\documentclass{article}
\usepackage[utf8]{inputenc}
\usepackage{graphicx} % Required for inserting images
\usepackage{amsmath,amsfonts,amssymb}
\usepackage{url}
\usepackage{amsthm}
\usepackage{mathrsfs}\usepackage{bbm}
\usepackage{dsfont}
\usepackage{color}
\usepackage{comment}
\usepackage[top=1in,bottom=1in,left=1in,right=1in]{geometry}
\usepackage{authblk}

\newcommand{\pa}[1]{\left(#1\right)}
\newcommand{\ac}[1]{\left\{#1\right\}}

\usepackage[backend=biber,doi=false,isbn=false,maxbibnames=99,url=false,style=alphabetic]{biblatex}
\usepackage{hyperref}
\usepackage{caption}

\newtheorem{theorem}{Theorem}
\newtheorem{conjecture}{Conjecture}
\newtheorem{corollary}{Corollary}
\newtheorem{proposition}{Proposition}
\newtheorem{lemma}{Lemma}
\newtheorem{remark}{Remark}

\title{Low-degree Lower bounds for clustering in moderate dimension}
\author[1]{Alexandra Carpentier}
\author[2]{Nicolas Verzelen}

\affil[1]{\textit{\scriptsize{Institut für Mathematik, Universität Potsdam, Germany.}}}
\affil[2]{\textit{\scriptsize{INRAE, MISTEA, Univ. Montpellier, Montpellier, France}} }

\begin{document}
\maketitle

\begin{abstract}
We study the fundamental problem of clustering $n$ points into $K$ groups drawn from a mixture of isotropic Gaussians in $\mathbb{R}^d$. Specifically, we investigate the requisite minimal distance $\Delta$ between mean vectors to partially recover the underlying partition. While the minimax-optimal threshold for $\Delta$ is well-established, a significant gap exists between this information-theoretic limit and the performance of known polynomial-time procedures. Although this gap was recently characterized in the high-dimensional regime ($n \leq dK$), it remains largely unexplored in the moderate-dimensional regime ($n \geq dK$). In this manuscript, we address this regime by establishing a new low-degree polynomial lower bound for the moderate-dimensional case when $d \geq K$. We show that while the difficulty of clustering for $n \leq dK$ is primarily driven by dimension reduction and spectral methods, the moderate-dimensional regime involves more delicate phenomena leading to a "non-parametric rate". We provide a novel non-spectral algorithm matching this rate, shedding new light on the computational limits of the clustering problem in moderate dimension.
%We consider the basic problem of clustering $n$ points into $K$ groups when those are sampled from a mixture of isotropic Gaussian in dimension $d$. Defining $\Delta$  as the minimal distance between the means, we investigate condition on $\Delta$ to recover the underlying partition. While the minimax-optimal value for $\Delta$ is well known,  there is a large gap between this threshold and the best known performances of polynomial-time procedures. Whereas this gap has been recently characterized in the high-dimensional regime ($n\leq dK$), it remains largely unknown  in the moderate-dimensional regime $(n\geq dK)$. In this manuscript, we establish a new low-degree polynomial lower bound to handle the latter in the undercomplete case $d\geq K$. While the difficulty of clustering for $n\leq dK$ is mostly driven by dimension reduction steps, the moderate-dimensional regime is much more subtle and we unveil completely different phenomenons. We also provide a matching low-degree polynomial upper bound that is tailored to the prior distribution considered in the lower bound. 
\end{abstract}

%\alex{
%TODO:
%\begin{itemize}
%    \item Faire preuve borne sup $d\geq K$?
%    \item Tableau?
%    \item Refaire une passe unification du vocabulaire?
%\end{itemize}
%}

\section{Introduction}

Gaussian mixture models are arguably the most iconic distribution model for clustering purpose. This attracted a large attention both in statistics and in machine learning
\cite{
Dasgupta99,
VEMPALA2004,
LuZhou2016,
diakonikolas2018list,
Regev2017,giraud2019partial,
fei2018hidden,chen2021hanson,Kwon20,SegolNadler2021,romanov2022,LiuLi2022,diakonikolasCOLT23b,Even24}.
%Despite this, the best possible performances of computationally efficient clustering procedures are still only partly understood. 

\paragraph{Set-up.}
For some unknown vectors $\mu_1,\ldots,\mu_K\in\mathbb{R}^d$, some  $\sigma>0$, we observe   $Y_{i}\in \mathbb{R}^d$ with $i=1,\ldots, n$ that are sampled independently with distribution
\begin{equation}\label{eq:GMM-intro}
Y_{i} \sim \mathcal{N}(\mu_{\underline{k}^*(i)},\sigma^2 I_d)\enspace ,
\end{equation}
where the function $\underline{k}^*:[n]\mapsto [K]$ encodes the unknown partition of the observation into $K$ groups. To simplify the discussion and by homogeneity, we assume throughout this paper that $\sigma=1$. We write $Y\in \mathbb{R}^{n\times d}$ for the full observation matrix. The model~\eqref{eq:GMM-intro} is an instance of the isotropic Gaussian mixture model that is conditioned to the latent assignments $\underline{k}^*$. 

For $l=1,\ldots, K$, denote $S^*_l= (\underline{k}^{*})^{-1}(\{l\})$ for the $l$-th group. The main statistical objective for such Gaussian mixture models~\eqref{eq:GMM-intro} is to cluster the data matrix $Y$, that is to recover the unknown partition 
%In the , there are two main objectives: estimating the mean parameters $\mu_l$'s  --see e.g.~\cite{diakonikolas2018list,diakonikolasCOLT23b,LiuLi2022,Kwon20} or clustering, that is reconstructing the partition 
$S^*_1,\ldots, S^*_K$ of $[n]$%--see e.g~\cite{VEMPALA2004,LuZhou2016,ndaoud2022sharp}
. Given an estimated partition $\hat{S}_1,\ldots, \hat{S}_K$ of $[n]$, consider the alignment  loss $\mathrm{err}(\hat{S}, S^*)$ defined by 
 \begin{align}\label{eq:definition_loss}
 \mathrm{err}(\hat{S}, S^*):= \min_{s \text{ permutation of [K]} }\quad  \frac{1}{2n}\sum_{k=1}^K |S_{s(k)}\triangle S^*_k|\ , 
 \end{align}
where  $\triangle$ stands here for the symmetric difference. When $\mathrm{err}(\hat{S},S^*)=0$, the partition is perfectly reconstructed. We say $\hat{S}$ achieves partial reconstruction when the error $\mathrm{err}(\hat{S}, S^*)$ is smaller than a random guess (independent of $Y$). For simplicity, we focus here on the case where the partition is balanced, i.e. when all groups  $S^*_k$'s have similar cardinality of the order of $n/K$.

The key quantity characterizing the difficulty of the clustering task is the minimal inter-group separation $\Delta_* = \inf_{k\neq k'}\|\mu_k - \mu_{k'}\|_2$, which plays the role of signal strength. Here $\|.\|_2$ stands for the $l_2$ norm in $\mathbb{R}^d$. As clustering  is combinatorial by nature, computational difficulties are central to this problem. Hence, the overarching question in the literature is to characterize the minimum separation $\Delta_*$ that allows to reconstruct the unknown partition with a polynomial-time procedure.

\subsection{State-of-the art}

\paragraph{Minimax conditions.}
Define the quantity  $\Delta_{\mathrm{stat}}$ the statistical threshold
%the square separation $\Delta_*^2$ is of larger order than the the square statistical minimax distance %larger than  $\Delta_*\gtrsim \Delta_{\mathrm{stat}}$ where
\begin{align}\label{eq:separation:minimax}
\Delta_{\mathrm{stat}}^2 := \sqrt{\frac{dK\log(K)}{n}} \lor  \log(K)\enspace .
\end{align}
If we set aside computational aspects, it has been established~\cite{Regev2017,Kwon20,Even24} that partial reconstruction of the partition is information-theoretically possible as soon as $\Delta_*\gtrsim \Delta_{\mathrm{stat}}$. Here, $u\gtrsim v$ means that $u\leq c v$ for some positive numerical constant $c$. Minimax bounds  for the low-dimensional regime ($n\geq dK$) are provided in~\cite{Regev2017}, whereas the general case is dealt with in~\cite{Even24}. The exact $k$-means algorithm, which minimizes a $l_2$-type criterion over all possible partitions into $K$ groups, is shown in~\cite{Even24} to partially recover the underlying partition with high probability as soon $\Delta_*^2$ is large compared to $\Delta_{\mathrm{stat}}^2$. However, there exists no known polynomial-time algorithm for computing this $k$-means estimator. Furthermore, in worst-case instances, this estimator is NP-hard to compute and to approximate~\cite{awasthi2015hardness}. This raises the important question whether it is possible to partially recover the underlying partition 
at the square statistical minimax distance $\Delta_{\mathrm{stat}}^2$ or whether a statistical-computational gap arises for this problem. 

%An efficient implementation of exact k-means that is polynomial-time on all datasets - namely that manages to minimise the k-means criterion in polynomial time - is however not known, and is conjectured not to exist \alex{ref?}.

\paragraph{High-dimensional regime ($n\leq dK$).}  
In an asymptotic regime where the number $K$ of groups is fixed, $n,p\to\infty$ with $p/n\to \alpha\geq \frac{1}{K^2}$, Lesieur et al.~\cite{lesieur2016phase} conjectured, using statistical physics arguments, that the problem is indeed hard below the BBP threshold~\cite{BBP05} $\Delta_*^2\asymp \sqrt{dK^2/n}$. Recently, \cite{Even24,Even25} established a low-degree polynomial lower bound stating that clustering better than random guess with $\log(n)$-degree polynomials can be impossible when $\Delta_* \lesssim_{\log} \Delta_{\mathrm{comp,HD}}$ where $\Delta_{\mathrm{comp,HD}}$ is defined through
\begin{align}\label{eq:LD_HD}
 \Delta^2_{\mathrm{comp,HD}}:= 1\bigvee \left(\sqrt{d}\wedge \sqrt{\frac{K^2d}{n}}\right)\enspace .
\end{align}
Here, $u \lesssim_{\log} v$ means that there exists two positive numerical constants $c$ and $c'$ such that $u\leq c \log^{-c'}(n) v$.
Such a lower bound provides strong evidence~\cite{SurveyWein2025} of hardness of clustering when $\Delta_*$ is of smaller order (up to logarithmic terms) than  $ \Delta_{\mathrm{comp,HD}}$. 
Note that, the lower bound~\eqref{eq:LD_HD} from~\cite{Even25} is valid both in the  high-dimensional regime ($n\leq dK$) and moderate-dimensional regime $(n\geq dK$).
On the one hand, this confirmed the conjecture of Lesieur et al.~\cite{lesieur2016phase} when the number $K$ of groups is not too large $(K\leq \sqrt{n})$. On the other hand, this unveiled another regime in the large group regime $K\geq \sqrt{n}$. Conversely, the rate $ \Delta^2_{\mathrm{comp,HD}}$ is, up to logarithmic factors, matched by a combination of hierarchical clustering (HC) for $K\geq \sqrt{n}$ and of a spectral projection step together with a HC method for $K\geq \sqrt{n}$~\cite{Even25}. The intuition is the following: If $\Delta_*^2 \gtrsim_{\log} \sqrt{d}$, then observations $Y_i$'s in the same group are closer than observations in distinct groups so that HC recovers the true partition. If $\Delta_*^2\gtrsim \sqrt{\frac{K^2d}{n}}$, then it is  possible to project $Y$ onto a $(K\wedge d)$-dimensional subspace of $\mathbb{R}^d$ corresponding to largest eigenvector. Next,  HC applied to the projected data recovers the true partition as long as $\Delta_*^2\gtrsim_{\log} \sqrt{K}$. Overall, this combination of procedures recovers the true partition as long as $\Delta_*^2\gtrsim_{\log} \Delta^2_{sp}$, with 
\begin{align}\label{eq:UB_Sp}
\Delta^2_{sp}:= \left(\left(\sqrt{\frac{dK^2}{n}}\right) \lor  \sqrt{K}\right) \land \sqrt{d}\enspace , 
\end{align}
which, in  the high-dimensional regime ($n\leq dK$), matches~\eqref{eq:LD_HD}.

\paragraph*{Moderate-dimensional regime $(n \geq dK$).} 
Despite a long series of work (e.g.~\cite{loffler2021optimality, fei2018hidden, LiuLi2022,diakonikolas2018list,Even25}), this regime remains to be understood.  On the computational lower bound side, the low-degree bound~\eqref{eq:LD_HD} still holds in this regime~\cite{Even25} provided that $d\geq \log^{5}(n)$. However, none of the procedures based on  spectral procedures as well as Lloyd's algorithm~\cite{ndaoud2022sharp,LuZhou2016}, SDP~\cite{fei2018hidden,giraud2019partial}, or HC are able to match those bounds. Indeed, among these distance-based methods, the best available guarantees are given by the threshold $\Delta^2_{sp}$
which, in the moderate-dimensional regime $n\geq dK$, simplifies as $\Delta^2_{sp}= \sqrt{K}\land \sqrt{d}$.
%One may interpret~\eqref{eq:UB_Sp} as follows: (i) when $\Delta_*^2$ is large compared to $\sqrt{d\log(n)}$, points in the same cluster are closer than points in distinct clusters. (ii) when $d\geq K$, and  $\Delta_*^2$ is large compared to $\sqrt{dK^2/n}$, then it is possibly to estimate the space spanned by the $\mu_k$ using the largest right singular values of $Y$. Then, projecting the data into this $K$-dimensional subspace, it is possible to recover the partition when $\Delta_*^2$ is large compared to $\sqrt{K\log(n)}$. 
A somewhat parallel stream of literature~\cite{hsu2013learning,diakonikolas2018list,hsu2013learning,kothari2018robust} builds upon high-order tensors built from the matrix $Y$. 
When $n\geq (dK)^c$ for some large non-explicit $c$,~\cite{LiuLi2022} proves that a polynomial-time estimator based on iterative tensor projection partially recovers the partition with high probability when $\Delta_*^2 \gtrsim \log(K)^{1+\epsilon}$, almost matching the minimax threshold~\eqref{eq:separation:minimax}.

\subsection{Open problems and our contribution}

\paragraph{Open Problems.} In the high-dimensional regime, the difficulty of clustering is mostly driven by the dimension reduction problem, at least when $K\geq \sqrt{n}$~\cite{Even25}. 
However, in the moderate-dimensional regime $(n\geq dK)$, the problem is much more subtle: even when $d\geq K$, the separation condition $\Delta_*^2\gtrsim \sqrt{dK^2/n}$ required to reduce the dimension from $d$ to $K$ is mild compared to the conditions $\Delta_*^2\gtrsim \sqrt{(d\wedge K)\log(n)}$ required by  HC algorithm~\cite{Even25,VEMPALA2004}. 
%is mild and the real challenge is to cluster points on a  $(d\wedge K)$-dimensional subspace.
%either stems from a simple distance-based methods for $(K\geq \sqrt{n})$ or from the spectral projection step for $K\leq \sqrt{n}$, as summarized in the low-degree lower bound~\eqref{eq:LD_HD} and as conjectured by Lesieur et al.~\cite{lesieur2016phase} in the fixed $K$ regime.
In particular, in our moderately large sample size $(n\geq dK)$, can we expect to recover the partition for $\Delta_*^2$ below this condition?  To illustrate this point, let us consider a specific yet emblematic example with $2K$ groups with $d=K$, and where, for $k=1,\ldots, K$, the $\mu_k$'s are orthogonal, and $\mu_{K+k}= - \mu_k$. Provided that the unknown partition is exactly balanced, the gram matrix $Y^{T}Y$ is, in expectation, proportional to the identity matrix, and any spectral procedure (or SDP) based on this Gram matrix does not provide any sensible information on the unknown means. This phenomenon is akin to other problems, as  e.g.~planted vector in a subspace~\cite{mao2025optimal} but is yet different since there there are $K$ planted vectors in a $K$-dimensional space. In light of the vacuity of spectral methods, it is tempting to rely on high-order moment of the distributions though the lens of tensor or tensor projection as done in~\cite{LiuLi2022}. Nevertheless, as the sampling size $n$ requirement in in~\cite{LiuLi2022} is implicit and is quite large, this raises the following questions:
\begin{enumerate}
    \item What is the minimum separation $\Delta_*^2$ for efficient clustering method to recover the partition in the moderate-dimensional regime?  Is it related to the spectral threshold 
    $\Delta_*^2 \gtrsim \sqrt{\frac{dK^2}{n}}$? 
    \item What are the optimal algorithms in this regime? 
\end{enumerate}

\paragraph{Main Contribution.} In this paper, we make significant advances to these two questions. Namely, our first main result is a \emph{low-degree polynomial lower bound} providing  strong evidence of the hardness of clustering for $d\geq K$ and when $\Delta_*\lesssim_{\log} \Delta_{\mathrm{comp, MD}}$ with
\begin{equation}\label{eq:comp;MD}
\Delta_{\mathrm{comp, MD}}^2 := \left(\frac{K}{n^{1/4}} \land \sqrt{K}\right) \enspace .
\end{equation}
Although this result is tailored to handle the moderate-dimensional regime, our result is valid for any sample size $n$ as long as $d\geq K$. In particular, the non-parametric rate $\Delta_{\mathrm{comp, MD}}^2$ is significantly larger than the rate $\Delta_{\mathrm{comp, HD}}^2$~\eqref{eq:LD_HD} as long as $n\geq (d\vee K)^2$. In particular, this entails that it is impossible to recover the unknown partition at the spectral threshold $\sqrt{\frac{dK^2}{n}}$ in that regime and that more delicate phenomena arise in that regime. It sheds light on the fact that the hindering to clustering is not directly related to spectral properties of $YY^{T}$ but on the behavior of higher-order quantities.

\medskip 

Our proof of the low-degree lower-bound is of constructive nature. In particular, it provides us strong insights to introduce a new clustering procedure that highly differs from spectral methods. It is somewhat related to order-4 tensor methods, although it is also of different nature. As a second main result, we establish that our new procedure is able to recover the cluster as long as $\Delta_*$ is large compared to $(1\vee \Delta_{\mathrm{comp, MD}})$ in the specific case where $d=K$ and where the parameters of the mixture follow the same prior distribution as in our low-degree lower bound --see Section~\ref{sec:main_results} for a precise definition. Overall, with these two results, we obtain that the threshold $\Delta_{\mathrm{comp, MD}}^2$ is intrinsic for computationally-efficient clustering.

\medskip

Together with low-degree polynomials lower bounds of~\cite{Even25}, our results imply a strong of evidence that, for any dimension $d\geq K$, clustering in polynomial time is not feasible as long as 
\begin{equation}\label{eq:LD_general:lower_bound}
\Delta_*^2 \lesssim_{\log}  \left[1 + \left( \sqrt{\frac{dK^2}{n}}+  \frac{K}{n^{1/4}} \right)  \wedge \sqrt{d} \right]\ .
\end{equation}
In this paper, we only provide a matching upper bound for specific instances of the model~\eqref{eq:GMM-intro}. Nevertheless, as those instances are exactly the ones used for proving the low-degree lower bound, this leads us to the following conjecture.

\begin{conjecture}\label{conject:optimal_clustering_condition}
For any $n$, $d$, $K$, such that $d\geq K$, it is possible to reconstruct the partition in polynomial time as long as
\begin{equation}\label{eq:LD_general:upper_bound}
\Delta_*^2 \gtrsim_{\log}  \left[1 + \left( \sqrt{\frac{dK^2}{n}}+  \frac{K}{n^{1/4}} \right)  \wedge \sqrt{d} \right]\ .
\end{equation}
\end{conjecture}

\subsection{Low-degree polynomial framework}\label{sec:LD}

 In the low-degree polynomial framework, we only consider estimators, or test statistics, within the class of multivariate polynomials of degree at most $D$ of the data. Its premise is that, for a large class of problems, the polynomials of degree $D=O(\log n)$ are as powerful as the best polynomial-time algorithms. Hence, proving the failure of degree $O(\log n)$ polynomials is a strong indication~\cite{KuniskyWeinBandeira,SurveyWein2025} that no polynomial-time algorithm can solve this task. This framework is closely related to other approaches  including statistical queries~\cite{brennan2020statistical}, free-energy landscapes from statistical physics~\cite{bandeira2022franz} or approximate message passing~\cite{montanari2024equivalence}.  Low-degree polynomials have been fruitfully applied to a large range of detection problems, including  community detection~\cite{Hopkins17}, spiked tensor models~\cite{Hopkins17,KuniskyWeinBandeira} among many others. Although it has originally been proposed for detection problems, it has been subsequently extended to estimation problems~\cite{SchrammWein22,SohnWein25,CGGV25}.

\medskip 

In order to fit the clustering problem within the low-degree estimation framework, we need to introduce a prior distribution on the parameters $\mu_k$'s in and $\underline{k}^*$'s of the model~\eqref{eq:GMM-intro}. We write $\mathbb{P}$ (resp. $\mathbb{E}$) for the specific marginal probability (resp. expectation) distribution of $Y$. Also, we need to reduce the clustering problem to the problem of estimating a functional $x\in \mathbb{R}$. 
This will be made explicit in the beginning of Section~\ref{sec:main_results}, but one may think of $x$ as the indicator function that the first and second rows belong to the same group, so that the problem of recovering $x$ is a sub-problem of our global clustering problem from Equation~\ref{eq:definition_loss}. 
For any multiset $S$ of $[n] \times [d]$, we write $Y^S = \prod_{(i,j) \in S} Y_{i,j}$. Given a positive integer  $D$, we write $\mathcal S_{\leq D}$ for the collection of all multisets that satisfy $|S| \leq D$. 
Following~\cite{SchrammWein22}, we define the \emph{minimum low-degree risk} as
$$\mathrm{MMSE}_{\leq D} = \inf_{f: \mathrm{deg}(f)\leq D}\mathbb{E}[(f(Y)-x)^2] = \sup_{(\alpha_S)_{S \in \mathcal S_{\leq D}}} \mathbb E\left[ \left(\sum_{S \in \mathcal S_{\leq D}} \alpha_S Y^S - x\right)^2\right]\enspace ,$$
which corresponds to the minimum square risk achievable by polynomials of degree at most $D$ with respect to $Y$ to estimate the functional $x$.  Our main objective is therefore to show that $\mathrm{MMSE}_{\leq D}\leq \mathrm{MMSE}_{\leq 0} (1+o(1))$ for degrees $D$ of the order of $\log(n)$, which entails that low-degree polynomials of degree, up to $\log(n)$, do not have a smaller error than low-degree polynomials. 
Define the \emph{low-degree polynomial correlation criterion} $\mathrm{Corr}_{\leq D}$ by
\begin{equation}\label{eq:ld_correlation}
\mathrm{Corr}_{\leq D} := \sup_{(\alpha_S)_{S \in \mathcal S_{\leq D}}} \frac{\mathbb E\left[ x\sum_{S \in \mathcal S_{\leq D}} \alpha_S Y^S\right]}{\sqrt{\mathbb E\left[\left[\sum_{S \in \mathcal S_{\leq D}} \alpha_S Y^S\right]^2\right]}}\ , 
\end{equation}
which satisfies $\mathrm{MMSE}_{\leq D}=\mathbb{E}[x^2]- \mathrm{Corr}^2_{\leq D}$ by~\cite{SchrammWein22}. Hence, the key goal is to control the low-degree correlation.

For simple detection problems of a distribution $\mathbb{P}$ against $\mathbb{Q}$ considered e.g. in~\cite{HopkinsFOCS17,KuniskyWeinBandeira}, one can bound the counterpart of the low-degree correlation~\eqref{eq:ld_correlation} by considering an orthonormal basis of the space of polynomials with respect to $\mathbb{P}$, this allows to directly express the supremum in~\eqref{eq:ld_correlation} as a sum of projections.  When $\mathbb{P}$ corresponds to the distribution of independent Bernoulli random variables or independent standard normal distributions, the canonical basis or the multivariate Hermite basis are easily shown to be orthogonal. However, for generation estimation problem as considered here, the distribution $\mathbb{P}$ is not a product distribution and no explicit orthonormal basis is known. This technical hurdle prevented the development of low-degree lower bound for estimation problems. Schramm and Wein~\cite{SchrammWein22} proposed to lower bound the denominator of~\eqref{eq:ld_correlation} using Jensen's trick, which then allows to control $\mathrm{Corr}_{\leq D}$ as a sum of multivariate cumulants.
This provided a versatile tool which has been applied among others to submatrix estimation~\cite{SchrammWein22}, stochastic block models and graphons~\cite{luo2023computational}, and Gaussian mixture models~\cite{Even25}. However, for Gaussian mixture models, bounding cumulants does not allow to improve over the spectral-type lower bound~\eqref{eq:LD_HD} from~\cite{Even25}. More recently, Sohn and Wein~\cite{SohnWein25} developed  sharper theory, but solving the corresponding overcomplete linear systems is extremely involved and we did not manage to apply this approach for our problem. 

As an alternative, Carpentier et al.~\cite{CGGV25} have advocated for a more direct approach to lower bound the risk of low-degree polynomials by constructing a family of polynomials that is almost orthogonal under $\mathbb{P}$. This has been applied to several planted problems in random graphs~\cite{CGGV25} and in particular to stochastic block models with many groups~\cite{carpentier2025phase}. One advantage is that such a basis  provides strong  intuition on the form of optimal polynomials that match the lower bound~\eqref{eq:comp;MD}. In this work, we extend and sharpen this approach  from random graph to Gaussian mixture models. In turn, this provides us insights to introduce our new clustering procedures.

\subsection{Further related literature}

\paragraph{Estimation of the means.} In this manuscript, we focus on the problem of estimating the partition $(S_1,\ldots, S_K)$ of $[n]$. Some part of the literature --see e.g.~\cite{Regev2017,diakonikolas2018list,LiuLi2022,Kwon20} rather study the problem of estimating the means $\mu_k$'s. Nevertheless, there is a close connection between between mean estimation and clustering. Indeed, in a moderate-dimensional regime $(n\geq dK)$ a consistent estimation of the $\mu_k$'s is possible when it is possible to partially reconstruct the partition~\cite{LiuLi2022}. Conversely, if the means $\mu_k$'s are well estimated, it is possible to recover the underlying partition when $\Delta_*$ is large compared to $\sqrt{\log(K)}$, the latter condition being necessary for clustering from a information-theoretical perspective~\cite{Even24}. Hence, in our regime of interest, both mean estimation and clustering turn out to be equivalent. Nevertheless, we point out that mean estimation is still possible even in low-separation regime ($\Delta_*\leq \sqrt{\log(K)}$) where clustering in not achievable. However, the estimation rates and the statistical phenomenons at stake are quite different~\cite{Regev2017,doss2020optimal}.

\paragraph{Anisotropic Gaussian mixture models.} We underline that we focus in this work on the isotropic case where the covariance matrices of the mixture is proportional to the identity matrix. In the non-isotropic case, there is an additional statistical-computational gap that does not only arise when the number of $K$ of groups is large but also because of the unknown covariance structure. In particular,  \cite{SQclustering} and \cite{pmlr-v195-diakonikolas23b}  establish such gaps through the lens of the Statistical Query lower bounds

\subsection{Organization of the manuscript}

In Section~\ref{sec:main_results}, we state our main results. 
 Section~\ref{sec:family:polynomial} is devoted to the construction of an family of polynomials indexed by multigraph and we establish its almost orthonormality. Then, we rely on that family to establish our low-degree lower bound. The main arguments for our upper bound are given in Section~\ref{sec:upper}. Section~\ref{sec:discussion} further discusses remaining results. All the proofs are postponed to the end of the manuscript and to the appendix.

\section{Model and main results}\label{sec:main_results}

\paragraph{Gaussian mixture model with orthogonal means.} For the purpose of establishing our low-degree lower bounds, we introduce a specific prior distribution on the means of the Gaussian mixture. Fix any $\Delta>0$, and any positive integer $n$, $d$, and $K$, with $d\geq K$.  
First, sample $\mu_1,\ldots, \mu_K$ independently, such that for $k=1,\ldots, K$, $\|\mu_k\|_2= \Delta$, the $\mu_k$'s are orthogonal almost surely, and their joint distribution is invariant by rotation on $\mathbb{R}^d$. Then, sample $k^*(1),\ldots, k^*(n)$ independently and uniformly at random on $[K]$. For $i=1,\ldots, n$, we also independently sample independent Rademacher variables $b_i$. Finally, given i.i.d. standard normal random variables $Z_i$, 
we observe
\begin{equation}\label{eq:model:GGM}
Y_i =  b_i \mu_{k^*(i)} + Z_i \ , \quad i=1,\ldots, n\ .
\end{equation}
In the sequel, we write $X_i= b_i\mu_{k^*(i)}$ for the signal. Observe that the matrix $Y$ obtained from~\eqref{eq:model:GGM} is, conditionally to the $\mu_k$'s, $b_i$'s, and $k^*(i)$, a Gaussian mixture model with $2K$ groups\footnote{This slightly departs from the formalism in~\eqref{eq:GMM-intro} as the number of groups is now equal to $2K$, but we keep with the abuse of notation as $K$ rather than $2K$ is the quantity of interest here.}, separation $\Delta_* = \sqrt{2}\Delta$, and such that the corresponding means are either orthogonal or opposite. The partition of $[n]$ into $2K$ groups is given by $S^*_{k,b}:= \{i\in [n]: k^*(i)=k\text{ and }b_i= b\}$ with $k\in [K]$ and $b\in \{-1,1\}$. As long as $n\geq K\log(K)$, the partition is balanced with high probability on $k^*$ and the $b_i$'s.  Henceforth, we write $\mathbb{P}$ for the marginal distribution of the matrix $Y$ according to this model. 

\medskip

Here, we choose this prior distribution on the means in such a way that, for $d=K$,  $\mathbb{E}[Y^TY|(\mu_k)]= (1+ \frac{\Delta^2}{K})I_K$ and that, at least in expectation, the spectral properties of $Y^TY$ are irrelevant to recover the partition.

%Let $\Delta >0, d,n,K \in \mathbb N$. Consider the following clustering model. We focus on the case where $K=d$, although we still use 
%$K$ and $d$ to emphasize the difference between dimension and number of groups. 
%\begin{itemize}
%\item The $\mu_1, \ldots, \mu_K$ are orthogonal almost-surely, satisfy $\|\mu_k\|_2 =\Delta^2$, and . 
%\item the $k^*(i)$ for $i \in [n]$ are taken 
%\item the $b_i$ for $i \in [n]$ are taken independently and as Rademacher random variables.
%\item for any $i$, $Y_i = X_i +Z_i$ where the $Z_{i}$ are i.i.d and follow standard normal distributions. 
%\item $X_i = b_i\mu_{k^*(i)}$. 
%\end{itemize}

\paragraph{Low-degree lower bound.}
As usual in the literature on clustering-type problems, let it be stochastic block models~\cite{SohnWein25} or Gaussian mixtures models~\cite{Even24}, we reduce clustering to the problem of estimating the \emph{functional} $x= \mathbf{1}_{k^*(1)=k^*(2)}$. It is clear that being able to recover the partition for the model~\eqref{eq:model:GGM} gives a strong indication on value of $x$. More precisely, observe that $\mathbb{P}[x=1]=1/K$ and, the risk $\mathbb{E}[(\hat{x}-x)^2]$ of an estimator $\hat{x}$ independent of $Y$ is at least $1/K(1-1/K)$. In Appendix A.2 of~\cite{Even25}, it is shown that the hardness of reconstructing $x$ within a square error smaller than $K^{-1}(1+o(1))$ implies that all polynomial-time balanced estimators $\hat{S}$ of the partition $(S_{k,b}^*)$ achieve $\mathrm{err}(\hat{S},S^*)= 1+o(1)$, i.e. that partial reconstruction is hard.  Thus, we focus in this manuscript on the problem of estimating the functional $x$.  Observe $\mathbb{E}[x^2]=1/K$ and $\mathrm{MMSE}_{\leq 0}=1/K(1-1/K)$. Recalling that $\mathrm{MMSE}_{\leq D}= \mathbb{E}[x^2]- \mathrm{Corr}^2_{\leq D}$ --see Section~\ref{sec:LD}--, we see that only have to prove $\mathrm{Corr}^2_{\leq D}$ is small compared to $1/K$ to establish that $\mathrm{MMSE}_{\leq D}\leq \mathrm{MMSE}_{\leq 0}(1+o(1))$ and thereby that any low-degree polynomial performs better than random guess.

        \begin{theorem}[Low-degree lower bound]\label{thm:LB_polynomial}
         There exist positive numerical constant $c_0>0$ and $c'_0>0$ such that the following holds for any $d\geq K$. Assume that $D\geq c'_0$ and that 
        \[
        \left(\frac{n^{1/4}\Delta^2}{K}\right)\vee \left(\frac{\Delta^2}{\sqrt{K}}\right)\vee \left(\frac{1}{K^{1/4}}\right)\leq  \left(\frac{1}{4D}\right)^{c}
        \]
        for some $c\geq c_0$. Then 
            $$\mathrm{Corr}^2_{\leq 2D} \leq \frac{1}{K^{2}} \left(1+24\left(\frac{8}{D}\right)^{c/16}\right).$$
        \end{theorem}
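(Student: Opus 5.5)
We follow the almost-orthogonal-basis strategy of~\cite{CGGV25} announced in Section~\ref{sec:LD}. We build a family of polynomials $(P_G)$ indexed by multigraphs $G$ and show that it spans all polynomials of degree $\leq 2D$ in $Y$. Each $P_G$ should be nearly centered, and the family should be nearly orthonormal under $\mathbb{P}$.

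\textbf{Construction.} Each $G$ has row-vertices in $[n]$, together with coordinate labels (or pairings of coordinates) attached to the edges. We set $P_G$ to be a product of Hermite polynomials of the $Y_{ij}$, centered along the graph structure. Because of the sign flips $b_i$, each row $i$ must appear with even total degree for $P_G$ to carry any signal. By rotation invariance, coordinates only matter through pairings. A natural basic object is therefore $\langle Y_i,Y_j\rangle^2 - \mathbb{E}$, i.e.\ $\sum_{a,b}(Y_{ia}Y_{ja})(Y_{ib}Y_{jb})$, which has the flavour of an order-4 tensor. The precise definition is given in Section~\ref{sec:family:polynomial}. Writing $f=\sum_G \alpha_G P_G$, the correlation becomes
\[
\mathrm{Corr}_{\leq 2D}=\sup_\alpha \frac{\sum_G\alpha_G\,\mathbb{E}[xP_G]}{\sqrt{\alpha^T \Gamma\alpha}}, \qquad \Gamma_{G,G'}=\mathbb{E}[P_GP_{G'}].
\]
The first task is to prove $\Gamma \succeq (1-\varepsilon) I$, with $\varepsilon$ of order $(8/D)^{c/16}$. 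From this,
\[
\mathrm{Corr}^2_{\leq 2D}\leq (1-\varepsilon)^{-1}\sum_G \mathbb{E}[xP_G]^2 .
\]

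\textbf{Bounding $\mathbb{E}[xP_G]$.} Conditionally on $(\mu,k^*,b)$, the Hermite parts give $\mathbb{E}[P_G\mid X]=\prod X_{ij}^{\text{mult}}$. So $\mathbb{E}[xP_G]$ is a moment of products of $b_i\mu_{k^*(i),j}$. We integrate $b$ first, which forces even degrees. Next we integrate $k^*$: each connected component of rows sharing a group costs a factor $K^{-(\#\text{rows}-1)}$. Finally we integrate the orthogonal frame $\mu$, which gives Weingarten-type bounds of order $(\Delta^2/d)^{\#\text{pairs}}$ with corrections. For $G=\emptyset$ this yields $\mathbb{E}[x]=1/K$, which is exactly the $K^{-2}$ term in the statement. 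For any other $G$, the weight $\mathbb{E}[xP_G]^2$ must be compared with the number of graphs of the same shape, which is at most $n^{\#\text{rows}}d^{\#\text{coords}}$. The orthogonality of the means gives the key saving: all quadratic statistics cancel in expectation. The first informative shapes are therefore cycles and other structures in which each row has degree 4. Among these, the cheapest shapes are the ones that produce the ratio $n^{1/4}\Delta^2/K$. The $\sqrt{K}$ condition comes from shapes that stay inside a single group, similar to hierarchical clustering. The $K^{-1/4}$ term controls Weingarten corrections and the non-centered parts of $P_G$. Summing a geometric series over shapes with at most $2D$ edges, where the number of shapes is at most $(4D)^{O(\text{size})}$, gives a total excess of $K^{-2}\cdot O((8/D)^{c/16})$ under the hypothesis.

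\textbf{Main obstacle.} The hard step is the near-orthonormality of $\Gamma$. Products $P_GP_{G'}$ interact through shared rows and shared coordinates. Because the $\mu$'s are constrained to be orthogonal, they are not independent, so expectations factor only approximately. We would bound the off-diagonal part $|\Gamma_{G,G'}-\delta_{G,G'}|$ by a combinatorial sum over the merged graph $G\cup G'$, then apply a Schur test. Concretely, we would show
\[
\sup_G\sum_{G'}|\Gamma_{G,G'}-\delta_{G,G'}|\leq \varepsilon
\]
by charging each extra overlap a factor at most $(\text{signal ratio})\cdot 4D$, which is at most $(4D)^{1-c}$. Getting the definition of $P_G$ right is the delicate part. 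We would begin by recentering by conditional expectations along the group structure, in the spirit of~\cite{carpentier2025phase}, so that degenerate overlaps do not produce $O(1)$ terms.
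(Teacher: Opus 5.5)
There is a genuine gap. It sits in your claim that, thanks to orthogonality of the means, ``all quadratic statistics cancel in expectation'', which is what lets you discard every template with a degree-2 row.

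\textbf{Why the claim fails for your family.} You take products of Hermite polynomials, recentred along the graph. At a row $i$ of degree $2$ and with $d=K$,
\[
\mathbb{E}[\psi_2(Y_{ia})\mid \mu]=\mathbb{E}[X_{ia}^2\mid\mu]=\Delta^2/K, \qquad \text{so} \qquad \mathbb{E}[Y_iY_i^T-I_d\mid\mu]=(\Delta^2/K)I_d .
\]
Orthogonality makes this isotropic, not zero, and the signal propagates along chains.

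Take $G$ to be the cycle through $v_1$ and $v_2$ with $m$ intermediate rows of degree $2$. Then $\mathbb{E}[\Psi_{G,\pi}\mid X]=P_{G,\pi}(X)=\Delta^{2(m+2)}\mathbf{1}\{\text{the } m+2 \text{ rows share a group}\}$. Hence $\mathbb{E}[x\Psi_G]\asymp n^m\Delta^{2(m+2)}K^{-(m+1)}$ and
\[
\frac{\mathbb{E}[x\Psi_G]^2}{\mathbb{V}(G)}\asymp \frac{1}{K^2}\cdot\frac{\Delta^8}{K^2}\cdot\Big(\frac{n\Delta^4}{K^3}\Big)^{m}.
\]
This grows geometrically in $m$ once $\Delta^2\gg K^{3/2}/\sqrt{n}=\sqrt{dK^2/n}$, the spectral threshold. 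That regime lies inside the hypothesis of the theorem whenever $n$ is large compared with $(4D)^{4c}K^2$.

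Consequences for your argument:
\begin{itemize}
\item $\sum_G\mathbb{E}[xP_G]^2/\mathbb{V}(G)$ is not $K^{-2}(1+o(1))$, so the bound $\mathrm{Corr}^2\le(1-\varepsilon)^{-1}\sum_G(\cdots)$ is vacuous.
\item Your family cannot even satisfy $\Gamma\preceq(1+\varepsilon)I$ in this regime, since that would force $\mathrm{Corr}^2\gtrsim\sum_G(\cdots)$ and contradict the statement.
\item Recentering along connected components (the analogue of the paper's $\widetilde\Psi_G$) does not help, because this cycle is connected and contains $v_1,v_2$.
\end{itemize}

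\textbf{The missing idea.} The paper modifies the polynomial at degree-2 rows. At every degree-2 node other than $v_1,v_2$, replace $\psi_2(y)=y^2-1$ by $y^2-(1+\Delta^2/K)$. Keep plain Hermite polynomials elsewhere, so that $\mathbb{E}[\overline\Psi_G\mid X]=P_G(X)$ still holds when all inner nodes have degree $\ge 4$.

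Integrating over $Z_{\pi(v)}$ and $k^*(\pi(v))$ then produces $\frac1K\sum_k\mu_k\mu_k^T-\frac{\Delta^2}{d}I_d$, which is exactly zero when $d=K$. This kills $\mathbb{E}[x\overline\Psi_G]$ for such templates, and every covariance in which a degree-2 row is unmatched. The correction must then be carried through the Gram-matrix analysis: degree-2 nodes are forced into the matching, and matched-but-unpaired ones cost a factor $2^{|V_{2,np}|}$.

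\textbf{A second omission: the case $d>K$.} The cancellation needs $\frac1K\sum_k\mu_k\mu_k^T\propto I_d$. For $d>K$ this matrix equals $\frac{\Delta^2}{K}P_U$, with $P_U$ the projector onto $U=\mathrm{span}(\mu_k)$, and degree-2 rows genuinely carry spectral information. You therefore need the preliminary reduction from $d\ge K$ to $d=K$: revealing $U$ can only increase the low-degree correlation, and leaves a $K$-dimensional instance of the same model.

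\textbf{A side remark.} No Weingarten corrections appear in the paper. After the invariance reduction to polynomials of $YY^T$, moments are exact via $\langle\mu_k,\mu_l\rangle=\Delta^2\mathbf{1}\{k=l\}$. The $K^{-1/4}$ condition comes from long cycles and open paths in the merged graph when bounding the Gram matrix.
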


If we take $D= \log^{c'}(n)$, for some $c'>0$, the above theorem entails that  $\mathrm{MMSE}_{\leq D}\leq \mathrm{MMSE}_{\leq 0}(1+o(1))$ as long as $K\gtrsim \log^{c''}(n)$ and
\[
\Delta^2 \lesssim_{\log}  \frac{K}{n^{1/4}}\wedge \sqrt{K}\ . 
\]
Here, it is unavoidable to assume that $K$ is large. Indeed, there is no significant (i.e. larger than $\log(n)$) statistical-computational gap when $K\leq \log(n)$. For instance, one may observe that condition $\Delta^2 \gtrsim_{\log} \Delta^2_{sp}$ of~\eqref{eq:UB_Sp} for polynomial-time reconstruction is, up to polylog, matching the minimax condition of $\Delta^2\gtrsim \Delta_{\mathrm{stat}}$~\eqref{eq:separation:minimax} when $K\leq \log(n)$.

%Therefore, we only need to focus on invariant monomials associated to a connected 

%that have at least degree $4$, and that are connected components. Another last intuituve observation is that invariant monomials of a given degree - namely associated to templates that have a given number of edges - that are best for solving the clustering probelem are those associated templates that maximize the number of nodes - and that satisfy the constraints above. From there on, it is intuitive that such a template will be a double chain, which is what we use for our upper bound we actually do not exactly use this, but some slight modification, for technical reasons. \alex{point picture}. Proving our lower bound is then akin to formalising these intuitions, and proving that indeed the double chain is the template associated to the best possible monomial.

%\nico{Paragraphe qui suit a bouger}

%These intuitions and constructions have important consequences when we turn in Section~\ref{sec:upper} to building a polynomial that matches~\eqref{eq:comp;MD} under the model~\eqref{eq:model:GGM}. In light of the above, we only have to focus on polynomials associated to a multigraph $G$ that is connected, such that the degree of all nodes is even and at least equal to $4$. Then, technical computations of the variance also imply that, given a set of nodes, the number of edges of the multigraph has to be minimized which, together with other constraints, leads to the construction given in Figure~\ref{fig:UB}.

\paragraph{Matching upper bound.} The proof of Theorem~\ref{thm:LB_polynomial} described in Section~\ref{sec:family:polynomial} provides strong insights for constructing an efficient estimator of the functional $x$. In Section~\ref{sec:upper}, we construct an estimator $\hat{x}$  (see~\eqref{eq:definition:hat_x}) which is a transformation  of a low-degree polynomial of $Y$. This estimator is somewhat related to an order-4 tensor method, while being also different. In the following theorem $M$ and $L$ are two tuning parameters so that the degree of the corresponding polynomial is of order of $ML$.

\begin{theorem}\label{thm:final_upper_bound}
    There exists two numerical constants $c$ and $c'$ such that the following holds. Consider the estimator $\hat{x}$ defined in~\eqref{eq:definition:hat_x} where 
    we fix $M$ as smallest odd integer such that $M\geq \log(K) \vee 24$ and we fix $L= \lfloor \log(K)\rfloor$.  If we assume that $n \geq c \log^{5}(n) K$ and 
\begin{align}\label{eq:condition:final_upper_bound}
    \Delta^2 \geq c' \left[\log^{3}(n)\frac{K}{n^{1/4}} + \log^{13} (n) \right] \ , 
\end{align}
then, $\hat{x}$ defined in~\eqref{eq:definition:hat_x} satisfies $\mathbb{P}\pa{\hat x=x}\geq 1-n^{-3}$. 
\end{theorem}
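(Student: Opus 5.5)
The plan is to analyze $\hat{x}$ as a thresholded, aggregated version of a \emph{chain statistic} between observations $1$ and $2$. This is the polynomial suggested by the proof of Theorem~\ref{thm:LB_polynomial}: a connected multigraph whose nodes all have even degree at least $4$, with as many nodes as possible for a given number of edges.

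\textbf{Edge weights.} The basic edge weight is the centered squared inner product
\[
W_{ij} := \langle Y_i, Y_j\rangle^2 - \|Y_i\|_2^2 - \|Y_j\|_2^2 + d ,
\]
or an equivalent Hermite-type centering; I take it that \eqref{eq:definition:hat_x} uses such a weight. Conditionally on the latent variables, squaring removes the signs $b_i$ and the orthogonality of the means kills cross-group terms. Hence $\mathbb{E}[W_{ij}\mid \mu,k^*,b]\approx \Delta^4\mathbf{1}_{k^*(i)=k^*(j)}$, while the conditional fluctuation of $W_{ij}$ is of order $\sqrt{d}+\Delta^2$, which is order $\sqrt{K}+\Delta^2$ when $d=K$.

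\textbf{Chain statistic and signal/noise.} I would consider
\[
T := \sum_{i_1,\dots,i_{L}} W_{1 i_1} W_{i_1 i_2}\cdots W_{i_{L} 2},
\]
a ``double chain'' of length $L+1$ with distinct intermediate indices, computed on a sample split so that $M$ independent copies $T^{(1)},\dots,T^{(M)}$ are available. The first step is the conditional mean. Only paths staying in a single group contribute, so
\[
\mathbb{E}[T\mid k^*] \approx \mathbf{1}_{x=1}\,(n/K)^L\Delta^{4(L+1)},
\]
with the usual concentration of group sizes, for which the assumption $n\geq c\log^5(n)K$ suffices. The second step is the variance. I would expand $\mathbb{E}[T^2]$ as a sum over pairs of paths and classify them by their overlap pattern, i.e.\ by the union multigraph. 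The dominant term, coming from disjoint paths with matched edges, should be of order $n^L K^{L+1}$ up to $\Delta$-dependent corrections. Comparing signal to noise, the ratio per edge is then roughly $(n/K)\Delta^4/\sqrt{nK}$, which exceeds a constant precisely when $\Delta^2\gtrsim K/n^{1/4}$. The additional $\log^{13}(n)$ term in \eqref{eq:condition:final_upper_bound} covers the regime where the $\Delta^2$ part of the edge noise dominates.

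\textbf{Aggregation and high probability.} Choosing $L=\lfloor\log K\rfloor$ makes the per-edge gain compound to a factor polynomial in $K$. A Chebyshev bound then shows that each thresholded copy $\mathbf{1}\{T^{(m)}>\tau\}$, with $\tau$ half the signal, is correct with probability at least $3/4$. Taking a majority vote over the $M$ independent copies ($M$ odd, $M\geq\log K\vee 24$) and applying Hoeffding's inequality boosts this. To reach $1-n^{-3}$ rather than $1-K^{-c}$, I expect either hypercontractivity for degree-$4L$ polynomials of Gaussians, applied conditionally on the labels and contributing the polylog factors $\log^3(n)$, or a union bound over the group-size concentration events.

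\textbf{Main obstacle.} The hard part is the second-moment expansion. Overlapping paths — shared vertices, returning to $1$ or $2$, cycles in the union graph — must each be shown to be dominated by the disjoint term. This requires a combinatorial count of overlap patterns: for each pattern, weigh the number of free indices ($n$ per vertex) against the moment factors ($d$ or $\Delta^4$ per edge, with same-group constraints costing $1/K$), and check that $L\sim\log K$ keeps the number of patterns at most $L^{O(L)}$, absorbed by the polylog slack. I would first establish a per-pattern bound via a graph-theoretic excess argument (edges minus vertices), then sum over patterns.
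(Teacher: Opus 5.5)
Your proposal has a genuine gap in the choice of statistic: a plain double chain cannot reach the polylogarithmic regime. In your $T$ the endpoints $1$ and $2$ have degree $2$. Integrating out the intermediate rows, $\mathbb{E}[T\mid Y_1,Y_2,\mu]$ is, up to lower-order terms, proportional to $\sum_{k=1}^K(\langle Y_1,\mu_k\rangle^2-\Delta^2)(\langle Y_2,\mu_k\rangle^2-\Delta^2)$.

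When $x=1$, the common group contributes about $\Delta^8$. Each of the other $K-1$ groups contributes an independent centered term of variance $4\Delta^8$, because the projections $\langle Z_a,\mu_k\rangle$ are independent across $k$. Hence $\mathrm{Var}_1(T)/\mathbb{E}^2[T\mid x=1]\gtrsim K/\Delta^8$, whatever $n$ and $L$ are. This is exactly the term isolated in \eqref{eq:lower_variance} with $d_{\bar G}(\bar v_1)+d_{\bar G}(\bar v_2)=4$.

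Take for instance $n=K^4$ and $\Delta^2\asymp\log^{13}(n)$, which satisfies the hypotheses. The ratio then diverges and each copy is essentially uninformative. Your statistic needs $\Delta^2\gtrsim K^{1/4}$; the $\log^{13}(n)$ term does not cover this regime.

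The missing idea is the fasteners of $G^*$. Every $L$ nodes, a double edge is replaced by a single edge plus one edge to $v_1$ and one to $v_2$. This raises $\deg(v_1)=\deg(v_2)$ to $M+1$ while interior nodes keep degree $4$. That is why $M$ must be odd: so that this degree is even, not to break ties in a vote. The endpoint term then becomes $K(M+1)^{2(M+1)}/\Delta^{4(M+1)}$ (Lemma~\ref{prop:UBvar1}), which is negligible for $M\geq\log K$ and polylogarithmic $\Delta^2$. The extra cycles and components created by the fasteners cost only a factor $K^{4/L}=O(1)$ (Proposition~\ref{prop:template:chaine_rappeur}).

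The aggregation step also fails as written. A sample split can only partition $\{3,\dots,n\}$, so all your copies share $Y_1$ and $Y_2$. They are therefore not independent given $(\mu,k^*(1),k^*(2),b_1,b_2)$, and the dominant endpoint noise coming from $Z_1,Z_2$ is common to all of them.

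The paper also splits rows $1$ and $2$ by the orthogonal-rotation trick, producing $\Lambda=\lceil 24\log n\rceil$ conditionally i.i.d. copies with signal $\Delta/\sqrt\Lambda$. Chebyshev together with Theorem~\ref{thm:moment:rappeur} gives error at most $1/4$ per copy, and $\mathbb{P}[\mathrm{Bin}(\Lambda,1/4)\geq\Lambda/2]\leq n^{-3}$ concludes. No hypercontractivity is needed. The rescaling $\Delta^2\to\Delta^2/\Lambda$, $n\to n/\Lambda$ is what produces the polylogarithmic factors in \eqref{eq:condition:final_upper_bound}.

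A smaller slip: under noise, a squared-inner-product weight fluctuates at scale $d$, not $\sqrt d$. Your per-edge ratio $(n/K)\Delta^4/\sqrt{nK}$ would give the threshold $K^{3/4}/n^{1/4}$. The correct ratio is $\sqrt n\Delta^4/K^2$, which does give $K/n^{1/4}$.
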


As a consequence, the MMSE of $\hat{x}$ is smaller than $1/n^3$. Besides, if we apply this strategy to estimate all $n(n-1)/2$ functional $\mathbf{1}\{k^*(i)= k^*(j)\}$, we can recover the full partition of $[n]$ according to $k^*(.)$ with probability at least $1-1/n$. Given the knowledge of $k^*$, it is straightforward to recover the original partition given by $\overline{k}^*(.)$ as this simply amounts to considering $K$ problems of clustering with $2$ groups. Recovering these groups in polynomial-time is feasible as soon as  $\Delta^2\geq c\log(n)$ for a numerical constant $c>0$--see e.g.~\cite{ndaoud2022sharp}. In conclusion, under Condition~\ref{thm:final_upper_bound}, we can perfectly recover the partition $\overline{k}^*(.)$ with high probability.

\medskip 

In Theorem~\ref{thm:final_upper_bound}, the degree of the polynomial is of the order $\log^{2}(K)$. Fix any $\epsilon < 1/24$. It is in fact possible to reduce the degree of the polynomial to the order of $1/\epsilon^2$ by choosing $M$ and $L$ of the order of $1/\epsilon$. In this case, Condition~\eqref{eq:condition:final_upper_bound} is replaced by 
\[
   \Delta^2 \geq c_1 K^{c_2 \epsilon} \epsilon^{c_3}\left[\log^{5/4}(n)  \frac{K}{n^{1/4}} + \log(n)\epsilon \right] \ .
\]
This matches the low-degree lower bound up to an arbitrarily small power of $K$.

\section{Proof overview for the lower bound}\label{sec:family:polynomial}

In this section, we describe important steps for the proof of Theorem~\ref{thm:LB_polynomial}. First, we introduce an almost-orthogonal basis under $\mathbb{P}$ which is central in our arguments. After reducing the polynomial space by invariance properties, we introduce a multigraph formalism that will be used to define the polynomials. Then, we introduce a class of invariant polynomials that will be used to construct the estimator and to prove the lower bound. We prove that it is sufficient to restrict to the vector sub-space generated by these polynomials. Then, we correct this polynomial basis to establish its near orthonormality under $\mathbb{P}$.

\subsection{Invariance properties}

The following lemmas entail that it is sufficient to consider a subspace of polynomials that are rotation invariant in the $d$-dimensional feature space  and permutation-invariant on  the individuals $\{3,\ldots, n\}$.
\begin{lemma}\label{lem:reduction:rotation:permutation}
Fix any any degree $D>0$. Then, the minimum low-degree risk  $\min_{f: \mathrm{deg}(f)\leq D}\mathbb{E}[(f(Y)-x)^2]$ is achieved by a function $f$ such that 
\begin{itemize}
    \item[(i)] there exists  a polynomial $g$ of $\mathbb{R}^{n\times n}$ of degree at most $\lfloor D/2\rfloor$ such that $f(Y)= g(YY^T)$.
    \item[(ii)] $g(U)$ is invariant by permutation of the variable except one and two. In other words, for any bijection $\sigma: [n]\mapsto [n]$ such that $\sigma(1)=1$ and $\sigma(2)=2$, upon writing $U_{\sigma}= (U_{\sigma(i),\sigma_{j}})$,   we have $g(U)= g(U_{\sigma})$. 
\end{itemize}
\end{lemma}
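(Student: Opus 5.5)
The plan is a symmetrization argument: average a minimizer over a compact group that leaves the joint law of $(Y,x)$ invariant. The risk is a convex quadratic in $f$, so averaging never increases it. We then identify which polynomials are invariant.

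\textbf{Step 1 (invariances of the joint law).} Let $O\in O(d)$ act by $Y\mapsto YO$. Then $YO$ has rows $b_i O^T\mu_{k^*(i)}+O^TZ_i$. The prior on $(\mu_k)$ is rotation invariant, and $O^TZ_i$ is again standard Gaussian, independent of everything else. Hence $(YO,x)$ has the same law as $(Y,x)$. Similarly, let $\sigma$ be a permutation of $[n]$ fixing $1$ and $2$, acting by $Y\mapsto Y_\sigma$ (rows permuted). The $(k^*(i),b_i,Z_i)$ are i.i.d. over $i$, and $x$ depends only on rows $1,2$, so $(Y_\sigma,x)\overset{d}{=}(Y,x)$.

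\textbf{Step 2 (averaging).} The set of polynomials of degree $\le D$ is finite dimensional. The risk $f\mapsto \mathbb E[(f(Y)-x)^2]$ is a coercive convex quadratic on the quotient by null polynomials, so a minimizer $f$ exists. Define
\[
\bar f(Y)=\frac{1}{(n-2)!}\sum_{\sigma}\int_{O(d)} f(Y_\sigma O)\,dO,
\]
using the Haar measure. Then $\bar f$ is still a polynomial of degree $\le D$, since each map $Y\mapsto Y_\sigma O$ is linear. By Jensen and Step 1,
\[
\mathbb E[(\bar f-x)^2]\le \text{average of }\mathbb E[(f(Y_\sigma O)-x)^2]=\mathbb E[(f-x)^2],
\]
so $\bar f$ is also a minimizer. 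By construction it is invariant under both groups.

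\textbf{Step 3 (form of invariant polynomials).} Take an $O(d)$-invariant polynomial in the rows $Y_1,\dots,Y_n\in\mathbb R^d$. By the first fundamental theorem of invariant theory for $O(d)$, it is a polynomial in the inner products $\langle Y_i,Y_j\rangle=(YY^T)_{ij}$. This is the main technical point. If we want to avoid invoking it, we can argue directly. Decompose $f$ into homogeneous multi-degree components in each row; these components are preserved by averaging. Using $-I\in O(d)$, components of odd total degree vanish. The inner-product representation then follows from Weyl's theorem. Since the paper allows us to assume standard results, we simply cite it.

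Because $\bar f$ has degree $\le D$ and each $(YY^T)_{ij}$ has degree 2, we obtain $g$ of degree $\le\lfloor D/2\rfloor$. To do this, write $\bar f$ as a sum of its homogeneous components (each still invariant) and express each degree-$2m$ component as a degree-$m$ polynomial in the $U_{ij}$. For example, the degree-$2m$ part of $\sum_\sigma\int$ applied to monomials can be rewritten via Weyl's theorem in degree-graded form.

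Finally, apply permutation invariance. Since $\bar f(Y_\sigma)=\bar f(Y)$ and $Y_\sigma Y_\sigma^T=(YY^T)_\sigma$, we have $g(U_\sigma)=g(U)$ on the image set $\{YY^T\}$, i.e. on the PSD matrices. We may replace $g$ by its average over such $\sigma$ without changing its values there. This also makes $g$ genuinely invariant as a polynomial on $\mathbb R^{n\times n}$, yielding (ii).
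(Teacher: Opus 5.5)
Your argument is correct and follows the same overall scheme as the paper: average a minimizer over Haar measure on $O(d)$ and over permutations fixing $1$ and $2$, use Jensen together with invariance of the joint law of $(Y,x)$, and then identify the invariant polynomials.

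The difference is in the invariant-theory step. You cite Weyl's first fundamental theorem. The paper instead computes the Haar average of each monomial explicitly via the Collins--\'{S}niady formula $\int O_{l_1j_1}\cdots O_{l_rj_r}\,dO=\sum_{p_1,p_2}\delta^{p_1}_{\underline l}\delta^{p_2}_{\underline j}\phi(p_1,p_2)$. This vanishes for odd $r$, and after summing over $\underline l$ it turns each degree-$r$ monomial into a combination of products of $r/2$ inner products. So the paper gets the representation in $YY^T$ and the degree bound $\lfloor D/2\rfloor$ in one stroke.

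Your route needs two extra steps instead:
\begin{itemize}
\item $-I\in O(d)$ kills the odd-degree components.
\item A grading argument gives the degree bound; it should be stated cleanly. If a homogeneous invariant $h$ of degree $2m$ equals $G(YY^T)$, discard the homogeneous parts $G_k$ with $k\neq m$, since $G_k(YY^T)$ is homogeneous of degree $2k$. Hence $h=G_m(YY^T)$.
\end{itemize}
Both routes ultimately rest on the same classical fact (Brauer/Weyl), so nothing is lost either way.

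Your handling of (ii) is more careful than the paper's. The paper just averages $f_1$ over permutations. You note that $g(U_\sigma)=g(U)$ is a priori known only on the image $\{YY^T\}$, and you enforce genuine invariance on $\mathbb{R}^{n\times n}$ by symmetrizing $g$ itself. One small correction: that image is the set of PSD matrices of rank at most $d$, not all PSD matrices. Your symmetrization fix does not depend on this.
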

The first property follows from the fact that the distribution of our matrix $X$ is invariant by rotation in the space of dimension $d$. The second property follows from the fact that the distribution of $X$ is invariant by permutation of the $n$ individuals. Let us denote $\mathcal{P}^{\mathrm{inv}}_{\leq D}$ the subspace of polynomials $f$ of all polynomials in $\mathbb{R}^{nd}$  with  degree at most $D$  that satisfies the properties (i) and (ii) of Lemma~\ref{lem:reduction:rotation:permutation}.

\subsection{Multigraph formalism}

In the sequel, we consider multi-graphs $G=(V,E)$ where $V=\{v_1,\ldots, v_r\}$ stands for it nodes and  $E$ stands for its multi-set of edges. Importantly, the multi-graphs $G$ are allowed to have self-loops as well as multiple edges. We write henceforth $|V|$ for its number of nodes, and $|E|$ the number of edges. In this paper, we shall only consider multigraphs and, for the sake of conciseness, we shall sometimes write graphs for multigraphs. 
The degree $\mathrm{deg}_G(i)$ of a node $i$ is the number of edges that are incident to $i$, self-edges counting twice. Two multi-graphs $G^{(1)}=(V^{(1)},E^{(1)})$ and $G^{(2)}=(V^{(2)},E^{(2)})$ are said to be equivalent if there exists a bijection between $V^{(1)}$ and $V^{(2)}$ that maps $v_1^{(1)}$ to $v_1^{(2)}$,  $v_2^{(1)}$ to $v_2^{(2)}$, and  that preserves the edges and their multiplicity. In Section~\ref{sec:multigraph}, we shall introduce this notion of equivalence through the lens of the half-edges. This will be more convenient in the proofs to work with half-edges, but we can skip this for now. 
 
\medskip

In the sequel, we define $\mathcal{G}_{\leq D}$ as a maximum collection of non-equivalent multigraphs $G=(V,E)$ with at least two nodes, such that, all the nodes to the exception of $v_1$ and $v_2$ are non-isolated, and with at most $D$ edges. Such multigraphs $G\in \mathcal{G}_{\leq D}$ are henceforth referred to as templates.

\bigskip 

Consider a multigraph $G= (V,E)$ where $V=\{v_1,v_2,\ldots, v_r\}$ with possible self edges but without isolated nodes (except possibly $1$ and $2$). Consider $\Pi_V$ the set of injective maps from  $V$ to $[n]$ such that $\pi(v_1)=1$ and $\pi(v_2)=2$. For $\pi\in \Pi_V$, we define the polynomials
\begin{equation}\label{eq:definition:P_G}
P_{G,\pi}(Y)= \prod_{(i,j)\in E} \langle Y_i,Y_{j}\rangle\ ; \quad \quad P_G = \sum_{\pi\in \Pi_V} P_{G,\pi}
\end{equation}
For short, we sometimes write $P_G$ for $P_G(Y)$ when there is no ambiguity. These polynomials $P_G$ are what we called in the introduction the {\bf invariant monomials}. 
%- although they are not monomials, but sum of monomials indexed by multigraphs that are indexed by the same multigraph $G$ - which we called in the introduction the corresponding {\bf template}.

%Finally, we remind that $\mathcal{G}_{\leq D}$ is such that any maximum collection of such multi-graphs with total degree less than $D$ and such that any two different multigraphs $G$ and $G'$ in this collection are not isomorphic. 

As a consequence of Lemma~\ref{lem:reduction:rotation:permutation},the collection $(P_{G})$ with $G\in \mathcal{G}_{\leq D}$ spans the space of invariant polynomials and that we can therefore restrict ourselves to this span for the analysis of the correlation criterion.
\begin{lemma}\label{lem:invariant:graph}
 For any $f$ in $\mathcal{P}^{\mathrm{inv}}_{\leq 2D}$, there exist numerical values $(\alpha_{G})_{G \in \mathcal G_{\leq D}}$ such that $f(Y)=\sum_{G} \alpha_G P_G(Y)$.
\end{lemma}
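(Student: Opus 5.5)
Since $f\in\mathcal{P}^{\mathrm{inv}}_{\leq 2D}$, Lemma~\ref{lem:reduction:rotation:permutation}(i) lets us write $f(Y)=g(YY^T)$, with $g$ a polynomial in the entries $U_{ij}$ of $U=YY^T$ of degree at most $D$. I will take this representation as the starting point.

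\textbf{Step 1: expand $g$ and encode monomials by multigraphs.} Write $g(U)=\sum_{m}\beta_m \prod_{(i,j)\in m} U_{ij}$, where $m$ ranges over multisets of unordered pairs $\{i,j\}$ with $i,j\in[n]$ (possibly $i=j$), of cardinality at most $D$. Each such monomial defines a labelled multigraph $H_m$ on $[n]$: put one edge $\{i,j\}$ per factor (a self-loop when $i=j$), and keep as vertices the nodes $1$ and $2$ together with every non-isolated node. On $U=YY^T$ the monomial evaluates to $\prod_{(i,j)\in m}\langle Y_i,Y_j\rangle$. Let $G_m$ be $H_m$ with the labels of nodes $1$ and $2$ remembered as $v_1$ and $v_2$ and all other labels forgotten. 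Then $G_m$ is equivalent to a unique template in $\mathcal{G}_{\leq D}$, since it has at most $D$ edges and no isolated nodes except possibly $v_1,v_2$. Moreover, the monomial equals $P_{G_m,\pi}(Y)$ for the injective map $\pi$ given by the labelling, with $\pi(v_1)=1$ and $\pi(v_2)=2$.

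\textbf{Step 2: symmetrize.} By property (ii), $g(U)=\frac{1}{|\mathfrak{S}|}\sum_{\sigma\in\mathfrak{S}} g(U_\sigma)$, where $\mathfrak{S}$ is the group of permutations of $[n]$ fixing $1$ and $2$. So it suffices to treat a single symmetrized monomial $\frac{1}{|\mathfrak{S}|}\sum_{\sigma}\prod_{(i,j)\in m}U_{\sigma(i)\sigma(j)}$. The labellings $\sigma\circ\pi$ range over all of $\Pi_V$ (the injective maps fixing $v_1\mapsto 1$, $v_2\mapsto 2$), since $\mathfrak{S}$ acts transitively on them. Each image monomial is hit by the same number of $\sigma$. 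Hence this average equals $c_m\,P_{G_m}(Y)$ for some constant $c_m>0$.

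\textbf{Step 3: collect terms.} Summing over $m$ gives $\alpha_G=\sum_{m:\,G_m\simeq G}\beta_m c_m$, which proves the claim.

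The only delicate point is the bookkeeping in Step 2. Automorphisms of $G$ fixing $v_1,v_2$ make distinct $\pi\in\Pi_V$ produce the same labelled monomial. One must therefore check that the orbit sum is a constant multiple of $P_G$, with $P_G$ counting each labelled copy $|\mathrm{Aut}(G)|$ times, and not some other combination. This holds because both the orbit sum and $P_G$ are uniform sums over the same orbit of labelled monomials, so they agree up to the positive factor given by the ratio of stabilizer sizes. A second minor point: one might worry that a vertex $\pi(v)$ with $v\neq v_1,v_2$ could map onto $1$ or $2$. This cannot happen, because $\pi$ is injective and nodes $1,2$ are always kept as the distinguished vertices $v_1,v_2$.
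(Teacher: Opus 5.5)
Your proof is correct and follows essentially the same route as the paper: write $f=g(YY^T)$, expand into monomials in the $\langle Y_i,Y_j\rangle$, group them into orbits under the permutations fixing $1$ and $2$, and identify each orbit sum with a multiple of $P_G$. One step you leave implicit is that $P_G=P_{G'}$ whenever $G\simeq G'$; the paper checks this explicitly, and it follows by composing labellings with the isomorphism. The automorphism bookkeeping is unnecessary, since $\sigma\mapsto\sigma\circ\pi_0$ hits each $\pi\in\Pi_V$ exactly $(n-|V|)!$ times, so $c_m=(n-|V|)!/(n-2)!$; your aside that $P_G$ counts each labelled monomial $|\mathrm{Aut}(G)|$ times is not literally true for the paper's half-edge automorphisms (e.g.\ a lone self-loop at $v_1$), but nothing depends on it.
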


In the sequel, we shall reparametrize  the collection $(P_G)$ in a collection $(\widetilde{\Psi}_G)_G$ so that the $\widetilde{\Psi}_G$ are almost orthogonal.

\subsection{Towards almost orthogonal polynomials}

In the following, we will modify our basis of so-called invariant monomials into a  basis that is almost orthonormal under the distribution $\mathbb{P}$. While some of our results will remain valid for any value of $d,K$, the only case where our basis is almost orthonormal is in the case where $d=K$. In the discussion section, we will however explain how this construction remains useful for the case where $d\geq K$, through a projection into the $k$-dimensional space that contains the signal.

\paragraph{Hermite polynomials and specific correction for degree 2 nodes.} We start by recalling the standard properties of non-standardized Hermite polynomials -- see e.g. page 254 in~\cite{magnus2013formulas}.
\begin{lemma}\label{lem:hermite}
    For any integer $k>0$, we define $\psi_k$ as the Hermite polynomial of order $k$. 
    For $z\sim \mathcal{N}(0,1)$, we have $\mathbb{E}[\psi_k(z+\mu)]= \mu^k$, $\mathbb{E}[\psi_k(z)\psi_l(z)]= \mathbf{1}_{k=l}\sqrt{k!l!}$ and 
\begin{equation}\label{eq:recursion_hermite}
    \psi_k(x+y)= \sum_{l=0}^{k} \psi_l(x) \binom{k}{l}y^{k-l}
\end{equation}
\end{lemma}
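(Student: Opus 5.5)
The plan is to derive all three identities from the exponential generating function of the (probabilists') Hermite polynomials,
\[
\exp\Big(tx - \frac{t^2}{2}\Big) = \sum_{k\geq 0} \psi_k(x)\,\frac{t^k}{k!}\ , \qquad t,x\in\mathbb{R}\ ,
\]
which I take as the definition of $\psi_k$. It is consistent with the usual definition $\psi_k(x)=(-1)^k e^{x^2/2}\frac{d^k}{dx^k}e^{-x^2/2}$ by Taylor expanding $e^{-(x-t)^2/2}$ in $t$. For fixed $x$ the left-hand side is an entire function of $t$. Each identity then reduces to checking an equality between entire functions of one or two variables and comparing coefficients.

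\emph{Shift identity~\eqref{eq:recursion_hermite}.} This step is purely algebraic. I would write
\[
\exp\Big(t(x+y)-\frac{t^2}{2}\Big)=\exp\Big(tx-\frac{t^2}{2}\Big)\,e^{ty}
\]
and expand both factors as power series in $t$. Taking the Cauchy product, the coefficient of $t^k/k!$ on the right is $\sum_{l=0}^k \binom{k}{l}\psi_l(x)y^{k-l}$. On the left it is $\psi_k(x+y)$, which gives~\eqref{eq:recursion_hermite}.

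\emph{Mean identity.} Let $z\sim\mathcal{N}(0,1)$. Then $\mathbb{E}\exp\big(t(z+\mu)-t^2/2\big)=e^{t\mu}e^{t^2/2}e^{-t^2/2}=e^{t\mu}=\sum_k \mu^k t^k/k!$. To compare coefficients with $\sum_k \mathbb{E}[\psi_k(z+\mu)]\,t^k/k!$, I need to exchange expectation and series. The termwise bound $\sum_k |\psi_k(x)|\,|t|^k/k! \leq C(|t|)\exp\big(|t||x|+c(|t|)\big)$ works: it follows from the Cauchy estimates on the entire function $s\mapsto e^{sx-s^2/2}$ on a disc of radius $2|t|$. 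The right-hand side is integrable under the Gaussian law of $z+\mu$, so dominated convergence applies. The identity $\mathbb{E}[\psi_k(z+\mu)]=\mu^k$ follows.

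\emph{Orthogonality.} For two parameters $s,t$, I would compute
\[
\mathbb{E}\Big[e^{sz-s^2/2}\,e^{tz-t^2/2}\Big]=e^{(s+t)^2/2-s^2/2-t^2/2}=e^{st}=\sum_{k}\frac{(st)^k}{k!}\ .
\]
Expanding the left side as a double series $\sum_{k,l}\mathbb{E}[\psi_k(z)\psi_l(z)]\,s^kt^l/(k!\,l!)$ is justified by the same domination argument, now applied to a product of two dominated series. Matching the coefficients of $s^kt^l$ gives $\mathbb{E}[\psi_k(z)\psi_l(z)]=\mathbf{1}_{k=l}\,k!=\mathbf{1}_{k=l}\sqrt{k!\,l!}$.

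The only non-routine point, and the main obstacle, is justifying these interchanges of expectation and infinite sums. The Cauchy-estimate domination above handles it cleanly. Alternatively, one can note that each $\psi_k$ is a polynomial and argue via analyticity of the moment generating function.
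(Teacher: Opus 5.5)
Your proof is correct, but it does not follow the paper, because the paper gives no proof of this lemma. It simply quotes these as standard properties of the non-normalized (probabilists') Hermite polynomials, citing page~254 of \cite{magnus2013formulas}.

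Your generating-function argument is therefore a self-contained replacement for that citation rather than a competing route. It also has a real advantage: it fixes the normalization the paper silently relies on, namely $\exp(tx-t^2/2)=\sum_k \psi_k(x)t^k/k!$. This normalization is exactly what makes both $\mathbb{E}[\psi_k(z+\mu)]=\mu^k$ and $\mathbb{E}[\psi_k(z)^2]=k!$ hold; the physicists' convention would break both.

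One small slip: the Cauchy estimate on the disc of radius $2|t|$ gives $\sum_k |\psi_k(x)|\,|t|^k/k! \leq 2\exp(2|t||x|+2t^2)$, not a bound with $|t||x|$ in the exponent. The bound is still integrable under a Gaussian law, so your dominated-convergence step is unaffected.

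You could also skip the interchange for the mean identity entirely. Apply your shift identity with $x=z$, $y=\mu$, and use orthogonality against $\psi_0=1$ (so $\mathbb{E}[\psi_l(z)]=\mathbf{1}_{l=0}$). This gives
\[
\mathbb{E}[\psi_k(z+\mu)]=\sum_{l=0}^k\binom{k}{l}\mu^{k-l}\,\mathbb{E}[\psi_l(z)]=\mu^k .
\]
That leaves orthogonality as the only place where an analytic justification is needed.
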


Consider a multigraph $G=(V,E)$. For any edge $e\in E$, we write $(l(e),r(e))\in V^{2}$ for the respective left and right node of this edge. Given $v\in V$, $j\in [d]$, we define the modified hermite polynomial $\overline{\psi}_{{\beta_{v,j},G}}(x)$ by
\begin{equation}\label{eq:psi_bar}
\overline{\psi}_{{\beta_{v,j},G}}(x):= \left\{ \begin{array}{cc} 
    x^2 - (1+\Delta^2/K)  & \text{ if } \beta_{v,j}=2, \text{ }v\notin\{v_1,v_2\},\text{  and }\mathrm{deg}_G(v)=2\\
      \psi_{\beta_{v,j}}(x) & \text{ else }\enspace . 
\end{array}\right.
\end{equation}
Here, $\overline{\psi}_{{\beta_{v,j},G}}$ corresponds the Hermite polynomial of degree $\beta_{v,j}$ except if $\beta_{v,j}=2$ and $v$ is a degree $2$-node distinct from $v_1$ and $v_2$.

Then, given a labeling $\pi$, we define the counterpart $\overline{\Psi}_{G,\pi}$ of $P_{\pi,G}$ by introducing modified Hermite polynomials
\begin{equation}\label{eq:Hermite_graph}
 \overline{\Psi}_{G,\pi} := \sum_{(j_e, e\in E) =1}^d \overline{\psi}\left[\prod_{e\in E} Y_{\pi(l(e)),j_e}Y_{\pi(r(e)),j_e}  \right]\ , 
\end{equation}
and where $\overline{\psi}\left[\prod_{e\in E}  Y_{\pi(l(e)),j_e}Y_{\pi(r(e)),j_e}  \right]:= \prod_{i=1}^n\prod_{j=1}^d \overline{\psi}_{\beta_{\pi^{-1}(i),j},G}(Y_{i,j})$ and $\beta_{\pi^{-1}(i)j}$ is the number of times $Y_{ij}$ arises in the previous product. 

Let us comment on this definition by considering two specific cases:
\begin{itemize}
    \item 
\underline{The template $G$ does not have any  degree $2$ node.} If the multigraph $G$ does not contain any degree $2$ node aside from $v_1$ and $v_2$, then $\overline{\Psi}_{G,\pi}$ corresponds to the sum of multivariate Hermite polynomials associated to the development of $P_{G,\pi}$ in a sum of monomials in $Y$. In particular, it follows from Lemma~\ref{lem:hermite} that $\mathbb{E}[\overline{\Psi}_{G,\pi}|X]= P_{G,\pi}(X)$. In this way, $\overline{\Psi}_{G,\pi}$ is an unbiased estimator $P_{G,\pi}(X)$.

%we correct the invariant polynomial to remove the bias coming from the noise, as is classically done with the help of Hermite polynomials.

%Here note that in addition to the classical Hermite correction, we also have a special correction for nodes of degree $2$. We will discuss this later, after discussing what happens when no such node is present.

\item \underline{The template $G$ contains a degree 2 node.} What we will now discuss is only valid in the case where $d=K$. If the multigraph $G$ contains a degree $2$ node - write $v$ for such a node - then the additional correction applied to this node has a strong effect, namely that 
$$\mathbb{E}[\overline{\Psi}_{G,\pi}|\mu, k^*(\pi(v'))_{v'\in V\setminus v}] = \mathbb{E}[\overline{\Psi}_{G,\pi}|X_{[n]\setminus\{\pi(v)\}}] = 0,$$
which means that the conditional expectation of the corrected polynomial conditional to everything but the knowledge of the group of $\pi(\{v\})$ is $0$. See the proof of Lemma~\ref{lem:first_moment_degree2} for details.
This strong property is a \textit{key ingredient} in the proof of the lower bound. This hints toward the fact that degree $2$ nodes do not bring relevant information for the clustering problem. This intuition behind this property is that, the $d\times d$ matrix $\mathbb{E}[Y_{i}Y_{i}^T|\mu, k^(j)_{j\notin n}]= [1+ \frac{\Delta^2}{K}] I_d$ because the means $\mu_1,\ldots, \mu_K$ form an orthogonal basis in $\mathbb{R}^d$. This is related to the fact that the expectation of the matrix $Y^T Y$ is proportional to $I_d$ and does not carry any information on the means. Our specific construction~\eqref{eq:psi_bar} for degree $2$-nodes allows, later in the proof, to discard all templates $G$ with degree $2$ nodes. Alternatively, this correction is crucial for bypassing the spectral threshold $\sqrt{k^2d/n}$ as discussed below.

%Indeed, the largest eigenvalue of $Y^TY$, is well approximated by the power methods, by entries of $(YY^T)^D$
%In, fact and a hint toward this is that the prevous property implies that the correlation between $x$ and any polynomial $\overline{\Psi}_{G,\pi}$ with $G$ containing a node of degree $2$ is $0$ - namely $\mathbb E[x\widetilde{\Psi}_{G}]=0$ in this case.\\
%The intuition behind this is that since we have a very structured problem where we have several groups with orthogonal means, this means that the information carried by a low-degree node of degree $2$ is useless. This is related to the fact that spectral methods - which are, as discussed in the introduction, well-approximated by a power method $(YY^T)^D$ - fail. Indeed, a spectral method can be represented in our framework as the combination of several invariant polynomials $P_G$ where the most important multigraph $G$ that is used in this construction is a chain of length $D$ between $v_1$ and $v_2$. Such a multigraph $G$ is composed almost exclusively - except at its extremities - of nodes of degree $2$. What we remarked above highlights that information carried by such a multigraph is not relevant for estimating $x$ in our model.
\end{itemize}

If we had $\Delta = 0$ so that $X=0$, then $(\overline{\Psi}_{G,\pi})_{G: |E| \leq D,\pi \in \Pi_V}$ would correspond to a family of multivariate Hermite polynomials on $\mathbb{R}^{dn}$. By standard properties (see Lemma~\ref{lem:hermite}), multivariate Hermite polynomials are orthonormal under the standard normal distribution, as it is the case for $\Delta=0$. However, for $\Delta>0$, it is not the case anymore and we have to adapt the construction in a similar fashion to what was done in~\cite{CGGV25}.

%{\color{red} Ajouter ici pas mal d'explication intuitive sur ce qui se passe: ie $X=0$ la base serait orthogonale,... }

\paragraph{Correction of connected components and transformation to invariant polynomials.} Finally, we correct the polynomials in a similar fashion to~\cite{CGGV25} to better handle the correlation with polynomials associated to disconnected multigraphs. Consider a template $G\in \mathcal{G}_{\leq D}$ with $c$ connected components $(G_1,G_2,\ldots, G_c)$ that contain at least one edge, i.e. we leave aside the possible isolated nodes from these connected component. 
Given $G_l$, we define $G^*_l$ as the multigraph where we possibly add the isolated nodes $v_1$ and $v_2$ if $v_1$ or $v_2$ are not already in $G_l$. 
Then, we define
\begin{equation}\label{eq:definition_psi_tilde} 
\widetilde{\Psi}_{G} := \sum_{\pi\in \Pi_V}\widetilde{\Psi}_{G,\pi} \ ; \quad  \widetilde{\Psi}_{G,\pi}:=\prod_{l=1}^c \left[\overline{\Psi}_{G^*_l,\pi} - \mathbb{E}[\overline{\Psi}_{G^*_l,\pi} ]\right] \ . 
\end{equation}
 Note that $\mathbb{E}[\overline{\Psi}_{G^*_l,\pi} ]$ does not depend on the choice of $\pi$. Here, we use the convention of $\widetilde{\Psi}_G=1$ when $c=0$, that is when the multigraph $G\in \mathcal{G}_{\leq D}$ only contains two node and no edge.  An important property of the polynomials $\widetilde{\Psi}_{G}$ after this correction is that, when $G$ has more than one connected component, $x$ is not correlated to $\widetilde{\Psi}_{G}$, namely $\mathbb E[x\widetilde{\Psi}_{G}]=0$. 
 
 %Intuitively, the idea is that the correction that we enforced "centers" each connected component individually and that adding more connected components is therefore not bringing anything in terms of signal. This goes toward the remark that we made at the beginning, namely that it is possible to write an optimal polynomial using only those indexed by multigraphs indexed by connected components.

\paragraph{Restriction to templates where all nodes have an even degree.}

Define $\mathcal{G}_{\leq D}^{even}\subset \mathcal{G}_{\leq D}$ the subcollection of templates $G$ such that the degree of each  node is even. It turns out that the polynomials $\widetilde{\Psi}_{G}$ such that $G$ contains at least one node with odd degree do not bring any suitable for estimating $x$ in our model as stated by the following lemma. The idea behind this result is that because of the symmetrization of the distribution through the Rademacher random variables $b_i$, the polynomials $\widetilde{\Psi}_{G}$ such that $G$ contains at least one node with odd degree are not correlated with $x$ and are not correlated with any $\widetilde{\Psi}_{G'}$ where $G'$ only contains  nodes of even degree. Hence, we can restrict ourselves to the span of $\widetilde{\Psi}_{G}$ such that $G$ contains only nodes with even degree.

 Given a multi-graph $G$, we write $\mathrm{Aut}(G)$ for its group of automorphisms --see Section~\ref{sec:multigraph} for a formal definition. 
 For any template $G\in \mathcal{G}_{\leq D}$, introduce
\begin{equation}\label{eq:definition:V(G)}
\mathbb{V}(G)= |\mathrm{Aut}(G)| d^{|E|}\frac{(n-2)!}{(n-|V|)!}\enspace , 
\end{equation}
which corresponds to a variance proxy for $\widetilde{\Psi}_{G}$.

\begin{lemma}\label{lem:reduction:degree_pair}
We have 
\[
\mathrm{MMSE}_{\leq 2D}= \inf_{f(Y)\in \mathrm{Vect}(\widetilde{\Psi}_{G}, G \in \mathcal{G}_{\leq D}^{even})}\mathbb{E}[(f(Y)-x)^2]\enspace ,
\]
which is equivalent to
\begin{align*}
    \mathrm{Corr}_{\leq 2D} &= \sup_{(\alpha_G)_{G\in \mathcal G^{\mathrm{even}}_{\leq D}}} \frac{\mathbb E\left[x \cdot \sum_{G \in \mathcal G^{\mathrm{even}}_{\leq D}} \alpha_G \frac{1}{\sqrt{\mathbb V(G)}}\widetilde{\Psi}_{G}\right]}{\sqrt{\mathbb E\left[\left[\sum_{G\in \mathcal G^{\mathrm{even}}_{\leq D}} \alpha_G \frac{1}{\sqrt{\mathbb V(G)}}\widetilde{\Psi}_{G}\right]^2\right]}}\enspace .
\end{align*}
\end{lemma}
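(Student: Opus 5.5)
The proof has three steps: reduce to invariant polynomials, check that the $\widetilde{\Psi}_G$ span that space, and then use the Rademacher signs to show that templates with an odd-degree node can be discarded.

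\emph{Step 1: reduction to invariant polynomials.} By Lemma~\ref{lem:reduction:rotation:permutation}, the infimum defining $\mathrm{MMSE}_{\leq 2D}$ may be restricted to $\mathcal{P}^{\mathrm{inv}}_{\leq 2D}$. By Lemma~\ref{lem:invariant:graph}, this space is spanned by $(P_G)_{G\in\mathcal{G}_{\leq D}}$.

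\emph{Step 2: the $\widetilde{\Psi}_G$ span the same space.} I will show $\mathrm{Vect}(P_G, G\in\mathcal{G}_{\leq D}) = \mathrm{Vect}(\widetilde{\Psi}_G, G\in \mathcal{G}_{\leq D})$ by a triangular change of basis. Expanding the modified Hermite polynomials in~\eqref{eq:Hermite_graph}, each $\overline{\psi}_{\beta,G}(x)$ equals $x^\beta$ plus lower-order terms in $x$. Hence $\overline{\Psi}_{G,\pi}$ equals $P_{G,\pi}$ plus monomials in $Y$ of lower total degree. These lower-order terms arise from contracting pairs of identical coordinates $Y_{ij}^2$; after summing over $j$ and $\pi$, they are rotation and permutation invariant, hence combinations of $P_{G'}$ with $|E'|<|E|$. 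The centering and product in~\eqref{eq:definition_psi_tilde} have the same structure: the product over components of the terms $\overline{\Psi}_{G^*_l,\pi}$, summed over injective $\pi$, equals $P_G$ modulo templates with fewer edges. Non-injectivity issues only create merged templates with fewer edges or fewer nodes, and subtracting constants only lowers the degree. Ordering templates by number of edges, the transition matrix is therefore unitriangular, so the two spans coincide.

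\emph{Step 3: removing templates with an odd-degree node.} Let $G$ have a node $v$ of odd degree and $G'\in\mathcal{G}^{even}_{\leq D}$. Fix $\pi,\pi'$ and let $i=\pi(v)$.

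1. Consider the sign flip $Y_i\mapsto -Y_i$, which leaves $\mathbb{P}$ invariant because $b_i$ is Rademacher and independent, and $Z_i$ is symmetric.
2. Under this flip, $\widetilde{\Psi}_{G,\pi}$ changes sign. Hermite polynomials of odd order are odd, so the total parity of $\overline{\psi}\bigl[\prod_e Y_{\pi(l(e)),j_e}Y_{\pi(r(e)),j_e}\bigr]$ in the row $Y_i$ equals $\sum_j \beta_{v,j}=\deg_G(v)$. The corrected degree-2 node term is even but is only used when $\deg=2$.
3. Only the component containing $v$ is odd. Its centering constant vanishes, since its expectation is zero by symmetry.
4. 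If $i$ is in the image of $\pi'$, the factor $\widetilde{\Psi}_{G',\pi'}$ is even in $Y_i$, because every node of $G'$ has even degree. If $i$ is not in the image of $\pi'$, the factor does not depend on $Y_i$.
5. The functional $x$ depends only on $k^*$, which the flip does not touch.

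Consequently $\mathbb{E}[x\widetilde{\Psi}_{G}]=0$ and $\mathbb{E}[\widetilde{\Psi}_{G}\widetilde{\Psi}_{G'}]=0$. Writing $f=f_{even}+f_{odd}$, we get $\mathbb{E}[(f-x)^2]=\mathbb{E}[(f_{even}-x)^2]+\mathbb{E}[f_{odd}^2]$, so the infimum is attained in the even span. The correlation formula then follows from the identity $\mathrm{MMSE}=\mathbb{E}[x^2]-\mathrm{Corr}^2$ of~\cite{SchrammWein22} applied on this subspace. Rescaling by $\sqrt{\mathbb{V}(G)}$ is a harmless reparametrization of the coefficients.

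The main obstacle is the bookkeeping in Step 2. There I must verify that Hermite corrections, the special degree-2 correction, and the product of centered components produce only invariant lower-order terms expressible through templates with at most $D$ edges. The parity argument of Step 3 must also be checked node by node, taking care that a vertex of $G$ may coincide with $v_1$ or $v_2$, in which case $x$ is still unaffected.
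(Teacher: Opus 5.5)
Your proposal is correct and follows the paper's proof essentially step for step. You restrict to invariant polynomials via Lemmas~\ref{lem:reduction:rotation:permutation} and~\ref{lem:invariant:graph}, pass from $(P_G)$ to $(\overline{\Psi}_G)$ and then to $(\widetilde{\Psi}_G)$ by a triangular change of basis, and use the sign-flip argument of Lemmas~\ref{lem:first_moment_degreeuneven} and~\ref{lem:covariance_degree_odd} to eliminate odd-degree templates. The one minor difference is that you justify invariance of the lower-order Hermite terms through the explicit pairwise-contraction expansion. The paper instead obtains rotation invariance of $\overline{\Psi}_{G,\pi}$ from injectivity of the Gaussian smoothing map $P\mapsto \mathbb{E}[P(\cdot+Z)]$, with inflated variance on the degree-$2$ rows.
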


The following theorem states that, when $d=K$, the family of polynomials $\widetilde{\Psi}_{G}$ with $G\in \widetilde{\Psi}^{\mathrm{even}}_{G}$ is  almost orthogonal.
    \begin{theorem}\label{thm:Riez_constant}
        Assume that $d=K$. There exist positive numerical constant $c_0>0$ and $c'_0>0$ such that the following holds. 
        Assume that $D\geq c'_0$ and that 
        \begin{equation}    \label{eq:condition:orthogonal}
        \left(\frac{n^{1/4}\Delta^2}{K}\right)\vee \left(\frac{\Delta^2}{\sqrt{K}}\right)\vee \left(\frac{1}{K^{1/4}}\right)\leq  \left(\frac{1}{4D}\right)^{c}\ , 
        \end{equation}
        
        for some $c\geq c_0$. Define $u =  8 \left(\frac{8}{D}\right)^{c/16}$. Then, for any vector $\alpha=(\alpha_G)$, with $G\in \mathcal{G}_{\leq D}^{\mathrm{even}}$  we have 
        \begin{align}\label{eq:norm}
            (1-u) \|\alpha\|_2^2 \leq \mathbb{E}\left[\left(\sum_{G\in \mathcal{G}_{\leq D}^{\mathrm{even}}} \alpha_G \frac{\widetilde{\Psi}_G}{\sqrt{\mathbb V(G) }}\right)^2\right] \leq (1+u) \|\alpha\|_2^2\enspace .
        \end{align} 
       % where $\mathbb{E}\left[\big(\sum_{G\in \mathcal{G}_{\leq D}^{\mathrm{even}}} \alpha_G \frac{\widetilde{\Psi}_G}{\sqrt{\mathbb V(G) }}\big)_2^2\right] = \mathbb E \left[\left(\sum_{G\in \mathcal{G}_{\leq D}^{\mathrm{even}}} \alpha_G \frac{\widetilde{\Psi}_G}{\sqrt{\mathbb V(G) }} \right)^2\right]$. \alex{virer la notation norme?}\nico{oui pour cela }
        \end{theorem}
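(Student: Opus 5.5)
We prove the near-orthonormality by computing the Gram matrix $\Gamma_{G,G'} := \mathbb{E}[\widetilde{\Psi}_G\widetilde{\Psi}_{G'}]/\sqrt{\mathbb V(G)\mathbb V(G')}$ indexed by $\mathcal{G}^{\mathrm{even}}_{\leq D}$. The goal is to show $\|\Gamma - I\|_{op}\leq u$. The operator norm will be controlled by a Schur-type bound: for weights $w_G>0$ we use
\[
\|\Gamma-I\|_{op}\leq \max_G \sum_{G'} |\Gamma_{G,G'}-\mathbf 1_{G=G'}|\, \sqrt{w_{G'}/w_G}.
\]
We will take $w_G$ equal to $1$, or to a mild power of $D$ depending on $|E|$ if needed. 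It then suffices to bound each entry by a product of small factors, one for each ``excess'' structure.

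\textbf{Step 1: expanding the products.} We expand $\widetilde{\Psi}_{G,\pi}\widetilde{\Psi}_{G',\pi'}$ over pairs of labelings $(\pi,\pi')$. The union of the two labeled multigraphs gives a merged multigraph $G\cup_{\pi,\pi'}G'$ on $[n]$, together with the coordinate choices $(j_e)$. We first condition on $X$. Since $Z$ is Gaussian, Lemma~\ref{lem:hermite} makes the product of Hermite factors at entries $(i,j)$ computable: shifting $\psi_k(z+x)=\sum_l\binom{k}{l}\psi_l(z)x^{k-l}$ and using orthogonality, the conditional expectation becomes a sum over ``matchings''. In each matching, part of the Hermite degree at $(i,j)$ is paired between the two polynomials (giving $\sqrt{k!\,l!}$-type factors), and the remainder is paid with powers of $X_{ij}=b_i\mu_{k^*(i),j}$.

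The diagonal term arises when $\pi=\pi'\circ\sigma$ with $\sigma\in\mathrm{Aut}(G)$ and all degrees are paired. This contribution equals exactly $|\mathrm{Aut}(G)|\,d^{|E|}(n-2)!/(n-|V|)!=\mathbb V(G)$, up to a $1+O(D^2/n)$ correction and the modified degree-2 factors, which have variance $2+O(\Delta^2/K)$ as in the Hermite case.

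\textbf{Step 2: taking the expectation over the signal.} All other terms involve signal monomials $\prod X_{ij}^{m_{ij}}$, and we take their expectation over $b$, $k^*$ and the orthonormal frame. Three facts should kill or bound them.
\begin{itemize}
\item[(a)] The Rademacher $b_i$ force every node to carry an even total signal degree.
\item[(b)] The centering of each connected component in \eqref{eq:definition_psi_tilde} removes terms in which a component of $G$ or $G'$ does not touch the rest through signal or noise pairings. This is the cluster-expansion cancellation of \cite{CGGV25}.
\item[(c)] The degree-2 correction \eqref{eq:psi_bar} makes the conditional expectation vanish whenever a degree-2 node (other than $v_1,v_2$) is left unpaired. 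This is the key property quoted from Lemma~\ref{lem:first_moment_degree2}, and it uses $d=K$, so that $\sum_k\mu_k\mu_k^T=\Delta^2 I$ up to rotation.
\end{itemize}
For the surviving terms we bound the moments of the rotation-invariant orthogonal frame. A monomial in the $\mu_{k,j}$ costs $(\Delta^2/K)^{\#\text{signal edges}}$, and each distinct group label among $q$ merged nodes sharing a group gives a factor $K^{-(q-1)}$. We then count free label choices: each unpaired node of the merged graph contributes a factor $n$ relative to $\sqrt{\mathbb V(G)\mathbb V(G')}$, and each coordinate sum contributes a factor $d=K$.

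\textbf{Step 3: the combinatorial bound (main obstacle).} We must show that every non-diagonal configuration has relative weight at most a product of factors $\bigl(n^{1/4}\Delta^2/K\bigr)$, $\bigl(\Delta^2/\sqrt K\bigr)$, $K^{-1/4}$ and $n^{-1/2}$. At least one such factor must appear for each unit of ``excess''. The excess is measured by the number of edges or half-edges not matched between $G$ and $G'$, plus the number of nodes shared without a full identification. Because every surviving node has degree at least $4$ (even, not $2$, after (a) and (c)), a node carrying signal costs at least $\Delta^4$ while gaining at most $\sqrt n$ in label freedom and a $K^{-1}$ from group coincidence. This is exactly the tradeoff $n^{1/4}\Delta^2/K\ll 1$. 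The condition $\Delta^2/\sqrt K\ll1$ handles signal concentrated on $v_1,v_2$ or on pairings within a single node, and $K^{-1/4}$ handles coordinate collisions. I would first set up this accounting as an induction or charging scheme on the merged graph: distribute each small factor to local pieces of the excess, so that each unit of excess receives a factor of size at most $(4D)^{-c/4}$.

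Finally, the number of merged configurations with excess $s$ is at most $(4D)^{O(s)}$, since they come from choices of matchings among at most $2D$ edges. Summing the geometric series in $s\geq1$ gives a row sum of at most $8(8/D)^{c/16}=u$ once $c\geq c_0$ and $D\geq c_0'$. Applying the Schur bound then yields \eqref{eq:norm}.
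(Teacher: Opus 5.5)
Your overall architecture matches the paper's: Gram matrix $\Gamma$, diagonal dominance, Hermite expansion into node matchings $\mathbf M$ and half-edge pairings $\mathbf P$, parity, the degree-$2$ correction, centering, and a geometric series. However, Step~3, which is the whole content of the theorem, is only announced, and the counting in your last paragraph is false as stated.

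\textbf{The exponent bookkeeping.} Sum over coordinates exactly, rather than bounding moments of the frame coordinatewise. Then each $(\mathbf M,\mathbf P)$ contributes $\frac{(n-2)!}{(n-|V_\Delta|)!}\Delta^{2|E_\Delta|}d^{|\mathrm{Cyc}|}K^{-(|V_\Delta|-|\mathrm{CC}(G_\Delta)|)}$. Here $G_\Delta$ is the merged graph with paired cycles deleted and paired open paths contracted into edges. Divided by $\sqrt{\mathbb V(G)\mathbb V(G')}$, this becomes $[|\mathrm{Aut}(G)||\mathrm{Aut}(G')|]^{-1/2}(n^{1/2}\Delta^4/K^2)^{b_0}(\Delta^2/\sqrt K)^{b_1}K^{-b_2/2}$, with $b_0$ the number of unmatched nodes.

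The danger is the gains $K^{|\mathrm{CC}(G_\Delta)|}$ and $d^{|\mathrm{Cyc}|}$, which your per-node heuristic does not see. It even produces $\sqrt n\,\Delta^4/K$ instead of $\sqrt n\,\Delta^4/K^2$, because it forgets the $d^{|E|}$ in $\mathbb V$. You need genuine inequalities:
\begin{itemize}
\item $2|\mathrm{CC}(G_\Delta)|\le 2|\mathrm{Op}_{\mathrm{even}}|+|\mathrm{Op}_{\mathrm{odd}}|+2|\mathbf M|$. This is where centering really enters: it restricts the sum to matchings in which every component of each template contains a matched node. Hence a component of $G_\Delta$ avoiding matched nodes is produced by an even open path if it lies in one template, or by at least two odd ones (by parity) if it meets both.
\item $|\mathbf P|\ge 2|\mathrm{Cyc}|+2|\mathrm{Op}_{\mathrm{even}}|+|\mathrm{Op}_{\mathrm{odd}}|$.
\item $C\ge 0$ as in Lemma~\ref{lem:pairing}: unmatched nodes have degree at least $4$, and matched but not fully matched nodes keep at least two unpaired half-edges.
\item Finally $\phi(\mathbf M,\mathbf P)\ge m/8$, with $m$ the number of half-edges that are not perfectly paired.
\end{itemize}

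Your item (c) is also too strong. The correction kills a term only when the degree-$2$ node is not matched at all. If it is matched but none of its half-edges is paired, the constant $-\Delta^2/K$ survives. One must then show these terms are at most $2^{|V_{2,np}|}$ times the uncorrected ones, which uses $d\le K$ and $|T|+|\mathrm{CC}(G_\Delta)|-|\mathrm{CC}(G_{\Delta,T})|-|\mathrm{Cyc}(T)|\ge 0$. So ``every surviving node has degree at least $4$'' is true only for unmatched nodes. The factors $2^{O(|\mathbf M|-|\mathbf M_{\mathrm{full}}|)}$ produced here, and by expanding the centered products, must be absorbed; the paper does this via $\phi\ge 2(|\mathbf M|-|\mathbf M_{\mathrm{full}}|)$.

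\textbf{The counting.} The number of $(\mathbf M,\mathbf P)$ with excess $s$ is not $(4D)^{O(s)}$. Configurations that differ only in how the perfectly paired part is matched are distinct terms. Already for $G=G'$ and $s=0$ there are $|\mathrm{Aut}(G)|$ of them, which is superexponential in $D$: for example, $D$ (even) parallel edges between two nodes give $|\mathrm{Aut}(G)|\ge D!$. Your normalization in Step~2 also drops the factor $[|\mathrm{Aut}(G)||\mathrm{Aut}(G')|]^{1/2}$ contained in $\sqrt{\mathbb V(G)\mathbb V(G')}$.

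The missing idea is the paper's shadow argument. Keep that factor, and group the $(\mathbf M,\mathbf P)$ by their shadow, i.e.\ the non-perfectly-paired data, on which $\phi$ alone depends. Then prove that each shadow has at most $\min(|\mathrm{Aut}(G)|,|\mathrm{Aut}(G')|)$ preimages: two preimages differ by a bijection of perfectly paired half-edges that composes into an automorphism (Lemma~\ref{lem:counting_isomorphism}).

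Only after this cancellation does the count of at most $(4D)^{3m}$ shadows with $m$ non-perfectly-paired half-edges give $|\Gamma_{G,G'}|\lesssim (4/D)^{c\,m^*(G,G')/16}$, combined with $\phi\ge m/8$ and $c\ge 48$. The row sum then additionally needs that at most $(2D)^m$ templates $G'$ satisfy $m^*(G,G')=m$.
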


For $D\asymp \log(n)$, Condition~\eqref{eq:condition:orthogonal} for almost-orthogonality is equivalent to  $\Delta^2 \lesssim_{\log} \frac{K}{n^{1/4}}\wedge  \sqrt{K}$, which precisely corresponds to our regime of interest for the low-degree lower bound in the next theorem. 

        \medskip 

\paragraph{Proof idea.} To establish the almost orthonormality of the basis $\widetilde{\Psi}_G/\sqrt{\mathbb V(G) }$, we compute the Gram matrix associated to this basis and we show, as in~\cite{CGGV25}, that the row-sums of the absolute values of this matrix are negligible in comparison to the diagonal terms. Hence, we need to get a tight bound of the covariance terms $\mathbb{E}[\tilde{\Psi}_{G^{(1)}}\tilde{\Psi}_{G^{(2)}}]$. By developing these terms over the labeling $\pi^{(1)}\in \Pi^{V^{(1)}}$ and $\pi^{(2)}\in \Pi^{V^{(2)}}$, this mainly amounts to control quantities of the form $\mathbb{E}[\overline{\Psi}_{G^{(1)},\pi^{(1)}}\overline{\Psi}_{G^{(2)},\pi^{(2)}}]$ as a function of the two labelled multigraphs $(G^{(1)},\pi^{(1)})$ and  $(G^{(1)},\pi^{(2)})$. Because the noise structure of the matrix $YY^{T}\in \mathbb{R}^{n\times n}$ is quite different from the noise structure for a matrix $Y\in \mathbb{R}^{n\times n}$ sampled from a stochastic Block model and because we have to care about the specific corrections for degree 2 nodes, the proof arguments are quite distinct from that in~\cite{CGGV25} although the general organization is similar. The formalism for handling these moments is postponed to Section~\ref{sec:multigraph} and the proof of this theorem is given in Section~\ref{sec:proof:Riez}.

        \subsection{Low-degree lower bound}\label{sec:lower_bound}

Now, we are in position to establish Theorem~\ref{thm:LB_polynomial}.

\paragraph{Proof idea for Theorem~\ref{thm:LB_polynomial}.} By combining Lemma~\ref{lem:reduction:degree_pair} and Theorem~\ref{thm:Riez_constant}, we readily get 
 \begin{align*}
    \mathrm{Corr}^2_{\leq 2D} &= \sup_{(\alpha_G)_{G\in \mathcal G^{\mathrm{even}}_{\leq D}}} \frac{1}{1-u}\frac{\left[\mathbb  E\left(x \cdot \sum_{G \in \mathcal G^{\mathrm{even}}_{\leq D}} \alpha_G \frac{1}{\sqrt{\mathbb V(G)}}\widetilde{\Psi}_{G}\right) \right]^2}{\sum_{G \in \mathcal G^{\mathrm{even}}_{\leq D}} \alpha^2_G }\\
    &\leq \frac{1}{1-u}\sum_{G \in \mathcal G^{\mathrm{even}}_{\leq D}}\left[\mathbb  E\left( \frac{x}{\sqrt{\mathbb V(G)}}\widetilde{\Psi}_{G}\right) \right]^2\ , 
\end{align*}
by Cauchy-Schwarz inequality. As a consequence, we need to compute the moments  $\mathbb E[x \widetilde{\Psi}_{G}]$ for all $G\in \mathcal  G^{\mathrm{even}}_{\leq D}$. It follows from   our definitions that  (i) $\mathbb E[x \widetilde{\Psi}_{G}] = 0$ if some node (aside from $v_1$ and $v_2$) in $G$ has degree $2$ and (ii) $\mathbb E[x \widetilde{\Psi}_{G}] = 0$ if $G$ has more than one connected component. Thus,  we only need to focus on connected multigraphs $G$ such that the degree of all nodes except $v_1$ and $v_2$ is larger or equal to $4$ --see the lemmas in Section~\ref{sec:mom}.          The rest of the proof relies on direct bounds on these moments. See Section~\ref{sec:proof_LB}.  

    \begin{remark}[Spectral Methods]
    This lower bound supports our claim that it is not possible to recover the group  at the spectral threshold  $\Delta^2 \asymp \sqrt{\frac{K^2 d}{n}}$ in moderate-dimensional regime. Let us informally explain how we manage to bypass this threshold and how spectral procedure are connected to our multigraph formalism. 
    It is known that the largest eigenvalue of the matrix $YY^T$ is captured by power iterations of $YY^T$.
    %Alternatively, as alluded  in the introduction, spectral methods are strongly related to the evaluation of the entries of the power iterated matrix $(YY^T)^D$. 
    In particular, $(YY^T)^D_{1,2}= \sum_{i_1,\ldots, i_{D-1}}\prod_{s=1}^{D}\langle Y_{i_{s-1}},Y_{i_s}\rangle$ with $i_0=1$ and $i_D=2$ is the sum over all paths of length $D$ from $1$ to $2$. This quantity is arguably well approximated by the sum over \emph{non-overlapping paths}  $\sum_{i_1,\ldots, i_{D-1}}\prod_{s=1}^{D}\langle Y_{i_{s-1}},Y_{i_s}\rangle$ where the quantities $(i_0,i_1,\ldots, i_{D-1},i_D)$ are distinct. The latter exactly corresponds to $P_{G}(Y)$ where $G$ is the simple path of length $D$. This graph $G$ is only made of  degree $2$ nodes, except for the two extremities. In fact, the corresponding modified Hermite polynomial $\overline{\Psi}_{G}$ is uncorrelated to $x$ when $d=K$--see Lemma~\ref{lem:first_moment_degreeuneven}-- and therefore does not provide any sensible information on $x$. In summary, our choice for the prior distribution of the means was done to the make the spectrum of $YY^T$ informative about the clustering. As, with our formalism, this spectrum is related to the simple path multigraph, we crafted our family of polynomials in such a way that degree $2$ nodes correspond to polynomials that are uncorrelated to $x$.
    \end{remark}

\section{Proof overview for the upper bound} \label{sec:upper}

In this section, we introduce our estimator $\hat{x}$ and provide some intuition behind its definition and its analysis.

\subsection{Construction of the template}

 We now provide, for completeness,  a matching upper bound in the emblematic case where $d=K$. As discussed in the introduction, hierarchical clustering (HC) algorithms perfectly recovers the underlying partition when $\Delta^2\gtrsim_{\log} \sqrt{K}$. Hence, we only have to craft an estimator $\hat{x}$ that achieves a small error when $\Delta^2 \gtrsim_{\log} \frac{K}{n^{1/4}}+ d$.

 We have proved that the basis $(\overline{\Psi}_{G})$ is almost orthogonal, at least when $\Delta$ is not too large so that it satisfies~\eqref{eq:condition:orthogonal}. In light of this, we want to build a template  $\bar{G}=(\bar{V},\bar{E})$ such that 
 $\overline{\Psi}_{\bar{G}}$ tends to take different value when $x=0$ and when $x=1$. Intuitively, there is no benefit of using disconnected templates $\bar{G}$ and we focus henceforth on connected ones.

 It follows from our choice of the prior distribution and of the construction of the basis that  $\mathbb{E}[\overline{\Psi}_{\bar{G}}|x]=0$ if $\bar{G}$ contains a least a node of odd degree or at least a node of degree $2$ aside from $\bar{v}_1$ and $\bar{v}_2$ --see Lemmas~\ref{lem:first_moment_degreeuneven} and \ref{lem:first_moment_degree2} in Section~\ref{sec:mom}. As a consequence, all the nodes $\bar{v}_i$ with $i\geq 3$ should have a degree at least four, and we restrict our attention to such $\bar{G}$ in the following.

 In Remark~\ref{remark:expection:G}, we establish that $\mathbb{E}[\overline{\Psi}_{\bar{G}}|x=0]$ whereas $\mathbb{E}[ \overline{\Psi}_{\bar{G}}|x=1]= \frac{(n-2)!}{(n-|\bar V|)!}\frac{\Delta^{|\bar{E}|}}{K^{|\bar{V}|-1}}$. In order to show that $\overline{\Psi}_{\bar{G}}$ can reliably estimate $x$, we need that its variance  given $x=0$ and given $x=1$ should be small compared to $\mathbb{E}^2[ \overline{\Psi}_{\bar{G}}|x=1]$. In the proofs, we compute explicitly these conditional variances. If we denote  $\mathrm{Var}_1(\overline{\Psi}_{\bar{G}})$ the variance of conditional distribution of $\overline{\Psi}_{\bar{G}}$ given $x=1$, we show in particular in~\eqref{eq:lower_variance} that 
\[
\frac{\mathrm{Var}_1(\overline{\Psi}_{\bar{G}})}{\mathbb E^2[\overline{\Psi}_{\bar{G}}|\mu,x=1]} \geq 0.5 \frac{K}{\Delta^{2(d_{\bar{G}}(\bar v_1)+d_{\bar{G}}(\bar v_2))}}\ , 
\]
as long  as $|\bar{V}|$ is small compared to $\sqrt{n}$. Our goal is to reliably estimate $x$ as long as $\Delta^2$ is large compared $\sqrt{K}\wedge K/n^{-1/4}$. As a consequence, the right-hand-side of the above equation should be small even for $\Delta^2=K^{o(1)}$.  Hence, we need to choose $\bar{G}$ in such a way that $d_{\bar{G}}(\bar v_1)+d_{\bar{G}}(\bar v_2)$ is large. Furthermore, we also establish in \eqref{eq:lower_variance_2} that 
\begin{align} 
\frac{\mathrm{Var}_1(\overline{\Psi}_{\bar{G}})}{\mathbb E^2[\overline{\Psi}_{\bar{G}}|\mu,x=1]} \geq  \left(\frac{K^4}{n\Delta^8}\left(\frac{K}{\Delta^4}\right)^{\frac{r}{2(|\bar{V}|-2)}}\right)^{|\bar{V}|-2}  \enspace ,  
\end{align}
where $r := d_{\bar{G}}(\bar v_1)+d_{\bar{G}}(\bar v_2)+ \sum_{i=3}^{|\bar{V}|}(d_{\bar{G}}(\bar v_i)-4)$. Since we also want the-right-hand-side of the above equation to be small  
 for $\Delta^2$ small compared to $\sqrt{K}$ and $\Delta^2$ of the order of $K/n^{1/4}$,  the quantity $r$ has to be small compared to $|\bar{V}|-2$. In other words, almost all, if not all, the nodes $\bar{v}_i$ with $i\geq 3$ should be of degree $4$ and the degree $d_{\bar{G}}(\bar v_1)+d_{\bar{G}}(\bar v_2)$ should be small compared to $|\bar{V}|-2$.

 \medskip

These observations together with other constrains lead to introducing the following multigraph $G^*=(V,E)$ which corresponds to a double chain with fastener --see Figure~\ref{fig:UB}. More formally,  fix a positive integer $L$ and an odd positive integer $M$. We fix $V=\{v_1,v_2,\ldots, v_{L M+2}\}$. The multi-set $E$ of edges is defined by
    \begin{itemize}
        \item Both edges $(v_1,v_3)$ and $(v_2,v_{LM+2})$ have multiplicity $2$. 
        \item For any $0 \leq m \leq M-1$ and $1 \leq l \leq L-1$, the edge $(v_{l + mL+2}, v_{l + mL+3})$ has multiplicity $2$.
        \item For any $1\leq m \leq M-1$, the edges $(v_{mL+2},v_{mL+3})$, $(v_1,v_{mL+2})$, $(v_2, v_{mL+3})$ have multiplicity $1$. 
    \end{itemize}
  See Figure~\ref{fig:UB} for an illustration. The multigraph $G^*$ is mostly a double chain with, every $L$ node, a {\bf fastener}, namely two consecutive nodes are only  connected by one edge and that are connected to resp.~$v_1,v_2$. There are $M-1$ such fasteners. The degree of  $v_1$ and $v_2$ is equal to $M+1$, whereas the degrees of all the other nodes is equal to 4. Note that the degree of the polynomial  $\overline{\Psi}_{G^*}$ is equal to $2(LM+1)$.

      \begin{figure}
            \begin{center}
                \includegraphics[scale=1.6]{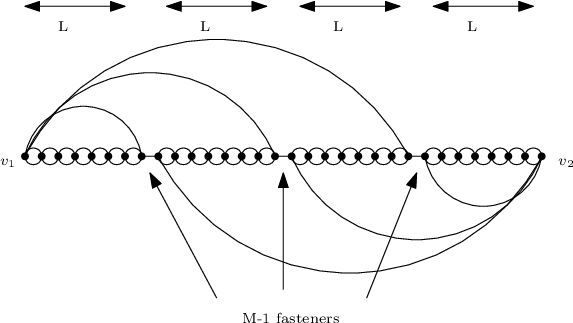}
            \end{center}
            \caption{Template $G^*$ that we use for defining the upper bound. It is mostly a double chain with, every $L$ node, a {\bf fastener}, namely two nodes connected through one edge and that are connected to resp.~$v_1,v_2$ are added. There are $M-1$ such fasteners.}\label{fig:UB}
        \end{figure}

    \medskip 

    In the next subsection, we will show that the statistic $\overline{\Psi}_{G^*}$ tends to be large when $x=1$, whereas $\overline{\Psi}_{G}^*$ is smaller when $x=0$.

    \begin{remark}[Connection with tensors] In Section~\ref{sec:lower_bound}, we have described the connections between the "simple path" graph and power iterations of the matrix $Y^T Y$.
    Similarly, our polynomial $\overline{\Psi}_{G^*}$ is approximated by a combination of power iterations of the matrix $[Y Y^T]^{(*)}$ whose general term   $[Y Y^T]^{(*)}_{ij}$ is defined by $(Y Y^T)_{ij}^2$ and multiplications by  $[Y Y^T]^{(1)}$ and  $[Y Y^T]^{(2)}$ defined by  $[Y Y^T]^{(1)}_{ij}= [Y Y^T]_{ij}[Y Y^T]_{i1}$ and $[Y Y^T]^{(2)}_{ij}= [Y Y^T]_{ij}[Y Y^T]_{2j}$. On that respect, $\overline{\Psi}_{G^*}$ has some connection to an  order-$4$ tensor method on $(Y Y^T)\otimes (YY^T)$, although we did not find a precise counterpart in the literature. 
    \end{remark}

%%More precisely, our estimator will be equal to $1$ if this polynomial is larger enough, and 0 otherwise. Note that this this template does not contain any degree $2$ node and does contain only one connected component, then $\widetilde{\Psi}_G$ is equal to $\overline{\Psi}_G$ up to an additive constant.\\
%As noted earlier, this template is, up to the fasteners, a double chain, which responds to the aim of creating a polynomial where all nodes have degree at least $4$, that posesses only one connected component, and that has as few edges as possible for a given number of nodes - matching the idea behind the "best case polynomial" for the lower bound. The presence of the fasteners is however very important as a simple double chain would not work, the fasteners being necessary to add some additional signal coming from the extremities $v_1$ and $v_2$ that correspond to individuals $1$ and $2$ and are instrumental for computing whether individuals $1$ and $2$ are in the same group.\\

\subsection{Moment bounds and MoM estimator}

For $a=0,1$, we denote
\[
\mathrm{Var}_a(\overline{\Psi}_{G^*}):= \mathbb{E}\left[\left[\overline{\Psi}_{G^*} - \mathbb E[\overline{\Psi}_{G^*}|\mu,x=a]\right]^2 \Big|\mu, k^*(1),k^*(2),b_1,b_2,x=a\right]\ , 
\]
conditional variance of $\overline{\Psi}_{G^*}$ given $\mu$, $k^*(1)$, $k^*(2)$, $b_1$, and $b_2$. It will be established in the proof of the next proposition that that this variance only depends on $a$. 

\begin{theorem}[Moments of $\overline{\Psi}_{G^*}$]\label{thm:moment:rappeur}
    Assume that $M\geq 24$. 
We have 
\[
            \mathbb{E}\left[\overline{\Psi}_{G^*}|\mu, k^*(1),k^*(2), b_1,b_2 \right]= x\frac{(n-2)!}{(n-|V|)!} \cdot \frac{\Delta^{2|E|}}{K^{|V|-2}} \enspace , %\ ; \quad \quad 
           %\mathbb{E}\left[\overline{\Psi}_{G^*}|\mu, x=0 \right]= 0\enspace . 
\] 
almost surely. 
If we further assume that $n\geq 64[4M^2L^2\vee 10^4 (M+2)^4] K$ and 
\begin{eqnarray}\label{eq:condition:signa}
\Delta^2 \geq \left(\frac{40 (M+2) K^{1+\frac{1}{2(M+1)}+\tfrac{1}{L}}}{n^{1/4}}\right)^{(M+1)/M} \bigvee \left(8(M+1)^6 K^{\tfrac{6}{M+1}}\right)\bigvee \left(128(M+1)^{12}\right)\enspace ,
\end{eqnarray}
  we have
\begin{align*}
\frac{\mathrm{Var}_0(\overline{\Psi}_{G^*})\vee \mathrm{Var}_1(\overline{\Psi}_{G^*})}{\mathbb E^2[\overline{\Psi}_{G^*}|\mu,x=1] }&\leq 10^4(M+2)^4   \left[\frac{K^{4+\tfrac{2}{M+1}+\tfrac{4}{L} }}{n\Delta^{8(1-\tfrac{1}{M+1})}}\bigvee \frac{K}{n}\right]+ \frac{K(M+1)^{2(M+1)}}{\Delta^{4(M+1)}} + 2  \frac{(M+1)^{5}}{\Delta^{2}}  +  \frac{4M^2L   ^2K}{n}\\&\leq \frac{1}{16}\enspace , 
\end{align*}
almost surely.
\end{theorem}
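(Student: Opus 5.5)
The proof has two parts: an exact first-moment computation, and a second-moment bound organised by how two labelled copies of $G^*$ overlap.

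\textbf{First moment.} Fix a labelling $\pi\in\Pi_V$. The template $G^*$ has no degree-$2$ node other than $v_1,v_2$, so $\overline{\Psi}_{G^*,\pi}$ is a sum of products of genuine Hermite polynomials. By Lemma~\ref{lem:hermite}, $\mathbb{E}[\overline{\Psi}_{G^*,\pi}\mid X]=P_{G^*,\pi}(X)=\prod_{(i,j)\in E}\langle X_{\pi(i)},X_{\pi(j)}\rangle$. Since the $\mu_k$ are orthogonal, a product is nonzero only if all nodes of $G^*$ carry the same group label $k$. In that case every factor equals $b_{\pi(i)}b_{\pi(j)}\Delta^2$. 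All degrees are even ($M$ odd gives $\deg v_1=\deg v_2=M+1$ even), so the signs $b$ cancel.

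This forces $k^*(1)=k^*(2)$, which gives the factor $x$. Each of the $|V|-2$ free nodes must then fall in that group, with probability $1/K$ each, independently. Summing over the $(n-2)!/(n-|V|)!$ labellings gives the stated identity.

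\textbf{Second moment.} Write $\overline{\Psi}_{G^*}^2=\sum_{\pi,\pi'}\overline{\Psi}_{G^*,\pi}\overline{\Psi}_{G^*,\pi'}$ and group the pairs by their overlap pattern, i.e.\ the set $W$ of nodes of the second copy identified with nodes of the first, together with the identification map. Labellings that share only $\{1,2\}$ are conditionally independent given $(\mu,k^*(1),k^*(2),b_1,b_2)$, so they cancel exactly against $\mathbb{E}^2$. For each remaining pattern, we expand the product of Hermite polynomials on the merged labelled multigraph. The conditional expectation is a sum over ways to match Hermite indices on shared coordinates; we use orthogonality for unmatched noise and $\mathbb{E}[\psi_k(z+\mu)]=\mu^k$ for signal. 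Each term is bounded by $d^{(\#\text{free coordinate cycles})}\,\Delta^{2(\#\text{signal edges})}\,K^{-(\#\text{constrained group labels})}$, times the combinatorial count $n^{2|V|-2-|W|-2}$.

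Dividing by $\mathbb{E}^2=n^{2(|V|-2)}\Delta^{4|E|}K^{-2(|V|-2)}$, each overlap pattern contributes a factor of roughly $(K^4/(n\Delta^8))^{\#\text{shared double-chain nodes}}$, together with extra factors from the pieces of the first copy that are cut loose. The four terms in the bound come from four kinds of overlap:
\begin{itemize}
\item partial overlaps inside a segment of the double chain, giving the $K^{4+\dots}/(n\Delta^{8(\cdots)})$ and $K/n$ term;
\item overlaps leaving only the edges incident to $v_1,v_2$ as pure-noise contributions, giving $K(M+1)^{2(M+1)}/\Delta^{4(M+1)}$;
\item a single fastener edge replaced by noise, giving $(M+1)^5/\Delta^2$;
\item collisions of labels, giving $M^2L^2K/n$.
\end{itemize}
The case $x=0$ is handled the same way, except that there is no mean to subtract and the group labels of $v_1$ and $v_2$ differ. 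Patterns still connected to both $v_1$ and $v_2$ through signal then vanish, leaving the same or smaller terms.

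\textbf{Main obstacle.} The hard step is the uniform bound over all overlap patterns. I would first prove a combinatorial lemma: for any pattern $W$, the exponent of $K^{4}/(n\Delta^8)$ is at least $|W|$ minus a boundary term. The boundary term is controlled by the number of connected pieces into which $W$ cuts the chain. Each piece has at least two boundary edges, since interior nodes have degree $4$ and each cut costs one factor $\sqrt{K}/\Delta^2$ absorbed via the $K^{1/L}$ slack. Fasteners occur every $L$ nodes, so cuts cost at most $K^{1/L}$ per segment and $K^{1/(2(M+1))}$ at the endpoints; this explains the exponents in \eqref{eq:condition:signa}.

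Next I would count patterns with $|W|=w$: there are at most $(2ML)^{O(\#\text{pieces})}$ of them. Summing as a geometric series, its ratio is below $1/2$ under \eqref{eq:condition:signa} and $n\ge 64[\cdots]K$. A final check that each of the four terms is at most $1/64$ under these conditions gives the bound $1/16$.
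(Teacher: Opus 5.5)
Your first-moment computation is correct (it is Lemma~\ref{lem:upper_boundexpectation}). Organising the second moment by node overlaps and by matchings of Hermite indices is the paper's $(\mathbf{M},\mathbf{P})$/$G_\Delta$ formalism.

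\textbf{The cancellation step is false.} You claim that labellings sharing only $\{1,2\}$ are conditionally independent given $(\mu,k^*(1),k^*(2),b_1,b_2)$ and so cancel exactly against $\mathbb{E}^2$. They are not independent: both factors contain Hermite polynomials in the rows $Y_1,Y_2$, and the noise $Z_1,Z_2$ is not conditioned on. Only the part with no index pairing cancels; for $\mathbf{M}=\mathbf{M}_0$ the factor $K^{|\mathbf{M}|-2}-1$ in the paper's term $C$ is zero. Pairing $m_1$ of the $M+1$ half-edges at $v_1^{(1)}$ with half-edges at $v_1^{(2)}$, and $m_2$ at $v_2$, survives. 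This is the term $B$ of Lemma~\ref{prop:UBvar1}, which relative to $\mathbb{E}^2[\overline{\Psi}_{G^*}|\mu,x=1]$ is at most
\[
\sum_{(m_1,m_2)\neq(0,0)}\binom{M+1}{m_1}^2 m_1!\binom{M+1}{m_2}^2 m_2!\,\Delta^{-2(m_1+m_2)}K^{\mathbf{1}\{m_1=m_2=M+1\}}.
\]
The extra $K$ appears because $v_1$ and $v_2$ both become isolated in $G_\Delta$. These are exactly the two $n$-free terms $K(M+1)^{2(M+1)}/\Delta^{4(M+1)}$ and $2(M+1)^5/\Delta^2$ of the statement, and they are genuinely present (cf.\ the lower bound \eqref{eq:lower_variance}). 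They are why the roots need large degree $M+1$, and why $\Delta^2\ge 128(M+1)^{12}$ and $\Delta^2\ge 8(M+1)^6K^{6/(M+1)}$ are imposed independently of $n$. In your scheme every non-cancelling pattern has $|W|\ge 1$ and so carries a factor $1/n$. These terms therefore cannot come out of it, and listing them among the ``overlaps'' is not derived.

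\textbf{The uniform bound is missing, and your outline would not close.} The weight depends on the pairing through $|\mathrm{Cyc}|$, $|\mathrm{CC}(G_\Delta)|$ and $|\mathbf{P}|$, not only on $W$. The paper needs three separate inputs:
\begin{enumerate}
\item Proposition~\ref{prop:template:chaine_rappeur}. Runs of matched nodes are not penalised: the $-2c$ of Lemma~\ref{lem:technique:0} beats the $+c$ in the bound on $\mathrm{CC}_1$ and pays for the fasteners, leaving only $\lfloor 4(|\mathbf{M}|-2)/L\rfloor$.
\item Proposition~\ref{prop2:template:chaine_rappeur}. It gives a power of $\Delta^{-2}$ for each paired root half-edge whose edge ends at an unmatched node (the sets $H_1,H_2$).
\item Lemma~\ref{lem:P:petit}, for the regime $\Delta^4\ge K$. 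There the exponent of $K/\Delta^4$ can be negative, and one interpolates between $2K/n$ and $2K^4/(n\Delta^8)$.
\end{enumerate}
The $H_1,H_2$ gain is indispensable because the roots have degree $M+1$. Such pairings number up to $(M+1)^{2(h_1+h_2)}$, and no count in terms of pieces of $W$ controls that.

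\textbf{Your count of overlap patterns is also wrong.} You bound the number of patterns by $(2ML)^{O(\#\text{pieces})}$. But a node matching is an arbitrary partial injection, misaligned ones included, so there are of order $(ML)^{2w}$ of them. Nothing in your sketch gives extra decay for misalignment; your per-cut factor $\sqrt{K}/\Delta^2$ is a loss, not a gain, when $\Delta^2\le\sqrt{K}$. You have to pay about $(ML)^2$ per matched node against its $1/n$, as in the paper's bound $(|V|-2)^{2(m-2)}$ on the number of matchings. On top of that you pay $5^4(M+2)^4$ per matched pair for the pairings (Lemma~\ref{lem:number_mathching}). Note that the paper's final summation for $A$ does not carry the factor $(|V|-2)^{2(m-2)}$. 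Doing this correctly therefore costs an extra $(ML)^2$ in the $n$-dependent conditions.
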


Write $t := \frac{(n-2)!}{(n-|V|)!} \cdot \frac{\Delta^{2|E|}}{K^{|V|-2}}$. By Chebychev inequality, is follows that, under the conditions of the above theorem, we have  
\[
\mathbb{P}\left[\overline{\Psi}_{G^*}\leq \frac{t}{2}\Big|\mu, x=1 \right]\leq \frac{1}{4}  \ ; \quad\quad\mathbb{P}\left[\overline{\Psi}_{G^*}\geq \frac{t}{2}\Big|\mu, x=0 \right]\leq \frac{1}{4}  \enspace ,  
\]
almost surely on the values of $\mu$. As a consequence, a test based on $\overline{\Psi}_{G^*}$ is able to to recover $x$ with probability at least $3/4$. However, the trivial test $\hat{x}=0$ is able to recover $x$ with probability at least $1-1/K$ and we therefore need a better guarantee.  Following~\cite{carpentier2025phase}, we add a ``Median-of-Means post-processing step'' to get  concentration bounds good enough for our purpose.

Fix $\Lambda=\lceil 24\log(n)\rceil$. Conditionally to $\mu$ and $k^*(.)$, we have $Y_i\sim \mathcal{N}(b_i\mu_k^*(i), I_d)$. We use the usual trick of  transforming $Y_i$ into a i.i.d. sample $Y_i^{(1)},\ldots, Y_i^{(\Lambda)}$ with distribution  $\mathcal{N}( \Lambda^{-1/2}b_i\mu_k^*(i), I_d)$. One way to do this is to build an orthogonal matrix $O_{\Lambda}$ of size $\Lambda$ whose first column is constant. Then, upon defining $Z'^{(1)}_{i},\ldots Z'^{(\Lambda-1)}_{i}$ as a sample of standard Gaussian random vectors, we take  $(Y_i^{(1)},\ldots, Y_i^{(\Lambda)})^T := O_{\Lambda}(Y_i,Z'^{(1)}_{i},\ldots, Z'^{(\Lambda-1)}_{i} )^T$.  Without loss of generality, we assume that $(n-2)/\Lambda$ is an integer, otherwise we can discard some of the samples.  Then, we partition $\ac{3,\ldots,n}\setminus\ac{i,j}$ into $\Lambda$ disjoint sets $J_1,\ldots J_{\Lambda}$ of cardinality $ (n-2)/\Lambda$.

Then, for $\ell=1,\ldots, \Lambda$, we define $T_{G^*}^{(\ell)}$ as the value of $\overline{\Psi}_{G^*}$ when applied to the data $(Y_1^{(\ell)}, Y_2^{\ell},  (Y_{i}^{\ell})_{i\in J_{\ell}})$
 and define $T_{G^*}$ as the median of the set $\ac{T^{(1)}_{G^*},\ldots,T^{(\Lambda)}_{G^*}}$. Finally, we consider the estimator 
\begin{align}\label{eq:definition:hat_x}
\hat x := \mathbf{1}\ac{T_{G^*}> \frac{1}{2} \frac{[(n-2)/\Lambda]!}{((n-2)/\Lambda-|V|+2)!} \cdot \frac{\Delta^{2|E|}}{\Lambda^{|E|} K^{|V|-2}} \ } \enspace .
\end{align}
In comparison to the threshold defined above, the additional factor $\Lambda^{|E|}$ accounts for the fact that $\Delta$ is replaced by $\Delta/\sqrt{\Lambda}$ in the samples $Y_i^{(\ell)}$ due to the noise inflation, and the $[(n-2)/\Lambda]!$ terms accounts for the fact that the sample size is now $2+ (n-2)/\Lambda$. Importantly, conditionally to $\mu$, $k^*(1)$, $k^*(2)$, $b_1$, and $b_2$, the random variables  $T^{(\ell)}_{G^*}$ are i.i.d.

Finally, Theorem~\ref{thm:final_upper_bound} is a consequence of the moment bounds of Theorem~\ref{thm:moment:rappeur} together with standard arguments for MoM estimators. See the proof in Section~\ref{sec:proof:UB}.

\section{Discussions}\label{sec:discussion}

       \paragraph{Extension to the case where $d\geq K$.} The previous estimator $\overline{\Psi}_{G^*}$ is tailored to the case $d = K$. Nevertheless, for general dimension $d\geq K$, when the means are sampled according to our prior distribution, we conjecture that  we can efficiently estimate $x$ as long as $\Delta^2$ is large compared to 
        \[
        \left[\left(\frac{K\sqrt{d}}{n^{1/2}}+ \left[\frac{K^{1+\epsilon} }{n^{1/4}} \land \sqrt{K}\right] \right)\land \sqrt{d} \right] \lor 1\ ,
        \]
        where $\epsilon>0$ depends on the degree $D_{\epsilon}$ of the polynomial. Note that polynomial-time clustering is known~\cite{Even25} to be possible as long as $\Delta^2$ is large compared to
        $\left[\left(\frac{K\sqrt{d}}{n^{1/2}}+ \sqrt{K} \right)\land \sqrt{d} \right] \lor 1$ and we only have to consider the regime $\Delta^2$ is large compared to $1+ \frac{K\sqrt{d}}{n^{1/2}}+ \frac{K^{1+\epsilon} }{n^{1/4}}$. Intuitively, when $\Delta^2$ is large compared to $\frac{K\sqrt{d}}{n^{1/2}}$, it is possible to project the matrix $Y$ onto the $K$-dimensional space spanned by $\mu_k$'s using a spectral method, then this boils down to studying a $K$-dimensional mixture, the twist being that the projected means of the mixture are not exactly orthogonal, so that we cannot readily use our analysis of $\overline{\Psi}_{G^*}$ in this section. As an alternative, we conjecture that relying on the template $G^{*,sp}$ described in Figure~\ref{UBadv} allows to handle this problem. The difference between $G^*$ and $G^{*,sp}$ is that each edge in $G^{*}$ is replaced by a size $N$-path where $N$ is a large constant. The effect of this size $N$ path is to mimic a spectral projection. We do not provide a formal analysis of this estimator as this is not the main focus of this paper.

%        for some arbitrary small constant $\epsilon>0$ and up to a polynomial factor of the degree $D_\epsilon$ which is depending on $\epsilon$ (increasing when $\epsilon$ decreases). Under this condition, we have that the error of our estimator will be going to $0$. This proves therefore that in the clustering setting that we consider, it is possible to characterise precisely the regimes in which clustering is computationally feasible. The reason why it is possible to extend this result to the case where $d\geq K$ is intuitive: in that case, under the above signal condition, it is possible to project in the lower dimensional space of dimension $K$ and then we just have to perform clustering in dimension $K$. To do that, a simple idea is to apply the template of Figure~\ref{fig:UB} not on $YY^T$ itself, but on a power iteration of $YY^T$ - namely $(YY^T)^N$ where $N$ is large. The template associated to this nmodification is described in : we replace all adges from the template in Figure~\ref{fig:UB} by a chain of $N$ nodes - where $N$ is a large constant. This adds, as desired, a sort of classical spectral step at every step of our procedure - through a classical power iteration method. This ensures that we "almost" project at each step on a space of dimension $K$. 
        %We provide in \alex{ref sur une section} a description of how one should modify our polynomial estimator to use it in the case where $d > K$. 
 %       We however only provide a formal proof in the case where $d=K$ and the formal theorem is therefore only stated in this case.

        \begin{figure}
            \begin{center}
                \includegraphics[scale=0.75]{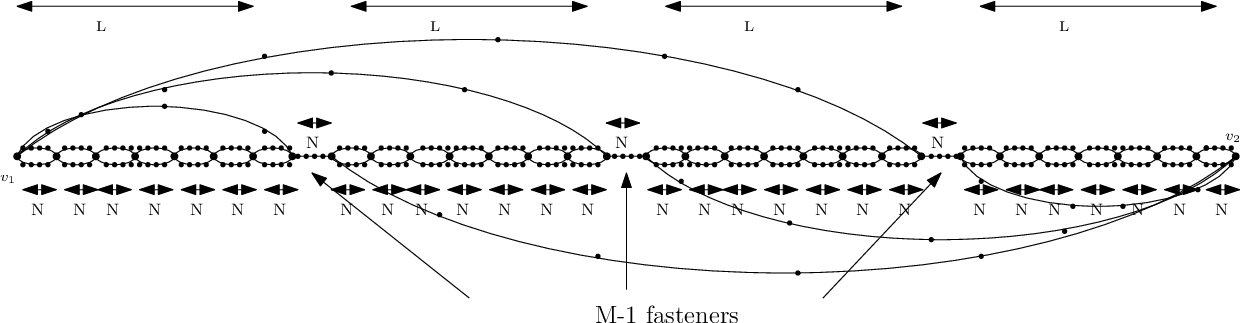}
            \end{center}
            \caption{Modification of the template $G$ in Figure~\ref{fig:UB} to accomodate $d\geq K$. We just replace each edge of $G$ by a simple chain of length $N$. }\label{UBadv}
        \end{figure}

\paragraph{Comparison with tensor Decomposition methods.} Continuing the comparison with tensor methods, we also observe that, using Hermine polynomials with respect to $Y$, we can build an unbiased estimator of the  order $4$-tensor $T$ defined $T=\frac{1}{K}\sum_{k=1}^K \mu_k\otimes \mu_k\otimes \mu_k\otimes \mu_k$.  In our model~\eqref{eq:model:GGM} with orthogonal means, estimating the means $\mu_k$'s then amounts to performing noisy orthogonal tensor decomposition from an empirical version $T_n$ of $T$. Although this approach has been proposed in the seminal work of~\cite{anandkumar2014tensor}, we are not aware any precise sample complexity bounds for our model~\ref{eq:model:GGM} as computationally efficient and robust implementation of tensor decomposition are delicate to analyze. Still, it would be interesting to analyze the Sum-of-Squares (SOS) algorithms of~\cite{ma2016polynomial} or the spectral method of \cite{hopkins2019robust} to check whether the performances matches our conditions $\Delta^2\gtrsim K/n^{1/4}$ coming from the low-degree lower bound.

    %This makes sense intuitively, as squaring the entries captures the first "higher-order" information about the clusters, which is absent in $YY^T$ due to the group orthogonality and multiplicity.
    
    %The classical spectral method based on power iterations of $YY^T$, such as $(YY^T)^D$, can be expressed as a combination of invariant monomials, where the dominant template is a single chain between nodes $v_1$ and $v_2$. However, this method fails in our setting because the single chain primarily involves nodes of degree $2$, which do not provide useful information for clustering.\\Our upper bound, on the other hand, is closely related to a double chain between $v_1$ and $v_2$, where each edge is doubled. This ensures that all nodes, except the extremities $v_1$ and $v_2$, have degree $4$, while maintaining a single connected component. This construction minimizes the number of edges for a given number of nodes under these constraints - that have been proven, in the lower bound, to be relevant. 

\paragraph{Conjecture and open problems.} As explained in the introduction, we conjecture that our low-degree polynomial lower bounds is, up to polylog, sharp for general Gaussian mixture models. To establish Conjecture~\ref{conject:optimal_clustering_condition}, we would need to introduce and analyze a polynomial-time  procedure that is able to recover the clusters as long as the square separation $\Delta_*$ satisfies
\[
\Delta_*^2 \gtrsim_{\log}  \left[1 + \left( \sqrt{\frac{dK^2}{n}}+  \frac{K}{n^{1/4}} \right)  \wedge \sqrt{d} \right]\ .
\]
In this work, we only dealt with the case where the $\mu_k$'s are either either orthogonal or opposite to each other.  We doubt that our procedure is able to deal with general $\mu_k$'s. 
Although the iterative tensor projection method of~\cite{LiuLi2022} does not allow to recover the desired condition, we believe that such an approach is a promising research direction.  Finally, our lower bound techniques are only tailored to $d\geq K$. For small dimension problems $d< K$, pinpointing the optimal separation condition for polynomial-time procedures would require  new ideas.

\section{Important multigraph notations}\label{sec:multigraph}

We first introduce some general multigraph notations as this will be central in all the proofs.

%{\color{red} Bazar de notation ici; pas toute}

%{\color{red} On essaye avec $V=\{v_1,\ldots v_{r}\}$ avec des ensembles generique.   }

\subsection{Half-edges, equivalence, and automorphisms}

Although the multigraphs are defined through a set of nodes and through a multiset of edges, it is more convenient here to work with half-edges. Consider any multigraph $G=(V,E)$. Although the multigraph is undirected, for $s=1,\ldots |E|$, we write the $s$-th edge as $(i_s,i'_s)$ where $i_s$ and $i'_s$ belong to $V$. 
Each edge $e\in E$ is defined through its extremeties $(l(e), r(e))\in V\times V$. We define the half corresponding edges as $E^{\mathrm{\mathrm{half}}}=  (\underline{e})$ where $i(\underline{e})\in V$ is the node incident to $e$ - we call them {\bf half-edges}. The degree $\mathrm{deg}_G(i)$ of a node $i$ is the number of half-edges that are incident to $i$. Given two different half-edges $\underline{e}$ and  $\underline{e}'$ in $E^{\mathrm{\mathrm{half}}}$ we write $\underline{e}\stackrel{edge}{\sim} \underline{e}'$ if $\underline{e}$ and $\underline{e}'$ come from the same edge. We say that two half-edges $\underline{e}$ and $\underline{e}'$ are incident $i(\underline{e})= i(\underline{e}')$.

We now introduce a formal definitiion of multi-graph equivalence through the lens of their half-edges. We say that $G^{(1)}$ and $G^{(2)}$ are \emph{isomorphic} and write $G^{(1)}\simeq G^{(2)}$ if there exists a bijection $\sigma: E^{\mathrm{\mathrm{half}},(1)} \mapsto E^{\mathrm{\mathrm{half}},(2)}$ such that that $\sigma$ preserves (i) the equivalence  $\stackrel{edge}{\sim}$, the (ii) the incidence relation, and (iii)  $i(\sigma(\underline{e}))=v^{(2)}_1$ (resp. $v^{(2)}_2$) when  $i(\underline{e})= v^{(1)}_1$ (resp. $v^{(1)}_2$). %If $G^{(1)}$ and $G^{(2)}$ are simple multigraphs, then this notion corresponds to classical isomorphisms between undirected multigraph. This notion also matches the standard notion of isomorphism between multigraph. 

Such a bijection $\sigma$ is called an automorphism of $G^{(1)}$ if $G^{(1)}=G^{(2)}$. We recall that  $\mathrm{Aut}(G)$ stands for the group of automorphisms of a multigraph $G$. In particular, observe that, with our definitions of automorphism, if we consider the multigraph $G$ with two nodes $v_1$ and $v_2$, only one self-edge at $v_1$, we have $|\mathrm{Aut}(G)|=2$.

\subsection{Node matchings, half-edge pairings}\label{pairmatch}

\paragraph{Node matchings.} Given two multigraphs $G^{(1)}$ and $G^{(2)}$ in $\mathcal G_{\leq D}$, we define a matching of nodes $\mathbf{M}=\{(v_{l}^{(1)},v_{l'}^{(2)}), \ldots, \}$ as a subset $V^{(1)}\times V^{(2)}$ that satisfies the three following properties
\begin{itemize} 
    \item[(a)] any element of $V^{(1)}$ (resp.  $V^{(2)}$) occurs at most once,
    \item[(b)] $(v^{(1)}_1,v^{(1)}_2)\in \mathbf{M}$, $(v^{(1)}_2,v^{(2)}_2)\in \mathbf{M}$,
    \item[(c)] all degree $2$ nodes in $V^{(1)}$  and in $V^{(2)}$ occur in  $\mathbf M$.
\end{itemize}
Henceforth, we write $\mathcal{M}$ the collection of possible matching of nodes. In what follows, we also defined $\mathcal{M}^{\star} \subset \mathcal{M}$ as the subcollection of  all node matchings that ensure that each connected components of $G^{(1)}$ and of $G^{(2)}$ has at least one node that arises in $\mathbf{M}$.

\paragraph{Half-edge pairings.} Then, given a node matching $\mathbf{M}$,  we define a pairing $\mathbf{P}$ of the half-edges as a subset of $E^{\mathrm{\mathrm{half}},(1)}\times E^{\mathrm{\mathrm{half}},(2)}$ such that (i) each half-edge in $E^{\mathrm{\mathrm{half}},(1)}$ or $E^{\mathrm{\mathrm{half}},(2)}$ occurs at most once, (ii) $(\underline{e}, \underline{e}')$ belongs to $\mathbf{P}$ implies that $(i(\underline{e}),i(\underline{e}'))$ belongs to $\mathbf{M}$. The second condition ensures that it is only possible to match half-edges that are incident to a node that has been paired.  Finally, we write $\mathcal P:=\mathcal P(\mathbf{M})$ for the collection of all half-edge pairings associated to a node matching $\mathbf{M}$. We also abreviate by $\mathcal{M}\mathcal{P}$ for the collections of all such matching of nodes and associated half-edges pairings of $G^{(1)}$ and $G^{(2)}$.  Similarly, we abreviate by $\mathcal{M}^{\star} \mathcal P$ for the set of pairing and matching associated to the matchings $\mathcal{M}^{\star}$ (instead of $\mathcal{M}$).

Define also $\mathcal{P}_{\mathrm{full}}(\mathbf{M})$ the collection of pairing $\mathbf{P} \in \mathcal P(\mathbf{M})$ such that $|\mathbf{P}|= |E^{\mathrm{\mathrm{half}},(1)}|= |E^{\mathrm{\mathrm{half}},(2)}|$ - namely such that all half-edges adjacent to nodes in $\mathbf M$ are paired. This set is in fact empty, unless $|V^{(1)}|=|V^{(2)}$, all node are matched, and all matched nodes have the same degree. 
%can be empty, e.g.~it is always empty if the number of half-edges incident to some matched node are not the same in both graphs. 

%Note that $\mathcal{P}_{\mathrm{full}}(\mathbf{M})$ is non-empty if and only if $G^{(1)} = G^{(2)}$ and $|\mathbf{M}|= |V^{(1)}|= |V^{(2)}|$. \alex{A AJUSTER - JE CROIS QUE LA CONVENYTION EST PAS LA MEME PARTOUT}

We write $\mathbf M_{\mathrm{full}}$ for the set of pairs of matched nodes - namely in $\mathbf M$ - namely that have all half-edges connected to them that are paired - namely in $\mathbf P$. We say that these nodes are {\bf fully matched}. Figure~\ref{tildeG} displays an example of matching and pairing where one pair of nodes is fully matched. Note that the corresponding nodes are isolated in the multigraph $G_{\Delta}$.

\subsection{Construction of the multigraph $G_{\Delta}$ resulting from matching and pairing}

We introduce a few notations related to the matching and pairing of $G^{(1)}$ and $G^{(2)}$ that will be instrumental in the proofs.

\paragraph{Merged multigraph $G_{\cup}[\mathbf{M};\mathbf{P}]$.} We consider two templates $G^{(1)}$ and $G^{(2)}$, a node matching $\mathbf{M}\subset V^{(1)}\times V^{(2)}\in \mathcal{M}$. Then, we consider a pairing $\mathbf{P}\subset E^{\mathrm{\mathrm{half}},(1)}\times E^{\mathrm{\mathrm{half}},(2)}$ of half-edges.

Define $G_{\cup}[\mathbf{M};\mathbf{P}]=(V,E^{\mathrm{\mathrm{half}}})$ as the multi-graph that merges $G^{(1)}$ and $G^{(2)}$ by identifying all nodes in $\mathbf{M}$. As a consequence, $G_{\cup}[\mathbf{M};\mathbf{P}]$ contains $|V^{(1)}|+|V^{(2)}|-|\mathbf{M}|$ nodes and $| E^{\mathrm{\mathrm{half}},(1)}| +|E^{\mathrm{\mathrm{half}},(2)}|$ edges. 

We write $G^{(1)}[\mathbf{M};\mathbf{P}]$ (resp. $G^{(2)}[\mathbf{M};\mathbf{P}]$) the submultigraph $G_{\cup}[\mathbf{M};\mathbf{P}]$ that only contains the nodes and the half-edges arising from $G^{(1)}$ (resp. $G^{(2)}$). Obviously, $G^{(1)}[\mathbf{M};\mathbf{P}]$ is isomorphic to $G^{(1)}$.

\begin{figure}
    \begin{center}
        \includegraphics[scale=1.3]{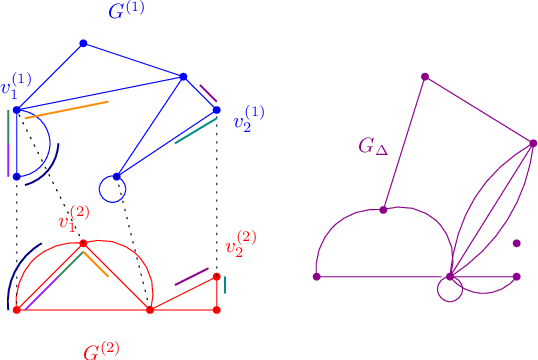}
    \end{center}
    \caption{{\bf Example of construction of $G_{\Delta}$.} The two multigraphs $G^{(1)}$ and $G^{(2)}$ are represented in respectively blue and red. The node matchings are indicated by dotted lines between the concerned nodes. The half-edge pairings are note by segments of colour. Two half-edges flagged with the same colour are paired together. The nodes $v_1^{(1)}, v_1^{(2)}$ resp.~$v_2^{(1)},v_2^{(2)}$ are matched together. $G_{\Delta}$ is then displayed in purple. Note that matched nodes appear only once, and that some edges are removed, and some are added - corresponding to the half-edge pairings. One pair of nodes is {\bf fully matched} and belongs to $\mathbf M_{\mathrm{full}}$, namely all half-edges connected to it are paired (in $\mathbf P$) - and it is therefore isolated in $G_{\Delta}$. %One edge is {\bf fully paired} - namely has both its corresponding half-edges paired (in $\mathbf P$) - and it belongs to $E_{\mathrm{full}}$.
    }\label{tildeG}
\end{figure}

\paragraph{Paths and cycles.}  In what follows, we {\bf color} each of the half-edges that correspond to $\mathbf P$ with a different color - each half-edge belonging to a pair are associated to a given, unique colour. See Figure~\ref{tildeG} for an example. We can then define cycles and paths along coloured paths.
\begin{itemize}
    \item When matching and pairing the multigraphs, we can have an {\bf open path} that is obtained by following a connected path of coloured half-edges in $\mathbf P$ such that it has open extremities - namely it starts and finished in non-paired half-edges. In what follows, we call the \emph{length} of the open path the number of pairs of half-edges that are involved in this path. We write respectively $\mathrm{Op}_{\mathrm{odd}}$ and $\mathrm{Op}_{\mathrm{even}}$ 
    for the sets of open paths obtained from our matching and pairing, that have an odd, resp.~an even, number of pairs of half-edges involved in the construction. We will mostly use their cardinality $|\mathrm{Op}_{\mathrm{odd}}|, |\mathrm{Op}_{\mathrm{even}}|$. Figure~\ref{paths} illustrates an example of both open paths.
    \item In a related way, we can have a {\bf cycle} that is obtained by following a connected path of coloured half-edges in $\mathbf P$ such that it has no open extremities and forms a cycle. We write $\mathrm{Cyc}$ 
    for the set of cycles obtained when matching and pairing the multigraphs. We will mostly use its cardinality $|\mathrm{Cyc}|$. Figure~\ref{cycle} illustrates an example of both open paths.
\end{itemize}

\begin{figure}
    \begin{center}
        \includegraphics[scale=0.8]{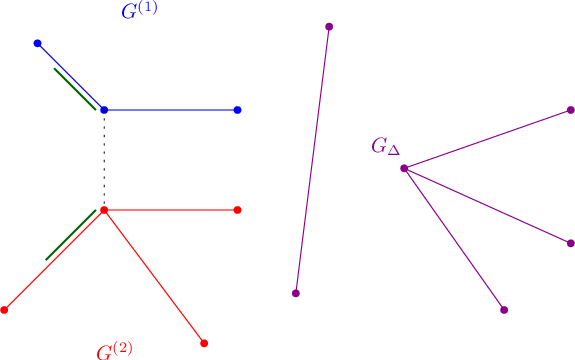}~~~~~~~~\includegraphics[scale=0.8]{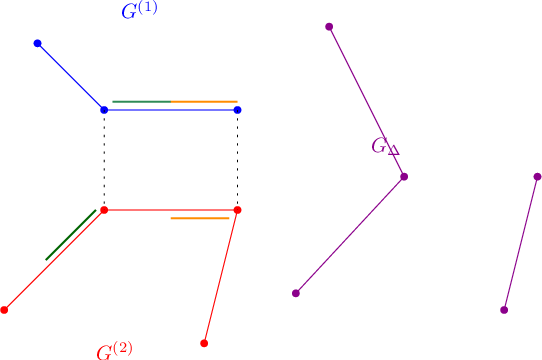}
    \end{center}
    \caption{{\bf Examples of open paths.} The picture on the left depicts a simple example of an open path that has an odd number of pairs of half-edges involved, while the picture on the right depicts an example of an open path that has an even number of pairs of half-edges involved. In each case, to create $G_{\Delta}$, the edges such that both half-edges are paired are removed, and we draw an edge through the open extremities.}\label{paths}
\end{figure}

\begin{figure}
    \begin{center}
        \includegraphics[scale=0.8]{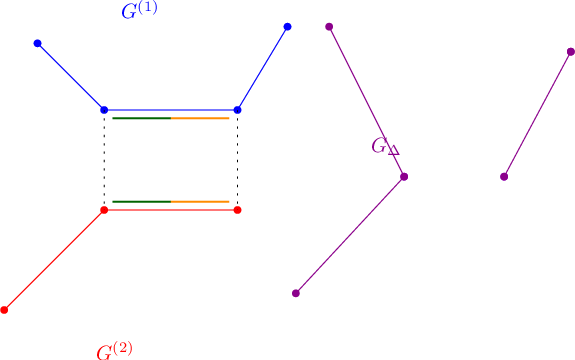}~~~~~~~~\includegraphics[scale=0.8]{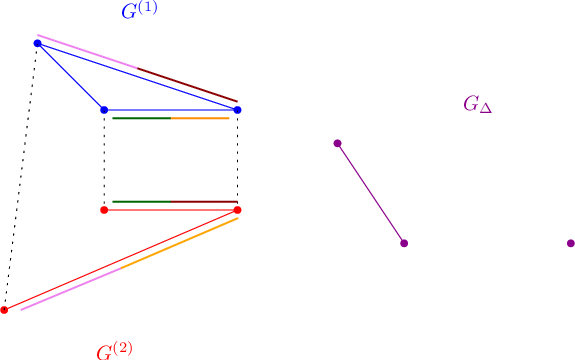}
    \end{center}
    \caption{{\bf Examples of cycles.} The picture on the left depicts an example of a simple cycle generated by two pairs of half-edges. %Note that the two corresponding edges are perfectly paired. 
    The picture on the right presents a more complicated case of a longer cycle. In each case, the cycles are removed to create $G_{\Delta}$.}\label{cycle}
\end{figure}

\paragraph{Construction of $G_{\Delta}$.} Finally, we define $G_{\Delta}:=G_{\Delta}[\mathbf{M},\mathbf{P}]=(V_{\Delta},E_{\Delta})$ as the pruned multi-graph of $G_{\cup}[\mathbf{M},\mathbf{P}]$ as follows: 
\begin{itemize} 
\item We remove all the edges that belong to a cycle.
\item For any maximum open path with colored half-edges,  we remove all these-half-edges and replace them by an edge between the two extremities.
\end{itemize}
Finally, we write $\mathrm{CC}(G_{\Delta})$ for the set of  connected components of $G_{\Delta}$.
Figure~\ref{tildeG} illustrates an example of construction - note that other examples are found in Figures~\ref{cycle} and Figure~\ref{paths}.

%In what follows, we also write $\mathcal{M}^{\star} \subset \mathcal{M}$ for the set of all node matchings that ensure that each connex components of $G^{(1)}, G^{(2)}$ have a least one which is matched, and such that all nodes or degree $2$ in $G^{(1)}, G^{(2)}$ are matched. We write then $\mathcal{M}^{\star} \mathcal P$ for the corresponding set of pairing and matching.

%Consider any $G^{(1)}$ and $G^{(2)}$ in $\mathcal G_{\leq D}$. Consider a subset $\mathbf{M} \subset \mathcal{M}$ of matched nodes. Define $\mathcal{P}_{\mathrm{full}}(\mathbf{M})$ the collection of pairing $\mathbf{P} \in \mathcal P(\mathbf{M})$ such that $|\mathbf{P}|= |E^{\mathrm{\mathrm{half}},(1)}|= |E^{\mathrm{\mathrm{half}},(2)}|$. Note that $\mathcal{P}_{\mathrm{full}}(\mathbf{M})$ is non-empty if and only if $G^{(1)} = G^{(2)}$ and $|\mathbf{M}|= |V^{(1)}|= |V^{(2)}|$.

%\include{polynotations.tex}

\section{Control of the first two moments of the polynomials}\label{sec:mom}

\subsection{Control of first moments}

In this section, we compute $\mathbb{E}[x\widetilde{\Psi}_G]$ for a multigraph $G=(V,E)$.

\begin{lemma}\label{lem:first_moment_tilde_psi}
Consider any $G\in \mathcal{G}_{\leq D}$. As long $G$ contains at least one edge, we have
\[
\mathbb{E}\left[\widetilde{\Psi}_G\right] = 0\ . 
\]
If $G$ is the graph with only two isolated nodes, $\widetilde{\Psi}_G= 1$ almost surely. 
\end{lemma}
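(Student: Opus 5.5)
The plan is to expand $\widetilde{\Psi}_G$ over labelings and reduce everything to independence between the centered factors attached to distinct connected components. Fix $\pi\in\Pi_V$. By definition~\eqref{eq:definition_psi_tilde}, $\widetilde{\Psi}_{G,\pi}=\prod_{l=1}^c\big[\overline{\Psi}_{G^*_l,\pi}-\mathbb{E}[\overline{\Psi}_{G^*_l,\pi}]\big]$, where $c\geq 1$ because $G$ has at least one edge. When $c=1$ the claim is immediate: $\mathbb{E}[\widetilde{\Psi}_{G,\pi}]=0$ for every $\pi$, and summing over $\Pi_V$ gives $\mathbb{E}[\widetilde{\Psi}_G]=0$. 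The degenerate case with no edge is just the convention $\widetilde{\Psi}_G=1$.

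For $c\geq 2$ the natural route is to show that the centered factors $F_l:=\overline{\Psi}_{G^*_l,\pi}-\mathbb{E}[\overline{\Psi}_{G^*_l,\pi}]$ are uncorrelated in the appropriate sense, so that $\mathbb{E}[\prod_l F_l]=\prod_l\mathbb{E}[F_l]=0$. The difficulty is that the components are not independent under $\mathbb{P}$, for two reasons. First, they share the common means $\mu$. Second, different components $G^*_l$ may all contain $v_1$ and/or $v_2$, so $Y_1$ and $Y_2$ can appear in several factors. Moreover, the modified Hermite polynomials $\overline{\psi}$ of~\eqref{eq:psi_bar} are applied separately within each component. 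Since $\pi$ is injective, however, the sets of rows $\pi(V(G_l)\setminus\{v_1,v_2\})$ are pairwise disjoint across $l$.

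I would first condition on $\mathcal{F}=\sigma(\mu,\,k^*(1),k^*(2),b_1,b_2,Z_1,Z_2)$, that is, on $Y_1$, $Y_2$ and the means. Conditionally on $\mathcal{F}$, the remaining rows $Y_i$, $i\geq 3$, are independent, since the $k^*(i)$, $b_i$, $Z_i$ are i.i.d. Hence the factors $F_l$, each depending on $\mathcal{F}$ and on a disjoint set of rows, are conditionally independent, and
\[
\mathbb{E}[\widetilde{\Psi}_{G,\pi}\mid\mathcal{F}]=\prod_{l=1}^c\Big(\mathbb{E}[\overline{\Psi}_{G^*_l,\pi}\mid\mathcal{F}]-\mathbb{E}[\overline{\Psi}_{G^*_l,\pi}]\Big).
\]
This does not yet vanish, so the next step is a direct computation of $h_l:=\mathbb{E}[\overline{\Psi}_{G^*_l,\pi}\mid\mathcal{F}]$. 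I expect this to be the main obstacle, and it may only go through for the configurations that matter. I would then try to show two facts. (i) When a component contains neither $v_1$ nor $v_2$, $h_l$ depends on $\mathcal{F}$ only through $\mu$. Rotation invariance of the prior, together with the orthonormal-frame structure with $\|\mu_k\|=\Delta$, makes $h_l$ constant, hence equal to its mean, so $F_l$ has zero conditional mean. (ii) When $c\geq2$, some component avoids both $v_1$ and $v_2$, unless the components share these nodes.

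Point (ii) can fail, for instance with two components each containing $v_1$. I would handle that case by a coarser argument: replace the global expectation with a tower over $\mu$ only, and use rotation invariance. This is the step I would have to check carefully against the paper's definitions. If it genuinely fails in general, I would fall back on proving the statement as needed downstream.
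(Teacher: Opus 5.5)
There is a genuine gap, and it comes from a misreading of the construction.

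\textbf{What you got wrong.} You claim that several factors $\overline{\Psi}_{G^*_l,\pi}$ may involve $Y_1$ or $Y_2$, and that (ii) fails ``with two components each containing $v_1$''. This cannot happen. Every non-isolated node lies in exactly one connected component $G_l$. The copies of $v_1,v_2$ added to form $G^*_l$ are isolated, so they have $\beta_{v,j}=0$ and contribute the factor $\psi_0=1$ in~\eqref{eq:Hermite_graph}. Hence the row sets entering the different factors are pairwise disjoint, rows $1$ and $2$ included.

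\textbf{Where your $\mathcal{F}$-argument actually fails.} For $c\geq 3$ some component always avoids $\{v_1,v_2\}$, so your step (i) finishes the proof there. The uncovered case is $c=2$ with $v_1$ non-isolated in $G_1$ and $v_2$ non-isolated in $G_2$. Take for example $G$ consisting of one self-loop at $v_1$ and one at $v_2$. There conditioning on $\mathcal{F}$ is too fine: $h_1=\|Y_1\|^2-d$ and $h_2=\|Y_2\|^2-d$ are not constant, and
\[
\mathbb{E}[\widetilde{\Psi}_{G,\pi}\mid\mathcal{F}]=(\|Y_1\|^2-d-\Delta^2)(\|Y_2\|^2-d-\Delta^2)\neq 0 .
\]
You leave this case unverified, with a fallback to ``as needed downstream'', so as written the proposal does not prove the lemma.

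\textbf{The fix.} Condition on $\mu$ alone, which is what the paper does; no case split is needed. Given $\mu$, the rows $Y_i=b_i\mu_{k^*(i)}+Z_i$ are independent. By the disjointness above, the factors are then conditionally independent, so
\[
\mathbb{E}[\widetilde{\Psi}_{G,\pi}\mid\mu]=\prod_{l=1}^c\Big(\mathbb{E}[\overline{\Psi}_{G^*_l,\pi}\mid\mu]-\mathbb{E}[\overline{\Psi}_{G^*_l,\pi}]\Big).
\]
Each conditional mean $\mathbb{E}[\overline{\Psi}_{G^*_l,\pi}\mid\mu]$ is unchanged when $\mu$ is replaced by $\mu\cdot O$. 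This holds because $\overline{\Psi}_{G^*_l,\pi}(Y\cdot O)=\overline{\Psi}_{G^*_l,\pi}(Y)$ (Lemma~\ref{lem:reduction:psi_bar}) and the noise is isotropic. It is this invariance of the polynomial, not the rotation invariance of the prior you invoke in (i), that is used. The orthogonal group acts transitively on $K$-tuples of pairwise orthogonal vectors of norm $\Delta$. Hence each conditional mean is almost surely constant, equal to $\mathbb{E}[\overline{\Psi}_{G^*_l,\pi}]$. Every factor therefore vanishes, and taking expectations and summing over $\pi\in\Pi_V$ gives the lemma. Your $\mathcal{F}$-conditioning and the case analysis in (ii) become unnecessary.
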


\begin{lemma}\label{lem:first_moment_degreeuneven}
    Consider any $G\in \mathcal{G}_{\leq D}$.  If $G$ contains at least one node of odd degree, then 
    \[
    \mathbb{E}[x\overline{\Psi}_G]=0 \ ; \quad \quad \quad \mathbb{E}[x\widetilde{\Psi}_G]=0 
    \]
    \end{lemma}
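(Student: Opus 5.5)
The plan is to exploit the sign symmetry given by the Rademacher variables $b_i$. Fix a labeling $\pi\in\Pi_V$ and let $v\in V$ be a node of odd degree. Since $\sum_{v'}\deg_G(v')=2|E|$ is even, the number of odd-degree nodes is even, so there are at least two of them. This gives us freedom in choosing $v$; if possible we take $v\notin\{v_1,v_2\}$, but the argument below does not actually need this. Write $i=\pi(v)$. We then consider the transformation $T_i$ that flips $b_i\mapsto -b_i$ and $Z_i\mapsto -Z_i$ while keeping all other variables fixed. Because $b_i$ is Rademacher and independent of everything else, and $Z_i$ is symmetric and independent, $T_i$ preserves the joint law of $(\mu,k^*,b,Z)$. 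Under $T_i$ we have $Y_i\mapsto -Y_i$ and $Y_{i'}$ unchanged for $i'\neq i$. Moreover $x=\mathbf 1_{k^*(1)=k^*(2)}$ depends only on $k^*$, so it is invariant under $T_i$.

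The next step is to check that $\overline{\Psi}_{G,\pi}$ is odd under $Y_i\mapsto -Y_i$. In~\eqref{eq:Hermite_graph}, each summand is a product over $(i',j)$ of $\overline{\psi}_{\beta_{\pi^{-1}(i'),j},G}(Y_{i',j})$, and for fixed $(j_e)$ we have $\sum_j\beta_{v,j}=\deg_G(v)$, which is odd. Hermite polynomials have parity $\psi_k(-y)=(-1)^k\psi_k(y)$, and the modified polynomial $y^2-(1+\Delta^2/K)$ is even, consistent with $\beta=2$. Hence the product over $j$ of the factors attached to row $i$ picks up the sign $(-1)^{\sum_j\beta_{v,j}}=-1$. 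So every summand, and therefore $\overline{\Psi}_{G,\pi}$, changes sign. We conclude that $\mathbb E[x\overline{\Psi}_{G,\pi}]=\mathbb E[x\,(-\overline{\Psi}_{G,\pi})]$, so this expectation vanishes. Summing over $\pi$ gives $\mathbb E[x\overline{\Psi}_G]=0$.

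For $\widetilde{\Psi}_{G,\pi}=\prod_l[\overline{\Psi}_{G^*_l,\pi}-\mathbb E\overline{\Psi}_{G^*_l,\pi}]$, the node $v$ lies in exactly one connected component $G_{l_0}$. This holds because $v$ has positive degree; also, if $v\in\{v_1,v_2\}$ is added as an isolated node to other $G^*_l$, its degree there is $0$, which is even. Its degree in $G^*_{l_0}$ equals $\deg_G(v)$, which is odd. By the same parity argument, $\overline{\Psi}_{G^*_{l_0},\pi}$ is odd under $T_i$, and its expectation is therefore $0$, so the centering constant vanishes. For $l\neq l_0$, node $v$ has degree $0$ in $G^*_l$, so $\overline{\Psi}_{G^*_l,\pi}$ does not depend on $Y_i$ and is invariant. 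Hence $\widetilde{\Psi}_{G,\pi}$ is odd under $T_i$, and as before $\mathbb E[x\widetilde{\Psi}_{G,\pi}]=0$; summing over $\pi$ gives the claim.

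The only delicate point, which is the main thing to verify carefully, is the bookkeeping of $\beta_{v,j}$ in the presence of self-loops and the modified degree-2 polynomial. A self-loop at $v$ contributes $Y_{i,j}^2$, so it adds $2$ to a single $\beta_{v,j}$ and keeps the count consistent with the convention that self-loops count twice in $\deg_G(v)$. The modified polynomial only occurs when $\beta_{v,j}=2$ and is even, so the identity $\sum_j\beta_{v,j}=\deg_G(v)$ and the parity of each factor are both preserved.
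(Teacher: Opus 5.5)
Your proposal is correct and matches the paper's proof essentially step for step. You flip row $\pi(v)$ of an odd-degree node (reversing $b_{\pi(v)}$ and $Z_{\pi(v)}$), which preserves the joint law of $(x,Y)$ because $x$ depends only on $k^*$, and which multiplies $\overline{\Psi}_{G,\pi}$ by $(-1)^{\deg_G(v)}=-1$. For $\widetilde{\Psi}_{G,\pi}$, the centering constant of the component containing $v$ vanishes, so the whole product is odd under the flip.

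Your extra bookkeeping on self-loops and on the modified degree-$2$ factor only makes explicit what the paper leaves implicit. In fact that factor cannot occur in row $\pi(v)$ at all, since $\deg_G(v)$ is odd.
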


\begin{lemma}\label{lem:first_moment_degree2}
Consider any $G\in \mathcal{G}_{\leq D}^{even}$.  If, possibly aside from the nodes $v_1$ and $v_2$, $G$ contains at least one node of degree $2$, then 
$$\mathbb{E}[\overline{\Psi}_G]=0\ ; \quad \quad \mathbb{E}[x\overline{\Psi}_G]=0\ ;\quad \quad \mathbb{E}[x\widetilde{\Psi}_G]=0 \enspace .$$
\end{lemma}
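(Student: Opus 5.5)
The strategy is to condition on everything except the signal attached to one degree-$2$ node. Fix $G\in\mathcal{G}^{even}_{\leq D}$ and a node $v\notin\{v_1,v_2\}$ with $\mathrm{deg}_G(v)=2$. Fix also a labeling $\pi\in\Pi_V$ and write $i=\pi(v)$. We may work throughout under $d=K$, which is the setting in which the degree-$2$ correction in \eqref{eq:psi_bar} was designed.

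The key claim is
\[
\mathbb{E}\big[\overline{\Psi}_{G,\pi}\,\big|\,\mu,\ (k^*(i'),b_{i'},Z_{i'})_{i'\neq i}\big]=0 .
\]
Since $x=\mathbf 1_{k^*(1)=k^*(2)}$ is measurable with respect to this $\sigma$-field (because $i\notin\{1,2\}$), the claim gives $\mathbb{E}[\overline{\Psi}_{G,\pi}]=0$ and $\mathbb{E}[x\overline{\Psi}_{G,\pi}]=0$ after summing over $\pi$.

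To prove the claim, separate in $\overline{\Psi}_{G,\pi}$ the factors involving row $Y_i$. Because $v$ has degree $2$, the row $Y_i$ enters through exactly two half-edges, with coordinates $j_e,j_{e'}$. There are two cases.
\begin{itemize}
\item If $j_e\neq j_{e'}$, the factor is $\psi_1(Y_{i,j_e})\psi_1(Y_{i,j_{e'}})=Y_{i,j_e}Y_{i,j_{e'}}$.
\item If $j_e=j_{e'}=j$, the factor is $\overline\psi_{2}(Y_{ij})=Y_{ij}^2-(1+\Delta^2/K)$.
\end{itemize}
If $v$ carries a self-loop, the two half-edges share the coordinate $j_e$, so we are automatically in the second case, and summing over $j$ gives $\|Y_i\|^2-d(1+\Delta^2/K)$. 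All other factors are functions of the conditioning variables. Summing over the two coordinates, the contribution of row $i$ is therefore $\sum_{j,j'}A_{jj'}\,(Y_{ij}Y_{ij'}-\delta_{jj'}(1+\Delta^2/K))$, where $A$ is measurable with respect to the conditioning.

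It remains to show that $\mathbb{E}[Y_iY_i^T\mid\mu]=I_d+\frac1K\sum_k\mu_k\mu_k^T=(1+\Delta^2/K)I_d$. This holds because $k^*(i)$ is uniform and independent of the rest, $b_i^2=1$, and the $\mu_k$'s form an orthogonal basis of norm $\Delta$ when $d=K$. Hence the row-$i$ factor has zero conditional mean, and so does the whole product.

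For $\widetilde{\Psi}_G$, let $G_l$ be the connected component containing $v$; $v$ still has degree $2$ in $G^*_l$. By the claim applied to $G^*_l$, we get $\mathbb{E}[\overline{\Psi}_{G^*_l,\pi}]=0$. In the product \eqref{eq:definition_psi_tilde}, the factors for the other components do not involve row $i$, since $\pi$ is injective and $v$ lies only in $G_l$. They are therefore measurable with respect to the conditioning. The centered factor of $G_l$ has zero conditional mean, so $\mathbb{E}[x\widetilde{\Psi}_{G,\pi}]=0$.

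The main obstacle is bookkeeping in \eqref{eq:Hermite_graph}. One must check that the Hermite factor of row $i$ really splits off as stated even when other half-edges share the coordinates $j_e$ at other nodes; this is fine because each $\overline\psi$ factor is indexed by its pair (node, coordinate). One must also handle the self-loop case explicitly.
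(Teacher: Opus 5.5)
Your proof is correct and follows essentially the same route as the paper. You condition on everything except the label, sign and noise of row $\pi(v)$, and you note that this row enters only through the factor $Y_{\pi(v),j}Y_{\pi(v),j'}-(1+\Delta^2/K)\mathbf{1}\{j=j'\}$. You then use that $\mathbb{E}[Y_{\pi(v)}Y_{\pi(v)}^T\mid\cdot\,]=(1+\Delta^2/K)I_d$, which holds because the $\mu_k$'s form an orthogonal basis when $d=K$; this is the same implicit assumption the paper makes. Your treatment of $\widetilde{\Psi}_G$ (checking that $v$ keeps degree $2$ in $G^*_l$ and that the centering constant vanishes) and of the self-loop case is simply more explicit than the paper's ``analogously''.
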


In the sequel, we write $\overline{G}$, the graph $G$ where we add an edge between $1$ and $2$.

\begin{lemma}\label{lem:first_moment} Consider any $G\in \mathcal{G}_{\leq D}^{even}$. 
    If, possibly aside from the node $1$ and $2$,  $G$ only contains nodes of even degree larger than or equal to $2$, then
    \begin{equation}\label{eq:moment_first_degree}
    \mathbb{E}[x\overline{\Psi}_G]= |\Pi_V| \Delta^{2|E|}\frac{1}{K^{|V|-|\mathrm{CC}(\overline{G})|}}\ ; \quad \mathbb{E}[\overline{\Psi}_G]= |\Pi_V| \Delta^{2|E|}\frac{1}{K^{|V|-|\mathrm{CC}(G)|}}\enspace . 
    \end{equation}
\end{lemma}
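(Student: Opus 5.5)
The plan is to condition on the signal $X=(X_i)_{i\le n}$, where $X_i=b_i\mu_{k^*(i)}$, for a fixed labeling $\pi\in\Pi_V$, and then integrate over the prior. Summing over $\pi$ then just multiplies by $|\Pi_V|$, since the law of $(X,x)$ is invariant under permutations of $\{3,\ldots,n\}$ and $\pi(v_1)=1$, $\pi(v_2)=2$ are fixed.

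\textbf{Step 1 (remove the noise).} Given $X$, the entries $Y_{i,j}=X_{i,j}+Z_{i,j}$ are independent Gaussians. For every factor $\overline{\psi}_{\beta,G}=\psi_\beta$, Lemma~\ref{lem:hermite} gives $\mathbb{E}[\psi_\beta(Y_{ij})\mid X]=X_{ij}^\beta$. Expanding the sum over $(j_e)_{e\in E}$ in~\eqref{eq:Hermite_graph}, I get $\mathbb{E}[\overline{\Psi}_{G,\pi}\mid X]=P_{G,\pi}(X)$ whenever no degree-$2$ correction is triggered.

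For a degree-$2$ node $v\notin\{v_1,v_2\}$, the only modified configuration is $j_e=j_{e'}=j$ on its two half-edges. There the factor is $\mathbb{E}[Y_{\pi(v)j}^2-(1+\Delta^2/K)\mid X]=X_{\pi(v)j}^2-\Delta^2/K$ instead of $X_{\pi(v)j}^2$. So I will write $\mathbb{E}[\overline{\Psi}_{G,\pi}\mid X]=P_{G,\pi}(X)+R_{G,\pi}(X)$, where $R_{G,\pi}$ collects the terms carrying at least one $-\Delta^2/K$.

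\textbf{Step 2 (the main term).} Since $\langle X_i,X_{i'}\rangle=b_ib_{i'}\Delta^2\mathbf 1\{k^*(i)=k^*(i')\}$ (by orthogonality and $\|\mu_k\|_2=\Delta$), and a self-loop gives $\|X_i\|_2^2=\Delta^2$, I obtain
\[
P_{G,\pi}(X)=\Big(\prod_{v}b_{\pi(v)}^{\deg_G(v)}\Big)\Delta^{2|E|}\mathbf 1\{k^*\circ\pi\text{ is constant on each connected component of }G\}.
\]
All degrees are even, so the Rademacher product equals $1$. Since the $k^*(i)$ are i.i.d.\ uniform on $[K]$, the indicator has probability $K^{-(|V|-|\mathrm{CC}(G)|)}$, where isolated $v_1,v_2$ count as components. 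Multiplying by $x=\mathbf 1\{k^*(1)=k^*(2)\}$ amounts to adding the edge $(v_1,v_2)$, which replaces $\mathrm{CC}(G)$ by $\mathrm{CC}(\overline{G})$. This yields exactly the two expressions in~\eqref{eq:moment_first_degree} for the contribution of $P_{G,\pi}$.

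\textbf{Step 3 (the correction terms; main obstacle).} It remains to account for $R_{G,\pi}$ when $G$ has degree-$2$ nodes outside $\{v_1,v_2\}$. I would expand the product over degree-$2$ nodes as a sum over subsets $A$ of such nodes that are forced onto equal coordinates. Each subset contributes $(-\Delta^2/K)^{|A|}$ times a contracted-graph polynomial, which is evaluated by the same computation as in Step 2. Then I would compare these contributions with the main term. Using $\sum_j X_{ij}^2=\Delta^2$ and the $1/K$ averaging over the group of $\pi(v)$, the bookkeeping should be organized so that the contributions combine with the main term into the closed form stated. This step is where I expect the real difficulty. The interplay is delicate: Lemma~\ref{lem:first_moment_degree2} asserts these corrections cancel the main term entirely, while the present lemma asserts the stated closed form under the same hypotheses. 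I would check on the smallest example (one degree-$2$ node joined to $v_1$ and $v_2$) to decide which form the combined sum takes before writing the general inclusion--exclusion.
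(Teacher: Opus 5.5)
Your Steps~1 and~2 are exactly the paper's proof. Conditionally on the signal, Hermite unbiasedness gives $\mathbb{E}[\overline{\Psi}_{G,\pi}\mid X]=P_{G,\pi}(X)$. Orthogonality of the means then reduces $P_{G,\pi}(X)$ to $\Delta^{2|E|}$ times the indicator that $k^*\circ\pi$ is constant on connected components. Multiplying by $x$ amounts to adding the edge $(v_1,v_2)$. Your treatment of the signs $b_i$ through the even degrees is slightly more careful than the paper, which silently replaces $X_i$ by $\mu_{k^*(i)}$. However, this computation is valid only when no degree-$2$ correction is triggered, and that is precisely the case the paper treats. Its proof opens with ``As $G$ does not contain any node of degree $2$ aside from $1$ and $2$''. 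The lemma is also only ever applied to templates whose other nodes all have degree at least $4$: in Corollary~\ref{cor:first_moment}, and by analogy to $G^*$ in Lemma~\ref{lem:upper_boundexpectation}.

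Step~3 is a genuine gap, and more careful bookkeeping cannot close it, because its target is false. When some $v\notin\{v_1,v_2\}$ has degree $2$ and $d=K$, the correction terms do not recombine into the closed form. They cancel the main term exactly, so both expectations vanish; this is Lemma~\ref{lem:first_moment_degree2}. The smallest check is $V=\{v_1,v_2,v_3\}$ with a single self-loop at $v_3$, which belongs to $\mathcal{G}^{even}_{\leq D}$. Then $\overline{\Psi}_{G,\pi}=\|Y_{\pi(v_3)}\|_2^2-d(1+\Delta^2/K)$ is independent of $x$ and has mean $\Delta^2(1-d/K)$. This is $0$ for $d=K$ and never equals the predicted value $\Delta^2$ per labeling for any $d\geq K$; similarly $\mathbb{E}[x\overline{\Psi}_{G,\pi}]=\Delta^2(1-d/K)/K$ instead of the predicted $\Delta^2/K$. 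The test case you proposed (a degree-$2$ node joined to $v_1$ and $v_2$) gives $v_1,v_2$ odd degree, so it lies outside $\mathcal{G}^{even}_{\leq D}$ and would not decide the question. The tension you noticed with Lemma~\ref{lem:first_moment_degree2} is real. It is resolved by reading ``larger than or equal to $2$'' in the statement as ``larger than or equal to $4$'', i.e.\ no degree-$2$ nodes other than $v_1,v_2$. Under that hypothesis you should drop Step~3 entirely, and Steps~1--2 form a complete proof.
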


\begin{corollary}\label{cor:first_moment}
     Consider any $G\in \mathcal{G}_{\leq D}^{even}$. If $G$ does not contain any edge, then  $\mathbb{E}[x\widetilde{\Psi}_G]=1/K$
      If, possibly aside from the nodes $v_1$ and $v_2$,  $G$ only contains nodes of even degree larger than or equal to $4$ and if $G$ is connected, then
    \begin{equation}\label{eq:moment_first_degreetilde}
    \mathbb{E}[x\widetilde{\Psi}_G]= |\Pi_V| \Delta^{2|E|}\frac{1}{K^{|V|-1}} \left(1 - \frac{1}{K}\right)\enspace .
    \end{equation}
    Otherwise, we have $\mathbb{E}[x\widetilde{\Psi}_G]= 0$.
\end{corollary}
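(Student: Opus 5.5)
The plan is to split according to the structure of $G$ and reduce each case to the lemmas just established. If $G$ has no edge, then by Lemma~\ref{lem:first_moment_tilde_psi} $\widetilde{\Psi}_G=1$, so $\mathbb{E}[x\widetilde{\Psi}_G]=\mathbb{P}[k^*(1)=k^*(2)]=1/K$.

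Next come the vanishing cases. If some node other than $v_1,v_2$ has degree $2$, Lemma~\ref{lem:first_moment_degree2} gives $\mathbb{E}[x\widetilde{\Psi}_G]=0$ directly. Otherwise suppose $G$ has $c\geq 2$ connected components with edges. Write $\widetilde{\Psi}_{G,\pi}=\prod_l(\overline{\Psi}_{G^*_l,\pi}-\mathbb{E}\overline{\Psi}_{G^*_l,\pi})$ and condition on $\mathcal{F}=(\mu,k^*(1),k^*(2),b_1,b_2)$. The node sets $\pi(V_l)\setminus\{1,2\}$ are disjoint, and the corresponding $(Y_i)$ are conditionally independent given $\mathcal{F}$. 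Hence
\[
\mathbb{E}[\widetilde{\Psi}_{G,\pi}\mid\mathcal{F}]=\prod_l\big(\mathbb{E}[\overline{\Psi}_{G^*_l,\pi}\mid\mathcal{F}]-\mathbb{E}\overline{\Psi}_{G^*_l,\pi}\big).
\]

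The key computation, mirroring the proof of Lemma~\ref{lem:first_moment}, is this conditional expectation for a component $H$. Unbiasedness of the Hermite corrections gives $\mathbb{E}[\overline{\Psi}_{H,\pi}\mid X]=P_{H,\pi}(X)$. Averaging over labels of nodes outside $\{1,2\}$, with the orthogonality and rotation invariance of the $\mu_k$'s and the even degrees killing the signs, gives a quantity depending on $\mathcal{F}$ only through whether $H$ touches both $v_1$ and $v_2$. There are three sub-cases.
\begin{itemize}
\item If $H$ touches neither $v_1$ nor $v_2$, the conditional expectation is a constant, and the centered factor vanishes.
\item If $H$ touches exactly one of $v_1,v_2$, it is again constant by symmetry over the group of that node, so the factor vanishes.
\item If $H$ contains both $v_1$ and $v_2$, it equals $|\Pi|\Delta^{2|E_H|}K^{-(|V_H|-1)}\mathbf{1}_{k^*(1)=k^*(2)}$, up to the sign $(b_1b_2)^{\deg}=1$ since degrees are even. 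Its centered version is therefore proportional to $x-1/K$.
\end{itemize}
Since at most one component can contain both $v_1$ and $v_2$ when $c\geq2$, some factor vanishes identically. Hence $\mathbb{E}[x\widetilde{\Psi}_G]=\mathbb{E}[x\,\mathbb{E}[\widetilde{\Psi}_G\mid\mathcal{F}]]=0$. Note that $x$ is $\mathcal{F}$-measurable.

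For the connected case, take $G$ with all non-root degrees even and at least $4$. If $G$ fails to contain both $v_1$ and $v_2$, the same sub-case analysis makes the conditional expectation constant, so the covariance with $x$ is $0$. This also falls under "otherwise": for instance, $v_2$ isolated means $\overline{G}$ and $G$ have component counts differing by one, yet the centering removes everything. If $G$ contains both, then $\widetilde{\Psi}_G=\overline{\Psi}_G-\mathbb{E}\overline{\Psi}_G$. By Lemma~\ref{lem:first_moment} with $|\mathrm{CC}(\overline G)|=|\mathrm{CC}(G)|=1$,
\[
\mathbb{E}[x\widetilde{\Psi}_G]=\mathbb{E}[x\overline{\Psi}_G]-\tfrac1K\mathbb{E}[\overline{\Psi}_G]=|\Pi_V|\Delta^{2|E|}K^{-(|V|-1)}(1-1/K).
\]
This is \eqref{eq:moment_first_degreetilde}.

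The remaining "otherwise" case, a node of odd degree, is handled by Lemma~\ref{lem:first_moment_degreeuneven}, although the statement already restricts to $\mathcal{G}^{even}_{\leq D}$. The main obstacle is the clean conditional computation for a single component touching exactly one root, or neither. One must check that averaging over the unobserved labels yields a constant not depending on $k^*(1)$; this holds by exchangeability of the $K$ groups under the prior.
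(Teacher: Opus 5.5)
Your proof is correct and is essentially the paper's argument: the edgeless case, Lemma~\ref{lem:first_moment_degree2} for degree-$2$ nodes, vanishing whenever some component with edges misses $v_1$ or $v_2$, and then $\mathbb{E}[x\overline{\Psi}_G]-\frac{1}{K}\mathbb{E}[\overline{\Psi}_G]$ via Lemma~\ref{lem:first_moment} in the connected case. Your conditioning on $(\mu,k^*(1),k^*(2),b_1,b_2)$ and computation of per-component conditional means gives a cleaner justification of the paper's terse ``centered and independent of $x$ given $\mu$'' step, with one harmless slip: for fixed $\pi$, the component containing both roots has conditional mean $\Delta^{2|E_H|}K^{-(|V_H|-2)}x$ (no $|\Pi|$ factor, and exponent $|V_H|-2$), but only proportionality to $x$ is used there.
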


%\newpage

\subsection{Control of the variance}

For any two graphs $G^{(1)}$ and $G^{(2)}$, we compute in this section the cross-moments $\mathbb{E}[\overline{\Psi}_{G^{(2)}} \overline{\Psi}_{G^{(2)}}]$ and $\mathbb{E}[\widetilde{\Psi}_{G^{(1)},\pi^{(1)}} \widetilde{\Psi}_{G^{(2)},\pi^{(2)}}]$.

\begin{lemma}\label{lem:covariance_degree_odd}
    Consider any $G^{(1)}\in \mathcal{G}_{\leq D}\setminus \mathcal{G}^{even}_{\leq D}$ and any $G^{(2)}\in \mathcal{G}^{even}_{\leq D}$.  For any injective maps $\pi^{(1)}$ and $\pi^{(2)}$, we have 
   \[
   \mathbb{E}[\overline{\Psi}_{G^{(1)},\pi^{(1)}} \overline{\Psi}_{G^{(2)},\pi^{(2)}}]=0\ ; \quad \quad  \mathbb{E}[\widetilde{\Psi}_{G^{(1)},\pi^{(1)}} \widetilde{\Psi}_{G^{(2)},\pi^{(2)}}]=0.
    \]
   \end{lemma}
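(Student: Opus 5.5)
The plan is a parity argument based on the Rademacher signs $b_i$. Since $G^{(1)}\notin \mathcal{G}^{even}_{\leq D}$, some node $v\in V^{(1)}$ has odd degree $\mathrm{deg}_{G^{(1)}}(v)$. Set $i_0:=\pi^{(1)}(v)$. The idea is to flip the sign of the whole row $Y_{i_0}$ and check that the product picks up exactly a factor $-1$. The model is invariant under $Y_{i_0}\mapsto -Y_{i_0}$: $Y_{i_0}=b_{i_0}\mu_{k^*(i_0)}+Z_{i_0}$, with $b_{i_0}$ Rademacher independent of everything else and $Z_{i_0}$ symmetric. Hence $(Y_{i_0},(Y_i)_{i\neq i_0})$ and $(-Y_{i_0},(Y_i)_{i\neq i_0})$ have the same distribution.

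\textbf{Behaviour of $\overline{\Psi}_{G,\pi}$ under the flip.} I will show that for any template $G$ and labeling $\pi$, $\overline{\Psi}_{G,\pi}$ is multiplied by $(-1)^{\mathrm{deg}_G(\pi^{-1}(i_0))}$, with the convention that the exponent is $0$ when $i_0\notin \pi(V)$.

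In each summand of~\eqref{eq:Hermite_graph}, the factors involving row $i_0$ are $\prod_j \overline{\psi}_{\beta_{v,j},G}(Y_{i_0,j})$, where $v=\pi^{-1}(i_0)$. Each $\overline{\psi}_{\beta,G}$ is either a Hermite polynomial $\psi_\beta$, which has parity $(-1)^\beta$, or the even polynomial $x^2-(1+\Delta^2/K)$ with $\beta=2$. So this product changes by $(-1)^{\sum_j\beta_{v,j}}$. A self-loop at $v$ contributes $2$ to $\sum_j \beta_{v,j}$ and $2$ to the degree, and any other edge at $v$ contributes $1$ to each. Therefore $\sum_j\beta_{v,j}=\mathrm{deg}_G(v)$, and the sign is the same for every summand.

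Taking $G=G^{(1)}$, $\pi=\pi^{(1)}$ gives the factor $-1$. For $G^{(2)}$, every node has even degree, so $\overline{\Psi}_{G^{(2)},\pi^{(2)}}$ is unchanged whether or not $i_0\in\pi^{(2)}(V^{(2)})$. The product therefore changes sign under a measure-preserving map, so its expectation equals its own negative and $\mathbb{E}[\overline{\Psi}_{G^{(1)},\pi^{(1)}}\overline{\Psi}_{G^{(2)},\pi^{(2)}}]=0$.

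\textbf{Passing to $\widetilde{\Psi}$.} By~\eqref{eq:definition_psi_tilde}, $\widetilde{\Psi}_{G,\pi}$ is a product over the connected components $G_l$ of $[\overline{\Psi}_{G^*_l,\pi}-\mathbb{E}\overline{\Psi}_{G^*_l,\pi}]$. The node $v$ lies in exactly one component $G_{l_0}$ of $G^{(1)}$. Within $G^*_{l_0}$ its degree is unchanged, so it is odd, while $i_0$ is not in the image of the other components under the injective $\pi^{(1)}$. By the previous step, $\overline{\Psi}_{G^*_{l_0},\pi^{(1)}}$ is odd under the flip and has mean zero, so the centering constant vanishes. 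All other factors are invariant.

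For $G^{(2)}$, each $G^{(2),*}_l$ has only even-degree nodes, so every factor is invariant. The only point to check is that adding the isolated nodes $v_1,v_2$ in $G^*_l$ does not create odd degrees, which holds since isolated nodes have degree $0$. The same sign-flip argument then yields $\mathbb{E}[\widetilde{\Psi}_{G^{(1)},\pi^{(1)}}\widetilde{\Psi}_{G^{(2)},\pi^{(2)}}]=0$.

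\textbf{Main obstacle.} The delicate step is the degree-parity bookkeeping: confirming that $\beta_{v,j}$ summed over $j$ equals $\mathrm{deg}_G(v)$ with self-loops counted twice, and that the special degree-$2$ correction in~\eqref{eq:psi_bar} preserves parity.
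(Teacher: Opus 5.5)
Your proposal is correct and follows the same route as the paper: you flip the sign of row $\pi^{(1)}(v)$ for an odd-degree node $v$, use that the law of $Y$ is invariant under this flip, and observe that $\overline{\Psi}_{G^{(1)},\pi^{(1)}}$ changes sign while $\overline{\Psi}_{G^{(2)},\pi^{(2)}}$ does not; the paper treats $\widetilde{\Psi}$ ``analogously'', and the vanishing centering constant you describe is exactly its argument in the proof of Lemma~\ref{lem:first_moment_degreeuneven}. One small imprecision: if the odd-degree node is $v_1$ or $v_2$, then $i_0\in\{1,2\}$ does lie in the image of every $G^*_l$, but it has degree $0$ there, so your parity rule $(-1)^{\mathrm{deg}}$ still leaves those factors invariant.
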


\begin{lemma}\label{lem:covariance_degree2}
 Consider a graph $G^{(1)}\in \mathcal{G}^{even}_{\leq D}$ that contains at least a degree $2$ node say $v$ with $v\notin\{v^{(1)}_1,v^{(1)}_2\}$, and consider any graph $G^{(2)}$. For any $\pi^{(1)}$ and $\pi^{(2)}$ such that $\pi^{(1)}(v)$ is not in the image of $\pi^{(2)}$, we have 
\[
\mathbb{E}[\overline{\Psi}_{G^{(1)},\pi^{(1)}} \overline{\Psi}_{G^{(2)},\pi^{(2)}}]=0\  ; \quad \quad  \mathbb{E}[\widetilde{\Psi}_{G^{(1)},\pi^{(1)}} \widetilde{\Psi}_{G^{(2)},\pi^{(2)}}]=0\enspace . 
 \]
\end{lemma}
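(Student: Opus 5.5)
We prove Lemma~\ref{lem:covariance_degree2} by conditioning on everything except the row $Y_{i}$ with $i:=\pi^{(1)}(v)$. By assumption $i$ is not in the image of $\pi^{(2)}$, so $\overline{\Psi}_{G^{(2)},\pi^{(2)}}$ does not involve $Y_i$. Neither do the other factors of $\overline{\Psi}_{G^{(1)},\pi^{(1)}}$, since $\pi^{(1)}$ is injective. Let $\mathcal{F}$ be the $\sigma$-field generated by $\mu$, by $(k^*(j),b_j)_{j\neq i}$ and by $(Y_j)_{j\neq i}$. Then $\overline{\Psi}_{G^{(2)},\pi^{(2)}}$ is $\mathcal F$-measurable, and the identity reduces to showing $\mathbb{E}[\overline{\Psi}_{G^{(1)},\pi^{(1)}}\mid\mathcal F]=0$. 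This is exactly the property announced after~\eqref{eq:psi_bar} and used in Lemma~\ref{lem:first_moment_degree2}, and we rederive it as follows.

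\textbf{Step 1 (isolate the factor at $v$).} Expand~\eqref{eq:Hermite_graph}. Since $\mathrm{deg}_{G^{(1)}}(v)=2$, the node $v$ carries two half-edges, with coordinate labels $j_e,j_{e'}$. If $v$ has a self-loop, both half-edges belong to the same edge, so $j_e=j_{e'}$; otherwise they belong to two distinct edges. In both cases the factor depending on $Y_i$ is $\prod_j \overline{\psi}_{\beta_{v,j},G^{(1)}}(Y_{i,j})$. When $j_e\neq j_{e'}$ this factor equals $\psi_1(Y_{i,j_e})\psi_1(Y_{i,j_{e'}})=Y_{i,j_e}Y_{i,j_{e'}}$. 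When $j_e=j_{e'}=j$ it equals $Y_{i,j}^2-(1+\Delta^2/K)$. All remaining factors are $\mathcal F$-measurable. Hence it suffices to show
\[
\mathbb{E}\bigl[Y_iY_i^T-(1+\Delta^2/K)I_d\mid\mathcal F\bigr]=0 ,
\]
because the diagonal entries of this matrix give the case $j_e=j_{e'}$ and the off-diagonal entries give the case $j_e\neq j_{e'}$.

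\textbf{Step 2 (compute the conditional second moment).} Conditionally on $\mathcal F$, we have $Y_i=b_i\mu_{k^*(i)}+Z_i$, where $k^*(i)$ is uniform on $[K]$ and $b_i,Z_i$ are independent of $\mathcal F$. Therefore
\[
\mathbb{E}[Y_iY_i^T\mid\mathcal F]=I_d+\frac1K\sum_{k=1}^K\mu_k\mu_k^T .
\]
Using $d=K$, orthogonality of the $\mu_k$, and $\|\mu_k\|_2=\Delta$, we get $\sum_k\mu_k\mu_k^T=\Delta^2 I_d$. So the conditional second moment equals $(1+\Delta^2/K)I_d$, which gives the claim. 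The case $j_e\neq j_{e'}$ actually only needs the cross term to vanish. Note that the Hermite choice $\psi_2(x)=x^2-1$ would leave a bias $\Delta^2/K$, which is precisely why the modified polynomial~\eqref{eq:psi_bar} is used.

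\textbf{Step 3 (pass to $\widetilde\Psi$).} Write $\widetilde{\Psi}_{G^{(1)},\pi^{(1)}}=\prod_l[\overline{\Psi}_{G^{(1)*}_l,\pi^{(1)}}-\mathbb{E}\overline{\Psi}_{G^{(1)*}_l,\pi^{(1)}}]$. Node $v$ lies in exactly one component $G^{(1)}_{l_0}$ and still has degree $2$ there. By Step 2, $\mathbb{E}[\overline{\Psi}_{G^{(1)*}_{l_0},\pi^{(1)}}\mid\mathcal F]=0$, hence its unconditional mean is also $0$, so the centering constant for this component vanishes. The other component factors and all of $\widetilde{\Psi}_{G^{(2)},\pi^{(2)}}$ (a polynomial of the $\overline{\Psi}_{G^{(2)*}_l,\pi^{(2)}}$) are $\mathcal F$-measurable. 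The tower property then gives zero.

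The main obstacle is bookkeeping rather than a hard estimate. One point needs care: when $d>K$ the identity fails, so the argument uses $d=K$, the standing assumption for this part. The other is to verify that the degree-2 correction in~\eqref{eq:psi_bar} is determined by $\mathrm{deg}_{G}(v)$ in the full graph, which coincides with the degree in the component containing $v$, so the corrected factor is the same in $\overline{\Psi}_{G^{(1)}}$ and in $\overline{\Psi}_{G^{(1)*}_{l_0}}$.
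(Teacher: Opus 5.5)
Your proposal is correct and follows essentially the same route as the paper: condition on everything except the row $Y_{\pi^{(1)}(v)}$, which $\overline{\Psi}_{G^{(2)},\pi^{(2)}}$ does not involve. Then isolate the degree-2 factor and use $\frac1K\sum_k\mu_k\mu_k^T=\frac{\Delta^2}{K}I_d$ (valid for $d=K$) to cancel the corrected second moment. Your Step 3 for $\widetilde{\Psi}$ is a correct, more explicit version of what the paper dismisses as ``similar'': the centering constant of the component containing $v$ vanishes, and the other factors are measurable with respect to the conditioning.
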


\begin{proposition}\label{prop:covariance_without_degree_2}
    Consider any $G^{(1)}$ and $G^{(2)}$ in $\mathcal{G}_{\leq D}^{\mathrm{even}}$ that do not contain any degree $2$ node, possibly aside from their first two nodes $v_1$ and $v_2$. Fix any injective mapping $\pi^{(1)}$ and $\pi^{(2)}$ for $G^{(1)}$ and $G^{(2)}$ with $\pi^{(1)}(v^{(1)}_1)=\pi^{(2)}(v^{(2)}_1)=1$ and   $\pi^{(2)}(v^{(1)}_2)=\pi^{(2)}(v^{(2)}_2)=2$. Define the subset of matched nodes $\mathbf{M}\subset V^{(1)}\times  V^{(2)}$ by 
    \[
    \mathbf{M}:= \{(v^{(1)}_i,v^{(2)}_j): \pi^{(1)}(v_i^{(1)})= \pi^{(2)}(v_j^{(2)}) \} \ . 
    \]
 Given a pairing $\mathbf{P}\in \mathcal{P}[\mathbf{M}]$, recall the pruned graph $G_{\Delta}[\mathbf{M},\mathbf{P}]= (V_{\Delta},E_{\Delta})$ and the number $|\mathrm{Cyc}|$ of pruned cycles. Then, we have 
    We have 
        \[
            \mathbb{E}[\overline{\Psi}_{G^{(1)},\pi^{(1)}} \overline{\Psi}_{G^{(2)},\pi^{(2)}}]= \sum_{\mathbf{P}\in \mathcal{P}[\mathbf{M}]}\Delta^{2|E_{\Delta}|}d^{|\mathrm{Cyc}|}\frac{1}{K^{|V_{\Delta}|-|\mathrm{CC}(G_{\Delta})|}}
        \]
    \end{proposition}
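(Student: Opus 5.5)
We first condition on the signal $X$, i.e.\ on $\mu$, $k^*$ and the $b_i$'s. Since neither graph has a degree $2$ node outside $v_1,v_2$, every factor $\overline{\psi}$ in \eqref{eq:Hermite_graph} is a genuine Hermite polynomial $\psi_{\beta}$. Conditionally on $X$, the entries $Y_{ij}=X_{ij}+Z_{ij}$ are independent, so the cross-moment factorizes over the coordinates $(i,j)\in[n]\times[d]$. We then expand each $\psi_{\beta}(X_{ij}+Z_{ij})$ via \eqref{eq:recursion_hermite} as $\sum_l \binom{\beta}{l}\psi_l(Z_{ij})X_{ij}^{\beta-l}$. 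Lemma~\ref{lem:hermite} gives
\[
\mathbb{E}[\psi_l(z)\psi_{l'}(z)]=\mathbf 1_{l=l'}\,l!\,.
\]
Combinatorially, for a fixed individual $i$ and coordinate $j$, we choose which half-edges of $G^{(1)}$ and of $G^{(2)}$ at nodes labelled $i$ with coordinate $j$ are "noise" half-edges. These must be paired one-to-one across the two graphs; the factor $l!$ together with the binomials counts exactly these bijections. All remaining half-edges contribute a factor $X_{ij}$. Pairing can only occur at a common label $i$, i.e.\ at nodes matched by $\mathbf M$. Summing over coordinate assignments $(j_e)$, we obtain a sum over half-edge pairings $\mathbf P\in\mathcal P[\mathbf M]$ with the constraint that paired half-edges carry the same coordinate. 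The signal-only term is the case where no half-edge is paired.

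Next we perform the sum over coordinates $j_e$ for a fixed $\mathbf P$. Along a colored path, pairing forces equality of coordinates, and the two halves of an edge share one coordinate. Hence each cycle in the colored structure has one free coordinate, which is summed freely and contributes $d^{|\mathrm{Cyc}|}$ with no $X$-factor. An open path ties the two unpaired extremity half-edges to a common coordinate $j$, giving $\sum_j X_{a j}X_{b j}=\langle X_a,X_b\rangle$, where $a,b$ are the extremity nodes. This is exactly the replacement edge in $G_\Delta$. Edges untouched by $\mathbf P$ remain $\langle X_{\pi(l(e))},X_{\pi(r(e))}\rangle$. We obtain
\[
\mathbb{E}[\,\cdot\,|X]=\sum_{\mathbf P}d^{|\mathrm{Cyc}|}\prod_{e\in E_\Delta}\langle X_{l(e)},X_{r(e)}\rangle\,.
\]
A careful check is needed that the multiplicities (the $l!$ and binomial factors, and self-loops contributing $X_{ij}^2$) match one-to-one with pairings. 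This bookkeeping is the step I expect to be the most delicate. I would handle it by working throughout with labelled half-edges, so that each monomial corresponds bijectively to a choice of paired sets plus a bijection.

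Finally we take the expectation over the prior. We have $\langle X_a,X_b\rangle=b_ab_b\Delta^2\mathbf 1_{k^*(a)=k^*(b)}$ by orthogonality, and $\langle X_a,X_a\rangle=\Delta^2$. So $\prod_{e\in E_\Delta}$ equals $\Delta^{2|E_\Delta|}\prod_e b_{l(e)}b_{r(e)}$ times the indicator that $k^*$ is constant on each connected component of $G_\Delta$. We need every node of $G_\Delta$ to have even degree, so that the Rademacher product equals $1$. This follows because each node has even degree in $G^{(1)}$ and $G^{(2)}$, and the operations (deleting paired half-edges in pairs, re-joining open-path extremities) change degrees by even amounts; a parity check per node suffices. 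Otherwise the expectation would vanish anyway, consistent with the formula once we verify parity. With $k^*$ i.i.d.\ uniform on $[K]$, the probability that labels are constant on each component is $K^{-(|V_\Delta|-|\mathrm{CC}(G_\Delta)|)}$. The norm constraint $\|\mu_k\|=\Delta$ removes any dependence on $\mu$, which yields the stated formula.
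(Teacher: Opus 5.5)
Your proposal is correct and follows essentially the same route as the paper: condition on the signal, expand the Hermite factors via \eqref{eq:recursion_hermite} so that the $l!$ and binomial factors become a sum over half-edge pairings $\mathbf P\in\mathcal P[\mathbf M]$ (the content of Lemma~\ref{lem:claim}), sum over coordinates to obtain $d^{|\mathrm{Cyc}|}\prod_{e\in E_\Delta}\langle\cdot,\cdot\rangle$, and then average over $k^*$ using orthogonality and $\|\mu_k\|_2=\Delta$. The one place you are more careful than the paper is the Rademacher signs: the paper writes $\mu_{k^*(i)}$ in place of $X_i=b_i\mu_{k^*(i)}$ and never mentions the $b_i$. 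Your observation that every node of $G_\Delta$ has even degree is exactly what guarantees $\prod_{e\in E_\Delta}b_{l(e)}b_{r(e)}=1$, so it fills that omission.
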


\begin{proposition}\label{prop:covariance_general}
    Consider any $G^{(1)}=(V^{(1)},E^{(1)})$ and $G^{(2)}=(V^{(2)},E^{(2)})$ in $\mathcal{G}_{\leq D}^{\mathrm{even}}$. Fix any injective mapping $\pi^{(1)}$ and $\pi^{(2)}$ for $G^{(1)}$ and $G^{(2)}$ such that  with $\pi^{(1)}(v^{(1)}_1)=\pi^{(2)}(v^{(2)}_1)=1$ and   $\pi^{(2)}(v^{(1)}_2)=\pi^{(2)}(v^{(2)}_2)=2$ and such that all degree $2$ nodes in $G^{(1)}$ and in $G^{(2)}$ are matched, that is they arise in $\mathbf{M}$. 
Given a pairing $\mathbf{P}\in \mathcal{P}(\mathbf{M})$, we define $V_{2,np}(\mathbf{P})$ the subset of degree $2$ nodes in $V^{(1)}\cup V^{(2)}\setminus\{v^{(1)}_1,v^{(1)}_2,v^{(2)}_1,v^{(2)}_2\}$ such that none of their incident half-edges belongs to $\mathbf{P}$.
\begin{eqnarray*}
\left|\mathbb{E}[\overline{\Psi}_{G^{(1)},\pi^{(1)}} \overline{\Psi}_{G^{(2)},\pi^{(2)}}] - \sum_{\mathbf{P}\in  \mathcal{P}_{\mathrm{full}}(\mathbf{M})}\frac{\Delta^{2|E_{\Delta}|}d^{|\mathrm{Cyc}|}}{K^{|V_{\Delta}|-|\mathrm{CC}(G_{\Delta})|}}  \right|\leq   \sum_{\mathbf{P}\in \mathcal{P}[\mathbf{M}]\setminus \mathcal{P}_{\mathrm{full}}(\mathbf{M})} 2^{|V_{2,np}(\mathbf{P}) |} \frac{\Delta^{2|E_{\Delta}|}d^{|\mathrm{Cyc}|}}{K^{|V_{\Delta}|-|\mathrm{CC}(G_{\Delta})|}}\ .
\end{eqnarray*}
   \end{proposition}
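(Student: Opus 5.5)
We prove Proposition~\ref{prop:covariance_general} by reducing it to the computation behind Proposition~\ref{prop:covariance_without_degree_2}. The only difference between the two situations is the modified factor $\overline{\psi}_{2,G}(x)=x^2-(1+\Delta^2/K)$ at degree~$2$ nodes, which replaces $\psi_2(x)=x^2-1$. We write
\[
x^2-(1+\Delta^2/K)=\psi_2(x)-\Delta^2/K .
\]
Then, for each degree $2$ node $v\notin\{v_1,v_2\}$ and each coordinate $j$ with $\beta_{v,j}=2$, the factor splits into a Hermite part and a constant shift $-\Delta^2/K$.

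\textbf{Step 1: expansion.} Expanding $\overline{\Psi}_{G^{(1)},\pi^{(1)}}\overline{\Psi}_{G^{(2)},\pi^{(2)}}$ over these choices writes the product as a signed sum of products of genuine multivariate Hermite polynomials, times powers of $\Delta^2/K$. Conditionally on $X$, we compute the expectation of a product of two Hermite monomials in $Y=X+Z$ with the standard Wick-type formula. This formula follows from \eqref{eq:recursion_hermite} and the orthogonality in Lemma~\ref{lem:hermite}:
\[
\mathbb{E}[\psi_a(X+Z)\psi_b(X+Z)]=\sum_{p}\binom{a}{p}\binom{b}{p}p!\,X^{a-p}X^{b-p}.
\]
Combinatorially, $p$ is the number of half-edges of the two graphs at the same matched node (same coordinate) that are paired. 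Summing over coordinates $j_e$, this yields the sum over pairings $\mathbf{P}\in\mathcal{P}(\mathbf{M})$:
\begin{itemize}
\item a closed cycle of paired half-edges frees one coordinate index, which gives the factor $d^{|\mathrm{Cyc}|}$;
\item an open path glues its two extremities into a single edge of $G_\Delta$.
\end{itemize}
The remaining expectation over the prior of $\prod_{e\in E_\Delta}\langle X_{l(e)},X_{r(e)}\rangle$ is $\Delta^{2|E_\Delta|}K^{-(|V_\Delta|-|\mathrm{CC}(G_\Delta)|)}$. The reason is that, by orthogonality of the means and the even-degree/Rademacher symmetry, every node of a connected component must carry the same label, as in Lemma~\ref{lem:first_moment}. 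This reproduces the formula of Proposition~\ref{prop:covariance_without_degree_2} for each term.

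\textbf{Step 2: effect of the shift at degree $2$ nodes.} Fix a pairing $\mathbf{P}$.
\begin{itemize}
\item Suppose a degree $2$ node $v$ has at least one incident half-edge in $\mathbf{P}$. Then we argue, node by node, that the contributions of the shift cancel against the $X^2$ terms, so that the weight of that node is unchanged or bounded by~$1$.
\item Suppose $v\in V_{2,np}(\mathbf{P})$, i.e.\ none of its half-edges is paired. Then $v$ appears in $G_\Delta$ with degree $2$. Its Hermite factor contributes $\mathbb{E}[X_{v,j}X_{v,j'}]$-type terms. Summed over $j=j'$, the shift contributes the same order, $\Delta^2/K$ per node. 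Combined with the original term this gives at most a factor $2$ per such node, whence $2^{|V_{2,np}(\mathbf{P})|}$. Here we use $\sum_j X_{v,j}^2=\Delta^2$ and that $\mu_k$'s uniformly labelled give $\mathbb{E}[X_vX_v^T]=(\Delta^2/K)I_d$ when $d=K$.
\item When $\mathbf{P}\in\mathcal{P}_{\mathrm{full}}(\mathbf{M})$, every node is fully matched, so $V_{2,np}=\emptyset$. The node-by-node check then shows the shifts cancel exactly. This gives the main term with no error, and the error sum runs only over non-full pairings, as in the statement.
\end{itemize}

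\textbf{Main obstacle.} The delicate point is the bookkeeping in Step~2 for a degree $2$ node whose two half-edges are only partly paired, or paired to different nodes. Here one must check that the shift terms never produce a weight exceeding the claimed $\Delta^{2|E_\Delta|}d^{|\mathrm{Cyc}|}K^{-(|V_\Delta|-|\mathrm{CC}|)}$ bound. We would handle this by conditioning on all labels except that of the node (as in Lemma~\ref{lem:first_moment_degree2}) and bounding the absolute value term by term via the triangle inequality.
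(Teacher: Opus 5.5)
Your Step~2 has a genuine gap at the point where the factor $2^{|V_{2,np}(\mathbf{P})|}$ has to be produced. You treat a node $v\in V_{2,np}(\mathbf{P})$ as a degree-$2$ node of $G_\Delta$ whose label enters only through $X_vX_v^\top$. But the proposition assumes that every degree-$2$ node is matched, so $v^{(1)}$ is merged with some $v^{(2)}$. Pairs at a merged node always join one half-edge from each graph, so none of the half-edges of $v^{(2)}$ are paired either. Hence the merged node has degree $2+\deg_{G^{(2)}}(v^{(2)})\ge 4$ in $G_\Delta$. Moreover, $k^*(\pi^{(1)}(v^{(1)}))$ also enters through the factors $\mu_{k,j''}$ carried by the half-edges of $v^{(2)}$.

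Consequently you cannot integrate out this label with $\mathbb{E}[X_vX_v^\top]=(\Delta^2/K)I_d$. If you could, you would get exact cancellation, which is the unmatched situation of Lemma~\ref{lem:covariance_degree2}. The claim that the shift is ``of the same order, hence at most a factor $2$ per node'' is therefore asserted, not proved. It also cannot be done node by node independently, because taking the shift at one node changes the graph seen by the others.

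Relatedly, your first bullet and your ``main obstacle'' point at the wrong case. A node with at least one paired half-edge is the trivial case. The constant $-\Delta^2/K$ only multiplies $\mathbb{E}[\psi_b(Y)\mid X]=X^b$, so it enters only the $p=0$ term of your product formula, and there is nothing to cancel. Conditioning on the node's label as in Lemma~\ref{lem:first_moment_degree2} only helps for unmatched nodes.

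What is missing is an explicit computation of the shift terms, as in the paper.

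\begin{enumerate}
\item For $v\in V_{2,np}(\mathbf{P})$ with coordinates $j,j'$ on its two unpaired half-edges, the factor is $\mu_{k(v),j}\mu_{k(v),j'}-(\Delta^2/K)\mathbf{1}\{j=j'\}$. Expand the product of these factors over subsets $T\subseteq V_{2,np}(\mathbf{P})$.
\item Taking the shift at $v\in T$ amounts to splicing the two half-edges of $v$ inside its own graph. Its two $G_\Delta$-edges towards $u$ and $w$ are replaced by a single edge $u$--$w$. If they form a loop, a cycle is closed instead, giving an extra factor $d$ after summing over the coordinate.
\item This produces a graph $G_{\Delta,T}$ with $|E_\Delta|-|T|$ edges and $|\mathrm{Cyc}(T)|$ additional cycles. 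Its expected weight is $(-1)^{|T|}\Delta^{2|E_\Delta|}d^{|\mathrm{Cyc}|+|\mathrm{Cyc}(T)|}K^{-|T|-|V_\Delta|+|\mathrm{CC}(G_{\Delta,T})|}$.
\item The key step is the combinatorial inequality $|T|+|\mathrm{CC}(G_\Delta)|-|\mathrm{CC}(G_{\Delta,T})|-|\mathrm{Cyc}(T)|\ge 0$. It is proved by induction on $T$: each splice either closes a cycle without changing the number of components, or increases that number by at most one.
\item Together with $d\le K$, this bounds each of the $2^{|V_{2,np}(\mathbf{P})|}$ terms by the main weight $\Delta^{2|E_\Delta|}d^{|\mathrm{Cyc}|}K^{-(|V_\Delta|-|\mathrm{CC}(G_\Delta)|)}$, which is the claimed bound.
\end{enumerate}

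For $\mathbf{P}\in\mathcal{P}_{\mathrm{full}}(\mathbf{M})$ one has $V_{2,np}(\mathbf{P})=\emptyset$, so those terms are exact.
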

   We refer to Subsection~\ref{pairmatch} for the definition of $\mathcal{P}_{\mathrm{full}}(\mathbf{M})$, namely the set of pairings according to the matchings $\mathbf M$ such that all half-edges attached to nodes in $\mathbf M$ are paired. In particular, $\mathcal{P}_{\mathrm{full}}(\mathbf{M})$ is empty unless $|V^{(1)}|=|V^{(2)}|$, all nodes are matched are matched, and all half-edges are paired. This disctinction between $\mathbf{P}\in \mathcal{P}_{\mathrm{full}}(\mathbf{M})$ and $\mathbf{P}\in \mathcal{P}[\mathbf{M}]\setminus \mathcal{P}_{\mathrm{full}}(\mathbf{M})$ will be of interest later only in the case where $G^{(1)}=G^{(2)}$.

   In the following proposition, recall that $\mathbf{M}_{\mathrm{full}}\subset \mathbf{M}$, is the subset of node matching such that all half-edges incident to the corresponding nodes are paired.

\begin{proposition}\label{prop:covariance_generaltilde}
    Consider any two graphs $G^{(1)}$ and $G^{(2)}$ in $\mathcal G_{\leq D}^{\mathrm{even}}$. 
    We have  
    \begin{eqnarray}\nonumber
    \lefteqn{\left|\mathbb{E}[\overline{\Psi}_{G^{(1)}} \overline{\Psi}_{G^{(2)}}] - \sum_{\mathbf{M}\in \mathcal{M}}\sum_{\mathbf{P}\in  \mathcal{P}_{\mathrm{full}}(\mathbf{M})}\frac{(n-2)!}{(n - |V_{\Delta}| )!}\Delta^{2|E_{\Delta}|}d^{|\mathrm{Cyc}|}\frac{1}{K^{|V_{\Delta}|-|\mathrm{CC}(G_{\Delta})|}}  \right|} &\\
    &\leq  \sum_{\mathbf{M} \in \mathcal{M}} \sum_{\mathbf{P}\in \mathcal{P}[\mathbf{M}]\setminus \mathcal{P}_{\mathrm{full}}(\mathbf{M})}2^{2(|\mathbf{M}|-|\mathbf{M}_{\mathrm{full}}|) } \frac{(n-2)!}{(n-|V_{\Delta}|)!} \Delta^{2|E_{\Delta}|}d^{|\mathrm{Cyc}|}\frac{1}{K^{|V_{\Delta}|-|\mathrm{CC}(G_{\Delta})|}} \enspace .\label{eq:moment_order_2}
    \end{eqnarray}
Consider any $G \in \mathcal G_{\leq D}^{\mathrm{even}}$ such that all its nodes have degree larger than or equal to $4$. Then, we have
\begin{equation}\label{eq:formula_variance}
    \mathbb{E}[\overline{\Psi}_{G}^2] 
    =  \sum_{\mathbf{M}\mathbf{P} \in \mathcal{MP}}  \frac{(n-2)!}{(n-|V_{\Delta}|)!} \Delta^{2|E_{\Delta}|}d^{|\mathrm{Cyc}|}\frac{1}{K^{|V_{\Delta}|-|\mathrm{CC}(G_{\Delta})|}}\enspace .
\end{equation}
If $G^{(1)}$ or $G^{(2)}$ is the graph with two isolated nodes, then $\mathbb{E}[\widetilde{\Psi}_{G^{(1)}} \widetilde{\Psi}_{G^{(2)}}] = \mathbf{1}\{G^{(1)}\backsimeq  G^{(2)}\}$. 
If, neither $G^{(1)}$ nor $G^{(2)}$ is the graph with two isolated nodes, we have 
\begin{eqnarray}\nonumber
    \lefteqn{\left|\mathbb{E}[\widetilde{\Psi}_{G^{(1)}} \widetilde{\Psi}_{G^{(2)}}] - \sum_{\mathbf{M}\in \mathcal{M}^{\star}}\sum_{\mathbf{P}\in  \mathcal{P}_{\mathrm{full}}(\mathbf{M})}\frac{(n-2)!}{(n - |V_{\Delta}|)!}\Delta^{2|E_{\Delta}|}d^{|\mathrm{Cyc}|}\frac{1}{K^{|V_{\Delta}|-|\mathrm{CC}(G_{\Delta})|}}  \right|} &\\
    &\leq  \sum_{\mathbf{M} \in \mathcal{M}^{\star}} \sum_{\mathbf{P}\in \mathcal{P}[\mathbf{M}]\setminus \mathcal{P}_{\mathrm{full}}(\mathbf{M})} 2^{4[|\mathbf{M}|- |\mathbf{M}_{\mathrm{full}}|]} \frac{(n-2)!}{(n-|V_{\Delta}|)!} \Delta^{2|E_{\Delta}|}d^{|\mathrm{Cyc}|}\frac{1}{K^{|V_{\Delta}|-|\mathrm{CC}(G_{\Delta})|}}\enspace .\label{eq:covariance_tilde} 
    \end{eqnarray}
   \end{proposition}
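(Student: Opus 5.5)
The plan is to derive the three parts of Proposition~\ref{prop:covariance_generaltilde} by summing the labelled estimates of Propositions~\ref{prop:covariance_without_degree_2} and~\ref{prop:covariance_general} over labellings, and then to handle the centering in~\eqref{eq:definition_psi_tilde}.

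\textbf{First bound.} Write $\mathbb{E}[\overline{\Psi}_{G^{(1)}}\overline{\Psi}_{G^{(2)}}]=\sum_{\pi^{(1)},\pi^{(2)}}\mathbb{E}[\overline{\Psi}_{G^{(1)},\pi^{(1)}}\overline{\Psi}_{G^{(2)},\pi^{(2)}}]$ and group the pairs $(\pi^{(1)},\pi^{(2)})$ by the induced node matching $\mathbf{M}$, defined as the set of pairs of nodes sharing a label. By Lemma~\ref{lem:covariance_degree2}, a pair contributes zero unless every degree $2$ node is matched, so only $\mathbf{M}\in\mathcal{M}$ survive. For a fixed $\mathbf{M}$, the number of label pairs is $(n-2)!/(n-|V^{(1)}|-|V^{(2)}|+|\mathbf{M}|)!$. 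The key observation is that $|V_{\Delta}|$ equals $|V^{(1)}|+|V^{(2)}|-|\mathbf{M}|$, since fully matched nodes are kept as isolated nodes in $G_\Delta$ (as in Figure~\ref{tildeG}), which produces the factor $(n-2)!/(n-|V_\Delta|)!$. Applying Proposition~\ref{prop:covariance_general} to each pair then gives~\eqref{eq:moment_order_2}. To convert the factor $2^{|V_{2,np}(\mathbf{P})|}$ into $2^{2(|\mathbf{M}|-|\mathbf{M}_{\mathrm{full}}|)}$, note that every node of $V_{2,np}$ is matched but not fully matched. Each pair in $\mathbf{M}\setminus\mathbf{M}_{\mathrm{full}}$ contributes at most two such nodes, one from each graph, so $|V_{2,np}|\le 2(|\mathbf{M}|-|\mathbf{M}_{\mathrm{full}}|)$.

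\textbf{Variance formula~\eqref{eq:formula_variance}.} When $G$ has all degrees at least $4$, Proposition~\ref{prop:covariance_without_degree_2} is exact. The same summation over labellings, now with no constraint on $\mathbf{M}$ beyond the conditions on $v_1$ and $v_2$, directly yields the exact expression.

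\textbf{Centered version.} If either graph is the empty template, the claim follows from Lemma~\ref{lem:first_moment_tilde_psi}: $\widetilde{\Psi}=1$ for the empty template and $\mathbb{E}[\widetilde{\Psi}_G]=0$ otherwise. In general, expand
\[
\widetilde{\Psi}_{G,\pi}=\prod_{l}\bigl(\overline{\Psi}_{G_l^*,\pi}-\mathbb{E}\overline{\Psi}_{G_l^*}\bigr)=\sum_{A\subset[c]}(-1)^{c-|A|}\prod_{l\notin A}\mathbb{E}[\overline{\Psi}_{G_l^*}]\;\overline{\Psi}_{\cup_{l\in A}G_l^*,\pi}.
\]
Then apply the labelled moment formula to each product $\overline{\Psi}_{G_A^{(1)},\pi^{(1)}}\overline{\Psi}_{G_B^{(2)},\pi^{(2)}}$, and also to the factors $\mathbb{E}[\overline{\Psi}_{G_l^*}]$, each of which is given by a single-graph pairing expansion as in Lemma~\ref{lem:first_moment}.

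The aim is an inclusion–exclusion cancellation. A matching–pairing configuration in which some connected component $G_l$ of $G^{(1)}$ (or of $G^{(2)}$) has no node in $\mathbf{M}$ factorizes. In that case the component splits off in $G_\Delta$: its own pruned component contributes exactly $\mathbb{E}[\overline{\Psi}_{G_l^*}]$, the quantity $|V_\Delta|-|\mathrm{CC}|$ is additive over components, and $d^{|\mathrm{Cyc}|}$ is additive as well. The alternating sum over $A$ then kills these terms exactly, leaving only $\mathbf{M}\in\mathcal{M}^\star$.

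\textbf{Main obstacle.} This cancellation is where I expect the difficulty. It is exact at the level of the main term, but the bounds from Proposition~\ref{prop:covariance_general} are only upper bounds, and the degree $2$ correction couples components through $v_1$ and $v_2$. I would handle this as follows. First, prove the factorization exactly for the conditional expectation given the means and $k^*$. Second, for the terms that survive, bound each error by the $2^{2(\cdot)}$ factor from the first part. Third, account for the extra loss incurred when nodes of $G_l^*$ are shared through $v_1$ and $v_2$ across the $2^{c}$ terms of the expansion. Absorbing this extra loss into the exponent is what upgrades the constant $2^{2}$ to $2^{4}$ per non-fully-matched pair, giving~\eqref{eq:covariance_tilde}.

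Finally, the falling-factorial count must be checked. Labellings for which different components collide in labels are exactly those with a larger matching, so they are already counted under another $\mathbf{M}$. This shows that no double counting occurs.
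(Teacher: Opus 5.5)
Your treatment of \eqref{eq:moment_order_2} and \eqref{eq:formula_variance} is the paper's argument: restrict to $\mathcal{M}$ via Lemma~\ref{lem:covariance_degree2}, count $\frac{(n-2)!}{(n-|V_\Delta|)!}$ labellings per matching, and use $|V_{2,np}(\mathbf{P})|\le 2(|\mathbf{M}|-|\mathbf{M}_{\mathrm{full}}|)$. Your reduction to $\mathcal{M}^\star$ also works. The cleanest route is to condition on $\mu$ alone. An unmatched component cannot contain $v_1,v_2$, so it uses rows disjoint from everything else. It is therefore independent of the rest given $\mu$, and $\mathbb{E}[\overline{\Psi}_{G_l^*,\pi}\mid\mu]$ is constant, so the centred factor kills the term. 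Conditioning on $k^*$ as well is too fine, since the centred factor does not have zero mean given $k^*$.

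\textbf{The gap is the bound \eqref{eq:covariance_tilde} for $\mathbf{M}\in\mathcal{M}^\star$.} This is the actual content of the statement, and your plan does not supply it. For such $\mathbf{M}$, the terms in which some components are replaced by their expectations neither cancel nor are ``errors'' of the kind controlled in the first part. Each is a moment for a \emph{reduced} matching $\mathbf{M}_{S_1,S_2}$, so its weights live on a different pruned graph $G'_\Delta$ from the one in \eqref{eq:covariance_tilde}. Two ingredients are missing.

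\emph{First ingredient: a comparison.} Each pairing of the reduced problem is a pairing $\mathbf{P}$ of $\mathbf{M}$ that avoids the removed components. For it, $|E'_\Delta|=|E_\Delta|$ and $|\mathrm{Cyc}'|=|\mathrm{Cyc}|$. Moreover $|V'_\Delta|-|\mathrm{CC}(G'_\Delta)|\ge |V_\Delta|-|\mathrm{CC}(G_\Delta)|$, because un-matching a pair, or splitting $v_1$ or $v_2$, adds one node and at most one component. Hence every such term is dominated by the $(\mathbf{M},\mathbf{P})$ weight. The sharing through $v_1,v_2$ that you flag costs nothing extra.

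\emph{Second ingredient: a per-pairing count.} A given $\mathbf{P}$ appears in the $(S_1,S_2)$-term only if no half-edge of a removed component is paired. Every component has a matched node, so each removable component contains a distinct matched node with no paired half-edge. There are at most $2(|\mathbf{M}|-|\mathbf{M}_{\mathrm{full}}|)$ such nodes. Thus $\mathbf{P}$ is charged by at most $2^{2(|\mathbf{M}|-|\mathbf{M}_{\mathrm{full}}|)}-1$ nonempty subsets, and full pairings are never charged, which is why the main term over $\mathcal{P}_{\mathrm{full}}(\mathbf{M})$ stays exact. Combined with the factor $2^{2(\cdot)}$ from Proposition~\ref{prop:covariance_general}, this gives $2^{4(\cdot)}$.

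Counting ``across the $2^c$ terms'' instead gives a factor governed by the total number of components. That number is not controlled by $|\mathbf{M}|-|\mathbf{M}_{\mathrm{full}}|$: take many perfectly matched and paired components plus one small imperfect one. So that count does not yield the stated inequality.
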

   We refer to Subsection~\ref{pairmatch} for the definition of $\mathcal{M}^{\star}$, namely the set of matchings such that all connected component have a matched node and such that all degree $2$ nodes are matched.

\section{Proof of the low-degree  lower bound - Theorem~\ref{thm:LB_polynomial}}\label{sec:proof_LB}

We first reduce the case the larger-dimensional regime $d\geq K$ to the case where $d=K$. We write $\mathrm{Corr}_{\leq 2D;d;K}$ for the low-degree correlation in dimension $d$ with $2K$ groups. We claim that, for $d> K$, we have  $\mathrm{Corr}_{\leq 2D;d;K}\leq \mathrm{Corr}_{\leq 2D;K;K}$. Indeed, the $d$-dimensional problem where the learner has access both to the matrix $Y$ and the space $U= \mathrm{span}(\mu_k, k \leq K)$ is exactly equivalent to a $K$-dimensional problem.

\medskip 

Hence, from now on, we restrict ourselves to the regime $d=K$. In this section, all the proofs of the technical lemmas are postponed to Appendix~\ref{sec:proof:technical:UB}.

The crux of the proof is to show that the family $\widetilde{\Psi}_G$, with $G\in \mathcal{G}_{\leq D}^{\mathrm{even}}$ is almost orthogonal, in the sense that the covariance matrix associated to this family have eigenvalues that are close to one. This result is stated in Theorem~\ref{thm:Riez_constant}.
Then, it follows from this theorem that 
\begin{align*}
    \mathrm{Corr}^2_{\leq 2D} &\leq 
     \sup_{(\alpha_G)_{G\in \mathcal G^{\mathrm{even}}_{\leq D}}} \frac{1}{1-u}\cdot \frac{\left(\sum_G \alpha_G \mathbb E\left[x \cdot \frac{1}{\sqrt{\mathbb V(G)}}\widetilde{\Psi}_{G}\right]\right)^2}{\|\alpha\|_2^2} = \frac{1}{1-u}\cdot \sum_{G\in \mathcal G^{\mathrm{even}}_{\leq D}} \frac{\left[\mathbb{E}(x  \widetilde{\Psi}_{G})\right]^2}{\mathbb{V}(G)}\ , 
\end{align*}
by Cauchy-Schwarz equality. First, we know from Corollary~\ref{cor:first_moment} that $\mathbb{E}(x \cdot \widetilde{\Psi}_{G})=1/K$ if $G$ has no edge. We set aside this graph henceforth. 
 By Corollary~\ref{cor:first_moment} again, $\mathbb{E}(x \cdot \widetilde{\Psi}_{G})=0$ unless $G$ is connected, the degree of all its nodes is even, and for any node aside from $v_1$ and $v_2$, its degree is higher than $4$. Let us call $\mathcal G^{\mathrm{even,4}}_{\leq D}$ the corresponding collection of graph. For any $G\in \mathcal G^{\mathrm{even,4}}_{\leq D}$, we again deduce from Corollary~\ref{cor:first_moment} and from the definition~\eqref{eq:definition:V(G)} of $\mathbb{V}(G)$ that 
\[
\frac{\left[\mathbb{E}(x  \widetilde{\Psi}_{G})\right]^2}{\mathbb{V}(G)}= \frac{(n-2)!}{(n-|V|)!|\mathrm{Aut}(G)|^2}\frac{\Delta^{4|E|}}{K^{|E|}K^{2|V|-2}} [1 - K^{-1}]^2\leq \left(\frac{\sqrt{n}\Delta^4}{K^2}\right)^{2(|V|-2)}  \left(\frac{\Delta^4}{K}\right)^{|E|-2(|V|-2)}  \frac{1}{K^2}\enspace . 
\]
Observe that $|E|-2(|V|-2)$ is non-negative as $G$ belongs to $ \mathcal G^{\mathrm{even,4}}_{\leq D}$.
Given $v\geq 2$, and $e\geq 1$ such that $e\geq 2(v-1)$, we denote $N_{v,e}$ the number of graph in $ \mathcal G^{\mathrm{even,4}}_{\leq D}$ with $v$ nodes and $e$ edges. A rough bound of $N_{v,e}$ is $v^{2e}$. Then, we arrive at 
\begin{align*}
    \mathrm{Corr}^2_{\leq 2D} 
    &\leq \frac{1}{(1-u)K^2}+ \frac{1}{K^2(1-u)}\sum_{v=2}^{D}\sum_{e = 2(|v|-2)\vee 1}^{D} v^{2e} \left(\frac{\sqrt{n}\Delta^4}{K^2}\right)^{2(v-2)} \left(\frac{\Delta^{4}}{K}\right)^{[e - 2(v-2)]}\\
    &\leq \frac{1}{K^2}+ \frac{2}{K^2} \left[u + \frac{4\Delta^4}{K}+\sum_{v=3}^{D}\sum_{e = 2(|v|-2)}^{D}  \left(\frac{D^2\sqrt{n}\Delta^4}{K^2}\right)^{2(v-2)} \left(\frac{D^2\Delta^{4}}{K}\right)^{[e - 2(v-2)]}  \right]\\
    &\leq \frac{1}{K^2}+   \frac{8}{K^2} \left[\left(\frac{8}{D}\right)^{c/16}
        + \frac{\Delta^4}{K}+  \frac{D^4n \Delta^8}{K^4}  \right]\ , 
\end{align*}
where we used that $u\leq 1/2$, the definition of $u$, $D^2\frac{\Delta^{4}}{K} \leq 1/2$ and $D^4n\Delta^8/K^4 \leq 1/2$. This concludes the proof of the theorem.

\section{Proof of Theorem~\ref{thm:Riez_constant}}\label{sec:proof:Riez}

Define $\Gamma$ as the covariance matrix indexed by $\mathcal{G}_{\leq D}^{\mathrm{even}}$ whose general term is $\Gamma_{G^{(1)},G^{(2)}}:=\frac{\mathbb{E}[\widetilde{\Psi}_{G^{(1)}} \widetilde{\Psi}_{G^{(2)}}]}{\sqrt{\mathbb V(G^{(1)}){\mathbb V}(G^{(2)})}}$. We follow the same general proof strategy as in~\cite{CGGV25} by establishing that the matrix $\Gamma$ is diagonal dominant to control its eigenvalues. First, we consider the case where $G$  is the graph without an edge. In this case, we know from Proposition~\ref{prop:covariance_generaltilde} that $\Gamma_{G,G}=1$ and that $\Gamma_{G,G'}=0$ for any $G\neq G'$. Hence, we only have to focus in the following on graphs $G^{(1)}$ and $G^{(2)}$ that contain at least one edge. Most of the proofs (Steps 1 up to Step 4) amount to deriving a sharp bound of  $\Gamma_{G^{(1)},G^{(2)}}$. For that purpose, we build on the moment bounds from Section~\ref{sec:mom}.

\subsection{Step 1: Expression of the pseudo-correlations}

    Consider any $G^{(1)}$ and $G^{(2)}$ in $\mathcal G_{\leq D}^{\mathrm{even}}$ such that $G^{(1)} \neq G^{(2)}$ and also assume that both $G^{(1)}$ and $G^{(2)}$ have at least one edge.  Since $G^{(1)}\neq G^{(2)}$, both templates are not isomorphic and, for all node matching $\mathbf{M}$, $\mathcal{P}_{\mathrm{full}}(\mathbf{M})$ is empty. Hence, we easily deduce from Proposition~\ref{prop:covariance_generaltilde} that
\begin{equation}\label{eq:control_covariance:PT}
    \left| \mathbb{E}[\widetilde{\Psi}_{G^{(1)}} \widetilde{\Psi}_{G^{(2)}}]\right|\leq  \sum_{\mathbf{M},\mathbf{P}\in \mathcal{M}^{\star}\mathcal{P}} 2^{4[|\mathbf M|- |\mathbf M_{\mathrm{full}}|]} \frac{(n-2)!}{(n-|V_{\Delta}|)!} \Delta^{2|E_{\Delta}|}d^{|\mathrm{Cyc}|}\frac{1}{K^{|V_{\Delta}|-|\mathrm{CC}(G_{\Delta})|}} \enspace . 
\end{equation}
We now use that $d=K$ and the definition~\eqref{eq:definition:V(G)} of $\mathbb{V}(G^{(1)})$ and $\mathbb{V}(G^{(2)})$. Hence, it follows that
\begin{eqnarray}\lefteqn{\frac{ \left| \mathbb{E}[\widetilde{\Psi}_{G^{(1)}} \widetilde{\Psi}_{G^{(2)}}]\right|}{\sqrt{\mathbb{V}(G^{(1)})\mathbb{V}(G^{(2)})}}[|\mathrm{Aut}(G^{(1)})||\mathrm{Aut}(G^{(2)})|]^{1/2} \leq} & &\nonumber\\ 
    & &\sum_{\mathbf{M},\mathbf{P}\in \mathcal{M}^{\star}\mathcal{P}}  2^{4[|\mathbf M|- |\mathbf M_{\mathrm{full}}|]}n^{-|V^{(1)}|/2-|V^{(2)}|/2+|V_{\Delta}|} \Delta^{2|E_{\Delta}|}{K^{|\mathrm{Cyc}|-|V_{\Delta}|-|\mathrm{CC}(G_{\Delta})|-(|E^{(1)}|+|E^{(2)}|)/2}} \nonumber \\   \label{eq:control_covariance:PT:bis}
&\leq &   \sum_{\mathbf{M},\mathbf{P}\in \mathcal{M}^{\star}\mathcal{P}} 2^{4[|\mathbf M|- |\mathbf M_{\mathrm{full}}|]} \left(\frac{n^{1/2}\Delta^4}{K^{2}}\right)^{b_0} \left(\frac{\Delta^{2}}{\sqrt{K}}\right)^{b_1} \left(\frac{1}{\sqrt{K}}\right)^{b_2} \ ,
\end{eqnarray}
where $b_0$, $b_1$, and $b_2$ are respectively defined as 
\begin{align}\label{eq:expression_b0}
b_0&:= 2|V_{\Delta}|-|V^{(1)}|- |V^{(2)}| = |V^{(1)}| + |V^{(2)}| - 2|\mathbf{M}|\enspace \ ;\\
\label{eq:defintion:b1}
    b_1& := -2b_0 + |E_{\Delta}| \enspace ,\\
\label{eq:defintion:b2}
b_2 &:= \left[|E^{(1)}|+ |E^{(2)}| - 2|\mathrm{Cyc}|\right] -|E_{\Delta}| - 2|\mathrm{CC}(G_{\Delta})| + 2 |V_{\Delta}|  - 2b_0\enspace . 
\end{align}
In the remainder of the proof, we shall rearrange and bound $b_0$, $b_1$, and $b_2$. Still, we might already observe that $b_0$ is non-negative by definition of $\mathbf{M}$.

\subsection{Step 2: bounds on $b_1,b_2$}

In order to bound $b_1$ and $b_2$, we need further notation. We remind the reader that $|\mathbf{M}_{\mathrm{full}}|$ is equal to the number of isolated nodes in $G_{\Delta}$. Equivalently, $|\mathbf{M}_{\mathrm{full}}|$ is the number of paired nodes $(v^{(1)}_{i}, v^{(2)}_{j})$ such  that all half-edges in $E^{\mathrm{\mathrm{half}},(1)}$ and $E^{\mathrm{\mathrm{half}},(2)}$ that are incident to $v^{(1)}_{i}$ in $G^{(1)}$  or $v^{(2)}_{j}$ in $G^{(2)}$ are paired, that is they arise in $\mathbf{P}$. 
We also recall the reader that  $|\mathrm{Cyc}|$, $|\mathrm{Op}_{\mathrm{even}}|$, and $|\mathrm{Op}_{\mathrm{odd}}|$ respectively stand for the number of cycles, of open-paths of even length, and open-path of odd length that have been pruned in the construction of $G_{\Delta}$. 
The two following lemmas are the main combinatorial tools of the proofs. Their proofs are postponed to the next section. 
\begin{lemma}\label{lem:connected_components}
        For any $\mathbf{M}\in \mathcal{M}^{\star}$, we have 
        \[
            2 |\mathrm{CC}(G_{\Delta})| \leq 2|\mathrm{Op}_{\mathrm{even}}|+ |\mathrm{Op}_{\mathrm{odd}}|+ 2|\mathbf{M}|.
        \]
\end{lemma}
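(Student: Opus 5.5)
We prove the bound by charging each connected component of $G_{\Delta}$ to an object that carries a budget: either a matched pair in $\mathbf{M}$ (budget $2$), an even open path (budget $2$), or an odd open path (budget $1$). We classify components as follows. First, $G_{\Delta}$ is obtained from $G_{\cup}[\mathbf{M};\mathbf{P}]$ by deleting cycle edges and contracting each maximal coloured open path into a single edge. So any component of $G_{\Delta}$ that contains no node of $\mathbf{M}$ lies entirely inside $G^{(1)}$ or entirely inside $G^{(2)}$, away from the matched nodes. Such a component is a piece of some connected component $C$ of $G^{(1)}$ (say), and it has been separated from the rest of $C$. Since $\mathbf{M}\in\mathcal{M}^{\star}$, the component $C$ contains a matched node, so the piece is not all of $C$. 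The separation can only happen through half-edges incident to matched nodes that were paired. Hence the piece is attached, in $G_{\cup}$, to the matched part through edges whose far half-edge was paired, and therefore it meets at least one open path.

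The charging scheme itself has three parts.
\begin{enumerate}
\item Each component of $G_{\Delta}$ containing at least one node of $\mathbf{M}$ is charged to one such matched pair. This costs at most $2$ per element of $\mathbf{M}$, which accounts for the term $2|\mathbf{M}|$.
\item Each component containing no matched node is charged to the open paths having an endpoint in it. An open path has two extremities, namely two unpaired half-edges, and after contraction it becomes an edge of $G_{\Delta}$ joining the nodes carrying them. The crucial point is that these two endpoints lie in the same component of $G_{\Delta}$, because they are joined by the new edge. So an open path can serve at most one component as an attachment.
\item We then run a parity argument. An open path alternates between half-edges of $G^{(1)}$ and $G^{(2)}$ via the pairing. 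Each pair $(\underline{e},\underline{e}')$ sits at a matched node, and consecutive pairs are linked by $\stackrel{edge}{\sim}$ inside one graph. An odd-length path therefore starts in one graph and ends in the other, while an even-length path starts and ends in the same graph.
\end{enumerate}
I would try to show that a component without matched nodes, lying in a single graph, needs either an even open path both of whose ends sit in it, or two odd open paths. In other words, it needs a total budget of $2$, counting even paths as $2$ and odd paths as $1$.

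The main obstacle is step 3: ruling out the situation where an unmatched component is attached through only one odd open path. Since both endpoints of an open path lie in the same component of $G_{\Delta}$, an odd path with one endpoint in a pure-$G^{(1)}$ unmatched component would have its other endpoint, which lies in $G^{(2)}$, in that same component. That endpoint's node would then have to be a node of $G^{(2)}$ belonging to a pure-$G^{(1)}$ component, which is only possible if the node is matched, a contradiction. This suggests that unmatched components are in fact attached only through even paths, so that the $|\mathrm{Op}_{\mathrm{odd}}|$ term serves as slack. I would verify this carefully, together with the edge case of components consisting of a single node incident to self-loops or parallel edges. Finally, summing the charges (at most $2$ per matched pair, $2$ per even path, $1$ per odd path, each object charged at most once) gives $2|\mathrm{CC}(G_{\Delta})|\le 2|\mathrm{Op}_{\mathrm{even}}|+|\mathrm{Op}_{\mathrm{odd}}|+2|\mathbf{M}|$.
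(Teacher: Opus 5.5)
You have a genuine gap, at the very first structural claim. It is not true that a component of $G_{\Delta}$ containing no node of $\mathbf{M}$ lies entirely inside $G^{(1)}$ or entirely inside $G^{(2)}$. You note yourself that an odd open path starts at an unpaired half-edge of one replica and ends at an unpaired half-edge of the other. Both extremities may sit at unmatched nodes, so the contracted edge joins an unmatched node of $G^{(1)}$ to an unmatched node of $G^{(2)}$.

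Here is a concrete example. Join $v^{(1)}$ to an unmatched node $a$ by four parallel edges, and $v^{(2)}$ to an unmatched node $b$ by four parallel edges. Let $v_1,v_2$ be isolated and matched, put $(v^{(1)},v^{(2)})\in\mathbf{M}$, and pair all four half-edges at $v^{(1)}$ with those at $v^{(2)}$. This gives four odd open paths of length one. The components of $G_\Delta$ are $\{v_1\}$, $\{v_2\}$, the merged node, and $\{a,b\}$, where $\{a,b\}$ contains no matched node. Such ``mixed'' components are exactly what the term $|\mathrm{Op}_{\mathrm{odd}}|$ pays for. Your conclusion that this term is slack would give $2|\mathrm{CC}(G_\Delta)|\le 2|\mathrm{Op}_{\mathrm{even}}|+2|\mathbf{M}|$, which reads $8\le 6$ here and is false. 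Your step-3 argument only shows that odd paths cannot attach to \emph{pure} unmatched components. It says nothing about mixed ones, and a mixed component touched by a single odd path would receive budget $1$ instead of $2$.

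The missing ingredient is a parity argument, and it is how the paper handles this case.

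\begin{enumerate}
\item Every node of $G_\Delta$ has even degree. The degrees in $G^{(1)}$ and $G^{(2)}$ are even. Each pair of $\mathbf{P}$ removes exactly two half-edges at one merged node. Unpaired half-edges, including the extremities of contracted open paths, are kept.
\item Take a mixed component $C$ with parts $C^{(1)},C^{(2)}$. Summing $G_\Delta$-degrees over $C^{(1)}$ shows that the number of edges between $C^{(1)}$ and $C^{(2)}$ is even. Since $C$ is connected, this number is at least $2$.
\item Each such crossing edge comes from a distinct odd open path, because surviving original edges and contracted even paths stay within one replica.
\item Each odd path serves only the one component containing both its extremities.
\end{enumerate}

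This gives $2|\mathrm{CC}_{\mathrm{mixed}}|\le|\mathrm{Op}_{\mathrm{odd}}|$. Your treatment of matched components and of pure unmatched components (via an even open path through an edge to a matched node of the same replica) is correct. With the mixed case added, this three-way split yields the lemma.
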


Define 
\begin{eqnarray}\label{eq:definition_B}
    B&:= &|\mathbf{P}| -  2|\mathrm{Cyc}| - 2|\mathrm{Op}_{\mathrm{even}}|- |\mathrm{Op}_{\mathrm{odd}}| \ ;\\
\label{eq:definition:C}
    C &:= &|E^{(1)}|+ |E^{(2)}| - [2|V^{(1)}| + 2|V^{(2)}|-3|\mathbf{M}|-|\mathbf{M}_{\mathrm{full}}|] - |\mathbf{P}|\ .
\end{eqnarray}

\begin{lemma}\label{lem:pairing}
   For any $\mathbf{M}$ and $\mathbf{P}$, we have $B\geq 0$ and $C\geq 0$.
    \end{lemma}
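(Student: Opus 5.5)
Both inequalities are counting statements: $B\geq 0$ follows from decomposing $\mathbf{P}$ along the colored paths and cycles, and $C\geq 0$ from a half-edge count node by node. Neither uses $\mathbf{M}\in\mathcal{M}^{\star}$ beyond properties (a)–(c) of node matchings, and both use that the templates lie in $\mathcal{G}^{\mathrm{even}}_{\leq D}$.

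\emph{Bound on $B$.} Each pair in $\mathbf{P}$ belongs to exactly one maximal colored structure, namely one open path or one cycle.
\begin{itemize}
\item Following such a structure alternates between a pair of $\mathbf{P}$ and an edge, i.e.\ the passage from a half-edge to its partner $\underline{e}'\stackrel{edge}{\sim}\underline{e}$.
\item A pair joins a half-edge of $G^{(1)}$ to one of $G^{(2)}$, whereas an edge stays inside one graph.
\end{itemize}
Hence a cycle alternates $G^{(1)}$-edges and $G^{(2)}$-edges and must use at least $2$ pairs. An open path uses at least one pair, and at least $2$ pairs when its length is even. Summing lengths over the partition gives
\[
|\mathbf{P}| \ge 2|\mathrm{Cyc}|+2|\mathrm{Op}_{\mathrm{even}}|+|\mathrm{Op}_{\mathrm{odd}}|,
\]
that is, $B\ge 0$. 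The one subtle point is a pair whose two half-edges come from self-loops. I will check that such a pair still closes a cycle using two pairs, or else counts as a legitimate length-one path, so the bound is unaffected.

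\emph{Bound on $C$.} Multiply $C\ge 0$ by two. It becomes
\[
2|E^{(1)}|+2|E^{(2)}|-2|\mathbf{P}| \ \ge\ 4\bigl(|V^{(1)}|+|V^{(2)}|-2|\mathbf{M}|\bigr)+2\bigl(|\mathbf{M}|-|\mathbf{M}_{\mathrm{full}}|\bigr).
\]
The left side counts the unpaired half-edges, since the total number of half-edges is $2|E^{(1)}|+2|E^{(2)}|$ and every pair removes two of them. I distribute this count over nodes.
\begin{itemize}
\item An unmatched node contributes its full degree. It is not $v_1$ or $v_2$, since these are matched by (b). It is non-isolated, by definition of templates. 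It does not have degree $2$, since all degree-$2$ nodes are matched by (c). Its degree is even. Hence it contributes at least $4$.
\item A matched pair $(v,v')$ with $p$ paired incident half-edges contributes $\deg(v)+\deg(v')-2p$, which is even. If the pair is not in $\mathbf{M}_{\mathrm{full}}$, this quantity is positive, hence at least $2$.
\item Fully matched pairs contribute $0\geq 0$.
\end{itemize}
Summing over the three kinds of nodes gives exactly the displayed inequality.

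Both steps are short. The only delicate bookkeeping is self-loops and the possibly isolated or low-degree nodes $v_1,v_2$, which is handled by their being matched.
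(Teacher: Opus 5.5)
Your proposal is correct and takes the same route as the paper. For $B\ge 0$, you decompose $\mathbf{P}$ into cycles, each using at least two pairs, and open paths, each using at least one pair and at least two when of even length. For $C\ge 0$, you count unpaired half-edges node by node: unmatched nodes give at least $4$, and matched pairs outside $\mathbf{M}_{\mathrm{full}}$ give at least $2$ by parity.

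Your justifications are somewhat more explicit than the paper's. You show why unmatched nodes have degree at least $4$ using (b), (c), non-isolation and evenness, and why cycles alternate between the two graphs. The self-loop case you defer needs no separate check, because the alternation argument works on half-edges and edges and never refers to their endpoints.
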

We deduce from the definition of $C$ and from the fact $|E_{\Delta}|= |E^{(1)}|+|E^{(2)}|- |\mathbf{P}|$ --see the construction of $G_{\Delta}$ 
 that 
\begin{eqnarray}
b_1 &= &3 [|\mathbf{M}|- |\mathbf{M}_{\mathrm{full}}|]+  C \enspace , 
\label{eq:b_1:bis} 
\end{eqnarray}
which is non-negative by Lemma~\ref{lem:pairing}. Similarly, we deal with $b_2$ by noting that $|V_{\Delta}|=|V^{(1)}|+|V^{(2)}|-|\mathbf{M}|$, and by applying Lemmas~\ref{lem:connected_components} and~\ref{lem:pairing}
\begin{eqnarray}\nonumber
    b_2 &= &2 |\mathbf{M}| + |\mathbf{P}| - 2|\mathrm{Cyc}| - 2|\mathrm{CC}(G_{\Delta})|  \geq  B  \label{eq:b_2:bis}\enspace ,
    \end{eqnarray}
    which is again non-negative.

\subsection{Step 3: Bound on the pseudo-correlation}

Define 
\begin{equation}\label{eq:definition:psi_P_T}
\phi(\mathbf{M},\mathbf{P}):=   2[|V^{(1)}| + |V^{(2)}| -2 |\mathbf{M}|] + 2B+C +  3(|\mathbf{M}|-|\mathbf{M}_{\mathrm{full}}|)\enspace . 
\end{equation}
Using Lemma~\ref{lem:pairing} to deduce that $\phi(\mathbf{M},\mathbf{P}) \geq 2(|\mathbf{M}|-|\mathbf{M}_{\mathrm{full}}|)$.

By Assumption, we have $(\frac{n^{1/4}\Delta^2}{K})\vee \frac{\Delta^2}{\sqrt{K}}\vee \frac{1}{K^{1/4}}\leq  (4D)^{-c}$\ , 
for $c\geq c_0\geq 1$.  Combining the previous bounds on $b_0$, $b_1$, and $b_2$ with~\eqref{eq:control_covariance:PT:bis}, we arrive at 
    \begin{align}
        \frac{ \left| \mathbb{E}[\widetilde{\Psi}_{G^{(1)}} \widetilde{\Psi}_{G^{(2)}}]\right|}{\sqrt{\mathbb{V}(G^{(1)})\mathbb{V}(G^{(2)})}}&\leq  [|\mathrm{Aut}(G^{(1)})||\mathrm{Aut}(G^{(2)})|]^{-1/2} \sum_{\mathbf{M},\mathbf{P}\in \mathcal{M}^{\star}\mathcal{P}} 2^{4[|\mathbf M|- |\mathbf M_{\mathrm{\mathrm{full}}}|]} (4D)^{- c \phi(\mathbf{M},\mathbf{P}) } \nonumber \\
        &\leq  [|\mathrm{Aut}(G^{(1)})||\mathrm{Aut}(G^{(2)})|]^{-1/2} \sum_{\mathbf{M},\mathbf{P}\in \mathcal{M}^{\star}\mathcal{P}}  D^{- c \phi(\mathbf{M},\mathbf{P}) }\enspace .     \label{eq:control_covariance:PT:3}
\end{align}
In order to control this sum over $\mathbf{M}$ and $\mathbf{P}$, we shall group matching and pairings that have some common structure that we shall call henceforth a shadow. This strategy is somewhat inspired from the arguments of~\cite{CGGV25} for node matching of simple graphs, but the arguments have to be refined here by also taking into account pairing between the half-edges. Given $(\mathbf{M}, \mathbf{P})$, we say that two edges $e$ and $e'$ of $G^{(1)}$ and $G^{(2)}$ are {\bf perfectly paired} if the two corresponding half-edges of $e$  are paired in $\mathbf{P}$  with the corresponding half-edges of $e'$. Similarly, we say that two half-edges are {\bf perfectly paired} if they belong to perfectly paired edges. Define $\underline{\mathbf{P}}\subseteq \mathbf{P}$ the subset of $\underline{P}$ where we have removed all perfectly paired half-edges. Let $\underline{PE}^{(1)}$ (resp. $\underline{PE}^{(2)}$) be the subset of half-edges of $G^{(1)}$ (resp. $G^{(2)})$ that are perfectly paired. Finally, we say that two nodes in $(v,v')$ in $\mathbf{M}$ are {\bf perfectly matched} if all their incident half-edges belong to $\underline{PE}^{(1)}$ (resp. $\underline{PE}^{(2)}$). Finally, we define $\underline{\mathbf{M}}$ as the subset of $\mathbf{M}$ where we removed all perfectly matched nodes. We call $(\underline{\mathbf{M}}, \underline{\mathbf{P}}, \underline{PE}^{(1)},\underline{PE}^{(2)})$ the {\bf shadow} of $(\mathbf{M},\mathbf{P})$. See Figure~\ref{fig:shadow} for an illustration.

\begin{figure}
    \begin{center}
        \includegraphics[scale=1.3]{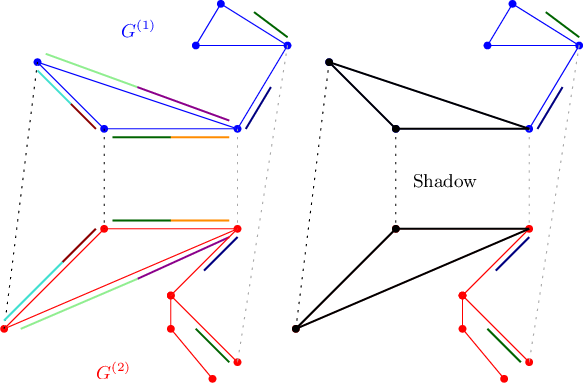}
    \end{center}
    \caption{{\bf Example of a shadow.} Here, the perfectly paired edges and perfectly mactched nodes in the two triangles are then displayed in black in the shadow, because the precise information on the matchings and pairings leading to these perfect matches is lost in the shadow (i.e.~node label and precise pairing). All the remaining information is kept intact in the shadow. In this example, there are only two matching-pairings in $\mathcal{M}\mathcal{P}$ that share this shadow. Indeed, we can only switch the matching between the two perfectly matched nodes and adjust the corresponding paired half-edges to obtain the same shadow.}\label{fig:shadow}
\end{figure}

Write $\underline{\mathcal{M}\mathcal{P}}_{\mathrm{shadow}}$ for the collection of all such $(\underline{\mathbf{M}}, \underline{\mathbf{P}}, \underline{PE}^{(1)},\underline{PE}^{(2)})$  that are the shadow of one element in $\mathcal{M}^{\star}\mathcal{P}$. For any element $(\mathbf{M}^{'},\mathbf{P}^{'})$ of $\mathcal{M}^{\star}\mathcal{P}$ we write $(\underline{\mathbf{M}}, \underline{\mathbf{P}}, \underline{PE}^{(1)},\underline{PE}^{(2)}) \triangleleft  (\mathbf{M}^{'},\mathbf{P}^{'})$ if $(\underline{\mathbf{M}}, \underline{\mathbf{P}}, \underline{PE}^{(1)},\underline{PE}^{(2)})$ is the shadow of $(\mathbf{M}^{'},\mathbf{P}^{'})$. Note that $\phi(\mathbf{M},\mathbf{P})$ defined in~\eqref{eq:definition:psi_P_T} is the same for all $(\mathbf{M},\mathbf{P})$ that share the same shadow. Hence, with a slight abuse of notation, we can define $\phi(\underline{\mathbf{M}},\underline{\mathbf{P}},\underline{PE}^{(1)},\underline{PE}^{(2)})$ for  $(\underline{\mathbf{M}},\underline{\mathbf{P}})\in \underline{\mathcal{M}\mathcal{P}}_{\mathrm{shadow}}$.  The following lemma, proved in the appendix, bounds the number of matchings-pairings that share the same shadow.

\begin{lemma}\label{lem:counting_isomorphism}
    For any $(\underline{\mathbf{M}}, \underline{\mathbf{P}}, \underline{PE}^{(1)},\underline{PE}^{(2)})\in \underline{\mathcal{M}\mathcal{P}}_{\mathrm{shadow}}$, we have 
    \[
    \left|\Big\{(\mathbf{M}^{'},\mathbf{P}^{'})\in \mathcal{M}\mathcal{P} :  (\underline{\mathbf{M}}, \underline{\mathbf{P}}, \underline{PE}^{(1)},\underline{PE}^{(2)})\triangleleft  (\mathbf{M}^{'},\mathbf{P}^{'}) \Big\}\right|\leq \min (|\mathrm{Aut}(G^{(1)})|,|\mathrm{Aut}(G^{(2)})|)\enspace .
    \]
\end{lemma}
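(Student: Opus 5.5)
The plan is to build an injection from the fibre of a given shadow into $\mathrm{Aut}(G^{(1)})$, and symmetrically into $\mathrm{Aut}(G^{(2)})$; the minimum of the two cardinalities then bounds the fibre. Fix a shadow $(\underline{\mathbf{M}}, \underline{\mathbf{P}}, \underline{PE}^{(1)},\underline{PE}^{(2)})$ and one reference element $(\mathbf{M}_0,\mathbf{P}_0)$ of its fibre.

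\textbf{Step 1: what a fibre element still has to choose.} Take any $(\mathbf{M}',\mathbf{P}')$ in the fibre. Outside the perfectly paired structure, it agrees with $(\mathbf{M}_0,\mathbf{P}_0)$: its non-perfectly-matched node pairs are exactly $\underline{\mathbf{M}}$, and its non-perfect half-edge pairs are exactly $\underline{\mathbf{P}}$. Its only freedom is therefore the following.
\begin{itemize}
\item[(a)] A bijection between the edges of $G^{(1)}$ supported on $\underline{PE}^{(1)}$ and the edges of $G^{(2)}$ supported on $\underline{PE}^{(2)}$, recording which edges are perfectly paired.
\item[(b)] For each such pair of edges, the way the two half-edges are matched.
\item[(c)] The matching between the perfectly matched nodes, meaning those whose incident half-edges all lie in $\underline{PE}$.
\end{itemize}
These choices must be consistent. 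The matched node pairs $\mathbf{M}'$ must be compatible with $\mathbf{P}'$: paired half-edges must sit at matched nodes, and each node appears at most once in $\mathbf{M}'$.

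\textbf{Step 2: the map into $\mathrm{Aut}(G^{(2)})$.} Both $\mathbf{P}_0$ and $\mathbf{P}'$ restrict to bijections $\underline{PE}^{(1)}\to\underline{PE}^{(2)}$ that preserve $\stackrel{edge}{\sim}$. Define $\sigma$ on the half-edges of $G^{(2)}$ as follows:
\begin{itemize}
\item on $\underline{PE}^{(2)}$, set $\sigma := \mathbf{P}'\circ\mathbf{P}_0^{-1}$;
\item on all other half-edges of $G^{(2)}$, set $\sigma$ to be the identity.
\end{itemize}
Then $\sigma$ preserves $\stackrel{edge}{\sim}$, since perfect pairing sends whole edges to whole edges.

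Incidence preservation is where the node matchings enter. Consider two half-edges incident to a common node $w$ of $G^{(2)}$. If $w$ is perfectly matched in $\mathbf{M}_0$, then $\sigma$ sends both half-edges into the star of the node of $G^{(2)}$ that corresponds to $w$ via $\mathbf{M}'\circ\mathbf{M}_0^{-1}$. If instead $w$ appears in $\underline{\mathbf{M}}$, it is fixed by both matchings. Its perfectly paired half-edges are then mapped by $\mathbf{P}'$ to half-edges whose endpoint is matched to the same node of $G^{(1)}$, so they land back at $w$.

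The nodes $v_1,v_2$ are matched to each other by condition (b) of the definition of $\mathbf{M}$, so they are fixed. Hence $\sigma\in\mathrm{Aut}(G^{(2)})$.

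\textbf{Step 3: injectivity and the symmetric bound.} The map $(\mathbf{M}',\mathbf{P}')\mapsto\sigma$ is injective, because $\mathbf{P}'$ is recovered as $\sigma\circ\mathbf{P}_0$ on $\underline{PE}$ and as $\underline{\mathbf{P}}$ elsewhere. Likewise $\mathbf{M}'$ is recovered from $\mathbf{P}'$ together with $\underline{\mathbf{M}}$: a perfectly matched node has degree at least $2$ (isolated nodes other than $v_1,v_2$ are excluded), so its partner is determined by where its half-edges go. Exchanging the roles of the two graphs, using $\mathbf{P}_0^{-1}\circ\mathbf{P}'$ on $\underline{PE}^{(1)}$, gives the injection into $\mathrm{Aut}(G^{(1)})$, and the bound follows.

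\textbf{Main obstacle.} The delicate point is incidence preservation in Step 2 at nodes of $\underline{\mathbf{M}}$ that carry both perfectly and imperfectly paired half-edges. There one must check that $\mathbf{P}'$ cannot move a perfectly paired edge at such a node to a different endpoint. The argument is that condition (ii) of the pairing definition forces the image half-edge to sit at the $\mathbf{M}'$-partner of the node, and that partner is fixed by the shadow. Self-loops and multi-edges also need care, which is why automorphisms are defined on half-edges.
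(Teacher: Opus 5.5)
Your proof is correct and follows the paper's argument. Both fix one element of the fibre, compose the two perfect-pairing bijections on $\underline{PE}$, extend by the identity to get an automorphism, and conclude by injectivity and symmetry; the paper builds $\tilde{\phi}$ on $G^{(1)}$ from $\phi,\phi'$, while you build $\sigma=\mathbf{P}'\circ\mathbf{P}_0^{-1}$ on $G^{(2)}$. Your version is more careful than the paper's, since you check incidence at nodes of $\underline{\mathbf{M}}$ and show that $\sigma$ recovers $\mathbf{M}'$ as well as $\mathbf{P}'$; for the reverse direction of incidence preservation, note that $\sigma^{-1}$ has the same form with the two fibre elements exchanged.
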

In Light of Lemma~\ref{lem:counting_isomorphism}, we deduce from~\eqref{eq:control_covariance:PT:3} that 
\begin{eqnarray}\label{eq:control_covariance:PT:4}
    \frac{ \left| \mathbb{E}[\widetilde{\Psi}_{G^{(1)}} \widetilde{\Psi}_{G^{(2)}}]\right|}{\sqrt{\mathbb{V}(G^{(1)})\mathbb{V}(G^{(2)})}}  \leq   \sum_{\underline{\mathbf{M}},\underline{\mathbf{P}},\underline{PE}^{(1)},\underline{PE}^{(2)}\in \underline{\mathcal{MP}}_{\mathrm{shadow}}} D^{-c \phi(\underline{\mathbf{M}},\underline{\mathbf{P}},\underline{PE}^{(1)},\underline{PE}^{(2)})} \enspace . 
\end{eqnarray}

\paragraph{Control of the sum over $\underline{\mathcal{MP}}_{\mathrm{shadow}}$.}

Note that the definition of $(\underline{\mathbf{M}},\underline{\mathbf{P}},\underline{PE}^{(1)},\underline{PE}^{(2)})$ is completely characterized by (i) the set of half-edges that are not-perfectly paired, (ii) the possible pairing among these half-edges and (iii) the matching $\underline{\mathbf{M}}$ of nodes that are not perfectly matched.
In what follows, we write 
$m$
for the number of  half-edges that are not perfectly paired. Since the total number of half-edges of $G^{(1)}$ and of $G^{(2)}$ is at most $2D$, we deduce from the 
previous property that there are at most 
\begin{equation}\label{card:m}
   (4 D)^{m}\cdot m^{m}\cdot m^{m}\leq (4D)^{3m}\ ,
\end{equation}
    shadows $(\underline{\mathbf{M}},\underline{\mathbf{P}},\underline{PE}^{(1)},\underline{PE}^{(2)})$ with $m$ half-edges that are not perfectly paired. Indeed,  (1) this makes less than $(4D)^m$ possibilities for choosing these half-edges. Then, we choose whether those are paired and to which half-edge they are paired, hence less $m^m$ possibilities. Finally, for nodes incident to non-paired half-edges, we must decide whether they are matched and to which node they are matched, hence less than $m^m$ possibilities.

\begin{lemma}\label{lem:lower_bound_psi}
Consider any $\underline{\mathbf{M}}$, $\underline{\mathbf{P}}$, $\underline{PE}^{(1)}$, $\underline{PE}^{(2)}$. Writing $m$ for the number of half-edges that are not perfectly paired we have 
\[
    \phi(\underline{\mathbf{M}},\underline{\mathbf{P}},\underline{PE}^{(1)},\underline{PE}^{(2)})\geq \frac{1}{8} m 
\]
\end{lemma}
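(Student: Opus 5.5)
We need to show $\phi \ge m/8$, where
\[
\phi = 2b_0 + 2B + C + 3(|\mathbf{M}|-|\mathbf{M}_{\mathrm{full}}|),\qquad b_0=|V^{(1)}|+|V^{(2)}|-2|\mathbf{M}|.
\]
All four terms are nonnegative by Lemma~\ref{lem:pairing}. The plan is to charge each of the $m$ non-perfectly paired half-edges to one of these terms, losing at most a constant factor. A non-perfectly paired half-edge $\underline e$ is of one of three types:
\begin{itemize}
\item[(a)] $\underline e$ is incident to an unmatched node;
\item[(b)] $\underline e$ is incident to a matched node but is itself unpaired in $\mathbf{P}$;
\item[(c)] $\underline e$ is paired in $\mathbf{P}$, but its sibling half-edge is not paired with the sibling of its partner.
\end{itemize}
I would bound the number of half-edges of each type by a constant multiple of $\phi$.

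\emph{Type (a).} An unmatched node $v$ contributes $1$ to $b_0$. I would refine the computation behind $C\ge 0$ in Lemma~\ref{lem:pairing} to show that excess degree is charged to $C$. Heuristically, $C$ equals $\tfrac12\sum_v(\deg v-4)$ over unmatched nodes, plus analogous slack terms at matched nodes. Since all degrees are even, this gives
\[
\deg v \le 4 + 2\cdot(\text{local excess in }C).
\]
Hence the number of type (a) half-edges is at most $4b_0+2C$.

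\emph{Type (b).} A matched node carrying an unpaired half-edge is not in $\mathbf{M}_{\mathrm{full}}$, so it contributes to $|\mathbf{M}|-|\mathbf{M}_{\mathrm{full}}|$. Its number of unpaired half-edges is controlled by the degree slack in $C$, because $C$ counts $|E|$ minus $|\mathbf{P}|$ minus the degree-based baseline $2|V|-\tfrac32|\mathbf{M}|-\tfrac12|\mathbf{M}_{\mathrm{full}}|$. This should give at most $O(|\mathbf{M}|-|\mathbf{M}_{\mathrm{full}}|)+O(C)$ type (b) half-edges.

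\emph{Type (c).} These half-edges lie in cycles or open paths that are not $2$-cycles formed by perfectly paired edges. Recall that
\[
B=|\mathbf{P}|-2|\mathrm{Cyc}|-2|\mathrm{Op}_{\mathrm{even}}|-|\mathrm{Op}_{\mathrm{odd}}|.
\]
A $2$-cycle contributes $2-2=0$ to $B$ and corresponds exactly to a perfectly paired edge. I would show the following:
\begin{itemize}
\item A cycle of length $\ell\ge 3$ contributes $\ell-2\ge \ell/3$ to $B$.
\item An open path of length $\ell$ contributes $\ell-1$ or $\ell-2$ to $B$, which is at least $\ell/3$ once $\ell\ge 3$.
\item Short open paths (of length $1$ or $2$) have an open extremity, i.e.\ an unpaired half-edge at a matched node. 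Each such path can therefore be charged to type (b), with constant multiplicity.
\end{itemize}
This gives type (c) $\le 3B + O(\text{type (b)})$.

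Summing the three bounds yields $m \le 8\phi$ after tracking the constants. Note that $\phi$ carries the weight $2$ on both $b_0$ and $B$, which gives room for the factors $4$ and $3$ above.

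The main obstacle is the precise local decomposition of $C$: it must dominate the degree excess at unmatched nodes and the unpaired half-edges at matched nodes simultaneously. I would first reprove $C\ge 0$ from Lemma~\ref{lem:pairing} as a sum of local nonnegative node contributions, and then read off the needed inequalities from that decomposition. Degree-$2$ nodes need separate care. All of them are matched by condition (c) in the definition of $\mathcal{M}$, and I expect their slack to appear through the $3|\mathbf{M}|$ term in $C$.
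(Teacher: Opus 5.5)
Your plan is correct and is essentially the paper's proof. Substituting the definition of $C$ into $\phi$ gives exactly $\phi=\tfrac12\,\#\{\text{unpaired half-edges}\}+2(|\mathbf{M}|-|\mathbf{M}_{\mathrm{full}}|)+2B$, so your types (a) and (b) are controlled together and the local decomposition of $C$ you flag as the main obstacle is not needed. Type (c) half-edges are then charged to $B$ when they lie in a cycle or open path of length greater than $2$, and to the two unpaired extremities of the path otherwise.

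One correction: those extremities need not sit at matched nodes, because the end edge of the path may lead to an unmatched node. So short open paths must be charged to unpaired half-edges of type (a) or (b) alike. Also, counting $2\ell$ half-edges per structure gives $6B$ rather than $3B$. Done this way one gets $m\le 3\,\#\{\text{unpaired half-edges}\}+6B\le 6\phi$.
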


For any two multigraphs $G^{1}$ and $G^{(2)}$, define 
\begin{equation}\label{eq:definition:m*}
    m^*(G^{(1)},G^{(2)}) := \min_{\underline{\mathbf{M}},\underline{\mathbf{P}},\underline{PE}^{(1)},\underline{PE}^{(2)}} 
    m \ , 
\end{equation}
as the minimum over all shadows $(\underline{\mathbf{M}},\underline{\mathbf{P}},\underline{PE}^{(1)},\underline{PE}^{(2)})$ of the number of $m$ of half-edges that are not perfectly paired. Obviously, we have $m^*(G^{(1)},G^{(2)})\geq 0$. Besides, one readily checks that $m^*(G^{(1)},G^{(2)})=0$ implies that that $G^{(1)}$ and $G^{(2)}$ are isomorphic. 

So that we deduce from Equation~\eqref{eq:control_covariance:PT:4}, Equation~\eqref{card:m} and Lemma~\ref{lem:lower_bound_psi} that
\begin{eqnarray}\label{eq:control_covariance:PT:5}
\Gamma_{G^{(1)},G^{(2)}} =     \frac{ \left| \mathbb{E}[\widetilde{\Psi}_{G^{(1)}} \widetilde{\Psi}_{G^{(2)}}]\right|}{\sqrt{\mathbb{V}(G^{(1)})\mathbb{V}(G^{(2)})}}  \leq   \sum_{m\geq m^*(G^{(1)},G^{(2)})} (4D)^{3m} D^{-cm/8} \leq  \left(\frac{4}{D}\right)^{cm^*(G^{(1)},G^{(2)})/16}\left(\frac{1}{1- 4/D}\right) \enspace ,
\end{eqnarray}
for $c \geq 48$.
\subsection{Step 4: Special case of the rescaled variance}

We now consider the diagonal terms of $\Gamma$. Consider any graph $G=G^{(1)} = G^{(2)}$ in $\mathcal{G}_{\leq D}^{\mathrm{even}}$. We argue quite similarly to the non-diagonal terms except that we use slightly differently Proposition~\ref{prop:covariance_generaltilde} to get a tighter bound for perfect matchings and pairings. Let us call $\mathcal{MP}^{\mathrm{perf}}$ the collection of $(\mathbf{M},\mathbf{P}$) that correspond to perfect pairings of all edges. Note that in this case, the graph $G_{\Delta}$ does not contain any edge. Hence, we have from Proposition~\ref{prop:covariance_generaltilde} that 
\begin{equation}\label{eq:upper_variance}
\left|\mathbb{E}[\widetilde{\Psi}^2_{G^{(1)}}] -  \big|\mathcal{MP}^{\mathrm{perf}}\big|\frac{(n-2!)}{(n-|V_{\Delta}^{(1)}|!)}d^{|E^{(1)}|} \right|\leq  \sum_{\mathbf{M},\mathbf{P}\in \mathcal{M}^{\star}\mathcal{P}\setminus \mathcal{MP}^{\mathrm{perf}}}  2^{4[|\mathbf M|- |\mathbf M_{\mathrm{full}}|]} \frac{(n-2)!}{(n-|V_{\Delta}|)!} \Delta^{2|E_{\Delta}|}d^{|\mathrm{Cyc}|}\frac{1}{K^{|V_{\Delta}|-|\mathrm{CC}(G_{\Delta})|}} \enspace .
\end{equation}
One easily checks that $\big|\mathcal{MP}^{\mathrm{perf}}\big|= |\mathrm{Aut}(G^{(1)})|$ so that we recognize $\mathbb{V}(G^{(1)})$ in the above inequality. Note that, if $(\mathbf{M},\mathbf{P})$ does not belong to to $\mathcal{MP}^{\mathrm{perf}}$, there are at least $m\geq 2$ half-edges that are not perfectly paired. Hence, arguing exactly as in Step 2 and in Step 3, we arrive at 
\begin{eqnarray}\label{eq:control_covariance:PT:5'}
\left|\frac{ \mathbb{E}[\widetilde{\Psi}^2_{G^{(1)}}]}{\mathbb{V}(G^{(1)})} -1 \right| \leq   \sum_{m\geq 2} (4D)^{3m} D^{-cm/8} \leq  \left(\frac{4}{D}\right)^{c/8}\left(\frac{1}{1- 4/D}\right) \enspace . 
\end{eqnarray}

\subsection{Step 5: Conclusion on the pseudo-correlation matrix}

Define $H = \Gamma - \mathrm{Id}$. To control the eigenvalues of $\Gamma$, we bound the $l_1$ norm of each row of $H$.
We fix a multigraph $G^{(1)} \in \mathcal G_{\leq D}^{\mathrm{even}}$. Note that, by Equations~\eqref{eq:control_covariance:PT:5} and~\eqref{eq:control_covariance:PT:5'}, we have
\begin{align*}
    \sum_{G^{(2)} \in \mathcal G_{\leq D}^{\mathrm{even}}}\left|H_{G^{(1)}, G^{(2)}}\right|
    &\leq \sum_{G^{(2)} \in \mathcal G_{\leq D}^{\mathrm{even}}} \left(\frac{4}{D}\right)^{cm^*( G^{(1)},  G^{(2)})/16} \left(\frac{1}{1- 4/D}\right)+ \left(\frac{4}{D}\right)^{c/8}\left(\frac{1}{1- 4/D}\right)\\
    &\leq 2  \left(\frac{4}{D}\right)^{c/8}+ 2\sum_{m\geq 2} |\{G^{(2)}: m^*( G^{(1)}, G^{(2)}) = m\}| \left(\frac{4}{D}\right)^{cm/16}\enspace ,
\end{align*}
for $D \geq 6$.

\begin{lemma}\label{lem:combinatoire_m*}
    Consider any graph $G^{(1)} \in \mathcal G_{\leq D}^{\mathrm{even}}$. Then, for any $m\geq 2$, we have  
    $$|\{G^{(2)}\in \mathcal G_{\leq D}^{\mathrm{even}} : m^*( G^{(1)},  G^{(2)}) = m\}| \leq (2D)^{m}\enspace .$$
\end{lemma}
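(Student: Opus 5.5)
We bound the number of templates $G^{(2)}$ with $m^*(G^{(1)},G^{(2)})=m$ by an injective encoding: from $G^{(1)}$, which is fixed, plus a short description of length about $m\log(2D)$, we must be able to reconstruct $G^{(2)}$ up to isomorphism. Fix such a $G^{(2)}$ and pick a shadow $(\underline{\mathbf{M}},\underline{\mathbf{P}},\underline{PE}^{(1)},\underline{PE}^{(2)})$ achieving the minimum in \eqref{eq:definition:m*}, so that exactly $m$ half-edges of $G^{(1)}$ and $G^{(2)}$ together are not perfectly paired. The perfectly paired edges define an edge-bijection between a subgraph $H^{(1)}\subseteq G^{(1)}$ and a subgraph $H^{(2)}\subseteq G^{(2)}$. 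This bijection respects incidence at matched nodes, so $H^{(2)}$ is, up to isomorphism, $H^{(1)}$ with some of its nodes possibly identified or split.

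The description proceeds in three steps.

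\emph{Step 1.} Specify which half-edges of $G^{(1)}$ are not perfectly paired; deleting the corresponding edges of $G^{(1)}$ yields $H^{(1)}$. Let $m_1$ be the number of such half-edges. Each can be named by an index in $[2D]$, costing at most $(2D)^{m_1}$ choices.

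\emph{Step 2.} Add back the $m_2=m-m_1$ half-edges of $G^{(2)}$ that are not perfectly paired. For each, record its endpoint, which is either a node of $H^{(1)}$ (via the matching $\underline{\mathbf{M}}$) or a new node of $G^{(2)}$ (named by a small index), together with its partner half-edge. Since $|E^{(1)}|,|E^{(2)}|\leq D$, each half-edge costs a factor $O(D)$.

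\emph{Step 3.} Handle nodes of $H^{(1)}$ that are not fully incident to perfectly paired edges. These are exactly the nodes where the structure may change. Any such node is incident to at least one of the $m$ non-perfectly-paired half-edges, so its status is already encoded in Steps 1--2.

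Nodes that are perfectly matched have all their incident edges in $H^{(1)}$ and are carried over unchanged. Hence $G^{(2)}$ is determined, up to isomorphism, by $G^{(1)}$ and the data above, and the count is at most $\prod (\text{choices per half-edge})$.

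The main obstacle is making the constant exactly $2D$ per half-edge, rather than something like $(4D)^{3}$ as in \eqref{card:m}. Two features help. First, half-edges come in pairs: each non-perfectly-paired half-edge of $G^{(2)}$ belongs to an edge whose other half-edge is either perfectly paired, so its endpoint is already known, or is also among the $m$. Second, all degrees are even and no node of degree $2$ is unmatched, which limits where new nodes can appear. I would therefore encode edges rather than half-edges: each edge of $G^{(2)}$ not in $H^{(2)}$ contributes two half-edges to $m$, and is specified by its two endpoints among at most $2D$ candidate nodes, namely the at most $D+1$ nodes of $G^{(1)}$ plus the at most $D$ new ones. This gives $(2D)^{2}$ per edge, i.e.\ $(2D)$ per half-edge. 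Symmetrically, a deleted edge of $G^{(1)}$ is chosen among at most $D$ edges, which fits within the $(2D)$ per half-edge budget.

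If this sharp accounting fails, I would settle for a bound like $(cD)^{m}$. This still suffices in Step~5, since the sum $\sum_m (cD)^m (4/D)^{cm/16}$ remains small for $c\geq c_0$ large.
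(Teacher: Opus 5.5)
Your proposal is correct and follows the paper's route. The perfectly paired edges give a common subgraph; in fact $\mathbf{M}$ is injective, so $H^{(1)}\cong H^{(2)}$ exactly and no nodes are identified or split. Hence $G^{(2)}$ is, up to isomorphism, within $m/2$ edge deletions and insertions of $G^{(1)}$, and one counts such graphs at roughly $(2D)^2$ per edited edge, which is exactly the paper's one-line edit-distance argument (your bookkeeping is more explicit, and your fallback bound $(cD)^m$ would also suffice in Step 5).
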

\begin{proof}[Proof of Lemma~\ref{lem:combinatoire_m*}]
    Consider any multigraph $G^{(2)}$ in $\mathcal{G}_{\leq D}$ such that $|\{G^{(2)}\in \mathcal G_{\leq D}^{\mathrm{even}}: m^*( G^{(1)},  G^{(2)}) = m\}|=m$. We claim that there exists an isomorphism $G'^{(2)}$ of $G^{(2)}$ such that the edit distance  between $G^{'(2)}$ and $G^{(1)}$ is less than $m/2$. Up to isomorphisms, we conclude that $|\{G^{(2)}: m^*( G^{(1)},  G^{(2)}) = m\}|$  is smaller than the set of multigraphs within edit distance $m/2$ from $G^{(1)}$. It is therefore smaller or equal to $(2D)^{m}$. 
    \end{proof}
We conclude from this lemma that 
\begin{eqnarray}\label{eq:control_covariance:PT:6}
    \sum_{G^{(2)} \in \mathcal G_{\leq D}^{\mathrm{even}}}\left|H_{G^{(1)}, G^{(2)}}\right| \leq 8 \left(\frac{8}{D}\right)^{c/16}\enspace ,
\end{eqnarray}
provided that $c\geq 32$ and $D\geq 16$. Hence, the matrix $\Gamma$ is diagonal dominant and its eigenvalue lies in $[1-u, 1+u]$ where $u = 8 \left(\frac{8}{D}\right)^{c/16}$. 
This concludes the proof of Theorem~\ref{thm:Riez_constant}.

\section{Proof of the upper bound}\label{sec:proof:UB}
\subsection{Proof of Theorem~\ref{thm:final_upper_bound}}

Define $t_0:= \frac{[(n-2)/\Lambda]!}{[2+ (n-2)/\Lambda-|V|]!} \cdot \frac{\Delta^{2|E|}}{\Lambda^{|E|} K^{|V|-2}}$. 
It follows from Theorem~\ref{thm:moment:rappeur} that, under the conditions of the theorem, we have
\[
\mathbb{P}\left[T^{(\ell)}_{G^*}\leq \frac{t_0}{2}|\mu,b_1,b_2,k^*(1),k^*(2), x=1\right]\leq \frac{1}{4} \ ; \mathbb{P}\left[T^{(\ell)}_{G^*}\geq \frac{t_0}{2}|\mu,,b_1,b_2,k^*(1),k^*(2), x=0\right]\leq \frac{1}{4}\enspace . 
\]
As conditionally to $\mu$ and $k^*(1)$, $k^*(2)$, $b_1$, and $b_2$, the random variables $T^{(\ell)}_{G^*}$ are i.i.d., we conclude that 
\[
\mathbb{P}[\hat{x}\neq x|\mu, x] \leq \mathbb{P}[\mathrm{Bin}(\Lambda,1/4)\geq \Lambda/2] \leq n^{-3}\ . 
\]
This concludes the proof.

%\begin{proof}[Proof of Theorem~\ref{thm:cross_variance}\]
\subsection{Proof of Theorem~\ref{thm:moment:rappeur}}
The following lemmas provide explicit expressions of the first and second moments of $\overline{\Psi}_{G^*}$. The statements and the proofs are closely related to Lemmas~\ref{lem:first_moment} and  Propositions~\ref{prop:covariance_without_degree_2} and~\ref{prop:covariance_generaltilde}.
\begin{lemma}
    \label{lem:upper_boundexpectation}
        Let $G^*$ be the multigraph defined in Section~\ref{sec:upper}. Then
      \begin{equation}\label{eq:expectation:psi_G^*}
            \mathbb{E}\left[\overline{\Psi}_{G^*}|\mu, k^*(1),k^*(2),b_1,b_2\right]= x \frac{(n-2)!}{(n-|V|)!} \frac{\Delta^{2|E|}}{K^{|V|-2}} \enspace . 
      \end{equation}
    \end{lemma}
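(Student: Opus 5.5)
We compute $\mathbb{E}[\overline{\Psi}_{G^*}\mid \mu,k^*(1),k^*(2),b_1,b_2]$ by linearity. Fix $\pi\in\Pi_V$ and look at a single labelled term $\overline{\Psi}_{G^*,\pi}$. The graph $G^*$ has no node of degree $2$ outside $\{v_1,v_2\}$: every inner node has degree $4$, and $v_1,v_2$ have degree $M+1\geq 25$. So by \eqref{eq:psi_bar} every factor is a genuine Hermite polynomial. Since the noise entries $Z_{ij}$ are independent given the signal, Lemma~\ref{lem:hermite} gives $\mathbb{E}[\overline{\Psi}_{G^*,\pi}\mid X]=P_{G^*,\pi}(X)=\prod_{(u,w)\in E}\langle X_{\pi(u)},X_{\pi(w)}\rangle$.

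Next we average over the labels of the inner nodes $\pi(v_3),\dots,\pi(v_{LM+2})$, keeping $\mu$, $k^*(1)$, $k^*(2)$, $b_1$, $b_2$ fixed. The means are orthogonal with $\|\mu_k\|^2=\Delta^2$, so $\langle X_i,X_j\rangle=b_ib_j\Delta^2\mathbf{1}\{k^*(i)=k^*(j)\}$. Hence
\[
P_{G^*,\pi}(X)=\Delta^{2|E|}\prod_{v\in V}b_{\pi(v)}^{\deg(v)}\prod_{(u,w)\in E}\mathbf{1}\{k^*(\pi(u))=k^*(\pi(w))\}.
\]
All degrees are even ($4$ for inner nodes, and $M+1$ is even because $M$ is odd), so the sign product equals $1$. $G^*$ is connected, so the indicator product equals $\mathbf{1}\{\text{all } k^*(\pi(v)) \text{ equal}\}$. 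This event forces $k^*(1)=k^*(2)$, so the term vanishes when $x=0$. When $x=1$, each of the $|V|-2$ inner labels is independent and uniform on $[K]$ and must equal $k^*(1)$, which happens with probability $K^{-(|V|-2)}$.

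Summing over the $|\Pi_V|=(n-2)!/(n-|V|)!$ injective labellings then gives \eqref{eq:expectation:psi_G^*}.

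The only delicate points are bookkeeping. First, the conditioning must be handled in two stages: take the expectation over $Z$ given $X$, then over the inner $k^*,b$ given $(\mu,k^*(1),k^*(2),b_1,b_2)$. Second, we must check the parity of $\deg(v_1)=\deg(v_2)=M+1$ so that $b_1$ and $b_2$ drop out. Third, the Hermite identity must be applied entrywise, including the self-consistent exponents $\beta_{\pi^{-1}(i),j}$; these are exactly the multiplicities of $Y_{ij}$ in the monomial expansion, so $\mathbb{E}[\prod\psi_{\beta}(Z+X)\mid X]$ reproduces the monomial in $X$.
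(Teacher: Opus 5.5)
Your proof is correct and follows the paper's own argument. The steps match: Hermite unbiasedness, since $G^*$ has no inner degree-$2$ node; orthogonality of the means, with even degrees cancelling the signs $b_i$; connectedness forcing all inner labels to equal $k^*(1)=k^*(2)$, which has probability $K^{-(|V|-2)}$; and summation over the $(n-2)!/(n-|V|)!$ labellings. Your write-up is more careful than the paper's terse proof, which skips the parity check on the $b_i$'s and has a typo $K^{2|V|-2}$ for $K^{|V|-2}$ in an intermediate display. The bound $M+1\geq 25$ is unnecessary, because the degree-$2$ correction in \eqref{eq:psi_bar} never applies to $v_1,v_2$.
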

\begin{remark}\label{remark:expection:G}
In fact, the above formula~\eqref{eq:expectation:psi_G^*} only depends on the specific form of $G^*=(V,E)$ through $|V|$ and $|E|$. Hence, a corresponding expression also holds for any connected graph $\bar{G}=(\bar{V},\bar{E})$ that contains only nodes with even degree at least $4$. 
\end{remark}

To compute the covariance, we consider two replicas of $G^*$, say $G^{*(1)}=(V^{(1)},E^{(1)})$ and $G^{*(2)}=(V^{(2)},E^{(2)})$. In the sequel, $\mathbf{M}$ will correspond to a matching of $V^{(1)}$ and $V^{(2)}$ as introduced in Section~\ref{sec:multigraph}. Since, by definition,  we always have $(v_1^{(1)}, v_1^{(2)})\in \mathbf{M}$ and $(v_2^{(1)}, v_2^{(2)})\in \mathbf{M}$, the nodes $v_1^{(1)}$ and  $v_1^{(2)}$ (resp. $v_2^{(1)}$ and  $v_2^{(2)}$) are identified in the merged graph $G_{\cup}[\mathbf{M},\mathbf{P}]$ and in the pruned graph $G_{\Delta}[\mathbf{M},\mathbf{P}]$ and we refer, for short, to these nodes as $v_1$ (resp. $v_2$). In this proof, we write $v_1 \sim_{G_\Delta} v_2$  (resp. $v_1 \nsim_{G_\Delta} v_2$) when $v_1$ and $v_2$ belong (resp. do not belong) to the same community in $G_{\Delta}$. 

\begin{lemma}\label{lem:UBva}
The conditional variances of $\overline{\Psi}_{G^*}$ write as follows. 
    \begin{align*}
 \mathbb{E}\left[\left[\overline{\Psi}_{G^*} - \mathbb E[\overline{\Psi}_{G^*}|\mu,x=0]\right]^2 |\mu,b_1,b_2,k^*(1),k^*(2), x=0\right]\hspace{3cm}\\ 
 =  \sum_{\mathbf{M},\mathbf{P}\in \mathcal{MP}:~\mathbf{P} \neq \emptyset} \frac{(n-2)!}{(n-|V_{\Delta}|)!} \Delta^{2|E_{\Delta}|}d^{|\mathrm{cyc}|}\frac{1}{K^{|V_{\Delta}|-|\mathrm{CC}(G_{\Delta})|}}  \mathbf{1}\{v_1 \nsim_{G_\Delta} v_2\}\enspace , 
    \end{align*}
and 
\begin{align*}
 \mathbb{E}\left[\left[\overline{\Psi}_{G^*} - \mathbb E[\overline{\Psi}_{G^*}|\mu,x=1]\right]^2 |\mu,b_1,b_2,k^*(1),k^*(2), x=1\right]\hspace{3cm}\\
 =  \sum_{\mathbf{M},\mathbf{P}\in \mathcal{MP}:~\mathbf{P} \neq \emptyset} \frac{(n-2)!}{(n-|V_{\Delta}|)!} \Delta^{2|E_{\Delta}|}d^{|\mathrm{cyc}|}\frac{1}{K^{|V_{\Delta}|-|\mathrm{CC}(G_{\Delta})|-\mathbf{1}\{v_1 \sim_{G_\Delta} v_2\}}}   \\ 
  \quad \quad \quad \quad + \mathbb E^2[\overline{\Psi}_{G^*}|\mu,x=1]   \sum_{\mathbf{M}\in  \mathcal{M} }   \frac{(n-|V|)!^2}{(n-2)!(n-2|V|+|\mathbf{M}|)! } \left(K^{|\mathbf{M}|- 2}-1\right)
 \enspace . \noindent 
\end{align*}
%
 %   Let $G$ be the multigraph defined above. %Let $\pi^{(1)},\pi^{(2)}$ be any two injective mapping from $V$ to $[n]$ such that $\pi^{(1)}(1) = \pi^{(2)}(1)=1, \pi^{(1)}(2) = \pi^{(2)}(2)=2$. 
 %   Then:
 %  \begin{align*}
  %      &\mathbb{E}\left[\left[\overline{\Psi}_{G} - \mathbb E[\overline{\Psi}_{G}|\mu,k^*]\right]^2 |\mu, k^*\right]\\
  %      &=  \sum_{\mathbf{M},\mathbf{P}\in \mathcal{MP}:~\mathbf{P} \neq \emptyset} \frac{(n-2)!}{(n-|V_{\Delta}|)!} \Delta^{2|E_{\Delta}|}d^{|\mathrm{cyc}|}\frac{1}{K^{|V_{\Delta}|-|\mathrm{CC}(G_{\Delta})|-|\mathrm{CC}(G_{\Delta}, 1,2)|}}  \mathbf 1\{|\mathrm{CC}(G_{\Delta}, 1,2)| =0~~\mathrm{OR}~~x=1\},
  %  \end{align*}
  %  using the notations from the previous section, and where $|\mathrm{CC}(G_{\Delta}, 1,2)|$ is the number of connected components of $G_{\Delta}$ that contain both $1$ and $2$ (note that it can either be $0$ or $1$).
\end{lemma}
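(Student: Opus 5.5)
We compute the conditional second moment $\mathbb{E}[\overline{\Psi}_{G^*}^2\mid \mu,b_1,b_2,k^*(1),k^*(2)]$ by expanding $\overline{\Psi}_{G^*}=\sum_{\pi}\overline{\Psi}_{G^*,\pi}$ over pairs of labelings $(\pi^{(1)},\pi^{(2)})$. As in Proposition~\ref{prop:covariance_without_degree_2}, each such pair induces a node matching $\mathbf{M}$ (the nodes sharing the same label). Since $G^*$ has no degree-$2$ node outside $v_1,v_2$, the modified Hermite polynomials are ordinary Hermite polynomials. We condition on the signal $X$ and apply the Hermite identities of Lemma~\ref{lem:hermite}, in particular the product formula $\mathbb{E}[\psi_k(z+\mu)\psi_l(z+\mu)]=\sum_{s}\binom{k}{s}\binom{l}{s}s!\,\mu^{k+l-2s}$. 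This rewrites $\mathbb{E}[\overline{\Psi}_{G^*,\pi^{(1)}}\overline{\Psi}_{G^*,\pi^{(2)}}\mid X]$ as a sum over half-edge pairings $\mathbf{P}\in\mathcal{P}(\mathbf{M})$. Each paired pair of half-edges sharing a coordinate $j$ contracts, and summing over coordinates turns open paths into single edges of $G_\Delta$ and each cycle into a factor $d$. The result is $\sum_{\mathbf{P}} d^{|\mathrm{Cyc}|}P_{G_\Delta}(X)$ with the labeling induced on $G_\Delta$. This step is exactly the computation behind Proposition~\ref{prop:covariance_without_degree_2}, and I would simply reuse it, stopping before the final averaging over $(\mu,k^*,b)$.

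Next I take the expectation over $k^*(i),b_i$ for $i\ge 3$ and over the rotation of the orthogonal frame, keeping $\mu$, $k^*(1),k^*(2),b_1,b_2$ fixed. For a product $\prod_{e\in E_\Delta}\langle X_{l(e)},X_{r(e)}\rangle$ with all degrees even, each connected component of $G_\Delta$ must be monochromatic for the product to survive. Each such component then contributes $\Delta^{2|E|}$, and the Rademacher signs cancel by evenness. Every free node forced into a common group costs a factor $1/K$. Components containing neither $v_1$ nor $v_2$ give $K^{-(|\text{nodes}|-1)}$. For the component(s) containing $v_1,v_2$, the groups of $v_1,v_2$ are fixed. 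If $v_1\sim_{G_\Delta}v_2$, the contribution is $\mathbf{1}\{x=1\}$ times $K^{-(|\text{nodes}|-2)}$, which yields the exponent $|V_\Delta|-|\mathrm{CC}|-\mathbf{1}\{v_1\sim v_2\}$. If they lie in distinct components, each gets the usual factor regardless of $x$. Summing over the $\frac{(n-2)!}{(n-|V_\Delta|)!}$ labelings of $V_\Delta$ gives the conditional second moment.

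Finally I subtract the squared mean from Lemma~\ref{lem:upper_boundexpectation}. The terms with $\mathbf{P}=\emptyset$ are exactly where $G_\Delta$ is the disjoint union of the two copies glued along $\mathbf{M}$. In that case the conditional expectation factorizes, up to the constraint that shared nodes carry a common group, and this is what produces the correction $K^{|\mathbf{M}|-2}$. When $x=0$, the empty-pairing terms vanish, since each copy is connected and contains both $v_1,v_2$, which forces $v_1\sim v_2$. The nonempty pairings with $v_1\sim_{G_\Delta}v_2$ vanish for the same reason. When $x=1$, the empty-pairing terms are $\mathbb{E}^2[\cdot]$ times $\sum_{\mathbf{M}}\frac{(n-|V|)!^2}{(n-2)!(n-2|V|+|\mathbf{M}|)!}K^{|\mathbf{M}|-2}$. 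The counting identity $\sum_{\mathbf{M}}\frac{(n-2)!}{(n-2|V|+|\mathbf{M}|)!}=\big(\frac{(n-2)!}{(n-|V|)!}\big)^2$ then turns the subtraction of $\mathbb{E}^2$ into the factor $(K^{|\mathbf{M}|-2}-1)$.

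The main obstacle is the bookkeeping in the second step. One must correctly identify when the frame average of the product over the merged component containing $v_1$ and $v_2$ reduces to the fixed-group indicator. One must also check that the orthogonality of the $\mu_k$, rather than the Hermite corrections, kills mixed-group terms: off-diagonal inner products vanish, while same-group ones give $\pm\Delta^2$ with signs cancelling by even degree. I would verify this carefully on the component containing $v_1$, since that is where conditioning on $k^*(1),k^*(2),b_1,b_2$ breaks the symmetry used in Lemma~\ref{lem:first_moment}.
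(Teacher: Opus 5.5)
Your proposal is correct and follows the paper's proof essentially step for step. Both proofs reuse the conditional identity~\eqref{eq:upper_cross_variance:conditional} from the proof of Proposition~\ref{prop:covariance_without_degree_2}, evaluate it given $\mu,b_1,b_2,k^*(1),k^*(2)$ through orthogonality and monochromatic components of $G_\Delta$ (which gives the $\mathbf{1}\{v_1\sim_{G_\Delta}v_2\}$ shift and the vanishing when $x=0$), split off $\mathbf{P}=\emptyset$, and subtract $\mathbb{E}^2$ using Lemma~\ref{lem:upper_boundexpectation}, where your identity $\sum_{\mathbf{M}}\frac{(n-2)!}{(n-2|V|+|\mathbf{M}|)!}=\bigl(\frac{(n-2)!}{(n-|V|)!}\bigr)^2$ is exactly what the paper uses implicitly to obtain the factor $K^{|\mathbf{M}|-2}-1$. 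The only superfluous element is your averaging over rotations: conditionally on $\mu$, the inner products $\langle \mu_k,\mu_{k'}\rangle=\Delta^2\mathbf{1}\{k=k'\}$ are already deterministic.
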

In particular, we readily check from this lemma that 
\[
 \mathbb{E}\left[\left[\overline{\Psi}_{G^*} - \mathbb E[\overline{\Psi}_{G^*}|\mu,x=0]\right]^2 |\mu,b_1,b_2,k^*(1),k^*(2), x=0\right]\leq \mathbb{E}\left[\left[\overline{\Psi}_{G^*} - \mathbb E[\overline{\Psi}_{G^*}|\mu,x=1]\right]^2 |\mu,b_1,b_2,k^*(1),k^*(2), x=1\right]\ , 
\]
and we mainly have to control $\mathrm{Var}_1(\overline{\Psi}_{G^*}):=\mathbb{E}\left[\left[\overline{\Psi}_{G^*} - \mathbb E[\overline{\Psi}_{G^*}|\mu,x=1]\right]^2 |\mu,b_1,b_2,k^*(1),k^*(2), x=1\right]$.

Given $\mathbf{M}$ with $|\mathbf{M}|\geq 3$, and $\mathbf{P}\in \mathcal{P}(\mathbf{M})$ define $A[\mathbf{M},\mathbf{P}]$ by 
\begin{align}\label{eq:definition:A:M:P}
 A[\mathbf{M},\mathbf{P}] :=  \frac{(n-2)!}{(n-|V_{\Delta}|)!} \Delta^{2|E_{\Delta}|}d^{|\mathrm{cyc}|}\frac{1}{K^{|V_{\Delta}|-|\mathrm{CC}(G_{\Delta})|-\mathbf{1}\{v_1 \sim_{G_\Delta} v_2\}}}\ ; \quad \quad A:=\sum_{\substack{\mathbf{M},\mathbf{P}\in \mathcal{MP}:\\|\mathbf{M}|\geq 3~\mathrm{and}~\mathbf{P} \neq \emptyset}} A[\mathbf{M},\mathbf{P}]\enspace .
\end{align}
Denote $\mathbf{M}_0$ as the only node matching such that $|\mathbf{M}_0|=2$. Define
\begin{align*}
B&: = \sum_{\mathbf{P}\in \mathcal{P}(\mathbf{M}_0)\neq \emptyset} \frac{(n-2)!}{(n-|V_{\Delta}|)!} \Delta^{2|E_{\Delta}|}d^{|\mathrm{cyc}|}\frac{1}{K^{|V_{\Delta}|-|\mathrm{CC}(G_{\Delta})|-\mathbf{1}\{v_1 \sim_{G_\Delta} v_2\}}}\ ; \\
C &: = \mathbb E^2[\overline{\Psi}_{G^*}|\mu,x=1]   \sum_{\mathbf{M}\in  \mathcal{M} }   \frac{(n-|V|)!^2}{(n-2)!(n-2|V|+|\mathbf{M}|)! } \left(K^{|\mathbf{M}|- 2}-1\right)\enspace . 
\end{align*}
Hence, it follows from Lemma~\ref{lem:UBva} that we have the decomposition 
\begin{align}\label{eq:decomposition_V1}
\mathrm{Var}_1(\overline{\Psi}_{G^*})= A  + B + C\enspace .
\end{align}

\begin{remark}
The decomposition~\eqref{eq:decomposition_V1} does not use explicitly the topology of $G^*=(V,E)$, except that $G^*$ is connected. In particular, a similar formula holds for any connected multigraph $\bar{G}=(\bar{V},\bar{E})$ such that also the degree of all nodes is even and at least equal to $4$. Also, observe that all the terms in the sum defining $A$, $B$, and $C$ are non-negative. Consider any such connected $\bar{G}$ such that 
$\bar{v}_1$ and $\bar{v}_2$ do not contain any self-edge.  Denote $d_{\bar{G}}(\bar v_1)$ and $d_{\bar{G}}(\bar v_2)$ for the degrees of $\bar{G}$. 

First, consider the specific term in $B$ where $\mathbf{M}=\mathbf{M}_0$ and the pairing $\mathbf{P}$ such that each half-edge $\underline{e}^{(1)}$ incident to $\bar{v}^{(1)}_1$ to $\bar{v}^{(2)}_1$ in the first replica of $\bar{G}$ is paired to the corresponding half-edge  $\underline{e}^{(2)}$ in the second replica of $\bar{G}$. Together with~\eqref{eq:expectation:psi_G^*} (and the corresponding remark), this leads us  to 
\begin{align}\nonumber
\mathrm{Var}_1(\overline{\Psi}_{\bar{G}})& \geq \frac{(n-2)!}{(n-2|V|+2)!}\frac{\Delta^{4|\bar{E}|-2d_{\bar{G}}(v_1)-2d_{\bar{G}}(v_2)}}{K^{2|V|-5}}\\
&\geq \mathbb E^2[\overline{\Psi}_{\bar{G}}|\mu,x=1] \left(1- \frac{2|\bar{V}|}{n}\right)^{|\bar{V}|} \frac{K}{\Delta^{2(d_{\bar{G}}(\bar v_1)+d_{\bar{G}}(\bar v_2))}}\enspace . \label{eq:lower_variance}
\end{align}
Second, consider the specific choice of ${\bf M}$ and ${\bf P}$ in $A[{\bf M},{\bf P}]$ such that $|V_{\Delta}|=V$ and $E_{\Delta}=\emptyset$, and $|\mathrm{Cyc}|= |\tilde{E}|$. This corresponding to perfectly matching and pairing the corresponding replicas $\bar{G}^{(1)}$ and $\bar{G}^{(2)}$. Then, we have 
\begin{align} 
\mathrm{Var}_1(\overline{\Psi}_{\bar{G}})& \geq \frac{(n-2)!}{(n-|V|)!}K^{|\bar{E}|}
\geq \mathbb E^2[\overline{\Psi}_{\bar{G}}|\mu,x=1]  n^{-|\bar{V}|+2}\frac{K^{|\bar{E}|+2[|\bar{V}|-2]}}{\Delta^{4|\bar E|}} \enspace . \label{eq:lower_variance_2}
\end{align}

\end{remark}

We control each of the terms $A$, $B$, and $C$. We start with $C$. By assumption, we have $n\geq 4|V|$. It then follows from the definition of $C$ that 
\begin{align}\label{eq:bound:C}
\frac{C}{\mathbb E^2[\overline{\Psi}_{G^*}|\mu,x=1] } &\leq  \sum_{\mathbf{M}\in  \mathcal{M}: |\mathbf{M}|\geq 3 } \left(\frac{2K}{n}\right)^{|\mathbf M|-2}\leq  \sum_{m=3}^{|V|} \left(\frac{2(|V|-2)^2 K}{n}\right)^{m-2}\leq \frac{4(|V|-2)^2K}{n} \ , 
\end{align}
since the number of matchings of size $m$ is equal to 
$(\binom{|V|-2}{m-2})^2 (m-2)!\leq (|V|-2)^{2(m-2)}$ and since we assume that $n\geq 4(|V|-2)^2 K$.

\begin{lemma}\label{prop:UBvar1}
    %Let $G$ be the multigraph defined above. %Let $\pi^{(1)},\pi^{(2)}$ be any two injective mapping from $V$ to $[n]$ such that $\pi^{(1)}(1) = \pi^{(2)}(1)=1, \pi^{(1)}(2) = \pi^{(2)}(2)=2$. 
Provided that $\Delta^2 \geq 2(M+1)^{4}$, we have 
    \begin{align*}
B 
&\leq \mathbb E^2[\overline{\Psi}_{G^*}|\mu,x=1]\Big[\frac{K(M+1)^{2(M+1)}}{\Delta^{4(M+1)}} + 2  \frac{(M+1)^{5}}{\Delta^{2}}\Big]\ . 
\end{align*}
\end{lemma}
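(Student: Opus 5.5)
With $\mathbf{M}=\mathbf{M}_0$ only $v_1,v_2$ are shared between the two replicas $G^{*(1)}$ and $G^{*(2)}$. Hence a pairing $\mathbf{P}\in\mathcal{P}(\mathbf{M}_0)$ can only pair half-edges incident to $v_1$ with half-edges incident to $v_1$, and likewise at $v_2$. Each of $v_1,v_2$ has degree $M+1$ in each replica, so $\mathbf{P}$ is described by two partial bijections between sets of $M+1$ half-edges. The plan is to compute each summand of $B$ in terms of a few integer parameters of $\mathbf{P}$, normalize by
\[
\mathbb E^2[\overline{\Psi}_{G^*}|\mu,x=1]=\Big(\tfrac{(n-2)!}{(n-|V|)!}\Big)^2\frac{\Delta^{4|E|}}{K^{2|V|-4}},
\]
and count pairings.

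\textbf{Step 1 (structure of $G_\Delta$).} Recall that $v_1$ is incident to the double edge $(v_1,v_3)$ and to the $M-1$ single fastener edges $(v_1,v_{mL+2})$, and symmetrically for $v_2$. Since $v_1\neq v_2$ and no edge joins them, a colored path has length one. Each $\mathbf{P}$ therefore creates no cycles, except when both half-edges of the two copies of $(v_1,v_3)$ are paired crosswise; this is impossible because the far endpoints $v_3^{(1)}\neq v_3^{(2)}$ are unmatched. So $|\mathrm{Cyc}|=0$. Every paired pair $(\underline e^{(1)},\underline e^{(2)})$ is an odd open path. It removes the two edges and adds one edge joining their far endpoints, which links the two replicas.

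It follows that $|V_\Delta|=2|V|-2$, $|E_\Delta|=2|E|-p$ with $p=|\mathbf{P}|$, and $|\mathrm{CC}(G_\Delta)|\in\{1,2\}$. If $p\ge1$ the graph is connected, so $|\mathrm{CC}|=1$ and $v_1\sim v_2$. Plugging in,
\[
\frac{B[\mathbf{P}]}{\mathbb E^2}=\frac{(n-|V|)!^2}{(n-2)!(n-2|V|+2)!}\,K\,\Delta^{-2p}\le 2K\Delta^{-2p},
\]
using $n\ge 4|V|^2$. The $K$ factor comes from comparing $K^{-(2|V|-2-1-1)}$ with $K^{-(2|V|-4)}$. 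I must double check this exponent bookkeeping, since the claimed first term $K(M+1)^{2(M+1)}/\Delta^{4(M+1)}$ corresponds to $p=2(M+1)$ with an extra factor $K$.

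\textbf{Step 2 (the obstacle: small $p$ must not carry a factor $K$).} A naive bound of order $K/\Delta^{2}$ for $p=1$ would fail, because the claim has no $K$ in the second term. So I will refine the computation of $\mathbb{E}[\overline\Psi_{G^{*(1)},\pi^{(1)}}\overline\Psi_{G^{*(2)},\pi^{(2)}}]$ using the conditional variance given $k^*(1),k^*(2)$ and $x=1$. Conditioning fixes the groups of $v_1,v_2$; Lemma~\ref{lem:UBva} encodes this through $\mathbf{1}\{v_1\sim v_2\}$. When $p$ is smaller than the full degree $2(M+1)$, some unpaired edge at $v_1$ or $v_2$ survives in $G_\Delta$, which forces the connected component to already contain both $v_1$ and $v_2$ through an unpaired chain. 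My expectation is that the effective exponent of $K$ is then $|V_\Delta|-|\mathrm{CC}|-1$ with no gain, giving a ratio of order $\Delta^{-2p}$ without $K$. The extra $K$ should appear only when every half-edge at both $v_1$ and $v_2$ is paired, i.e. $p=2(M+1)$. There the replicas are glued only through edges whose group constraint is not fixed by $v_1,v_2$, so one group sum is freed.

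I expect the main difficulty to be this careful recount of the $K$-exponent, together with separating the terms in the variance formula where the two replicas' constraints are decoupled. I will redo the group-labeling count directly: assign labels to the connected components of the constraint graph, with the labels of $v_1$ and $v_2$ conditioned.

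\textbf{Step 3 (counting).} The number of pairings with $p_1$ pairs at $v_1$ and $p_2$ at $v_2$ is at most $(M+1)^{2p_1}(M+1)^{2p_2}$. With $p=p_1+p_2$ and the full case giving $K(M+1)^{2(M+1)}\Delta^{-4(M+1)}$, the remaining terms sum to
\[
\sum_{p\ge1}(p+1)(M+1)^{2p}\Delta^{-2p}\le 2(M+1)^{5}\Delta^{-2},
\]
using $\Delta^2\ge2(M+1)^4$ so that the geometric series is dominated by its first terms. Any sloppiness in the constants here is absorbed by the slack in $(M+1)^5$.
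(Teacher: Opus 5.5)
Your route is the paper's: with $\mathbf{M}=\mathbf{M}_0$ you split pairings by the numbers $m_1,m_2$ of pairs at $v_1$ and $v_2$. You then read the power of $K$ off the components of $G_\Delta$, bound the number of pairings by $\binom{M+1}{m_1}^2m_1!\binom{M+1}{m_2}^2m_2!\le (M+1)^{2(m_1+m_2)}$, and sum a geometric series. The gap is in the one step that carries the content of the lemma: the exponent of $K$.

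In Step 1, $2|V|-2-1-1=2|V|-4$, so a connected $G_\Delta$ gives no factor $K$ at all. The obstacle you describe in Step 2 is an artifact of this slip. No refined conditional computation is needed, because the indicator $\mathbf{1}\{v_1\sim_{G_\Delta}v_2\}$ in the definition of $B$ already encodes the conditioning on $k^*(1)=k^*(2)$.

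More importantly, your claims that $|\mathrm{CC}(G_\Delta)|\in\{1,2\}$ and that $G_\Delta$ is connected as soon as $p\ge 1$ are false. If all $M+1$ half-edges at $v_1^{(1)}$ are paired (hence all $M+1$ at $v_1^{(2)}$), then $v_1$ is an isolated node of $G_\Delta$. For the same reason, your Step 2 argument that for $p<2(M+1)$ the component must contain both $v_1$ and $v_2$ fails when exactly one of $m_1,m_2$ equals $M+1$. The extra $K$ in the full case is only asserted heuristically.

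The fix is a three-case count. Each replica minus $\{v_1,v_2\}$ is connected (double chain plus fasteners), and every paired pair creates an edge between the two remnants. With $|V_\Delta|=2|V|-2$:
\begin{itemize}
\item If $m_1,m_2<M+1$, then $G_\Delta$ is connected and $|V_\Delta|-|\mathrm{CC}(G_\Delta)|-\mathbf{1}\{v_1\sim v_2\}=2|V|-4$.
\item If exactly one of $m_1,m_2$ equals $M+1$, there are two components and $v_1\not\sim v_2$, so the exponent is again $2|V|-2-2=2|V|-4$. Here the isolated node and the vanishing indicator cancel; it is not because $v_1\sim v_2$, as you claim.
\item If $m_1=m_2=M+1$, the components are $\{v_1\}$, $\{v_2\}$ and the rest, so the exponent is $2|V|-5$. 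This is the only source of the factor $K$: the big component contains neither $v_1$ nor $v_2$, so its group label is free.
\end{itemize}
Hence each term of $B/\mathbb E^2[\overline{\Psi}_{G^*}|\mu,x=1]$ equals $\frac{(n-|V|)!^2}{(n-2)!(n-2|V|+2)!}\,K^{\mathbf{1}\{m_1=m_2=M+1\}}\,\Delta^{-2(m_1+m_2)}$.

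The factorial ratio is a product of $|V|-2$ factors, each at most $1$. So drop your factor $2$ from $n\ge 4|V|^2$, or absorb it using $(M+1)!^2\le (M+1)^{2M}$; otherwise your leading term is twice the claimed one. With these corrections, your Step 3 goes through as written.
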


%\subsection{Part 2: Bound on $B$, i.e. the term in the sum such that just $2$ nodes are paired}\label{ss:2no}
%µ
%We have by definition:
%\begin{align*}
%    B &= \left[\frac{K^{|V| - 2} }{\Delta^{2|E|}} \frac{(n - |V|)!}{(n-2)!}\right]^2  \times \left[\sum_{1 \leq m \leq 2M+4} \frac{(n-2)!}{(n-(2|V|-2))!} \binom{2M+4}{2m} 2^m m! \Delta^{4|E|-2m} \frac{1}{K^{2(|V|-2) - 2\times \mathbf 1\{m \geq M\}}} \right]\\
 %   &= \frac{[(n - |V|)!]^2}{(n-2)!(n-(2|V|-2))!} \times \left[\sum_{1 \leq m \leq 2M+4} \binom{2M+4}{2m} 2^m m! \Delta^{-2m} \frac{1}{K^{ - 2\times \mathbf 1\{m \geq M\}}} \right]\\
 %   & \leq \sum_{1 \leq m \leq 2M+4} \binom{2M+4}{2m} 2^m m! \Delta^{-2m} \frac{1}{K^{ - 2\times \mathbf 1\{m \geq M\}}}\\
 %   & \leq \sum_{1 \leq m \leq 2M+4} (2(2M+4)m)^{2m}   \Delta^{-2m} \frac{1}{K^{ - 2\times \mathbf 1\{m \geq M\}}}.
%\end{align*}
%Since $\Delta^{1/2} \geq 4(2M+4)^2$ and $\Delta^{M/2} \geq K^{2}$, we have %\alex{ICI}
%\begin{equation}\label{boun:B}
%    B \leq \Delta^{-1}.
%\end{equation}

\paragraph{Control of $A[\mathbf{M},\mathbf{P}]$}
Since $|E_{\Delta}|= 2|E| - |\mathbf{P}|$, $|V_{\Delta}|=2|V|-|\mathbf{M}|$, $d=K$, and the assumption $n\geq 4|V|$, we  deduce from Lemma~\ref{lem:upper_boundexpectation} that 
\begin{align}\nonumber
\frac{A(\mathbf{M},\mathbf{P})}{\mathbb E^2[\overline{\Psi}_{G^*}|\mu,x=1]} &= \frac{(n-|V|)!^2}{(n-2)!(n-2|V|+{\mathbf{M}})!}\cdot 
\frac{\Delta^{2|E_{\Delta}|-4|E|}}{K^{|V_{\Delta}|-2(|V|-2)-|\mathrm{CC}(G_{\Delta})| - |\mathrm{Cyc}|-\mathbf{1}\{v_1 \sim_{G_\Delta} v_2\}}}\\  \label{eq:AMP_0}
&\leq  \left(\frac{2}{n}\right)^{|\mathbf{M}|-2}\frac{K^{|\mathbf{M}|-4+ |\mathrm{CC}(G_{\Delta})| + |\mathrm{Cyc}|+\mathbf{1}\{v_1 \sim_{G_\Delta} v_2\}}}{\Delta^{2|\mathbf{P}|}}\\ 
&\leq \left(\frac{2K^4}{n\Delta^8}\right)^{|\mathbf{M}|-2}\frac{K^{-3|\mathbf{M}|+4 +|\mathrm{CC}(G_{\Delta})| + |\mathrm{Cyc}|+\mathbf{1}\{v_1 \sim_{G_\Delta} v_2\}}}{\Delta^{2|\mathbf{P}|-8(|\mathbf{M}|-2)}} \nonumber \\
&\leq \left(\frac{2K^4}{n\Delta^8}\right)^{|\mathbf{M}|-2} \left(\frac{K}{\Delta^4}\right)^{a_1} \frac{1}{\Delta^{2a_2}} \label{eq:AMP}
\end{align}

where $a_1:= -3|\mathbf{M}|+4 +|\mathrm{CC}(G_{\Delta})| + |\mathrm{Cyc}|+\mathbf{1}\{v_1 \sim_{G_\Delta} v_2\}$ and 
\[
a_2= |\mathbf{P}|- 4[|\mathbf{M}|-2]-2a_1 = |\mathbf{P}|+2\left[|\mathbf{M}|-|\mathrm{CC}(G_{\Delta})| - |\mathrm{Cyc}|-\mathbf{1}\{v_1 \sim_{G_\Delta} v_2\}\right]\enspace . 
\]
Next, we control both $a_1$ and $a_2$. 

\begin{proposition}\label{prop:template:chaine_rappeur}
For any $\mathbf{M}$ and $\mathbf{P}$, we have 
\[|\mathrm{Cyc}|+|\mathrm{CC}(G_{\Delta})|  - 1 \leq 2(|\mathbf{M}|-2) +|\mathbf{M}| - \mathbf 1\{(v_1^{(1)},v_1^{(2)})\notin \mathbf{M}_{\mathrm{full}}\} -\mathbf 1\{(v_2^{(1)},v_2^{(2)})\notin \mathbf{M}_{\mathrm{full}}\}  + \lfloor4(|\mathbf{M}|-2)/L\rfloor\enspace . 
\]
\end{proposition}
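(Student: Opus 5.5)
We will prove the inequality by a charging argument on the merged graph $G_{\cup}[\mathbf{M},\mathbf{P}]$ built from two replicas of $G^*$. The idea is to assign every cycle in $\mathrm{Cyc}$ and every connected component of $G_{\Delta}$, except one, to a matched pair in $\mathbf{M}$. A matched interior pair may absorb at most three such objects. Each of $v_1$, $v_2$ may absorb at most one, and only when it is fully matched. Finally, $\lfloor 4(|\mathbf{M}|-2)/L\rfloor$ extra objects come from the fasteners. The component containing $v_1$ (equivalently, the ``$-1$'') is the reference object that is charged to nothing.

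The first step is local. Fix a matched pair $(u^{(1)},u^{(2)})$ of interior nodes; both have degree $4$, so $8$ half-edges are involved. Every cycle uses at least two pairs of half-edges and visits only matched nodes. If we charge each cycle to its ``first'' matched pair along an ordering of the chain, a cycle of length $\ell$ consumes $\ell$ pairs of $\mathbf{P}$. For components, we argue as in Lemma~\ref{lem:connected_components}. A component of $G_{\Delta}$ that does not contain $v_1$ is either an isolated fully matched node or contains an edge created from an open path or an unpaired edge. Each such component contains at least one node that belongs to $\mathbf{M}$ or is attached to one, because every component of each replica meets $\mathbf{M}$ (the replicas are connected and contain $v_1$). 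Summing the budgets, the key local inequality is
\[
|\mathrm{Cyc}(u)| + |\mathrm{CC}(u)| \le 3
\]
for every interior matched pair $u$. The count $4$ pairs of half-edges yields at most $2$ cycles, plus at most one new component when the pair is fully matched and isolated. The main point to check is that $2$ cycles and $2$ new components cannot occur simultaneously. This uses the degree-$4$ structure, which forces a double edge to continue into the neighbour.

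For $v_1$ and $v_2$, which have degree $M+1$ in each replica, the budget can be much larger locally. However, cycles through $v_1$ must leave along fastener edges or the double edge $(v_1,v_3)$, and they close only by returning through other matched interior nodes. We therefore charge such cycles to those interior nodes rather than to $v_1$. The only genuinely new object associated with $v_1$ is its own component when it becomes isolated, i.e.\ when $(v_1^{(1)},v_1^{(2)})\in\mathbf{M}_{\mathrm{full}}$. This gives the terms $|\mathbf{M}| - \mathbf 1\{\cdot\notin\mathbf{M}_{\mathrm{full}}\}$ for the two endpoints.

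The main obstacle is the global step: showing that interior matched pairs typically absorb only $2$ objects, while excess objects arise only at fasteners. Along a double chain, perfectly matching a run of consecutive nodes creates one cycle per double edge, i.e.\ roughly one per node, plus possibly one component per run. A matched run of length $r$ in replica $1$ can contain $\lfloor r/L\rfloor$ fasteners, each of which may break the chain and create an additional cycle or component. Counting runs therefore gives at most $2$ per matched node plus one extra per fastener crossing. The total number of fasteners met by the $|\mathbf{M}|-2$ matched interior nodes, counted in both replicas, is at most $2\cdot 2(|\mathbf{M}|-2)/L$. I would formalize this by orienting each chain segment between fasteners, charging objects to the last matched node of each maximal matched run, and treating separately the case where a run ends at a fastener node. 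This fastener case is where the $\lfloor 4(|\mathbf{M}|-2)/L\rfloor$ slack is spent.
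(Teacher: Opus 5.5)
\textbf{Verdict: there is a genuine gap.} Your proposal is a charging plan, not a proof: the two steps carrying the weight are asserted or deferred, and the accounting behind them is inaccurate.

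\textbf{The local inequality is undefined and, as set up, false.} The ``key local inequality'' $|\mathrm{Cyc}(u)|+|\mathrm{CC}(u)|\le 3$ never gets a precise meaning, because you do not say which pair a given cycle or component is charged to. The phrase ``contains at least one node that belongs to $\mathbf{M}$ or is attached to one'' does not bound how many components of $G_{\Delta}$ with no matched node a pair receives. These are exactly the hard components: an interior pair can border an unmatched stretch of the chain on both sides in both replicas, on top of the cycles through it.

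With your setup (cycles through $v_1,v_2$ pushed onto interior nodes, and $v_1,v_2$ absorbing at most one object each), a bound of $3$ per interior pair would give $|\mathrm{Cyc}|+|\mathrm{CC}(G_{\Delta})|-1\le 3(|\mathbf{M}|-2)+2$. This fails for the perfect matching of $G^*$ with itself, where the left-hand side is $3LM+M+2$. Even for $M=1$, with no fastener at all, there is an excess of one, coming from the double edges $(v_1,v_3)$ and $(v_2,v_{LM+2})$. So the objects paid for by the slack term must be identified explicitly, and you never do so.

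Your global picture is also off. A perfectly paired run along the double chain produces two $2$-cycles per double edge and one isolated node per fully matched node, i.e.\ three objects per interior node. It is not ``one cycle per double edge'', nor ``typically $2$''.

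\textbf{The missing idea.} Cycles and the components lying in unmatched parts of the second replica consume the same resource: fully paired edges of $G^{(1)}$.
\begin{itemize}
\item Every cycle alternates between the replicas, so it contains a fully paired $G^{(1)}$-edge. Hence $|\mathrm{Cyc}|\le|FE^{(1)}_c|$, where $FE^{(1)}_c$ (resp.\ $FE^{(1)}_o$) denotes the fully paired $G^{(1)}$-edges lying in cycles (resp.\ open paths).
\item A component meeting neither $G^{(1)}[\mathbf{M},\mathbf{P}]$ nor any matched node has its boundary edges in $G^{(2)}$ pruned by open paths of even length $\ge 2$. Each such path contains a fully paired $G^{(1)}$-edge, so twice the number of these components, plus $\mathbf 1\{\{v_1,v_2\}\in\mathrm{CC}(G_{\Delta})\}$, is at most $|FE^{(1)}_o|$.
\item Both quantities are bounded by the number $|E(\mathbf{M}^{(1)})|$ of $G^{(1)}$-edges with both ends matched. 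The chain structure limits this to $2(|\mathbf{M}|-2)-2c+2\mathbf 1\{v^{(1)}_3\in\mathbf{M}^{(1)}\}+2\mathbf 1\{v^{(1)}_{LM+2}\in\mathbf{M}^{(1)}\}$, plus one for each fastener $\{v^{(1)}_{mL+2},v^{(1)}_{mL+3}\}$ meeting $\mathbf{M}^{(1)}$. Here $c$ is the number of maximal matched runs in $G^{(1)}$.
\end{itemize}
The deficit $-2c$ pays for the components meeting only unmatched nodes of $G^{(1)}$; there are at most $c+1-\mathbf 1\{v^{(1)}_3\in\mathbf{M}^{(1)}\}-\mathbf 1\{v^{(1)}_{LM+2}\in\mathbf{M}^{(1)}\}+\mathbf 1\{\{v_1,v_2\}\in\mathrm{CC}(G_{\Delta})\}$ of them. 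Isolated components and components containing a matched interior node number at most $|\mathbf{M}|-\mathbf 1\{(v_1^{(1)},v_1^{(2)})\notin\mathbf{M}_{\mathrm{full}}\}-\mathbf 1\{(v_2^{(1)},v_2^{(2)})\notin\mathbf{M}_{\mathrm{full}}\}$. What remains, $-c$ plus the two boundary indicators plus the number of fasteners touched, is the only part that must be bounded by $\lfloor 4(|\mathbf{M}|-2)/L\rfloor$, and this is done run by run.

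This asymmetric, edge-based bookkeeping is what makes the count close. A node-local charge as you describe does not capture the trade-off between cycles and the replica-2 gap components.
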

We deduce that 
\begin{align}\nonumber
a_1 & \leq \mathbf 1\{(v_1^{(1)},v_1^{(2)})\in \mathbf{M}_{\mathrm{full}}\}+ \mathbf{1}\{v_1 \sim_{G_\Delta} v_2\} - \mathbf 1\{(v_2^{(1)},v_2^{(2)})\notin \mathbf{M}_{\mathrm{full}}\}+ \lfloor4(|\mathbf{M}|-2)/L\rfloor \\
&\leq \mathbf 1\{(v_1^{(1)},v_1^{(2)})\in \mathbf{M}_{\mathrm{full}}\}+\lfloor4(|\mathbf{M}|-2)/L\rfloor \ , 
\label{eq:upper:a1}
\end{align}
since $\mathbf 1\{(v_2^{(1)},v_2^{(2)})\in \mathbf{M}_{\mathrm{full}}\}$ implies that the node $v_2$ is isolated in $G_{\Delta}$.

%In order  $\mathbf 1\{(v_1^{(1)},v_1^{(2)})\}$ to belong to $\mathbf{M}_{\mathrm{full}}$, all that $M+1$ half-edges  incident to $v_{1}^{(1)}$ must be paired, which in turn implies that $|\mathbf{P}|\geq |M|+1$. Hence, we deduce from that and from this proposition that 

\medskip 

When we consider the graph $G_{\Delta}$, the so-called "fastener edges" that connect $v_1^{(1)}$ (or $v_1^{(2)}$) to long-distance node play a specific role. For this purpose, we introduce $H_1$ and $H_2$ as follows.  
 Write $H_1\subset E^{\mathrm{half},(1)}$ (resp. $H_2\subset E^{\mathrm{half},(1)}$) for the subset of half-edges of $G^{(1)}$, that are (i) paired (ii) \emph{incident to the node $v_1^{(1)}$} (resp.~$v_2^{(2)})$,  and (iii) such that the corresponding edge is incident to a node that is \emph{unmatched}.

\begin{proposition}\label{prop2:template:chaine_rappeur}
    We have:
%    $$\sum_{i \in \mathcal{M}\setminus\{(v_1^{(1)},v_1^{(2)}),(v_2^{(1)},v_2^{(2)})\}} |\overline{\mathbf{P}}_i| +|S_1| + |S_2| +  2(|\mathbf{M}|-2)   \geq 2(|\mathrm{Cyc}|+|\mathrm{CC}(G_{\Delta})| -1)  - \lfloor 8(|\mathbf{M}|-2)/L\rfloor.$$
\[
    2(|\mathrm{Cyc}|+|\mathrm{CC}(G_{\Delta})| -1)\leq |\mathbf{P}| - |H_1| -  |H_2|
 + 2\left[|\mathbf{M}|-\mathbf{1}\{(v_1^{(1)},v_1^{(2)})\notin \mathbf{M}_{\mathrm{full}}\} -\mathbf{1}\{(v_2^{(1)},v_2^{(2)}) \notin \mathbf{M}_{\mathrm{full}}\}\right] \enspace . 
\]
\end{proposition}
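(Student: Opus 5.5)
We would prove Proposition~\ref{prop2:template:chaine_rappeur} by a charging argument on the merged graph $G_{\cup}[\mathbf{M},\mathbf{P}]$: every cycle of $\mathrm{Cyc}$ and every connected component of $G_{\Delta}$ beyond the one allowed by the "$-1$" is charged to a distinct resource, namely either half of a pair in $\mathbf{P}$ or one of the matched nodes in $\mathbf{M}$ other than $v_1,v_2$ when they are not fully matched.

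\emph{Charging cycles.} A cycle in $\mathrm{Cyc}$ is a closed colored path. Since $G^*$ has no self-loops, it uses at least two pairs of half-edges, and these pairs are used by no other cycle or open path. So each cycle is charged $2$ units taken from $|\mathbf{P}|$.

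\emph{Charging components.} Every connected component of $G_{\Delta}$ contains at least one node. An unmatched node in $G^{(1)}$ or $G^{(2)}$ lies in the component of the replica it came from, and that replica is connected. Hence every component other than the ones containing $v_1$ or $v_2$ must contain a matched node, or be an isolated fully matched node. Component creation beyond one requires either cutting the replica's connectivity, which consumes paired half-edges (a component is separated only when every connecting edge is removed, i.e.\ both of its half-edges are paired), or identifying distinct pieces through matched nodes. I would show that each extra component consumes at least two units from $2|\mathbf{M}|$ or $|\mathbf{P}|$. The correction terms $-2\mathbf{1}\{(v_a^{(1)},v_a^{(2)})\notin\mathbf{M}_{\mathrm{full}}\}$ reflect that $v_1,v_2$ are always matched but spawn no new component unless they become isolated, that is, fully matched.

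\emph{Extra charge $|H_1|+|H_2|$.} A half-edge in $H_1$ is incident to $v_1^{(1)}$ and paired, and its other endpoint is an unmatched fastener node. The other half of that edge is therefore unpaired, so this pair lies on an open path rather than a cycle, and it creates no component separation at a matched node. Such pairs are thus never charged above, and they can be subtracted from $|\mathbf{P}|$. Making this rigorous means checking that in the charging scheme, each pair in $\mathbf{P}$ is used by at most one cycle or cut, and that $H_1$/$H_2$ pairs are never used.

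\emph{Main obstacle.} The crux is the component count. Matched nodes can merge parts of both replicas, and open paths can reconnect pieces, so a naive count overcounts. I would first try an inductive argument: add pairs of $\mathbf{P}$ one at a time, starting from the merged graph, and track how $|\mathrm{Cyc}|+|\mathrm{CC}|$ changes. Each pair $\{\underline e,\underline e'\}$ merges two half-edges at a matched node. This increases $|\mathrm{Cyc}|+|\mathrm{CC}|$ by at most $1$ per pair, and by at most $1/2$ on average unless a matched node becomes fully matched, which is paid by the $2|\mathbf{M}|$ budget. Pairs in $H_1\cup H_2$ yield no increase, since the unmatched far endpoint keeps the edge present in $G_{\Delta}$. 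Averaging in this way gives the factor $2$ in the inequality.
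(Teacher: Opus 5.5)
There is a genuine gap. Your cycle charge is fine, but the component count, which is the whole content of the proposition, is only announced, and the facts you set it up with are false. An edge leaves $G_{\Delta}$ as soon as \emph{one} of its half-edges is paired: it is rerouted along its open path. So ``separated only when both half-edges are paired'' and ``the unmatched far endpoint keeps the edge present in $G_\Delta$'' both misdescribe the construction, and components containing no matched node do occur. For instance, match $v^{(1)}_{L+1}\leftrightarrow v^{(2)}_{L+1}$ and $v^{(1)}_{L+4}\leftrightarrow v^{(2)}_{L+4}$. Pair the two half-edges of $v^{(1)}_{L+1}$ toward $v^{(1)}_{L+2}$ with those of $v^{(2)}_{L+1}$ toward $v^{(2)}_{L+2}$, and do the same at $v_{L+4}$ toward $v_{L+3}$. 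Then $S=\{v^{(a)}_{L+2},v^{(a)}_{L+3}:a=1,2\}$ is attached to the rest only through its four fastener edges to $v_1,v_2$. Now pair the $v_1$-half-edges of $(v_1,v^{(a)}_{L+2})$ (an $H_1$ pair), then the $v_2$-half-edges of $(v_2,v^{(a)}_{L+3})$ (an $H_2$ pair). This turns $S$ into a component with no matched node. The second of these pairs is precisely the step that increases $|\mathrm{CC}(G_{\Delta})|$, and $v_2$ is not fully matched. So ``pairs in $H_1\cup H_2$ yield no increase'' is false, and ``at most $1/2$ per pair on average'' merely restates what has to be proved.

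More fundamentally, apart from the absence of self-loops your sketch never uses the topology of $G^*$, and the inequality fails without it. Take the template with double edges $v_1a$, $ab$, $bv_2$, $v_1c$, $cd$, $dv_2$ and $\mathbf{M}=\mathbf{M}_0$. Pair across replicas the two half-edges at $v_1$ toward $a$ and the two at $v_2$ toward $b$. Then $|\mathrm{Cyc}|=0$, $|\mathrm{CC}(G_\Delta)|=2$, $|\mathbf{P}|=|H_1|+|H_2|=4$, neither $v_1$ nor $v_2$ is fully matched, and the inequality reads $2\leq 0$.

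The missing idea is the paper's charging on the $G^{(1)}$ side of each pair, which is where the chain structure enters:
\begin{itemize}
\item A component meeting $G^{(1)}$ but no matched node other than $v_1,v_2$ is a union of maximal unmatched chain intervals $[v^{(1)}_l,v^{(1)}_{l+b}]$. Their boundary edges can only be cut by paired half-edges at the matched chain neighbours $v^{(1)}_{l-1},v^{(1)}_{l+b+1}$. These half-edges lie on no fully paired edge and are not incident to $v_1,v_2$. Letting $q$ count such half-edges of $G^{(1)}$, this gives $2|\mathrm{CC}_1|\leq q+2$.
\item Cycles, components lying inside $G^{(2)}$, and a component $\{v_1,v_2\}$ each consume fully paired edges of $G^{(1)}$ (cycles, or open paths of length at least $2$). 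Write $FE^{(1)}_c$ and $FE^{(1)}_o$ for these fully paired edges on cycles and on open paths respectively.
\item Matched and isolated components are bounded by $|\mathbf{M}|$ minus the two indicators.
\end{itemize}
The identity $|\mathbf{P}|=|H_1|+|H_2|+q+2|FE^{(1)}_c|+2|FE^{(1)}_o|$ then closes the count. The half-edges of $H_1\cup H_2$ are discounted not because they never disconnect anything, but because this charging never needs them.
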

It follows from this proposition and the definition of $a_2$ that 
\begin{align}\nonumber 
a_2 & \geq |H_1|+ |H_2| - 2\mathbf 1\{(v_1^{(1)},v_1^{(2)})\in \mathbf{M}_{\mathrm{full}}\}+ 2[\mathbf{1}\{(v_2^{(1)},v_2^{(2)}) \notin \mathbf{M}_{\mathrm{full}}\} -\mathbf{1}\{v_1 \sim_{G_\Delta} v_2\} ] \\ 
&\geq |H_1|+ |H_2| - 2\mathbf 1\{(v_1^{(1)},v_1^{(2)})\in \mathbf{M}_{\mathrm{full}}\}\enspace  \ , \label{eq:upper:a2}
\end{align}
where we used again that $v_2$ is isolated in $G_{\Delta}$ when $(v_2^{(1)},v_2^{(2)}) \in \mathbf{M}_{\mathrm{full}}$.

\medskip

We consider two cases depending on the values of $\Delta$:~\\
\noindent 
{\bf Case 1}: $\Delta^4 \leq K$. Gathering~\eqref{eq:upper:a1} with~\eqref{eq:upper:a2} and coming back to the expression~\eqref{eq:AMP}, we arrive at 
\[
\frac{A(\mathbf{M},\mathbf{P})}{\mathbb E^2[\overline{\Psi}_{G^*}|\mu,x=1]}
\leq \left(\frac{2K^4}{n\Delta^8}\right)^{|\mathbf{M}|-2} \left(\frac{K}{\Delta^4}\right)^{\lfloor 4(|\mathbf{M}|-2)/L\rfloor} \frac{K^{\mathbf 1\{(v_1^{(1)},v_1^{(2)})\in \mathbf{M}_{\mathrm{full}}\}}}{\Delta^{2(|H_1|+|H_2|)}}
\]
In order to have $(v_1^{(1)},v_1^{(2)})\in \mathbf{M}_{\mathrm{full}}$, we need all $2(M+1)$ half-edges incident to $v_1$ in $G_{\cup}[\mathbf{M},\mathbf{P}]$ arise in $\mathbf{P}$. Hence, we either have $|H_1|\geq (M+1)/2$, or that $|\mathbf{M}|-2\geq (M+1)/2$. This yields
\begin{align}
    \frac{A(\mathbf{M},\mathbf{P})}{\mathbb E^2[\overline{\Psi}_{G^*}|\mu,x=1]}
&\leq \left(\frac{2K^4}{n\Delta^8} \left(\frac{K}{\Delta^4}\right)^{4/L}K^{2/(M+1)} \right)^{|\mathbf{M}|-2}  \left(\frac{K^{2/[M+1]}}{\Delta^{2}} \right)^{|H_1|+|H_2|}
\label{eq:AMP2} \ . 
\end{align}
%Next, we sum this quantity over all $(\mathbf{M},\mathbf{P})$ such that $|\mathbf{M}|\geq 3$ and $|\mathbf{P}|\geq 1$. 

\medskip 

\noindent 
{\bf Case 2}: $\Delta^4 \geq K$. First, if $a_1$ is non-negative, then the term $(K/\Delta^{4})$ in~\eqref{eq:AMP} is bounded by one. Arguing as previously, we deduce that 
\begin{align}
    \frac{A(\mathbf{M},\mathbf{P})}{\mathbb E^2[\overline{\Psi}_{G^*}|\mu,x=1]}
&\leq \left(\frac{2K^4}{n\Delta^8} \Delta^{8/(M+1)} \right)^{|\mathbf{M}|-2}  \left(\frac{\Delta^{8/[M+1]}}{\Delta^{2}} \right)^{|H_1|+|H_2|}
\label{eq:AMP2-bis} \ . 
\end{align}
Let us turn to the situation where $a_1\leq 0$. This implies that 
\[
\kappa := |\mathrm{CC}(G_{\Delta})| + |\mathrm{Cyc}|+\mathbf{1}\{v_1 \sim_{G_\Delta} v_2\}- 2 \leq 3[|\mathbf{M}|-2]\ . 
\]
In this case, we come back to the expression \eqref{eq:AMP_0} of $A(\mathbf{M},\mathbf{P})$ and we use the different decomposition 
\begin{align}\nonumber
\frac{A(\mathbf{M},\mathbf{P})}{\mathbb E^2[\overline{\Psi}_{G^*}|\mu,x=1]} 
\leq \left(\frac{2K}{n}\right)^{[|\mathbf{M}|-2]-\kappa/3} \left(\frac{2K^4}{n\Delta^8}\right)^{\kappa/3} \frac{1}{\Delta^{2a'_2}} \ ,  \label{eq:AMP_0_bis}
\end{align}
where $a'_2:= |\mathbf{P}|-\frac{4\kappa}{3}$.  This quantity is bounded in the next lemma.
\begin{lemma}\label{lem:P:petit}
Recall the sets $H_1$ and $H_2$ defined above Proposition~\ref{prop2:template:chaine_rappeur}. We have 
\[
\kappa \leq \frac{3}{4}|\mathbf{P}| - \frac{|H_1|+|H_2|}{4}\enspace . 
\]
\end{lemma}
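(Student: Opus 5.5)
We plan to prove the bound $4\kappa \le 3|\mathbf{P}| - |H_1|-|H_2|$ by a charging argument. Each cycle and each connected component of $G_\Delta$ (beyond the two "free" ones, which account for the $-2$ in $\kappa$ together with the indicator $\mathbf{1}\{v_1\sim_{G_\Delta} v_2\}$) will be charged to paired half-edges. The target is that each new structure costs at least $4/3$ pairs, while half-edges in $H_1\cup H_2$ are charged nothing.

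\emph{Step 1: the paired half-edges.} Every element of $\mathrm{Cyc}$ is a closed coloured path in $G_\cup[\mathbf{M},\mathbf{P}]$. A cycle of length $\ell$ uses exactly $\ell$ pairs, and $\ell\geq 2$ since we are working with two replicas.

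\emph{Step 2: the new components.} A connected component of $G_\Delta$ other than those of $v_1$ and $v_2$ must be cut off from $v_1,v_2$. Every unmatched node of $G^{*(1)}$ or $G^{*(2)}$ lies in a component containing $v_1$ and $v_2$ via the double chain and fasteners, unless that connection is cut by pruning. Pruning removes paired half-edges, so cutting out a component requires removing the edges that link it to the rest. In $G^*$ every internal node has degree $4$, and the double chain has edge-connectivity $2$ between consecutive blocks. Separating a piece should therefore cost at least $4$ paired half-edges, counted over both replicas, per extra component, up to sharing between adjacent pieces. An isolated fully matched node costs $8$ half-edges, that is $4$ pairs. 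I would make this precise with a discharging rule: each pair of $\mathbf{P}$ distributes weight $3/4$ among the structures (cycles or component boundaries) it creates, and each created structure must receive total weight at least $1$. Pairs involving $H_1$ or $H_2$ contribute only $1/2$. This is consistent because such a half-edge lies on a fastener edge going to an unmatched node, so it cannot close a cycle or isolate anything on the $v_1$ side; it only reroutes an edge.

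\emph{Step 3: degenerate structures.} Cycles of length $2$ (perfectly paired edges) and the indicator $\mathbf{1}\{v_1\sim v_2\}$ need separate treatment. A perfectly paired double edge of the double chain yields $2$ cycles using $4$ pairs, which gives exactly $\kappa$-contribution $2\le \tfrac34\cdot 4=3$, so there is slack. The tight configurations are likely long perfectly paired stretches. Here we would use the fact that $\kappa\le 3(|\mathbf{M}|-2)$ is already assumed in this case to absorb boundary effects at the fasteners.

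The main obstacle is Step 2: bounding $|\mathrm{CC}(G_\Delta)|$ in terms of $|\mathbf{P}|$ for this specific topology, including the fasteners. I would first attempt the argument node by node, using the relation between $|\mathrm{CC}|$, open paths, and $\mathbf{M}$ from Lemma~\ref{lem:connected_components} and Proposition~\ref{prop2:template:chaine_rappeur}. The latter already gives $2(|\mathrm{Cyc}|+|\mathrm{CC}|-1)\le |\mathbf{P}|-|H_1|-|H_2|+2|\mathbf{M}|-\dots$. The plan is to combine it linearly with the hypothesis $\kappa\le 3(|\mathbf{M}|-2)$ to eliminate $|\mathbf{M}|$; a suitable convex combination of the two inequalities should yield the stated bound.
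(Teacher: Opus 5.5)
\paragraph{Gap in Step 2.} Step 2 carries the whole lemma, and the route you propose to close it cannot work.

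Up to bounded terms, Proposition~\ref{prop2:template:chaine_rappeur} reads $2\kappa\le |\mathbf P|-|H_1|-|H_2|+2|\mathbf M|$, and the case hypothesis reads $\kappa\le 3(|\mathbf M|-2)$. Both bound $\kappa$ from above with a positive coefficient on $|\mathbf M|$; the hypothesis is a \emph{lower} bound on $|\mathbf M|$, not an upper one. So no nonnegative combination eliminates $|\mathbf M|$. Concretely, take $|\mathbf P|=|H_1|=|H_2|=|\mathrm{Cyc}|=0$, $|\mathrm{CC}(G_\Delta)|=2$, $v_1\sim_{G_\Delta}v_2$ and $|\mathbf M|=3$. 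These values satisfy both inequalities exactly as stated (Proposition~\ref{prop2:template:chaine_rappeur} becomes $2\le 2$), yet give $\kappa=1>\frac34|\mathbf P|$. The needed information is simply not in them.

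The obstruction is structural. Proposition~\ref{prop2:template:chaine_rappeur}, like Lemma~\ref{lem:connected_components}, pays for components containing a matched node with $|\mathbf M|$, and matched nodes carrying no paired half-edge cost nothing in $|\mathbf P|$. The lemma is in fact unconditional, so the hypothesis $\kappa\le 3(|\mathbf M|-2)$ is of no use here, nor in your Step 3.

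Your discharging rule would give the right arithmetic, but it is only asserted. Your reason for the reduced weight on $H_1\cup H_2$ is also off. Rerouting a fastener edge can be exactly what detaches a run of the chain from $v_1$, so these half-edges do pay for components. They are cheaper only because they are charged once: their hub end never lies in a counted component.

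\paragraph{What is needed.} You need a direct charge of the components avoiding $v_1,v_2$ (there are $|\mathrm{CC}(G_\Delta)|+\mathbf 1\{v_1\sim_{G_\Delta}v_2\}-2$ of them) to paired half-edges of the first replica only.
\begin{itemize}
\item Split these components into $\mathrm{CC}^*_1$, those meeting $G^{(1)}[\mathbf M,\mathbf P]$, and $\mathrm{CC}^*_2$, those not meeting it. For the latter, \eqref{eq:upper_CC_2} already gives $|\mathrm{CC}^*_2|\le |FE^{(1)}_o[\mathbf M,\mathbf P]|/2$.
\item For $C\in\mathrm{CC}^*_1$, the nodes of $C$ in $G^{(1)}[\mathbf M,\mathbf P]$ form maximal runs $[v^{(1)}_l,v^{(1)}_{l+b}]$ of the chain. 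The outside neighbours of a run and $v_1,v_2$ are not in $C$. Hence the two edges leaving each end of the run (a double chain edge, or a fastener edge plus a hub edge) were pruned, and each carries a paired half-edge of $G^{(1)}$.
\item This gives four charges per component. Each paired half-edge is charged at most twice, and at most once if it lies on a fully paired edge or is incident to $v_1$ or $v_2$. Hence
\[
4|\mathrm{CC}^*_1|+2|FE^{(1)}[\mathbf M,\mathbf P]|+|H_1|+|H_2|\le 2|\mathbf P|\ .
\]
\item Add $|\mathrm{Cyc}|\le|FE^{(1)}_c[\mathbf M,\mathbf P]|$ and use $2|FE^{(1)}[\mathbf M,\mathbf P]|\le|\mathbf P|$; this yields the lemma.
\end{itemize}
The constant $3/4$ comes from a trade-off. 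Cycles consume fully paired edges, while fully paired edges are charged only once on component boundaries. The perfectly paired double chain shows the constant is tight: each node uses four pairs and yields one isolated node and two $2$-cycles.
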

This yields $a'_2\geq (|H_1|+|H_2)/3$ and we arrive at the bound   
\begin{align}
    \frac{A(\mathbf{M},\mathbf{P})}{\mathbb E^2[\overline{\Psi}_{G^*}|\mu,x=1]}
&\leq \left(\frac{2K^4}{n\Delta^8} \bigvee \frac{2K}{n} \right)^{|\mathbf{M}|-2}  \left(\frac{1}{\Delta^{2}} \right)^{(|H_1|+|H_2|)/3}
\label{eq:AMP2-ter} \ . 
\end{align}

Gathering~\eqref{eq:AMP2},~\eqref{eq:AMP2-bis}, and~\eqref{eq:AMP2-ter}, we conclude that, in all regimes,  we have 
\begin{align}
    \frac{A(\mathbf{M},\mathbf{P})}{\mathbb E^2[\overline{\Psi}_{G^*}|\mu,x=1]}
&\leq \left(\frac{2K^4}{n\Delta^8} \left(\frac{K}{\Delta^4}\vee 1 \right)^{4/L}(K\vee \Delta^4)^{2/(M+1)}\bigvee \frac{2K}{n} \right)^{|\mathbf{M}|-2}  \left(\frac{(K\vee \Delta^4)^{6/[M+1]}}{\Delta^{2}} \right)^{(|H_1|+|H_2|)/3}
\label{eq:AMP2_final} \ . 
\end{align}

Then, we sum over all possibile $\mathbf{M}$ and all possible $\mathbf{P}$ to control $A$. 
\begin{align*}
\frac{A}{\mathbb E^2[\overline{\Psi}_{G^*}|\mu,x=1]}&\leq  \sum_{\substack{\mathbf{M},\mathbf{P}\in \mathcal{MP}:\\|\mathbf{M}|\geq 3~\mathrm{and}~\mathbf{P} \neq \emptyset} } \left(\frac{2K^4}{n\Delta^8} \left(\frac{K}{\Delta^4}\vee 1 \right)^{4/L}(K\vee \Delta^4)^{2/(M+1)}\bigvee \frac{2K}{n} \right)^{|\mathbf{M}|-2} \\ &\hspace{4cm}  \cdot \left(\frac{(K\vee \Delta^4)^{6/[M+1]}}{\Delta^{2}} \right)^{(|H_1|+|H_2|)/3} \  , \\
&\leq 
\sum_{m=3}^{|V|} \sum_{h_1,h_2=0}^{M+1}
  \left(\frac{2K^{4}(K\vee \Delta^4)^{\tfrac{2}{M+1}}}{n\Delta^8} \left(\frac{K}{\Delta^4}\vee 1 \right)^{\tfrac{4}{L}} \bigvee \frac{2K}{n} \right)^{m-2} \\ &\quad \quad \cdot  \left(\frac{(K\vee \Delta^4)^{\tfrac{6}{M+1}}}{\Delta^{2}} \right)^{(h_1+h_2)/3} \sum_{\substack{\mathbf{M},\mathbf{P}\in \mathcal{MP}:\\|\mathbf{M}|=m; |H_1|=h_1 ; |H_2|=h_2}} 1 \  .
\end{align*}
We have already argued that number of matchings $\mathbf{M}$ of size $m$ is at most equal to $(|V|-2)^{2(m-2)}$. 
\begin{lemma}\label{lem:number_mathching}
    We have
\[
\left|\{\mathbf{P}\in \mathcal{P}(\mathbf{M}): |H_1|=h_1,\ |H_2|=h_2\}\right|\leq 5^{4(|\mathbf{M}|-2)} \cdot (M+2)^{4(|\mathbf{M}|-2)}\cdot (M+1)^{2(h_1+h_2)}\enspace . 
\]
\end{lemma}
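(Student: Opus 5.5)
We count the pairings $\mathbf{P}\in\mathcal{P}(\mathbf{M})$ by distributing them over the matched node pairs. The constraint (ii) in the definition of a pairing says a half-edge of $G^{(1)}$ at $v^{(1)}_i$ can only be paired with a half-edge of $G^{(2)}$ at $v^{(2)}_j$ when $(v^{(1)}_i,v^{(2)}_j)\in\mathbf{M}$. So $\mathbf{P}$ decomposes as a disjoint union $\bigcup_{(v,v')\in\mathbf{M}}\mathbf{P}_{(v,v')}$, where $\mathbf{P}_{(v,v')}$ is a partial injection from the half-edges incident to $v$ into those incident to $v'$. The number of pairings is therefore bounded by a product of local counts. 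We treat separately the $|\mathbf{M}|-2$ pairs of non-root nodes and the two root pairs $(v_1^{(1)},v_1^{(2)})$ and $(v_2^{(1)},v_2^{(2)})$.

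\emph{Non-root pairs.} Every node of $G^*$ other than $v_1,v_2$ has degree exactly $4$. A partial injection between two sets of $4$ half-edges is determined by choosing $k\le 4$ half-edges on each side and a bijection between them. This gives $\sum_{k=0}^4\binom{4}{k}^2k!=209\le 5^4$ possibilities. The product over the $|\mathbf{M}|-2$ non-root pairs is thus at most $5^{4(|\mathbf{M}|-2)}$.

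\emph{Root pairs.} Here the degree is $M+1$, which is large, and we must exploit $h_1,h_2$. The half-edges at $v_1^{(1)}$ fall into two classes. Class (a) half-edges belong to an edge whose other endpoint is unmatched; these are the $H_1$ ones when paired. Class (b) half-edges belong to an edge whose other endpoint is a matched node $w$; since $G^*$ has no multi-edges at $v_1$ except the double edge $(v_1,v_3)$, there are at most $2$ such half-edges per matched node $w$. Hence at most $2(|\mathbf{M}|-2)$ half-edges at $v_1^{(1)}$ are of class (b), and similarly at $v_1^{(2)}$.

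For class (b) we encode the pairing by, for each of these at most $2(|\mathbf{M}|-2)$ half-edges on the $G^{(1)}$ side, its image among the $M+1$ half-edges at $v_1^{(2)}$ or ``unpaired''. This gives at most $(M+2)^{2(|\mathbf{M}|-2)}$ choices.

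For class (a), exactly $h_1$ of these half-edges are paired. We choose them and their images by choosing, for each of $h_1$ slots, a source and a target among the $M+1$ half-edges, giving at most $(M+1)^{2h_1}$ choices. The same holds at $v_2$ with $h_2$.

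Multiplying the contributions from $v_1$ and $v_2$ gives $(M+2)^{4(|\mathbf{M}|-2)}(M+1)^{2(h_1+h_2)}$. Combined with the non-root factor, this yields the claimed bound.

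\emph{Main obstacle.} The delicate point is the asymmetric definition of $H_1$: it is defined via the $G^{(1)}$ side. A half-edge at $v_1^{(1)}$ whose edge goes to a matched node may be paired to a half-edge at $v_1^{(2)}$ whose edge goes to an unmatched node of $G^{(2)}$. Encoding from the $G^{(1)}$ side handles this, because each such pairing is charged either to the $(M+2)$ factor for class (b) or to the $(M+1)^2$ factor for $H_1$. One also has to check that counting the class (a) slots with repetition and order only overcounts. If the symmetric convention is intended (the statement writes $v_2^{(2)}$ for $H_2$), the same argument applies with the roles of the replicas swapped at $v_2$.
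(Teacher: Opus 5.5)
Your proof is correct and follows the paper's argument essentially step by step. You use the same factor $5^4$ for each of the $|\mathbf{M}|-2$ non-root matched pairs of degree-$4$ nodes, and at each root a factor $(M+1)^{2h_i}$ for the half-edges in $H_i$ together with a factor $(M+2)$ for each of the at most $2(|\mathbf{M}|-2)$ half-edges whose edge ends at a matched node. Encoding both root pairings from the $G^{(1)}$ side (at $v_1^{(1)}$ and $v_2^{(1)}$) is a cleaner reading of the paper's somewhat garbled ``arguing similarly for $v_1^{(2)}$'', and it gives exactly the stated exponent $4(|\mathbf{M}|-2)$ on $M+2$.
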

\begin{proof}[Proof of Lemma~\ref{lem:number_mathching}]
Pairings only involve half-edges incident to matched nodes. Consider a node matching $(v_i^{(1)},v_{i'}^{(2)})$ where $i,i'>2$. The degrees of those nodes in $G^{(1)}$ and $G^{(2)}$ is equal to 4. Each half-edge incident to $v_i^{(1)}$ is either not paired or is paired to one of $4$ half-edges incident to $v_i^{(2)}$, hence there are at most $5^{4}$ possibilities for each of the $|\mathbf{M}|-2$ nodes. It remains to consider the paired half-edges incident to $v_{i}^{(a)}$ for $i=1,2$ and $a=2$. By symmetry, we focus on $v_{1}^{(1)}$. First, 
there are at most $(M+1)^{2h_1}$ possibilities to choose the $h_1$ half-edges and their corresponding paired half-edges. All the other paired half-edges incident to $v_{1}^{(1)}$ must belong to an edge that is incident to another node that arises in the matching $\mathbf{M}$. Since there are at most $2(|\mathbf{M}|-2)$ such half-edges, there are less than $(M+2)^{2(|\mathbf{M}|-2)}$ possibilities for choosing whether they are paired and to which half-edge they are paired. Arguing similarly for $v_1^{(2)}$ concludes the proof.
\end{proof}

As a consequence, we arrive at 
\begin{align*}
\frac{A}{\mathbb E^2[\overline{\Psi}_{G^*}|\mu,x=1]}
&\leq \sum_{m=3}^{|V|} \sum_{h_1,h_2=0}^{M+1}
  \left(\frac{2\cdot 5^{4}(M+2)^4 K^{4} (K\vee \Delta^4)^{\tfrac{2}{M+1}}}{n\Delta^8} \left(\frac{K}{\Delta^4}\vee 1 \right)^{\tfrac{4}{L}} \bigvee \frac{2K\cdot 5^4 (M+2)^{4}}{n}\right)^{m-2}  \\ & \quad \quad \cdot \left(\frac{(M+1)^6 (K\vee \Delta^4)^{\tfrac{6}{M+1}}}{\Delta^{2}} \right)^{(h_1+h_2)/3} \\
  &\leq  4 \sum_{m=3}^{|V|}
  \left(\frac{2\cdot 5^{4}(M+2)^4 K^{4}(K\vee \Delta^4)^{\tfrac{2}{M+1}}}{n\Delta^8} \left(\frac{K}{\Delta^4}\vee 1 \right)^{\tfrac{4}{L}}\bigvee \frac{2K\cdot 5^4 (M+2)^{4}}{n} \right)^{m-2}  \\ 
  &\leq 10^4(M+2)^4 \left[\frac{ K^{4+\tfrac{2}{M+1}+ \tfrac{4}{L}}}{n\Delta^{8- \tfrac{8}{M+1}}}\bigvee \frac{K}{n}\right] \ , 
\end{align*}
where we used in the second line that $\Delta^2 \geq 8(M+1)^6 (K\vee \Delta^4)^{\tfrac{6}{M+1}}$ and in the last line that $ 10^4 (M+2)^4 K^{4+\tfrac{2}{M+1}+\tfrac{4}{L} }\leq n\Delta^{8(1-\tfrac{1}{M+1})}$ and $n\geq 10^{4} K (M+2)^4$. 
Gathering~\eqref{eq:decomposition_V1},~\eqref{eq:bound:C}, and Lemma~\ref{prop:UBvar1}, we conclude that 
\begin{align*}
\frac{\mathrm{Var}_1(\overline{\Psi}_{G^*})}{\mathbb E^2[\overline{\Psi}_{G^*}|\mu,x=1] }&\leq 10^4(M+2)^4   \left[\frac{ K^{4+\tfrac{2}{M+1}+ \tfrac{4}{L}}}{n\Delta^{8(1-\tfrac{1}{M+1})}}\bigvee \frac{K}{n}\right]+ \frac{K(M+1)^{2(M+1)}}{\Delta^{4(M+1)}} + 2  \frac{(M+1)^{5}}{\Delta^{2}}  +  \frac{4M^2L   ^2K}{n}\enspace . 
\end{align*}
This finishes the proof.

\subsection*{Acknowledgements}
We grateful to Christophe Giraud for many illuminating discussions. The work of A. Carpentier is partially supported by the Deutsche Forschungsgemeinschaft (DFG)- Project-ID 318763901 - SFB1294 "Data Assimilation", Project A03,  by the DFG on the Forschungsgruppe FOR5381 "Mathematical Statistics in the Information Age - Statistical Efficiency and Computational Tractability", Project TP 02 (Project-ID 460867398), and by  the DFG on the French-German PRCI ANR-DFG ASCAI CA1488/4-1 "Aktive und Batch-Segmentierung, Clustering und Seriation: Grundlagen der KI" (Project-ID 490860858). The work of N. Verzelen has  partially been supported by ANR-21-CE23-0035 (ASCAI, ANR). The work of A.~Carpentier and N.~Verzelen is also supported by the Universite franco-allemande (UFA) through the college doctoral franco-allemand CDFA-02-25 "Statistisches Lernen für komplexe stochastische Prozesse".

\printbibliography
\appendix

\section{Proofs of the invariance results from Section~\ref{sec:mom}}

\subsection{Proof of Lemma~\ref{lem:reduction:rotation:permutation}}

Let us consider $f_0$ any minimizer of $\min_{f: \mathrm{deg}(f)\leq D}\mathbb{E}[(f(Y)-x)^2]$. Fix any orthogonal $O$ matrix and denote $Y\cdot O$ the rotation of $Y$, that is the matrix $Y\cdot O$ defined by $(Y\cdot O)_{i,j}= \sum_{l=1}^{d} Y_{i,l}O_{l,j}$. Since the distribution of $X$ and the distribution of the noise $Z$ is invariant by rotation, we have $\mathbb{E}[(f_0(Y\cdot 0)-x)^2]= \mathbb{E}[(f_0(Y)-x)^2]$. Then, define $f_1= \int f_0(Y\cdot O)dO$ by integrating with the Haar measure over all possible $O$. By Jensen's inequality, we deduce that $\mathbb{E}[(f_1(Y)-x)^2]\leq \mathbb{E}[(f_0(Y)-x)^2]$. 

We claim that $f_1$ can be expressed as polynomial in $U=YY^T$. 

\noindent 
{\bf Proof of the claim}: Since $f_0$ is polynomial, it is expressed as linear combination of monomial in $(Y_{i,j})$. By linearity, $f_1$ is therefore a linear combination of terms  of the form  $\int [Y\cdot O]_{i_1j_1}\ldots [Y\cdot O]_{i_rj_r}dO$. Again expanding the terms $[Y\cdot O]_{i_sj_s}$, $f_1$ is a linear combination of terms of the form 
\begin{equation}\label{eq:integral_orthogonal_group}
\sum_{l_1,\ldots, l_r} Y_{i_1l_1},\ldots Y_{i_rl_r}\int O_{l_1,j_1}\ldots O_{l_r,j_r} dO  \ , 
\end{equation}
which are  integrals of polynomials over the orthogonal group.  Here, we rely on the landmark paper~\cite{collins2006integration} which provides an explicit form of such integrals. More precisely, Corollary 3.4 in~\cite{collins2006integration} states that~\eqref{eq:integral_orthogonal_group} is null when $r$ is odd. If $r$ is even, write $\underline{l}= (l_1,\ldots, l_r)$ and $\underline{j}= (l_1,\ldots, l_r)$. A pairing $p$ of $r$ is a partition of $[r]$ into $r/2$ pairs. Given a pairing $p$ and a sequence $\underline{l}$ we write $\delta_{\underline{l}}^{p}$ as the indicator function that is equal to one if all paired indices by $p$  share the same value of $\underline{l}$. Then, by Corollary 3.4 in~\cite{collins2006integration}, there exists a function $\phi$ that maps two pairings $p_1$, $p_2$ to a real number such that 
\[
    \int O_{l_1,j_1}\ldots O_{l_r,j_r} dO = \sum_{p_1, p_2}\delta_{\underline{l}}^{p_1}\delta_{\underline{j}}^{p_2} \phi(p_1,p_2)\ , 
\]
where the sum is over all pairing $p_1$ and $p_2$. Then, we can reorder the sum in~\eqref{eq:integral_orthogonal_group} as 
\[
\sum_{p_2}\delta_{\underline{j}}^{p_2}\sum_{p_1}\phi(p_1,p_2) \sum_{\underline{l}}  \delta_{\underline{l}}^{p_1} \prod_{s=1}^r Y_{i_s,l_s}\ . 
\]
In the above sum over $\underline{l}$, the indicator function $\delta_{\underline{l}}^{p_1}$ is non-zero, only if $p_1$ is compatible with the pairing. As a consequence, this sum is equal to a  product of terms of the form $\prod \langle Y_{i_s}, Y_{i_{s'}}\rangle$, where $s$ and $s'$ are paired by $p_1$. Thus, we have shown that $f_1$ expresses a polynomial of degree at most $D/2$ in $U$. 

Furthermore, the problem is invariant by permutation. Let $\Pi$ denote the collection of permutation over $[n]$ that keep $1$ and $2$ invariant. Then, define $Y\cdot \pi$ as the matrix $(Y_{\pi(i)},j)$. By invariance, the polynomial $f_2$ define by $f_2(Y)= |\Pi|^{-1} \sum_{\pi\in \Pi}f_1(Y\cdot \pi)$ still achieves the minimum risk and satisfies the desired properties.

\subsection{Proof of Lemma~\ref{lem:invariant:graph}}

    First, for any multigraph $G$, one easily checks that $P_{G}$ is invariant by permutation.  Besides, if $G$ and $G'$ are isomorphic, then $P_{G}=P_{G'}$. Indeed, there exists a bijection $\sigma$ from the half-edges of $G$ to that of $G'$ that preserves the incidence equivalence and that preserves the edges. This induces a bijection $\tilde{\pi}$ from $V$ to $V'$ that preserves the edges. As a consequence, 
    \[
    P_{G}= \sum_{\pi}P_{G',\pi}= \sum_{\pi}P_{G,\pi o \tilde{\pi}}= \sum_{\pi}P_{G,\pi}\ , 
    \]
    and the claim  follows. 

    Furthermore, the family $(P_G)$, $G\in \mathcal{G}_{\leq D}$ spans the space of invariant polynomials of degree at most $2D$. Indeed, if we consider any invariant polynomial and we decompose it into monomials, we easily check (i) the monomials can be grouped as a sum over permutation $\pi$ and, in turn, is equal $P_G$ for some multigraph $P_G$.

\subsection{Proof of Lemma~\ref{lem:reduction:degree_pair}}

First, we deal with the family $(\overline{\Psi}_G)$ before showing that $(\widetilde{\Psi}_G)$ spans the suitable space of polynomials. Finally, we deal with polynomials with odd degree.

\begin{lemma}\label{lem:reduction:psi_bar}
The family $(\overline{\Psi}_G)$ with $G\in \mathcal{G}_{\leq D}$ spans the space $\mathcal{P}^{\mathrm{\mathrm{inv}}}_{\leq 2D}$.
\end{lemma}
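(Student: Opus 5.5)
The plan is to compare both families with the $P_G$'s through a triangular change of basis. By Lemma~\ref{lem:invariant:graph}, every element of $\mathcal{P}^{\mathrm{inv}}_{\leq 2D}$ is a linear combination of the $P_G$, $G\in\mathcal{G}_{\leq D}$. So it suffices to show two things: that each $\overline{\Psi}_G$ lies in $\mathcal{P}^{\mathrm{inv}}_{\leq 2D}$, and that each $P_G$ is a linear combination of the $\overline{\Psi}_{G'}$.

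I would order templates by number of edges $|E|$, breaking ties arbitrarily. The target identity is
\[
\overline{\Psi}_G = P_G + \sum_{G': |E'|<|E|} c_{G,G'} P_{G'}
\]
for some constants $c_{G,G'}$. This makes the transition matrix unitriangular, hence invertible, and yields both claims at once.

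\textbf{Step 1: expansion of $\overline{\Psi}_{G,\pi}$.} Fix a labelling $\pi$ and expand $\overline{\Psi}_{G,\pi}$ coordinate by coordinate. For each node $v$ and coordinate $j$, the factor $\overline{\psi}_{\beta_{v,j},G}(Y_{\pi(v),j})$ is monic of degree $\beta_{v,j}$. Its lower-order terms are given by the Hermite expansion, which pairs up copies of $Y_{\pi(v),j}$ and replaces each pair by a constant. The degree-2 correction $x^2-(1+\Delta^2/K)$ has the same structure, only with a different constant.

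The top-degree term summed over $(j_e)$ gives exactly $P_{G,\pi}$. A lower-order term removes pairs of half-edges at $v$ that carry the same coordinate $j$. Summing over the free coordinates $(j_e)$, it becomes a product of inner products indexed by a multigraph $G'$ on the same vertex set. This $G'$ is obtained from $G$ by contracting two half-edges at $v$: either the two edges merge into one edge between their other endpoints, or a self-loop is deleted. A deleted self-loop produces a factor $d$ from the free coordinate sum.

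\textbf{Step 2: handling coordinate constraints.} The subtle point is that the Hermite coefficient depends on $\beta_{v,j}$. That is the number of coincident coordinates, not the multigraph structure alone. I would handle this by inclusion–exclusion over the partitions of half-edges at each node into coordinate-equality classes. Alternatively, I would use the standard identity that the multivariate Hermite polynomial attached to a product of inner products equals a sum over partial pairings of the half-edges at each node (Wick-type expansion).

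Either way, I expect $\overline{\Psi}_{G,\pi}$ to equal $P_{G,\pi}$ plus a combination of terms $P_{G',\pi}$. Here each $G'$ has strictly fewer edges and lives on the same labelled vertex set. Terms that would isolate a node $v\notin\{v_1,v_2\}$ still make sense as polynomials on the remaining nodes. They contribute a constant multiple of $P_{G''}$ with fewer nodes, because $\sum_\pi$ runs over injective maps and an isolated node just yields a factor $(n-|V''|)$.

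\textbf{Step 3: summing over labellings.} The coefficients depend only on the local structure at each node. So summing over $\pi\in\Pi_V$ gives $\overline{\Psi}_G=P_G+\sum_{|E'|<|E|}c_{G,G'}P_{G'}$. In particular $\overline{\Psi}_G$ is invariant and has degree at most $2D$. Inverting the unitriangular system by induction on $|E|$ then expresses every $P_G$ through the $\overline{\Psi}_{G'}$.

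\textbf{Main obstacle.} The hard step is Step 2: making rigorous that the lower-order Hermite terms assemble into invariant polynomials indexed by multigraphs with fewer edges, despite the dependence on exact coordinate multiplicities. My first approach would be to write $\psi_k(y)=\sum_{s}\frac{k!}{s!(k-2s)!2^s}(-1)^s y^{k-2s}$, which can be rewritten as a sum over sets of $s$ disjoint pairs among the $k$ copies, each pair contributing $-1$. Summed over coordinates, the constraint ``the paired half-edges carry the same coordinate'' is exactly what contracts them into a merged edge or removes a loop. No inclusion–exclusion is needed, because pairing $k$ labelled copies is already expressed combinatorially.
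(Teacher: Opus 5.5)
Your proof is correct, but it follows a different route from the paper's. The paper treats invariance and spanning separately.

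\emph{Invariance.} The paper observes that $\mathbb{E}[\overline{\Psi}_{G,\pi}(X+Z')\mid X]=P_{G,\pi}(X)$. Here $Z'$ is Gaussian with isotropic rows, of variance $1+\Delta^2/K$ on the rows of degree-$2$ nodes. The right-hand side is rotation invariant, and the smoothing map $P\mapsto \mathbb{E}[P(\cdot+Z')]$ is injective because it preserves the leading homogeneous part. Hence $\overline{\Psi}_{G,\pi}(Y)=\overline{\Psi}_{G,\pi}(Y\cdot O)$.

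\emph{Spanning.} The paper only uses that $\overline{\Psi}_G-P_G$ is an invariant polynomial of lower degree. By Lemma~\ref{lem:invariant:graph} it is therefore a combination of lower-degree $P_{G'}$, and the paper concludes by induction.

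You instead compute the remainder explicitly. You write each $\psi_k$ as a signed sum over partial matchings of its $k$ copies and then sum over coordinates. Each choice of intra-node pairings then becomes a product of inner products along paths, with a factor $d$ per cycle. This is the same path/cycle pruning the paper later uses to build $G_\Delta$ in Proposition~\ref{prop:covariance_without_degree_2}, applied here within a single graph. It gives the unitriangular identity $\overline{\Psi}_G=P_G+\sum_{|E'|<|E|}c_{G,G'}P_{G'}$ directly.

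Your route gives slightly more. Membership of $\overline{\Psi}_G$ in $\mathcal{P}^{\mathrm{inv}}_{\leq 2D}$ is immediate, since each $P_{G'}$ is visibly a permutation-invariant polynomial in $YY^T$. The paper's injectivity argument only yields $f(Y)=f(Y\cdot O)$. It then tacitly relies on the fact, from the Haar-averaging argument of Lemma~\ref{lem:reduction:rotation:permutation}, that rotation-invariant polynomials are polynomials in $YY^T$, and it needs this both for $\overline{\Psi}_G$ and for the remainder. You also obtain the coefficients $c_{G,G'}$ explicitly. The paper's route is shorter and avoids all coefficient bookkeeping.
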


This lemma is shown below. Next, we prove that the family $(\widetilde{\Psi}_G)$ with $G\in \mathcal{G}_{\leq D}$ spans the the same space. First, observe that if $G$ has only isolated nodes, the  $\widetilde{\Psi}_{G,\pi}= \overline{\Psi}_{G,\pi}=1$.  Consider a multigraph $G$ with at least one edge. By expanding $\widetilde{\Psi}_G$ in the definition~\eqref{eq:definition_psi_tilde}, we observe $\widetilde{\Psi}_G- \overline{\Psi}_G$ is a linear combination of $\overline{\Psi}_{G'}$ for some strict submultigraph $G'$ of $G$. As a consequence, (i)  the polynomials $\widetilde{\Psi}_G$ satisfy the desired invariance properties and (ii) the family $(\widetilde{\Psi}_{G})$ with $G\in \mathcal{G}_{\leq D}$ spans the space $\mathcal{P}^{\mathrm{\mathrm{inv}}}_{\leq 2D}$ of invariant polynomials. 

Thus, we have proved that 
\[
MMSE_{\leq 2D}= \inf_{f\in \mathrm{Vect}((\widetilde{\Psi}_{G})_{G\in \mathcal{G}_{\leq D}})}\mathbb{E}[(f(Y)-x)^2]\ . 
\]
It remains to reduce the space to polynomials with even degrees. 
For that, we take any $f$ in $\mathrm{Vect}((\widetilde{\Psi}_{G})_{G\in \mathcal{G}_{\leq D}})$. We decompose it into $f_1(Y)+ f_2(Y)$ where $f_1(Y)$ is in the span of the $\widetilde{\Psi}_{G}$'s with $G\in \widetilde{\Psi}^{even}_{G}$ and where $f_2(Y)$ is in the span of the  $\widetilde{\Psi}_{G}$'s where the graphs $G$ have at least a node with odd degree. 
Then, 
\[
\mathbb{E}[(f(Y)-x)^2]= \mathbb{E}[(f_1(Y)-x)^2] + \mathbb{E}[f^2_2(Y)] + 2 \mathbb{E}[f_2(Y)(f_1(Y)-x)]
\]
By Lemma~\ref{lem:first_moment_degreeuneven}, we have $\mathbb{E}[f_2(Y)x]= 0$, whereas Lemma~\ref{lem:covariance_degree_odd} states that $\mathbb{E}[f_1(Y)f_2(Y)]= 0$. Hence, we have $ \mathbb{E}[(f_1(Y)-x)^2]\leq \mathbb{E}[(f(Y)-x)^2]$. The result follows.

\begin{proof}[Proof of Lemma~\ref{lem:reduction:psi_bar}]

It is clear from the definition that $\overline{\Psi}_{G}$ is invariant by permutation. As a consequence, we mainly need to prove that $\overline{\Psi}_{G,\pi}$ is invariant by right rotation $O$ of $Y$.  As a warm-up, let us consider a slight variation  $\Psi_{G,\pi}$ of $\overline{\Psi}_{G,\pi}$ where, in~\eqref{eq:Hermite_graph} and \eqref{eq:psi_bar}, we replace the functions $\overline{\psi}$ by the functions $\psi$ -- in other words, we dot not make any specific correction to degree 2 nodes. We write $\Psi_{G,\pi}(Y)$ for emphasizing its dependency on $Y$. We denote $P_{G,\pi}[X]$ for the polynomial defined as $P_{G,\pi}$ but with the  $Y_i$ replaced by the $X_i$. In the remainder, of this proofs, we consider the matrix $X$ as fixed. 
Since $Y-X$ is made of independent Gaussian entries, it follows from Lemma~\ref{lem:hermite} that $\mathbb{E}[\Psi_{G,\pi}(Y)|X]= P_{G,\pi}[X]$. Consider any orthogonal matrix $O$; since $P_{G,\pi}[X]$ is invariant by rotation and the Gaussian distribution is invariant by rotation we have 
\begin{equation}\label{eq:nullite_rotation}
\mathbb{E}[\Psi_{G,\pi}(Y)- \Psi_{G,\pi}(Y\cdot O)|X]= P_{G,\pi}[X] - P_{G,\pi}[X\cdot O]=0
\end{equation}
$X$ almost surely.
Let us show that this implies that 
\begin{equation}\label{eq:equality_rotation}
\Psi_{G,\pi}(Y)= \Psi_{G,\pi}(Y\cdot O) \ . 
\end{equation}
Introduce $\varphi$ as the endomorphism of the space of polynomials in $dn$ variables such, given a polynomial $P$, $\varphi(P)$ is such that $\varphi(P)[X]= \mathbb{E}[P(X+Z)]$, where matrix $Z$ is made of independent standard Gaussian entries. One easily checks that $\varphi(P)$ is polynomial and that $\varphi$ is linear. Besides, the total degree of $\varphi(P)$ is the same as that of $P$. As a consequence, $\varphi$ is injective. Since~\eqref{eq:nullite_rotation} states that the image of  $\Psi_{G,\pi} - \Psi_{G,\pi}(\cdot O)$ by $\varphi$ is the null polynomial, this implies~\eqref{eq:equality_rotation}. Since this holds for any rotation $O$, we have proved that $\Psi_{G,\pi}$ is invariant by rotation. 

Let us now turn to $\overline{\Psi}_{G,\pi}$. In the definition \eqref{eq:psi_bar} of $\overline{\psi}_{{\beta_{\pi^{-1}(i),j},G}}$, $\overline{\psi}_{{\beta_{\pi^{-1}(i),j},G}}= \psi_{{\beta_{\pi^{-1}(i),j},G}}$ unless the degree of $\pi^{-1}(i)$ in $G$ is exactly equal to $2$  (and $i\notin\{i,2\}$). In this case, $\overline{\psi}_{\beta_{\pi^{-1}(i),j},G}$ has been defined in such a what that $\mathbb{E}[\overline{\psi}_{\beta_{\pi^{-1}(i),j},G}[x+ Z'_{\pi^{-1}(i)j}]]= x^{\beta_{\pi^{-1}(i),j}}$ where $Z'_{\pi^{-1}(i),j}$ is mean normal variable with variance $1+\Delta^2/K$. Then, let us work conditionally to $X$ and define the centered Gaussian matrix $Z'$ such that the variance of $Z'_{i,j}$ is one unless the degree $\pi^{-1}(i)$ equals $2$ and $i\notin\{i,2\}$, in which case, the variance is equal to $1+\Delta^2/K$. Then, we have, similarly to~\eqref{eq:nullite_rotation}, that for any orthogonal matrix $O$. 
\[
\mathbb{E}[\overline{\Psi}_{G,\pi}(X+Z')- \overline{\Psi}_{G,\pi}((X+Z')\cdot O)|X]= P_{G,\pi}[X] - P_{G,\pi}[X\cdot O]=0
\]
By arguing as previously, but with the different morphism $\varphi'$ such that $\varphi'(P)[X]= \mathbb{E}[P(X+Z')]$, we again conclude that 
$\overline{\Psi}_{G,\pi}(Y)= \overline{\Psi}_{G,\pi}(Y\cdot O)$ for any rotation $O$. We have proved that $\overline{\Psi}[G]$ satisfies the desired invariance properties.

To show that the family $\overline{\Psi}_G$ spans the space $\mathcal{P}^{\mathrm{\mathrm{inv}}}_{\leq 2D}$, it suffices to prove that any $P_G$ expresses as a linear combination of $\overline{\Psi}_{G}$. This is done by induction on the total degree of the polynomial. From the definition of $\overline{\psi}$, we know  that the total degree $Q_G= \overline{\Psi}_{G}- P_{G}$ is smaller than that of $P_{G}$. Then, $Q_G$ is a polynomial generated by linear combination of $P_{G'}$ with total degree smaller than that of $G$, which by induction hypothesis, is spanned by the $\overline{\Psi}_{G}$'s. The result follows. 
\end{proof}

\section{Proofs of the moment bounds from Section~\ref{sec:mom}}\label{sec:proof:moment}

\subsection{Proof of Lemma~\ref{lem:first_moment_tilde_psi}}

    The case where $G$ only has only two isolated nodes is a consequence of the definition. Let us turn to the general case. 
    \[
    \widetilde{\Psi}_{G,\pi}= \prod_{l=1}^c \left[\overline{\Psi}_{G^*_l,\pi} - \mathbb{E}[\overline{\Psi}_{G^*_l,\pi} ]\right] \enspace . 
    \]
    Conditionally to the $\mu_k$'s, the random variables $\overline{\Psi}_{G^*_l,\pi}$ are independent. Thus, 
    \[
    \widetilde{\Psi}_{G,\pi}=\mathbb{E}\left[\prod_{l=1}^c \mathbb{E}[\overline{\Psi}_{G^*_l,\pi}|\mu] - \mathbb{E}[\overline{\Psi}_{G^*_l,\pi}] \right]\ . 
    \]
    Since $\overline{\Psi}_{G^*_l,\pi}$ is invariant by rotation on $Y$ in $\mathbb{R}^d$, we have for any orthogonal matrix $O$ that 
    $\overline{\Psi}_{G^*_l,\pi}(Y)= \overline{\Psi}_{G^*_l,\pi}(Y\cdot O)$. Besides the conditional distribution  $Y\cdot O$ to $\mu$ is the same as the conditional distribution to $\mu\cdot O$, which corresponds to the orthogonal transformation $O$ of all $\mu_k$. Hence,
    $\mathbb{E}[\overline{\Psi}_{G^*_l,\pi}|\mu]$ is invariant by any orthogonal transformation of $\mu$ and therefore does not depend on the value of $\mu$. We have therefore $\mathbb{E}[\overline{\Psi}_{G^*_l,\pi}|\mu]= \mathbb{E}[\overline{\Psi}_{G^*_l,\pi}]$, which, in turn, implies that $\mathbb{E}[\widetilde{\Psi}_{G,\pi}]=0$.

    \subsection{Proof of Lemma~\ref{lem:first_moment_degreeuneven}}

        First, we show that, for any $\pi$, we have $\mathbb{E}[x\overline{\Psi}_{G,\pi}]=0$. Let $v$ be any node with odd degree. Define $\tilde{Y}\in \mathbb{R}^{n\times d}$ by $\tilde{Y}_{i,j}= Y_{i,j}$ if $i\neq \pi(v)$ and $\tilde{Y}_{i,j}= -Y_{i,j}$ if $i= \pi(v)$. This amounts to reversing $b_{\pi(v)}$ and the noise $(Z_{\pi(v),j})_{j=1,\ldots,d}$. Note that reversing $b_{\pi(v)}$ does not change the functional $x$. Since the noise distribution is symmetric and since the $b_i$'s are Rademacher variables, $\tilde{Y}$ has the same distribution as $Y$. Since the degree of $v$ is odd, we have $\overline{\Psi}_{G,\pi}(Y)= -\overline{\Psi}_{G,\pi}(\tilde{Y})$. Hence, we conclude that 
        \[
        \mathbb{E}[x\overline{\Psi}_{G,\pi}(Y)] = - \mathbb{E}[x\overline{\Psi}_{G,\pi}(\tilde{Y})] =- \mathbb{E}[x\overline{\Psi}_{G,\pi}(Y)] =0\ .
        \]
        The first result follows. Similarly, we prove also that  $\mathbb{E}[\widetilde{\Psi}_{G},\pi]=0$.
        Let us turn to $\widetilde{\Psi}_{G}$. Decompose $G$ into its connected components $(G_1,\ldots, G_c)$.
        Without loss of generality, we assume that the first connected component $G_1$  contains a node with odd degree. Since we have proved that $\mathbb{E}[\overline{\Psi}_{G^*_1}]= 0$, we deduce that 
        \[
        \widetilde{\Psi}_{G,\pi}= \overline{\Psi}_{G^*_1,\pi}\prod_{l=2}^c\left[\overline{\Psi}_{G^*_l,\pi} - \mathbb{E}[\overline{\Psi}_{G^*_l,\pi}] \right]\ . 
        \]
        As previously, we transform $Y$ into $\tilde{Y}$ by reversing the row $\pi(v)$ so that $x\widetilde{\Psi}_{G,\pi}(Y)= - x\widetilde{\Psi}_{G,\pi}(\tilde{Y})$. By distribution invariance, the expectation is therefore again equal to zero.

\subsection{Proof of Lemma~\ref{lem:first_moment_degree2}}

Let us denote $v$ any such node of degree $2$, which is distinct from $v_1$ or $v_2$. We first prove that  $\mathbb{E}[x\overline{\Psi}_{G}]=0$, the result for $\overline{\Psi}_{G}$ being shown analogously. 
We shall prove the stronger result $\mathbb{E}[x\overline{\Psi}_{G,\pi}|(\mu, k^*(i)_{i\neq \pi(v)}) ]=0$,   $(\mu, k^*(i)_{i\neq \pi(v)})$ almost surely. For that, we expand $\overline{\Psi}_{G,\pi}$ over the features $\underline{j}:= (j_1,\ldots, j_{|E|})$, that is $\overline{\Psi}_{G,\pi}= \sum_{\underline{j}}
\overline{\Psi}_{G,\pi,\underline{j}}$ where, as in~\eqref{eq:Hermite_graph}, $\overline{\Psi}_{G,\pi,\underline{j}}= \overline{\psi}\left[\prod_{e\in E} Y_{\pi(l(e)),j_e}Y_{\pi(r(e)),j_e}\right]$. We shall show that, for   each of these terms, we have  $\mathbb{E}\left[x\overline{\Psi}_{G,\pi,\underline{j}}|(\mu, k^*(i)_{i\neq \pi(v)})\right]=0$. Let us denote $s$ and $s'$ the indices of the edges that are incident to $v$ --note that $s'$ and $s$ can be identical if $v$ has a self-edge. The random variable $\overline{\Psi}_{G,\pi,\underline{j}}$ is then of the form $(Y_{\pi(v),j_{s}}Y_{\pi(v),j_{s'}}- (1+\frac{\Delta^2}{d})\mathbf{1}\{j_{s}=j_{s'}\}) H((Y_l)_{(l\neq \pi(v))})$ for some function $H$. As a consequence, if we integrate $x\overline{\Psi}_{G,\pi,\underline{j}}$ with respect to $Z_{\pi(v)}$, we arrive at $[\mu_{k^*(\pi(v)),j_s}\mu_{k^*(\pi(v)),j'_s}- \frac{\Delta^2}{d}\mathbf{1}\{j_{s}=j_{s'}\}]xH((Y_i)_{(i\neq \pi(v))})$. Also, only the left-hand side term of the latter expression actually depends on $k^*(\pi(v))$. Integrating with respect to $k^{*}(\pi(v))$ (while conditioning on all the rest) we arrive at 
\[
\mathbb{E}\left[\overline{\Psi}_{G,\pi,\underline{j}}|\mu, (Y_l)_{(l\neq \pi(v))},(k^*(l))_{(l\neq \pi(v))} \right] = \left[\frac{1}{K}\sum_{k=1}^K \mu_k \mu_k^T - \frac{\Delta^2}{d} I_d\right]_{j_{s},j_{s}'}x H((Y_l)_{(l\neq \pi(v))}) \ , 
\]
Since the $\mu_k$'s form almost surely an orthogonal basis in $\mathbb{R}^d$ and their norm is almost surely equal $\Delta$, the matrix $\frac{1}{K}\sum_{k=1}^K \mu_k \mu_k^T - \frac{\Delta^2}{d} I_d$ is exactly equal to zero and the result follows.% for $\overline{\Psi}_G$. The result also follows straightforwardly for $\widetilde{\Psi}_G$ by definition.

We prove analogously that $\mathbb{E}[x\widetilde{\Psi}_{G,\pi}|(\mu, k^*(i)_{i\neq \pi(v)}) ]=0$, which concludes the proof. 

\subsection{Proof of Lemma~\ref{lem:first_moment}}

    As $G$ does not contain any node of degree $2$ aside from $1$ and $2$, it follows from standard properties of Hermite polynomials that 
    \begin{equation}\label{eq}
    \mathbb{E}[x\overline{\Psi}_{G,\pi}|k^*,\mu]= xP_{G,\pi}([\mu_{k^*(1)};\mu_{k^*(2)};\ldots;\mu_{k^*(n)}]^T )\ , 
    \end{equation}
    where $P_{G,\pi}([\mu_{k^*(1)};\mu_{k^*(2)};\ldots;\mu_{k^*(n)}]^T)$ stands for the polynomial $P_{G,\pi}$ applied to the $n\times d$ matrix ~\\ $[\mu_{k^*(1)};\mu_{k^*(2)};\ldots;\mu_{k^*(n)}]^T$. This leads us to 
    $\mathbb{E}[x\overline{\Psi}_{G,\pi}]= \mathbb{E}\left[x P_{G,\pi}([\mu_{k^*(1)};\mu_{k^*(2)};\ldots;\mu_{k^*(n)}]^T )\right]$
    and 
    \[
    \mathbb{E}[\overline{\Psi}_{G}]= \mathbb{E}\left[P_{G}([\mu_{k^*(1)};\mu_{k^*(2)};\ldots;\mu_{k^*(n)}]^T )\right]\enspace . 
    \]
    Since the $\mu_k$'s are orthogonal, $P_{G}([\mu_{k^*(1)};\mu_{k^*(2)};\ldots;\mu_{k^*(n)}]^T)$ is non-zero if and only we have  $k^*(i)= k^*(j)$ whenever $\pi^{-1}(\{i\})$ and $\pi^{-1}(\{j\})$ belong to the same connected component in $G$. The probability of that event is $K^{-|V|+|\mathrm{CC}(G)|}$. Since $\langle \mu_{k^*(i)},\mu_{k^*(i)}\rangle= \Delta^2$, this leads us to 
    \[
    \mathbb{E}[\overline{\Psi}_G]= |\Pi_V| \Delta^{2|E|}\frac{1}{K^{|V|-|\mathrm{CC}(G)|}}\enspace .
    \]
    Turning to $x\overline{\Psi}_G$, we observe $xP_{G}([\mu_{k^*(1)};\mu_{k^*(2)};\ldots;\mu_{k^*(n)}]^T )$ is non-zero if and only we have  $k^*(i)= k^*(j)$ whenever $\pi^{-1}(\{i\})$ and we have $k^*(1)=k^*(2)$. The second result follows.

    \subsection{Proof of Corollary~\ref{cor:first_moment}}
        The case where $G$ has no edge is simple as $\widetilde{\Psi}_G=1$. 
        If there exists a connected component $G_1$ in $G$ that does not contain both nodes  $v_1$ and $v_2$ then $\mathbb{E}[x\widetilde{\Psi}_{G,\pi}]= 0$. Indeed, conditionally to $\mu$, $(\overline{\Psi}_{G_1,\pi}-\mathbb{E}[\overline{\Psi}_{G_1,\pi}])$ is centered and is independent of $x$ and of the other $(\overline{\Psi}_{G_l,\pi}-\mathbb{E}[\overline{\Psi}_{G_k,\pi}])$. We have dealt with the case where $G$ contains at least a degree $2$ node (aside from $v_1$ and $v_2$) in Lemma~\ref{lem:first_moment_degree2}. 
        
        Hence, we can restrict ourselves to the cases where $G$ is connected and all the nodes of $G$, to the possible exception of $v_1$ and $v_2$, have a degree larger or equal to $4$. Then, by definition of $\widetilde{\Psi}_G$, we have 
        \[
        \mathbb{E}[x\widetilde{\Psi}_G]= \mathbb{E}[x\overline{\Psi}_G]- \mathbb{E}[x]\mathbb{E}[\overline{\Psi}_G]\ . 
        \]
        The result then follows from Lemma~\ref{lem:first_moment} and from the fact that $\overline{G}=G$.

    \subsection{Proof of Lemma~\ref{lem:covariance_degree_odd}}
        The proof is similar to that of  Lemma~\ref{lem:first_moment_degreeuneven}. Denote $v$ a node with odd degree in $G^{(1)}$ and define the transformation $\tilde{Y}$ of $Y$ by reversing the row $\pi^{(1)}(v)$. Then, $\tilde{Y}$ and $Y$ have similar distributions. Since, if it exists, the degree of $(\pi^{(2)})^{-1}(\{v\})$ is even, we deduce that $\overline{\Psi}_{G^{(1)},\pi^{(1)}}(Y) \overline{\Psi}_{G^{(2)},\pi^{(2)}}(Y)= -\overline{\Psi}_{G^{(1)},\pi^{(1)}}(\tilde{Y}) \overline{\Psi}_{G^{(2)},\pi^{(2)}}(\tilde{Y})$. The first result follows by distribution invariance. The second result is proved analogously

    \subsection{Proof of Lemma~\ref{lem:covariance_degree2}}

    The proof of this lemma is analogous to that of Lemma~\ref{lem:first_moment_degree2}. We only prove that  $\mathbb{E}[\overline{\Psi}_{G^{(1)},\pi^{(1)}} \overline{\Psi}_{G^{(2)},\pi^{(2)}}]=0$, the second result being similar. For that, we establish the stronger result 
    \[
    \mathbb{E}[\overline{\Psi}_{G^{(1)},\pi^{(1)}} \overline{\Psi}_{G^{(2)},\pi^{(2)}}|(\mu, k^*(i)_{i\neq \pi^{(1)}(v)}), (Y_{i})_{i\neq \pi^{(1)(v)}} ]=0
    \]
    $(\mu, k^*(i)_{i\neq \pi^{(1)}(v)}, (Y_{i})_{i\neq \pi^{(1)(v)}})$ almost surely. We expand $\overline{\Psi}_{G^{(1)},\pi^{(1)}}$ over the features $\underline{j}:= (j_1,\ldots, j_{|E|})$, that is $\overline{\Psi}_{G^{(1)},\pi^{(1)}}= \sum_{\underline{j}}
\overline{\Psi}_{G^{(1)},\pi^{(1)},\underline{j}}$ where, as in~\eqref{eq:Hermite_graph}, $\overline{\Psi}_{G^{(1)},\pi^{(1)},\underline{j}}= \overline{\psi}\left[\prod_{e\in E^{(1)}} Y_{\pi^{(1)}(l(e)),j_e}Y_{\pi^{(1)}(r(e)),j_e}\right]$. Let us denote $s$ and $s'$ the indices of the edges that are incident to $v$ --note that $s'$ and $s$ can be identical if $v$ has a self-edge. The random variable $\overline{\Psi}_{G^{(1)},\pi^{(1)},\underline{j}}$ is then of the form $(Y_{\pi^{(1)}(v),j_{s}}Y_{\pi^{(1)}(v),j_{s'}}- (1+\frac{\Delta^2}{d})\mathbf{1}\{j_{s}=j_{s'}\}) H((Y_l)_{(l\neq \pi^{(1)}(v))})$ for some function $H$.
Integrating the latter with respect to $E_{\pi^{(1)}(v)}$, we arrive at $[\mu_{k^*(\pi^{(1)}(v)),j_s}\mu_{k^*(\pi^{(1)}(v)),j'_s}- \frac{\Delta^2}{d}\mathbf{1}\{j_{s}=j_{s'}\}]H((Y_i)_{(i\neq \pi^{(1)}(v))}) \overline{\Psi}_{G^{(2)},\pi^{(2)}}$. Also, only the left-hand side term of the latter expression actually depends on $k^*(\pi^{(1)}(v))$. Integrating with respect to $k^{*}(\pi^{(1)}(v))$ while conditioning on all the rest, we arrive at 
\[
\mathbb{E}\left[\overline{\Psi}_{G^{(1)},\pi^{(1)}} \overline{\Psi}_{G^{(2)},\pi^{(2)}}|\mu, (Y_l)_{(l\neq \pi^{(1)}(v))},(k^*(l))_{(l\neq \pi^{(1)}(v))} \right] = \left[\frac{1}{K}\sum_{k=1}^K \mu_k \mu_k^T - \frac{\Delta^2}{d} I_d\right]_{j_{s},j_{s}'}H((Y_l)_{(l\neq \pi^{(1)}(v))})\overline{\Psi}_{G^{(2)},\pi^{(2)}} \ , 
\]
Since the $\mu_k$'s form almost surely an orthogonal basis in $\mathbb{R}^d$, the matrix $\frac{1}{K}\sum_{k=1}^K \mu_k \mu_k^T - \frac{\Delta^2}{d} I_d$ is exactly equal to zero and the result follows.

            \subsection{Proof of Proposition~\ref{prop:covariance_without_degree_2}}

            Without loss of generality, we assume that $\pi^{(1)}(v^{(1)}_i)=\pi^{(2)}(v^{(2)}_i)$ for $i=1,\ldots, r_0$ and $\pi^{(1)}(v_i)\cap \mathrm{Im}(\pi^{(2)})=\emptyset$ for $i>r_0$ and $\pi^{(2)}(v_i)\cap \mathrm{Im}(\pi^{(1)})=\emptyset$ for $i\geq r_0$, where $\mathrm{Im}(\pi^{(a)})$ stands for the image of $\pi^{(a)}$. With our notation, this corresponds to the case where $\mathbf{M}=\{(v_1^{(1)},v_1^{(2)}),\ldots, (v_{r_0}^{(1)},v_{r_0}^{(2)})\}$. We start by computing the conditional expectation with respect to $k^*(.)$ and to $\mu_k$'s. We  use the decomposition $\overline{\Psi}_{G^{(1)},\pi^{(1)}}= \sum_{\underline{j}^{(1)}}\overline{\Psi}_{G^{(1)},\pi^{(1)},\underline{j}^{(1)}}$ and $\overline{\Psi}_{G^{(2)},\pi^{(2)}}= \sum_{\underline{j}^{(2)}}\overline{\Psi}_{G^{(2)},\pi^{(2)},\underline{j}^{(2)}}$, where, for $a=1,2$, 
                \[
                \overline{\Psi}_{G^{(a)},\pi^{(a)},\underline{j}^{(a)}} = \overline{\psi}\left[\prod_{e\in E^{(a)}} Y_{\pi^{(a)}(l(e)),j^{(a)}_e}Y_{\pi^{(a)}(r(e)),j^{(a)}_e}  \right]\ . 
                \]
            It is more convenient here to work with half-edges instead of the edges. Given a half-edge $\underline{e}\in E^{\mathrm{half},a}$, we define $j^{(a)}_{\underline{e}}$ to be equal to $j^{(a)}_{e}$ where $e$ is the half-edge that correspond to $\underline{e}$. 
            Then, we define 
            \[
            \omega_{\underline{j}^{(1)},\underline{j}^{(2)}}= \prod_{\underline{e}\in E^{\mathrm{half},(1)}}\mu_{k^*(i(\underline{e})),j^{(1)}_{\underline{e}}} \prod_{\underline{e}'\in E^{\mathrm{half},(2)}} \mu_{k^*(i(\underline{e}')),j^{(2)}_{\underline{e}'}}\ . 
            \]
            
            For $i= 1,\ldots, r_0$, we rely on the decomposition~\eqref{eq:recursion_hermite} of Hermite polynomials. Then, for any $i\in[r_0]$, any $l\in [d]$, we introduce $\mathcal{S}^{(1)}_{i,l}$ as the set of half-edges $\underline{e}\in E^{\mathrm{half},(1)}$ that are incident to $v_i$ and such that $j^{(1)}_{\underline{e}}=l$. We define $\mathcal{S}^{(2)}_{i,l}$ similarly. In the following, we sum over all sets $(S^{(1)}_{i,l},S^{(1)}_{i,l})$ in $\mathcal{S}^{(1)}_{i,l}\times \mathcal{S}^{(2)}_{i,l}$ that have \emph{the same size}. Then, it follows from~\eqref{eq:recursion_hermite} that 
            \begin{eqnarray} \nonumber
            \lefteqn{    \omega_{\underline{j}^{(1)},\underline{j}^{(2)}}^{-1}  \mathbb{E}[\overline{\Psi}_{G^{(1)},\pi^{(1)},\underline{j}^{(1)}}\overline{\Psi}_{G^{(2)},\pi^{(2)},\underline{j}^{(2)}}|k^*,\mu]} &&\\ & = \sum_{(S^{(1)}_{1,1},S^{(2)}_{1,1},\ldots),\ldots, (S^{(1)}_{r_0,1},S^{(2)}_{r_0,1},\ldots)}\left(\prod_{i=1}^{r_0}\prod_{l=1}^d |S^{(1)}_{i,l}|!\right) \prod_{i\in [r_0]}\prod_{l=1}^d 
            \mu^{-2|S^{(1)}_{i,l}|}_{k^*(\pi^{(1)}(v_i)),l}
            %\prod_{e\in S^{(1)}_{i,l}}\mu_{k^*(\pi^{(1)}(v_i)),j^{(1)}_e}\prod_{e\in S^{(2)}_{i,l}}\mu_{k^*(\pi^{(2)}(v_i)),j^{(2)}_e}
            \ . \label{eq:cross1}
            \end{eqnarray}

            \begin{lemma}\label{lem:claim}
                
                Using the same notation as in the statement of the proposition, we have 
                \begin{eqnarray}\label{eq:cross12}
                    \mathbb{E}\left[\overline{\Psi}_{G^{(1)},\pi^{(1)}} \overline{\Psi}_{G^{(2)},\pi^{(2)}}\big|(k^*,\mu)\right]&=& \sum_{\mathbf{P} \in \mathcal{P}[\mathbf{M}]} \omega_{\mathbf{P} } \ ; \\
                    \omega_{\mathbf{P} }&:= &   \sum_{\underline{j}^{(1)}}\sum_{\underline{j}^{(2)}}\omega_{\underline{j}^{(1)},\underline{j}^{(2)}}\prod_{(\underline{e},\underline{e'} )\in\mathbf{P} }\mu^{-1}_{k^*(\pi^{(1)}(i(\underline{e}))),j^{(1)}_{\underline{e}}}\mu^{-1}_{k^*(\pi^{(2)}(i(\underline{e}'))),j^{(2)}_{\underline{e}'}} \ , 
                \end{eqnarray}
                where the sum over $\underline{j}^{(1)}$ and $\underline{j}^{(2)}$ is such that feature $j^{(1)}_{\underline{e}}$ and $j^{(2)}_{\underline{e}'}$ are the same whenever $(\underline{e},\underline{e}')\in \mathbf{P}$. 
                %For any $i\in [1:r_0]$, we write $d_i= \min(\mathrm{deg}_G(i),\mathrm{deg}_{G'}(i))$. 
                
                %In what follows $T_i$ is a matching between $t_i$ edges incident to $i$ in $G$ and $t_i$ edges incident to $i$ in $G'$.
                %\begin{eqnarray}
                 %   U=  \sum_{i=1}^{r_0}\sum_{t_i=0}^{r_i} \sum_{T_1,\ldots, T_{r_0}}\omega_{T_1,\ldots, T_{r_0}}\\
                 %   \omega_{T_1,\ldots, T_{r_0}}:= \sum_{j,j'} \prod_{s,s'} 
                 %   \mu_{k^*(\pi(i_s)),j_s}^{\mathbf{1}_{s\notin T_{i_s}}}
                 %   \mu_{k^*(\pi(i'_s)),j'_s}^{\mathbf{1}_{s\notin T_{i'_s}}}
                 %   \mu_{k^*(\pi'(i_{s'})),j_{s'}}^{\mathbf{1}_{s'\notin T_{i_{s'}}}}
                 %   \mu_{k^*(\pi'(i'_{s'})),j'_{s'}}^{\mathbf{1}_{s'\notin T'_{i'_{s'}}}} \ , 
                %\end{eqnarray}
                %where the sum over $j$ and $j'$ only runs such that matched half-edges tha share the same features.
                
               \end{lemma}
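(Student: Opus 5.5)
The plan is to reduce the claimed identity to a one-dimensional Hermite computation, done coordinate by coordinate, and then reorganise the resulting sum as a sum over half-edge pairings. We are in the setting of Proposition~\ref{prop:covariance_without_degree_2}, where no node other than $v_1,v_2$ has degree $2$. Hence every $\overline{\psi}_{\beta,G}$ appearing in $\overline{\Psi}_{G^{(a)},\pi^{(a)},\underline{j}^{(a)}}$ is the plain Hermite polynomial $\psi_\beta$. Conditionally on $(k^*,\mu,b)$, the entries $Y_{i,l}=X_{i,l}+Z_{i,l}$ are independent Gaussians with unit variance.

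First, fix $\underline{j}^{(1)},\underline{j}^{(2)}$. The product $\overline{\Psi}_{G^{(1)},\pi^{(1)},\underline{j}^{(1)}}\overline{\Psi}_{G^{(2)},\pi^{(2)},\underline{j}^{(2)}}$ factorises over the entries $(i,l)$, so its conditional expectation is a product of one-dimensional expectations.
\begin{itemize}
\item An entry $(\pi^{(a)}(v),l)$ with $v$ unmatched contributes $\mathbb{E}[\psi_\beta(X+Z)]=X^\beta$ by Lemma~\ref{lem:hermite}. Here $\beta=|\mathcal S^{(a)}_{v,l}|$ is the number of half-edges at $v$ carrying feature $l$, so this entry contributes one factor $X_{\pi^{(a)}(v),l}$ per such half-edge.
\item An entry $(\pi(v_i),l)$ with $v_i$ matched ($i\le r_0$) contributes $\mathbb{E}[\psi_\alpha(X+Z)\psi_\gamma(X+Z)]$, where $\alpha=|\mathcal S^{(1)}_{i,l}|$ and $\gamma=|\mathcal S^{(2)}_{i,l}|$. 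Expanding both factors with~\eqref{eq:recursion_hermite} and using the orthogonality $\mathbb{E}[\psi_s(Z)\psi_t(Z)]=s!\,\mathbf 1_{s=t}$ gives
\[
\mathbb{E}[\psi_\alpha(X+Z)\psi_\gamma(X+Z)]=\sum_{s}\binom{\alpha}{s}\binom{\gamma}{s}s!\,X^{\alpha+\gamma-2s}.
\]
\end{itemize}
This is exactly the content of~\eqref{eq:cross1}.

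Second, read the coefficient $\binom{\alpha}{s}\binom{\gamma}{s}s!$ combinatorially. It counts the ways to choose $s$ half-edges of $\mathcal S^{(1)}_{i,l}$ and $s$ half-edges of $\mathcal S^{(2)}_{i,l}$ and to put them in bijection. In other words, it counts partial pairings between half-edges incident to the matched pair $(v^{(1)}_i,v^{(2)}_i)$ that carry the same feature $l$. Each paired couple removes one factor $X$ coming from each side. Taking the product over $i\le r_0$ and $l\in[d]$, the choices combine into a single half-edge pairing $\mathbf P\in\mathcal P[\mathbf M]$, with the constraint that $j^{(1)}_{\underline e}=j^{(2)}_{\underline e'}$ for every $(\underline e,\underline e')\in\mathbf P$. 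The contribution of this choice is $\omega_{\underline{j}^{(1)},\underline{j}^{(2)}}$ with the two $\mu$-factors of each paired couple removed.

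Third, handle the signs and reorder the sums.
\begin{itemize}
\item Signs: every factor is $X_{i,l}=b_i\mu_{k^*(i),l}$. At an unmatched node the exponent of $b_i$ is its degree, which is even. At a matched node the exponent is $\deg^{(1)}+\deg^{(2)}-2|\mathbf P_i|$, also even. So all $b_i$ factors cancel and only $\mu$'s remain, which is consistent with the definition of $\omega$.
\item Reordering: sum over $(\underline{j}^{(1)},\underline{j}^{(2)})$ and, for each, over the compatible pairings. Then swap to $\sum_{\mathbf P}\sum_{\underline j\text{ compatible}}$. This is a bijection between pairs (feature assignment, compatible choice of paired sets and bijections) and pairs (pairing $\mathbf P$, assignment constant on pairs). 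Summing gives $\sum_{\mathbf P\in\mathcal P[\mathbf M]}\omega_{\mathbf P}$, which is~\eqref{eq:cross12}.
\end{itemize}

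The main obstacle is this bookkeeping step: checking that distinct pairings $\mathbf P$ are counted exactly once. The point to verify is that the $s!$ bijections correspond exactly to distinct pairings, with no extra multiplicity, when the same feature $l$ is shared by several half-edges, including the two halves of a self-loop. I would argue this by fixing labelled half-edges throughout, so that pairings are sets of labelled couples and the multinomial counts match without quotienting by any symmetry.
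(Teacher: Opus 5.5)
Your proposal is correct and follows essentially the paper's route. The paper takes \eqref{eq:cross1} (the per-coordinate Hermite computation you derive explicitly) as given. For fixed $\underline{j}^{(1)},\underline{j}^{(2)}$, it then identifies the sum over same-size subsets $(S^{(1)}_{i,l},S^{(2)}_{i,l})$ weighted by $|S^{(1)}_{i,l}|!$ with the sum over compatible pairings, and swaps the order of summation. Your parity argument for the $b_i$ factors and your labelled-half-edge bookkeeping simply make explicit details the paper leaves implicit.
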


            It remains to quantify $\omega_{\mathbf{P}}$. For that purpose, we build, as described previously, the labelled graph $G_{\Delta}=(V_{\Delta},E_{\Delta})$ obtained from $G^{(1)}$ and $G^{(2)}$. Let us recall its construction to clarify the arguments. 
            First, we merge $G^{(1)}$ and $G^{(2)}$ to create $G_{\cup}[\mathbf{M}, \mathbf{P}]$. We add a distinct color to each of the half- edges that are paired. We denote $\mathrm{Cyc}$ the set of cycles induced by these colored edges. Next, for any maximum open path induced by colored half-edges,  we remove all these half-edges and replace them by an edge between the two extremities of these open paths. Coming back to the definition of $  \omega_{\mathbf{P}}$, we observe that paired half-edges $\underline{e}$ and $\underline{e}'$ constrain that $j^{(1)}_{e}=j^{(2)}_{e'}$, for the corresponding edges. Consider a cycle in $\mathrm{Cyc}$. Then, all the edges $e\in E^{(1)}$ or $e'\in E^{(2)}$ involved in the cycle share the same features $j^{(1)}_e$ or $j^{(2)}_{e'}$. Similarly, for an open colored path that has been transformed into an edge in $G_{\Delta}$, we have the constrain that the features of the extremities are identical. As a consequence, for a given $\mathbf{P}$, if we sum over all possible sequence $\underline{j}_1$ and $\underline{j}_2$, we obtain that  
            \begin{eqnarray*}
                \omega_{\mathbf{P}} &:= d^{|\mathrm{Cyc}|}\prod_{e\in E_{\Delta}} \langle \mu_{k^*(l(e))},  \mu_{k^*(r(e))}\rangle\ .
            \end{eqnarray*}
            Hence, we have 
            \begin{align}\label{eq:upper_cross_variance:conditional}
               \mathbb{E}\left[\overline{\Psi}_{G^{(1)},\pi^{(1)}} \overline{\Psi}_{G^{(2)},\pi^{(2)}}|(k^* ,\mu) \right]=  \sum_{\mathbf{P} \in \mathcal{P}[\mathbf{M}]} 
                d^{|\mathrm{Cyc}|} \mathbb{E}\left[\prod_{e\in E_{\Delta}} \langle \mu_{k^*(l(e))},  \mu_{k^*(r(e))}\rangle \big|(k^* ,\mu) \right]\ .
            \end{align}
            To conclude, we observe that $\langle \mu_{k^*(l(e))},  \mu_{k^*(r(e))}\rangle=\Delta^2$ if $k^*(l(e))= k^*(r(e))$ and is zero otherwise. Hence, 
            $\mathbb{E}\left[\prod_{e\in E_{\Delta}} \langle \mu_{k^*(l(e))},  \mu_{k^*(r(e))}\rangle \right]$ is equal to $\Delta^{2|E_{\Delta}|}$ times the probability that, inside each connected component of $G_{\Delta}$, the groups are the same. The result follows.

            \begin{proof}[Proof of Lemma~\ref{lem:claim}]
                We start from~\eqref{eq:cross12} and we will show that it equals~\eqref{eq:cross1}. In~\eqref{eq:cross12}, we first exchange the summation orders and we first fix  $\underline{j}^{(1)}$ and $\underline{j}^{(2)}$. 
            Then, we have  
            \[
             \sum_{\mathbf{P} \in \mathcal{P}[\mathbf{M}]} \omega_{\mathbf{P} } = \sum_{\underline{j}^{(1)}, \underline{j}^{(2)}} \sum_{\mathbf{P}} \omega_{\underline{j}^{(1)}\underline{j}^{(2)}}\left[\prod_{(\underline{e},\underline{e'}) \in \mathbf{P}} \mu_{k^*(\pi^{(1)}(i(\underline{e}))),j^{(1)}_{\underline{e}}}\mu_{k^*(\pi^{(2)}(i(\underline{e}'))),j^{(2)}_{\underline{e}} }\right]^{-1}
            \] 
            where the sum run over all $\mathbf{P}$ that are compatible with $\underline{j}^{(1)}$ and $\underline{j}^{(2)}$ in the sense that  $j^{(1)}_{\underline{e}}$ and $j^{(2)}_{\underline{e}'}$ are the same whenever $(\underline{e},\underline{e}')\in \mathbf{P}$. Hence, we only  have to prove that 
            \[
            \sum_{\mathbf{P}} \left[\prod_{(\underline{e},\underline{e'}) \in \mathbf{P}} \mu_{k^*(\pi^{(1)}(i(\underline{e}))),j^{(1)}_{\underline{e}}}\mu_{k^*(\pi^{(2)}(i(\underline{e}'))),j^{(2)}_{\underline{e}} }\right]^{-1} = \sum_{(S^{(1)}_{1,1},S^{(2)}_{1,1},\ldots)}\left(\prod_{i=1}^{r_0}\prod_{l=1}^d |S^{(1)}_{i,l}|!\right) \prod_{i\in [r_0]}\prod_{l=1}^d 
            \mu^{-2|S^{(1)}_{i,l}|}_{k^*(\pi^{(1)}(v_i)),l}\enspace ,
            \]
            where we recall that the sum runs over all sets $(S^{(1)}_{i,l},S^{(1)}_{i,l})$ in $\mathcal{S}^{(1)}_{i,l}\times \mathcal{S}^{(2)}_{i,l}$ that have \emph{the same size}. 

            To show the latter equality, it suffices to fix $(S^{(1)}_{i,l},S^{(2)}_{i,l}), i=1,\ldots, r_0$ and $l=1,\ldots, d$. The pairings $\mathbf{P}$ corresponding to those $(S^{(1)}_{i,l},S^{(2)}_{i,l})$ are such that any half-edge in $S^{(1)}_{i,l}$ is paired to any half-edge of $S^{(1)}_{i,l}$. As a consequence, we have $\prod_{i=1}^{r_0}\prod_{l=1}^d |S^{(1)}_{i,l}|!$ such possible pairings. Since, for such a pairing $\mathbf{P}$, the product $\prod_{(\underline{e},\underline{e'}) \in \mathbf{P}} \mu_{k^*(\pi^{(1)}(i(\underline{e}))),j^{(1)}_{\underline{e}}}\mu_{k^*(\pi^{(2)}(i(\underline{e}'))),j^{(2)}_{\underline{e}} }$ equals $ \prod_{i\in [r_0]}\prod_{l=1}^d 
            \mu^{2|S^{(1)}_{i,l}|}_{k^*(\pi^{(1)}(v_i)),l}$, the result follows. 
            % of the $\mathbf{P}_i=(T_i^{(1)},T_i^{(2)})$ such that the multi-set $M_i^{(1)}=\{j_e, e\in T_i^{(1)}\}$ and $M_i^{(2)}:=\{j_e, e\in R_i^{(2)}\}$ are equal.
            
            %Then, we sum terms of the form 
            % $\prod_{i=1}^{r_0}\omega_{i\underline{j}^{(1)}\underline{j}^{(2)}}\left[\prod_{(\underline{e},\underline{e'}) \in \mathbf{P}_{i}} \mu_{k^*(\pi(v_i)),j^{(*)}_e} \right]^{-1}$ of the $\mathbf{P}_i=(T_i^{(1)},T_i^{(2)})$ such that the multi-set $M_i^{(1)}=\{j_e, e\in T_i^{(1)}\}$ and $M_i^{(2)}:=\{j_e, e\in R_i^{(2)}\}$ are equal.
             
            % The number of $(T_i)$ that are associated to a specific expression  is the the following: for each distinct value of $j$ in $M_i^{(1)}$, we consider all subset of edges $e$ in $G^{(1)}$  incidents to $i$ such that $j^{(1)}_e=j$ with size equal to the multiplicity $m_j$ of $j$ in $M_i^{(1)}$, we consider also  all subsets of edges $e$ in  $G^{(2)}$ that are incident to $i$ such that $j_e=j$ with size equal to the multiplicity $m_j$ of $j$ in $M_i^{(2)}$ and finally  we multiply this by $m_j!$ to count all such matching. In view of~\eqref{eq:cross1}.
            \end{proof}

\subsection{Proof of Proposition~\ref{prop:covariance_general}}
    
    Let $v^{(1)}$ be any node of degree $2$ of $G^{(1)}$ which is distinct from $v^{(1)}_1$ and $v^{(2)}_2$. Let $v^{(2)}$ be the node in $G^{(2)}$ such that $\pi^{(1)}(v^{(1)})=\pi^{(2)}(v^{(2)})$. To start with, we assume that $\mathrm{deg}_{G^{(2)}}(v^{(2)})>2$. Without loss of generality, we write $\underline{e}$ and $\underline{e}'$ for the two half-edges incident to $v^{(1)}$ in $G^{(1)}$.    
    Given some vectors $\underline{j}^{(1)}$, $\underline{j}^{(2)}$, let us compute the term where  $Y_{\pi^{(1)}(v^{(1)})}$ arises in  $\mathbb{E}[\overline{\Psi}_{G^{(1)},\pi^{(1)},\underline{j}^{(1)}}\overline{\Psi}_{G^{(2)},\pi^{(2)},\underline{j}^{(2)}}|k^*,\mu]$. If $j^{(1)}_{\underline{e}}\neq j^{(1)}_{\underline{e}'}$, we apply Lemma~\ref{lem:hermite} and the expression is identical to that in the proof of Proposition~\ref{prop:covariance_without_degree_2}. 
    \[
    \sum_{S^{(1)}_{\pi^{(1)}(v),j^{(1)}_{\underline{e}}},S^{(2)}_{\pi^{(1)}(v^{(1)}),j^{(1)}_{\underline{e}}}, S^{(1)}_{\pi^{(1)}(v^{(1)}),j^{(1)}_{\underline{e}'}}, S^{(2)}_{\pi^{(1)}(v^{(1)}),j^{(1)}_{\underline{e}'}}}|S^{(1)}_{\pi^{(1)}(v^{(1)}),j^{(1)}_{\underline{e}}}|!|S^{(1)}_{\pi_1(v^{(1)}),j^{(1)}_{\underline{e}'}}|! \prod_{\underline{e''}} \mu_{k^{*}(\pi^{(1)}(v^{(1)})),j^{(*)}_{\underline{e}''}} \ , 
    \]
    where the product $\prod_{\underline{e}''}$ runs over half-edges in $E^{\mathrm{half},(1)}$ and $E^{\mathrm{half},(2)}$ that are incident to $v^{(1)}$ or $v^{(2)}$ and do not belong to the $ S^{(*)}_{\pi^{(1)}(v^{(1)}),j^{(1)}_{\underline{e}}}$ or  $S^{(*)}_{\pi^{(1)}(v^{(1)}),j^{(1)}_{\underline{e}'}}$ for $*=1,2$ and $j^{(*)}$ equals $j^{(1)}$ or $j^{(2)}$. 
    If $j^{(1)}_{\underline{e}}= j^{(1)}_{\underline{e}'}$, the expression is slightly different.
    \begin{eqnarray}\label{eq:e_second_moment}
    \left[\mu_{k^*(\pi(v^{(1)})),j^{(1)}_{\underline{e}}}^2 -\frac{\Delta^2}{K }\right]\prod_{\underline{e}''\in E^{\mathrm{half},(2)}(v^{(2)})}\mu_{k^{*}(\pi^{(1)}(v^{(1)})),j^{(2)}_{\underline{e}''}} + \sum_{(S^{(1)}_{\pi^{(1)}(v^{(1)}),j^{(1)}_{\underline{e}}}, S^{(2)}_{\pi^{(1)}(v^{(1)}),j^{(1)}_{\underline{e}}})\neq \emptyset}|S_{i,j^{(1)}_{e}}|!\prod_{\underline{e}''} \mu_{k^{*}(\pi^{(1)}(v^{(1)})),j^{(*)}_{\underline{e}''}}\ , 
    \end{eqnarray}
    where $E^{\mathrm{half},(2)}(v^{(2)})$ is the set of half-edges in $G^{(2)}$ that are incident to $v^{(2)}$ and where the product over $\underline{e}''$ is over half-edges not belonging $ S^{(1)}_{v^{(1)},j^{(1)}_{\underline{e}}}$ or $ S^{(2)}_{v^{(1)},j^{(1)}_{\underline{e}}}$. In comparison to nodes with degree $4$, the correction of degree $2$ nodes only arises in the case where $(S^{(1)}_{v^{(1)},j^{(1)}_{\underline{e}}}, S^{(2)}_{v^{(1)},j^{(1)}_{\underline{e}}})= \emptyset$. We have a similar expression when the degree of $v^{(2)}$ in $G^{(2)}$ is also equal to $2$.
    
    Next, we argue as in the proof of Proposition~\ref{prop:covariance_without_degree_2}. Consider any pairing $\mathbf{P}\in\mathcal{M}$. 
    Write $V_{2,np}\subset V^{(1)}\cup V^{(2)}$ the subset of degree $2$ nodes $v$ in $G^{(1)}$ and $G^{(2)}$ such that no half-edge incident to $v$  is involved in $\mathbf{P}$. Then, we define 
    $\overline{\Psi}^{(V_{2,np})}_{G^{(1)},\pi^{(1)},\underline{j}^{(1)}}$ similarly to $\overline{\Psi}_{G^{(1)},\pi^{(1)},\underline{j}^{(1)}}$ except that we do not make the correction in $\overline{\Psi}$ to degree $2$ nodes that are not $V_{2,np}$. Arguing as in the main claim in the proof of Proposition~\ref{prop:covariance_without_degree_2}, we arrive at 
    \begin{align}\label{eq:cross12_bis}
        \mathbb{E}[\overline{\Psi}_{G^{(1)},\pi^{(1)}} \overline{\Psi}_{G^{(2)},\pi^{(2)}}|(k^*,\mu)]&=  \sum_{\mathbf{P}\in \mathcal{P}} \underline{\omega}_{\mathbf{P}}\enspace ; \\
        \omega_{\underline{j}^{(1)},\underline{j}^{(2)};V_{2,np}}& := \mathbb{E}[\overline{\Psi}^{(V_{2,np})}_{G^{(1)},\pi^{(1)},\underline{j}^{(1)}} |(k^*,\mu)]\mathbb{E}[\overline{\Psi}^{(V_{2,np})}_{G^{(2)},\pi^{(2)},\underline{j}^{(2)}} |(k^*,\mu)]\enspace ; \\ 
        \underline{\omega}_{\mathbf{P}}&:=   \sum_{\underline{j}^{(1)}}\sum_{\underline{j}^{(2)}}\omega_{\underline{j}^{(1)},\underline{j}^{(2)};V_{2,np}}\prod_{(\underline{e},\underline{e'} )\in\mathbf{P} }\mu^{-1}_{k^*(\pi^{(1)}(i(\underline{e}))),j^{(1)}_{\underline{e}}}\mu^{-1}_{k^*(\pi^{(2)}(i(\underline{e}'))),j^{(2)}_{\underline{e}'}}\enspace , 
    \end{align}
    where the sum runs over $\underline{j}^{(1)}$ and $\underline{j}^{(2)}$ such that features $j^{(1)}_{\underline{e}}$ and $j^{(2)}_{\underline{e}'}$ are the same if $(\underline{e},\underline{e}')\in \mathbf{P}$. 
    It is clear that, whenever $V_{2,np}=\emptyset$, we have $\underline{\omega}_{\mathbf{P}}= \omega_{\mathbf{P}}$.
    
    \begin{lemma}\label{lem:terme_correctif}
    If $V_{2,np}\neq \emptyset$, we claim that 
    \[
    \left|   \mathbb{E}\left[ \underline{\omega}_{\mathbf{P}}\right]\right| \leq 2^{|V_{2,np}|}\mathbb{E}\left[\omega_{ \mathbf{P}}\right]\ . 
    \]
    \end{lemma}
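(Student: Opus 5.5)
The plan is to expand the correction at each node of $V_{2,np}$ as a binomial. Then I show that each of the $2^{|V_{2,np}|}$ resulting terms has expectation bounded in absolute value by $\mathbb{E}[\omega_{\mathbf{P}}]$, which gives the factor $2^{|V_{2,np}|}$ directly.

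Fix $\underline{j}^{(1)},\underline{j}^{(2)}$ compatible with $\mathbf{P}$. For a node $v\in V_{2,np}$, say in $G^{(1)}$, with incident half-edges $\underline{e},\underline{e}'$, the corresponding factor in $\omega_{\underline{j}^{(1)},\underline{j}^{(2)};V_{2,np}}$ is
\[
\mu_{k^*(\pi^{(1)}(v)),j_{\underline{e}}}\,\mu_{k^*(\pi^{(1)}(v)),j_{\underline{e}'}} \;-\; \frac{\Delta^2}{K}\,\mathbf{1}\{j_{\underline{e}}=j_{\underline{e}'}\}.
\]
This follows from \eqref{eq:e_second_moment}: since no half-edge at $v$ is paired, only the first term survives. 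Multiplying these factors over $V_{2,np}$ and expanding gives
\[
\underline{\omega}_{\mathbf{P}}=\sum_{T\subseteq V_{2,np}}(-1)^{|T|}\,\omega_{\mathbf{P}}^{(T)},
\]
where $\omega_{\mathbf{P}}^{(T)}$ is obtained by replacing, for each $v\in T$, the two $\mu$-factors at $v$ by $\frac{\Delta^2}{K}\mathbf{1}\{j_{\underline{e}}=j_{\underline{e}'}\}$. By the triangle inequality, it then suffices to prove $|\mathbb{E}[\omega^{(T)}_{\mathbf{P}}]|\le \mathbb{E}[\omega_{\mathbf{P}}]$ for every $T$. Note that $T=\emptyset$ gives exactly $\omega_{\mathbf{P}}$.

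Next I evaluate $\omega^{(T)}_{\mathbf{P}}$ by repeating the feature-summation argument from the proof of Proposition~\ref{prop:covariance_without_degree_2}. The indicator $\mathbf{1}\{j_{\underline{e}}=j_{\underline{e}'}\}$ at $v\in T$ forces the two edges through $v$ to carry a common feature. So in the graph $G_\Delta$, the path $a - v - b$ is replaced by a single edge $a - b$, and $v$ loses its two incident edges there. This yields
\[
\omega^{(T)}_{\mathbf{P}}=\Big(\tfrac{\Delta^2}{K}\Big)^{|T|} d^{|\mathrm{Cyc}^{(T)}|}\prod_{e\in E_\Delta^{(T)}}\langle\mu_{k^*(l(e))},\mu_{k^*(r(e))}\rangle \;\ge 0.
\]
Here the contraction can at most close extra cycles, for instance when $a=b$. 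Taking expectation over $k^*$ then gives $(\Delta^2/K)^{|T|}\,d^{|\mathrm{Cyc}^{(T)}|}\,\Delta^{2|E^{(T)}_\Delta|}$ times the probability that every connected component of the contracted graph is monochromatic.

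The comparison with $\mathbb{E}[\omega_{\mathbf{P}}]$ then uses the identity $\frac1K\sum_k \mu_k\mu_k^T=\frac{\Delta^2}{K}I_d$, valid since $d=K$ and the $\mu_k$ form an orthogonal basis of norm $\Delta$. This is the same computation as in Lemma~\ref{lem:first_moment_degree2}. If the row $\pi^{(1)}(v)$ appeared only through $v$'s two edges, then averaging over $k^*(\pi^{(1)}(v))$ would turn the uncorrected factor exactly into the contracted one, so the two expectations would coincide.

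\textbf{Main obstacle.} Node $v$ is matched by condition (c), so its partner $v'$ in $G^{(2)}$ uses the same label $k^*(\pi^{(1)}(v))$ and ties that label to other edges of $G_\Delta$. My approach is to compare the two explicit closed forms combinatorially, node by node.

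In the uncorrected graph, the edges at $v$ force $a$, $v$, $b$ (and hence the component of $v'$) into one group. This costs a factor $K^{-1}$ for merging components, and the two edges contribute $\Delta^4$.

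In the contracted graph, $v$ is detached from $a,b$ but stays attached through $v'$. The prefactor is $\Delta^2/K$, and the edge $ab$ contributes $\Delta^2$, so the $\Delta^4$ is matched. Forcing $a\sim b$ costs at most what forcing $a\sim v\sim b$ costs. Detaching $v$ can increase the number of components by at most one, and that gain is compensated by the explicit $1/K$. If the contraction creates a new cycle, the extra factor $d=K$ should be offset by the loss of one component, since the closed edge is a loop.

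So the inequality should hold term by term. The delicate point is the bookkeeping of $|\mathrm{CC}|$ and $|\mathrm{Cyc}|$ under contraction, which I would verify by induction on $|T|$, contracting one node at a time. That induction step is the part I expect to require the most care.
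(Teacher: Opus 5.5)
Your proposal matches the paper's proof: expand the degree-$2$ corrections over subsets $T\subseteq V_{2,np}$, write each term through the contracted graph $G_{\Delta,T}$, and show $|T|+|\mathrm{CC}(G_{\Delta})|-|\mathrm{CC}(G_{\Delta,T})|-|\mathrm{Cyc}(T)|\geq 0$ by contracting one node at a time; with $d=K$ this bounds every term by $\mathbb{E}[\omega_{\mathbf{P}}]$. One correction to your cycle-case bookkeeping: a new cycle is closed only when the two current edges coming from $v$'s half-edges form a single loop at $v$ (the case $a=b\neq v$ merely produces a loop at $a$), and removing that loop leaves the number of components unchanged, so the factor $d=K$ is absorbed by the explicit $1/K$ rather than by any loss of a component.
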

    The result then follows by relying on the same bound $\mathbb{E}\left[\omega_{ \mathbf{P}}\right]$ as in the previ
    ous proof and by relying that $V_{2,np}=\emptyset$ whenever $\mathbf{P}\in \mathcal{P}_{\mathrm{full}}(\mathcal{M})$.  It remains to prove the claim.

    \begin{proof}[Proof of Lemma~\ref{lem:terme_correctif}]
    For any node $v$ in $V_{2,np}$, there is a correction term $-(\Delta^2/K) \mathbf{1}_{j^{(*)}_{\underline{e}}= j^{(*)}_{\underline{e}'}}$ where $(*)=(1),(2)$ depending on whether $v$ belongs to $V^{(1)}$ or $V^{(2)}$ and where $\underline{e}$ and $\underline{e}'$ are the two-half-edges incident to $v$. 
    Let us expand all these terms in $\underline{\omega}_{\mathbf{P}}$. For any $T\subseteq V_{2,np}$, we consider the term of $\underline{\omega}_{\mathbf{P}}$ where we consider the correction term 
    $-(\Delta^2/K) \mathbf{1}_{j^{(*)}_{\underline{e}}= j^{(*)}_{\underline{e}'}}$ for all $v\in T$ and, for any $v\notin T$ we consider the term 
    $\mu_{k^{*}(\pi^{(*)}(v)),j^{(*)}_{\underline{e}}} \mu_{k^{*}(\pi^{(*)}(v)),j^{(*)}_{\underline{e}'}}$ where $(*)=1,2$ depending whether $v\in V^{(1)}$ or $V^{(2)}$. In order to compute the corresponding quantity, we build a graph $G_{\Delta,T}=(V_{\Delta},E_{\Delta,T})$ from $G_{\Delta}$ as follows: first, for all $v\in T$, we color (with a color that depends on $v$) all half-edges incident to $v$ that were originally in the same graph $G^{(1)}$ or $G^{(2)}$. Then, we build $G_{\Delta,T}$ by removing all possible cycles formed by colored half-edges and removing all half-edges that formed an open path by connecting the two (non-colored) extremities of these open paths. Writing $|\mathrm{Cyc}(T)|$ the number of cycles that have been pruned when going from $G_{\Delta}$ to $G_{\Delta,T}$. Then, we have 
    \begin{eqnarray*}
        \underline{\omega}_{\mathbf{P}} &:= \sum_{T\subseteq V_{2,np}} (-1)^{|T|} d^{|\mathrm{Cyc}|} d^{|\mathrm{Cyc}(T)|} \frac{\Delta^{2|T|}}{K^{T}}\prod_{e\in E_{\Delta,T}} \langle \mu_{k^*(l(e))},  \mu_{k^*(r(e))}\rangle\ .
    \end{eqnarray*}
    Integrating with respect to $k^*(.)$ and using the orthogonality  of the $\mu_k$'s, we arrive at 
    \begin{eqnarray}\nonumber
    \mathbb{E}\left[\underline{\omega}_{\mathbf{P}}\right]&=& \sum_{T\subseteq V_{2,np}} (-1)^{|T|}  d^{|\mathrm{Cyc}(T)|} \frac{\Delta^{2|T|}}{K^{|T|}} \frac{\Delta^{2|E_{\Delta,T}|}}{K^{|V_{\Delta}|-|CC(G_{\Delta,T})|}} \\
    & = &   \frac{d^{|\mathrm{Cyc}(T)|} \Delta^{2|E_{\Delta}|}}{K^{|V_{\Delta}|-|CC(G_{\Delta})|}}\sum_{T\subseteq S} (-1)^{|T|}  \frac{d^{|\mathrm{Cyc}(T)|}}{K^{|T|+|CC(G_{\Delta})| -|CC(G_{\Delta,T})|}} \enspace . \label{eq:upper:corrective}
    \end{eqnarray}
    To conclude, we use that $d\leq K$ and we claim that  
    \begin{equation}\label{eq:upper_T}
    |T|+ |CC(G_{\Delta})| -|CC(G_{\Delta,T})|- \mathrm{Cyc}(T)\geq 0\enspace . 
    \end{equation}
    We first conclude the proof and then we show this claim. We deduce from~\eqref{eq:upper:corrective} and~\eqref{eq:upper_T} that 
    \[
    \left|\mathbb{E}\left[\underline{\omega}_{\mathbf{P}}\right]\right|\leq \frac{d^{|\mathrm{Cyc}(T)|} \Delta^{2|E_{\Delta}|}}{K^{|V_{\Delta}|-|CC(G_{\Delta})|}} 2^{|V_{2,np}|}\ , 
    \]
    which is the desired bound. 
    
    Let us now show~\eqref{eq:upper_T}  by induction on $T$. This is obviously true for $T=\emptyset$. Suppose this is true for $T\setminus\{v\}$. When we add $v$, we prune two more half-edges to go from $G_{\Delta,T\setminus \{v\}}$ to $G_{\Delta,T}$. It these two half-edges corresponded to a self-edge of $v$, then the number of cycles increases by one, we number of connected components is the same:  
    \[
    |T|- |CC(G_{\Delta})| -|CC(G_{\Delta,T})|- |\mathrm{Cyc}(T)| = |T\setminus\{v\}|- |CC(G_{\Delta})| -|CC(G_{\Delta,T\setminus\{v\}})|- |\mathrm{Cyc}(T\setminus\{v\})| \geq 0\ . 
    \]
    It these two half-edges do not correspond to a self-edge, then $\mathrm{Cyc}(T\setminus\{v\})= \mathrm{Cyc}(T)$. Besides, by going from $G_{\Delta,T\setminus\{v\}}$ to $G_{\Delta,T}$, we have replaced two edges incident to $v$ by an edge between the two corresponding neighbors of $v$. By doing this, we have increased the number of connected components by at most one. The result follows. 
    
\end{proof}

    \subsection{Proof of Proposition~\ref{prop:covariance_generaltilde}}

        The first inequality is obtained by Lemma~\ref{lem:covariance_degree_odd} and by summing the terms in Proposition~\ref{prop:covariance_general} over all possible matchings $\mathbf M$ in $\mathcal{M}$, and over all injections $\pi^{(1)}$, $\pi^{(2)}$ that are compatible with $\mathbf M$. The number of these injections  is $\frac{(n-2)!}{(n-|V_{\Delta}|)!}$.  For a matching ${\mathbf M}$ such that all degree-2 nodes are matched, the number $|V_{2,np}(\mathbf{P})|$ of degree $2$ nodes whose half-edges are not paired is smaller than $2[|\mathbf{M}| - |\mathbf{M}_{\mathrm{full}}|]$, where we recall $\mathbf{M}_{\mathrm{full}}$ the matching of nodes such that all incident edges are paired. We have proved the first identity. 

        The second equality is obtained analogously by applying Proposition~\ref{prop:covariance_without_degree_2}.

    \medskip 
    We now turn to the polynomials $\widetilde{\Psi}_G$. If $G^{(1)}$ is the graph with two isolated nodes, then $\widetilde{\Psi}_{G^{(1)}}=1$ and $\mathbf{E}[\widetilde{\Psi}_{G^{(1)}}\widetilde{\Psi}_{G^{(2)}}]=\mathbf{E}[\widetilde{\Psi}_{G^{(2)}}]$ and the result follows from Lemma~\ref{lem:first_moment_tilde_psi}. 
    The core of the proof is to establish~\eqref{eq:covariance_tilde}. 
    We write $G^{(1)}_1,\ldots, G^{(1)}_{r_1}$ and $G^{(2)}_1,\ldots, G^{(2)}_{r_2}$ for the respective connected components of $G^{(1)}$ and $G^{(2)}$. Consider any $\pi^{(1)}$ and $\pi^{(2)}$ such that the corresponding matching $\mathbf{M}$ belongs to $\mathcal{M}\setminus \mathcal{M}^{\star}$. Hence, there exists a connected components, say $G^{(1)}_1$ such that no node of $G^{(1)}_1$ is matched to any node of $G^{(2)}$. As a consequence, conditionally to $\mu$, $\overline{\Psi}_{G^{(1)}_1, \pi^{(1)}}$ is independent of $(\overline{\Psi}_{G^{(1)}_i, \pi^{(1)}})_{2 \leq i \leq r_1}, (\overline{\Psi}_{G^{(2)}_j, \pi^{(2)}})_{j \leq r_2}$. Hence, 
        \begin{align*}
        &\mathbb{E}\left[\prod_{1\leq i\leq r_i}\left[\overline{\Psi}_{G^{(1)}_i, \pi^{(1)}} - \mathbb E \overline{\Psi}_{G^{(1)}_i, \pi^{(1)}}\right]\prod_{1\leq j\leq r_2} \left[\overline{\Psi}_{G^{(2)}_j, \pi^{(2)}} - \mathbb E\overline{\Psi}_{G^{(2)}_j, \pi^{(2)}}\right]\Big|\mu\right]\\
        &= \mathbb{E}\left[\prod_{2\leq i\leq r_i}\left[\overline{\Psi}_{G^{(1)}_i, \pi^{(1)}} - \mathbb E \overline{\Psi}_{G^{(1)}_i, \pi^{(1)}}\right]\prod_{1\leq j\leq r_2} \left[\overline{\Psi}_{G^{(2)}_j, \pi^{(2)}} - \mathbb E\overline{\Psi}_{G^{(2)}_j, \pi^{(2)}}\right]\Big|\mu\right] \mathbb{E}\left[\overline{\Psi}_{G^{(1)}_1, \pi^{(1)}} - \mathbb E \overline{\Psi}_{G^{(1)}_1, \pi^{(1)}}\Big|\mu\right]
    \end{align*}
    which is $0$ since $\mathbb{E}[\overline{\Psi}_{G^{(1)}_1, \pi^{(1)}} - \mathbb E \overline{\Psi}_{G^{(1)}_1, \pi^{(1)}}|\mu] = 0$ as $\mathbb E[ \overline{\Psi}_{G^{(1)}_1, \pi^{(1)}}\mu|]$ does not depend on $\mu$ -- see the proof of Lemma~\ref{lem:first_moment_tilde_psi}. 
    
    As a consequence, only node matchings $\mathbf{M}$ in $\mathcal{M}^{\star}$ play a role in $\mathbb{E}[\widetilde{\Psi}_{G^{(1)}} \widetilde{\Psi}_{G^{(2)}}]$. 
    Now take a matching $\mathbf M \in \mathcal{M}^{\star}$ and consider any two injections $\pi^{(1)}$, $ \pi^{(2)}$ with node matching $\mathbf M$. We first develop the product
    \begin{align*}
    &\mathbb{E}\left[\prod_{1\leq i\leq r_1}\left[\overline{\Psi}_{G^{(1)}_i, \pi^{(1)}} - \mathbb E \overline{\Psi}_{G^{(1)}_i, \pi^{(1)}}\right]\prod_{1\leq j\leq r_2} \left[\overline{\Psi}_{G^{(2)}_j, \pi^{(2)}} - \mathbb E\overline{\Psi}_{G^{(2)}_j, \pi^{(2)}}\right]\Big|\mu\right]\\
    &= \sum_{S_1 \subset [r_1], S_2\subset [r_2]} (-1)^{|S_1|+|S_2|} \mathbb{E}\left[\prod_{i\not\in S_1}  \overline{\Psi}_{G^{(1)}_i, \pi^{(1)}} \prod_{j\not\in S_2} \overline{\Psi}_{G^{(2)}_j, \pi^{(2)}} \Big|\mu\right] \prod_{i \in S_1} \mathbb E \overline{\Psi}_{G^{(1)}_i, \pi^{(1)}} \prod_{j \in S_2} \mathbb E \overline{\Psi}_{G^{(2)}_j, \pi^{(2)}}\enspace . 
    \end{align*}

    \begin{lemma}\label{lem:cores}
      
      Fix any $S_1\subset [r_1]$, any $S_2\subset [r_2]$ and any $\mathbf{M}\in \mathcal{M}^{\star}$ and any $\pi^{(1)}$ and $\pi^{(2)}$ compatible with $\mathbf{M}$.  Define $\mathbf{M}_{S_1,S_2}$ the subset of $\mathbf{M}$ where we remove any pair of nodes if it involves at least a node from the connected components $G^{(1)}_i$ with $i\in S_1$ or from the connected components $G^{(1)}_2$ with $i\in S_2$.  Let $\overline{\pi}^{(1)}$ and $\overline{\pi}^{(2)}$ be any two injections that are compatible with $\mathbf{M}_{S_1,S_2}$. Then, we have 
        \begin{align*}
        &\mathbb{E}\left[\prod_{i\not\in S_1}  \overline{\Psi}_{G^{(1)}_i, \pi^{(1)}} \prod_{j\not\in S_2} \overline{\Psi}_{G^{(2)}_j, \pi^{(2)}} \right] \prod_{i \in S_1} \mathbb E \overline{\Psi}_{G^{(1)}_i, \pi^{(1)}} \prod_{j \in S_2} \mathbb E \overline{\Psi}_{G^{(2)}_j, \pi^{(2)}} =  \mathbb{E}\left[ \overline{\Psi}_{G^{(1)}, \overline{\pi}^{(1)}}  \overline{\Psi}_{G^{(2)}, \overline{\pi}^{(2)}} \right]\enspace . 
        \end{align*}
    \end{lemma}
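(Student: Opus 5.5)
Both sides are expectations of products of statistics built from disjoint rows of $Y$, so the plan is to compare them through conditional independence given $\mu$ and the rotation invariance already used in the proof of Lemma~\ref{lem:first_moment_tilde_psi}.

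First we condition on $\mu$ on the left-hand side. For $i\in S_1$ (resp. $j\in S_2$), the factor $\mathbb E\overline{\Psi}_{G^{(1)}_i,\pi^{(1)}}$ is a constant. By the proof of Lemma~\ref{lem:first_moment_tilde_psi}, it equals $\mathbb E[\overline{\Psi}_{G^{(1)}_i,\pi'}|\mu]$ for any $\mu$ and any labeling $\pi'$. Since the rows $Y_l$ are independent given $\mu$, we may replace each such constant by $\mathbb E[\overline{\Psi}_{G^{(1)}_i,\overline{\pi}^{(1)}}|\mu]$. This is legitimate because, under $\overline{\pi}^{(1)}$, the nodes of $G^{(1)}_i$ are not matched to anything: $\mathbf{M}_{S_1,S_2}$ has removed all pairs involving them, except possibly $v_1,v_2$, which we treat below. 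Their images are therefore disjoint from the rows used by every other factor.

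Next we handle the remaining product $\prod_{i\notin S_1}\overline{\Psi}_{G^{(1)}_i,\pi^{(1)}}\prod_{j\notin S_2}\overline{\Psi}_{G^{(2)}_j,\pi^{(2)}}$. Its expectation depends on $(\pi^{(1)},\pi^{(2)})$ only through which nodes share the same image, and by exchangeability of the rows this is exactly the matching restricted to the non-removed components. That restricted matching coincides with $\mathbf{M}_{S_1,S_2}$ on those components, because pairs touching removed components are discarded and nothing else changes. We therefore replace $\pi^{(a)}$ by $\overline{\pi}^{(a)}$ there.

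Finally we assemble: given $\mu$, the rows used by the removed components under $\overline{\pi}$ are disjoint from all other rows, so the conditional expectation factorizes. The conditional expectations of the removed components do not depend on $\mu$, so they pull out, and integrating over $\mu$ gives $\mathbb E[\overline{\Psi}_{G^{(1)},\overline{\pi}^{(1)}}\overline{\Psi}_{G^{(2)},\overline{\pi}^{(2)}}]$. This also uses $\overline{\Psi}_{G,\pi}=\prod_l\overline{\Psi}_{G^*_l,\pi}$, which holds because the components have disjoint rows, the degree-2 corrections are node-local, and Hermite polynomials factorize over distinct coordinates.

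The main obstacle is the nodes $v_1,v_2$, which are always mapped to $1,2$ and so may be shared between a removed component and a kept one. In that case the disjointness argument fails. We would argue that each component $G^*_l$ containing $v_1$ or $v_2$ still factorizes: given $\mu$ and $(k^*(1),k^*(2),b_1,b_2)$, its conditional expectation is constant. This follows from rotation invariance together with the orthogonality and symmetry of the $\mu_k$ (the $b$-signs cancel because all degrees are even, and the $k^*$ labels are permutation invariant). Once we condition additionally on these quantities, the factors with rows $1,2$ held fixed become independent, and the same factorization goes through.
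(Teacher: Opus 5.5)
Your first three steps are the paper's argument. Given $\mu$, the removed components evaluated at $\overline{\pi}$ use rows that no other factor touches. Their conditional means are constants (rotation invariance, as in the proof of Lemma~\ref{lem:first_moment_tilde_psi}) that do not depend on the labeling. The joint expectation of the kept components depends on the labelings only through the matching they induce, and that matching is the same for $(\pi^{(1)},\pi^{(2)})$ and $(\overline{\pi}^{(1)},\overline{\pi}^{(2)})$.

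\textbf{The gap is your last paragraph.} The obstacle it describes rests on a misreading of the statement. $\mathbf{M}_{S_1,S_2}$ discards every pair containing a node of a removed component. So if, say, $v^{(1)}_1$ carries an edge and its component is indexed by $S_1$, the pair $(v^{(1)}_1,v^{(2)}_1)$ is discarded as well. A pair of injections compatible with $\mathbf{M}_{S_1,S_2}$ must then send the two copies of $v_1$ to \emph{different} rows. Hence $\overline{\pi}^{(a)}\notin\Pi_V$ in general, and no row is ever shared between a removed component and another factor. The only extra ingredient you need is that all $n$ rows are exchangeable: the triples $(k^*(i),b_i,Z_i)$ are i.i.d.\ over all $i$, including $i=1,2$. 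Then the first moments do not change when $v_1$ leaves row $1$. In the kept product, row $1$ is used by at most one of the two graphs once the pair has been dropped, so your step on the kept product already covers this case.

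Your proposed remedy also fails on its own terms. Conditioning on $(\mu,k^*(1),k^*(2),b_1,b_2)$ leaves the noise $Z_1,Z_2$ random, so two factors that both contain row $1$ stay dependent. Conditioning on $Y_1,Y_2$ themselves instead makes the conditional mean of the component through $v_1$ a nonconstant polynomial in $Y_1$.

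In fact the identity is false if both copies of $v_1$ are kept on row $1$. Take $G^{(1)}=G^{(2)}$ with a double edge $v_1v_3$ and a self-loop at $v_3$, with $\mathbf{M}=\{(v^{(1)}_1,v^{(2)}_1),(v^{(1)}_2,v^{(2)}_2)\}$, $S_1=\{1\}$ and $S_2=\emptyset$.
\begin{itemize}
\item The left-hand side equals $\Delta^{12}/K^2$.
\item With $v_1$ on row $1$ in both graphs and the two copies of $v_3$ on distinct rows, Proposition~\ref{prop:covariance_without_degree_2} gives $\Delta^{12}/K^2+4\Delta^{10}/K^2+2\Delta^{8}/K$. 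The extra terms come from pairing the half-edges at row $1$.
\end{itemize}
So delete that paragraph and replace it with the observation about $\mathbf{M}_{S_1,S_2}$ and row exchangeability above.
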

    
    Hence, we deduce from Lemma~\ref{lem:cores} that 
    \begin{align}\nonumber
        &\left|\mathbb{E}\left[\prod_{1\leq i\leq r_i}\left[\overline{\Psi}_{G^{(1)}_i, \pi^{(1)}} - \mathbb E \overline{\Psi}_{G^{(1)}_i, \pi^{(1)}}\right]\prod_{1\leq j\leq r_2} \left[\overline{\Psi}_{G^{(2)}_j, \pi^{(2)}} - \mathbb E\overline{\Psi}_{G^{(2)}_j, \pi^{(2)}}\right]\right] - \mathbb E[\overline{\Psi}_{G^{(1)}, \pi^{(1)}}\overline{\Psi}_{G^{(2)}, \pi^{(2)}}]\right|\\ \label{eq:upper_mixed_moment}
        &\leq \sum_{S_1 \subset [r_1], S_2\subset [r_2]: S_1\cup S_2 \neq \emptyset}  \left|\mathbb{E}\left[ \overline{\Psi}_{G^{(1)}, \overline{\pi}^{(2)}}  \overline{\Psi}_{G^{(2)}, \overline{\pi}^{(2)}} \right]\right|\ .
        \end{align}
        Note that for $S_1,S_2$ such that $S_1\cup S_2 \neq \emptyset$ we have, by Proposition~\ref{prop:covariance_general} that
        \begin{eqnarray}\label{eq:upper_bound_1_pairing}
            \left|\mathbb{E}\left[ \overline{\Psi}_{G^{(1)}, \overline{\pi}^{(1) }}  \overline{\Psi}_{G^{(2)}, \overline{\pi}^{(2)}}\right]  \right|&\leq   \sum_{\mathbf{P}\in \mathcal{P}[\mathbf{M}_{S_1,S_2}]}2^{2[|\mathbf{M}_{S_1,S_2}| - |(\mathbf{M}_{S_1,S_2})_{\mathrm{full}}|] }\Delta^{2|E'_{\Delta}|}d^{|\mathrm{Cyc}'|}\frac{1}{K^{|V_{\Delta}|-|\mathrm{CC}(G'_{\Delta})|}}\enspace , 
        \end{eqnarray}
        where, in a the above bound,  $G'_{\Delta}=(V_\Delta,E'_{\Delta})$ is the graph  associated to $\mathbf{M}_{S_1,S_2}$ and to $\mathbf{P}$, whereas $|\mathrm{Cyc}'|$ is the number of cycles pruned in the construction of $G'_{\Delta}$. In fact, the previous quantity in~\eqref{eq:upper_bound_1_pairing} is no higher than if we replaced $G'_{\Delta}$ by $G_{\Delta}$ as described in the next proposition.
    
    \begin{lemma}\label{lem:core2}
    Define $\mathcal{P}[\mathbf{M}; S_1;S_2]$ the collection of pairings such that no half-edges incident to a node in the connected components $G^{(1)}_i$ with $i\in S_1$ or $G^{(2)}_i$ with $i\in S_2$ are paired. Then, we have  that 
       \begin{eqnarray}\label{eq:upper_bound_2_pairing}
            \left|\mathbb{E}\left[ \overline{\Psi}_{G^{(1)}, \overline{\pi}^{(1)}}  \overline{\Psi}_{G^{(2)}, \overline{\pi}^{(2)}}\right]  \right|&\leq   \sum_{\mathbf{P}\in \mathcal{P}[\mathbf{M};S_1;S_2]}2^{2[|\mathbf{M}| - |\mathbf{M}_{full}|]} \Delta^{2|E_{\Delta}|}d^{|\mathrm{Cyc}|}\frac{1}{K^{|V_{\Delta}|-|\mathrm{CC}(G_{\Delta})|}}\enspace .
        \end{eqnarray}
    \end{lemma}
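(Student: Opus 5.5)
The statement to prove is Lemma~\ref{lem:core2}. The plan is to start from the bound~\eqref{eq:upper_bound_1_pairing} of Proposition~\ref{prop:covariance_general}, applied to the reduced matching $\mathbf{M}_{S_1,S_2}$. We then map each pairing $\mathbf{P}'\in\mathcal{P}[\mathbf{M}_{S_1,S_2}]$ injectively into $\mathcal{P}[\mathbf{M};S_1;S_2]$, and compare the two summands term by term.

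\textbf{The injection.} A pairing $\mathbf{P}'$ of $\mathbf{M}_{S_1,S_2}$ only pairs half-edges incident to nodes matched in $\mathbf{M}_{S_1,S_2}$. None of these nodes lies in a component $G^{(1)}_i$ with $i\in S_1$ or $G^{(2)}_j$ with $j\in S_2$. Hence $\mathbf{P}'$ is also a pairing for the larger matching $\mathbf{M}$, and it pairs nothing in the removed components, so $\mathbf{P}'\in\mathcal{P}[\mathbf{M};S_1;S_2]$. Conversely, every element of $\mathcal{P}[\mathbf{M};S_1;S_2]$ arises this way. The index sets therefore coincide, and it remains to compare the weights for a fixed $\mathbf{P}$.

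\textbf{Comparing the weights.} Let $G'_\Delta$ be the graph built from $(\mathbf{M}_{S_1,S_2},\mathbf{P})$ and $G_\Delta$ the one built from $(\mathbf{M},\mathbf{P})$. The half-edge pairing is the same, so both graphs have the same pruned cycles, $|\mathrm{Cyc}'|=|\mathrm{Cyc}|$, and the same number of edges, $|E'_\Delta|=|E_\Delta|$. The only difference is that $G_\Delta$ additionally identifies the nodes of each pair in $\mathbf{M}\setminus\mathbf{M}_{S_1,S_2}$. There are two cases for the $K$-exponent.
\begin{itemize}
\item If we keep the node set $V_\Delta$ as in the statement (the injections $\overline{\pi}$ are counted separately), then each identification of two nodes lowers $|V'_\Delta|-|\mathrm{CC}(G'_\Delta)|$ by at most one when counted on the merged graph. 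In the quantity relevant here, the node count $|V_\Delta|$ is the same on both sides, and identifying nodes can only decrease the number of connected components. So $|\mathrm{CC}(G_\Delta)|\le|\mathrm{CC}(G'_\Delta)|$.
\item This case goes the wrong way, and it is the main obstacle. We resolve it by matching the node count: the paper writes $|V_\Delta|$ in both bounds, meaning the count of the actual merged graph. Then $|V'_\Delta|-|\mathrm{CC}(G'_\Delta)| \ge |V_\Delta|-|\mathrm{CC}(G_\Delta)|$, since each extra identification reduces the node count by exactly one and the component count by at most one.
\end{itemize}
Under the second reading, the factor $K^{-(|V'_\Delta|-|\mathrm{CC}(G'_\Delta)|)}$ in the $\mathbf{M}_{S_1,S_2}$ bound is at most $K^{-(|V_\Delta|-|\mathrm{CC}(G_\Delta)|)}$. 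This is the monotonicity I would verify carefully.

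\textbf{The combinatorial factor.} Pairs in $\mathbf{M}\setminus\mathbf{M}_{S_1,S_2}$ have no paired half-edges, because the pairs involve components in $S_1$ or $S_2$. Hence they do not belong to $\mathbf{M}_{\mathrm{full}}$, except when both nodes have no half-edges at all. It follows that
\[
|\mathbf{M}_{S_1,S_2}|-|(\mathbf{M}_{S_1,S_2})_{\mathrm{full}}|\le|\mathbf{M}|-|\mathbf{M}_{\mathrm{full}}|,
\]
so the power of $2$ only grows. Combining the equal index sets, the monotone $K$-exponent and this power-of-two comparison gives~\eqref{eq:upper_bound_2_pairing}.

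The delicate point is the exact convention for $V_\Delta$ and its interplay with the counting of injections. I would fix the convention that the $K$-exponent is computed on the merged graph of $\mathbf{M}$, and check that the extra identifications never increase $|V|-|\mathrm{CC}|$.
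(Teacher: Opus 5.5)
Your argument is correct and is essentially the paper's proof. The two index sets $\mathcal{P}[\mathbf{M}_{S_1,S_2}]$ and $\mathcal{P}[\mathbf{M};S_1;S_2]$ do coincide, so one can fix $\mathbf{P}$. For that fixed $\mathbf{P}$, $|E_\Delta|$ and $|\mathrm{Cyc}|$ are unchanged. Undoing each identification in $\mathbf{M}\setminus\mathbf{M}_{S_1,S_2}$ adds exactly one node and at most one component, so $|V'_\Delta|-|\mathrm{CC}(G'_\Delta)|\geq |V_\Delta|-|\mathrm{CC}(G_\Delta)|$. Your explicit checks of the index sets and of the power of $2$ (fullness of a retained pair depends only on $\mathbf{P}$) fill in what the paper leaves as ``the result follows''.

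Your ``second reading'' is not a convention choice but is forced, so you can simply drop the first bullet. Proposition~\ref{prop:covariance_general} bounds the moment for fixed injections. Its $K$-exponent is therefore computed on the merged graph of the matching $\mathbf{M}_{S_1,S_2}$ induced by $\overline{\pi}^{(1)},\overline{\pi}^{(2)}$; the $|V_\Delta|$ in \eqref{eq:upper_bound_1_pairing} should read $|V'_\Delta|$. No counting of injections enters at this stage.
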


    Coming back to~\eqref{eq:upper_mixed_moment}, summing over all $\mathbf{M}\in \mathcal{M}^{\star}$ and over all $\pi^{(1)}$ and $\pi^{(2)}$ that are compatible with $\mathbf{M}$, and applying Lemma~\ref{lem:core2}, we arrive at 
        \begin{align*}
     \lefteqn{       \left|\mathbf E[\widetilde{\Psi}_{G^{(1)}}\widetilde{\Psi}_{G^{(2)}}]- \mathbf E[\overline{\Psi}_{G^{(1)}}\overline{\Psi}_{G^{(2)}}]\right|} &\\ 
            &\leq \sum_{\mathbf M\in \mathcal{M}^{\star}} \frac{(n-2)!}{(n-|V_{\Delta}|)!}  \sum_{S_1 \subset [r_1], S_2\subset [r_2]: S_1\cup S_2 \neq \emptyset} \sum_{\mathbf{P}\in \mathcal{P}[\mathbf{M};S_1;S_2]}2^{2[|\mathbf{M}| - |\mathbf{M}_{full}|]} \Delta^{2|E_{\Delta}|}d^{|\mathrm{Cyc}|}\frac{1}{K^{|V_{\Delta}|-|\mathrm{CC}(G_{\Delta})|}} \enspace . 
        \end{align*}
    To conclude, it remains to reorganize the sum over pairing $\mathbf{P}\in \mathcal{P}[\mathbf{M}]$. Note that $\mathbf{P}$ belongs to $\mathcal{P}[\mathbf{M};S_1;S_2]$ only if all the nodes in the corresponding connected components are either not matched or are matched but none of their half-edges are paired. Given $\mathbf{M}\in \mathcal{M}^{\star}$, denote $V_0(\mathbf{P})$ the collection nodes  that are matched but are not incident to  any paired half-edges.
    Since, by definition of $\mathbf{M}\in \mathcal{M}^{\star}$, in any connected component, there is at least one matched node, this implies that, for a given $\mathbf{P}\in \mathcal{P}[\mathbf{M}]$, there are at most $2^{|V_0(\mathbf{P})|}-1$ sets  $(S_1,S_2)\neq \emptyset$ such that $\mathbf{P}\in \mathcal{P}[\mathbf{M},S_1,S_2]$. Since $|V_0(\mathbf{P})|\leq 2[|\mathbf{M}|- |\mathbf{M}_{\mathrm{full}}|]$, it follows that 
    \begin{align*}
         \lefteqn{       \left|\mathbf E[\widetilde{\Psi}_{G^{(1)}}\widetilde{\Psi}_{G^{(2)}}]- \mathbf E[\overline{\Psi}_{G^{(1)}}\overline{\Psi}_{G^{(2)}}]\right|} &\\ 
            &\leq \sum_{\mathbf M\in \mathcal{M}^{\star}} \frac{(n-2)!}{(n-|V_{\Delta}|)!}\sum_{\mathbf{P}\in \mathcal{P}[\mathbf{M}]\setminus \mathcal{P}_{\mathrm{full}}(\mathbf{M})}2^{2[|\mathbf{M}|- \mathbf{M}_{\mathrm{full}}|]}(2^{2[|\mathbf{M}|- \mathbf{M}_{\mathrm{full}}|]}-1)\Delta^{2|E_{\Delta}|}d^{|\mathrm{Cyc}|}\frac{1}{K^{|V_{\Delta}|-|\mathrm{CC}(G_{\Delta})|}}\enspace , 
             \end{align*}
    Finally, we apply~\eqref{eq:moment_order_2} to conclude the proof of~\eqref{eq:covariance_tilde}.

    \begin{proof}[Proof of Lemma~\ref{lem:cores}]
        Conditionally to $\mu$, the variables  $\overline{\Psi}_{G^{(1)}_i, \overline{\pi}^{(1)}}$ with $i\in S_1$ and the $\overline{\Psi}_{G^{(2)}_j, \overline{\pi}^{(2)}}$ with $j\in S_2$ are independent and they are independent of the other terms. Besides, the conditional expectation to $\mu$ of terms $\overline{\Psi}_{G^{(a)}_j, \underline{\pi}^{(a)}}$ does not depend on $\mu$.
        Hence, we have 
            \begin{align*}
                \mathbb{E}\left[ \overline{\Psi}_{G^{(1)}, \overline{\pi}^{(1)}}  \overline{\Psi}_{G^{(2)}, \overline{\pi}^{(2)}}|\mu \right]& =  \mathbb{E}\left[ \overline{\Psi}_{G^{(1)}, \overline{\pi}^{(1)}}  \overline{\Psi}_{G^{(2)}, \overline{\pi}^{(2)}}|\mu \right] \\
       &= \mathbb{E}\left[\prod_{i\not\in S_1}  \overline{\Psi}_{G^{(1)}_i, \overline{\pi}^{(1)}} \prod_{j\not\in S_2} \overline{\Psi}_{G^{(2)}_j, \overline{\pi}^{(2)}}|\mu \right]\prod_{i \in S_1} \mathbb E\left[\overline{\Psi}_{G^{(1)}_i, \overline{\pi}^{(1)}}|\mu\right] \prod_{j \in S_2} \mathbb E \left[\overline{\Psi}_{G^{(2)}_j, \overline{\pi}^{(2)}}|\mu\right]   \\
       &= \mathbb{E}\left[\prod_{i\not\in S_1}  \overline{\Psi}_{G^{(1)}_i, \overline{\pi}^{(1)}} \prod_{j\not\in S_2} \overline{\Psi}_{G^{(2)}_j, \overline{\pi}^{(2)}} \right] \prod_{i \in S_1} \mathbb E\left[\overline{\Psi}_{G^{(1)}_i, \overline{\pi}^{(1)}}\right] \prod_{j \in S_2} \mathbb E \left[\overline{\Psi}_{G^{(2)}_j, \overline{\pi}^{(2)}}\right]
       \enspace . 
            \end{align*}
        Then, we use that the first moments of the $\overline{\Psi}_{G^{(a)}_i, \overline{\pi}^{(a)}}$ for $a=1,2$ do not depend on the actual value of the injection $\overline{\pi}^{(a)}$. Similarly, the  $\mathbb{E}\left[\prod_{i\not\in S_1}  \overline{\Psi}_{G^{(1)}_i, \overline{\pi}^{(1)}} \prod_{j\not\in S_2} \overline{\Psi}_{G^{(2)}_j, \overline{\pi}^{(2)}}|\mu \right]$ only depends on $\overline{\pi}^{(1)}$ and $\overline{\pi}^{(2)}$ through their matching $\mathbf{M}_{S_1,S_2}$. The result follows. 
        \end{proof}

        \begin{proof}[Proof of Lemma~\ref{lem:core2}]
    Let us compare the graph $G'_{\Delta}$ arising from a pairing $(\mathbf{M}_{S_1,S_2},\mathbf{P})$ in~\eqref{eq:upper_bound_1_pairing} and the corresponding  graph  $G_{\Delta}$ arising from $(\mathbf{M},\mathbf{P})$. First, we have $|E_{\Delta}|=|E'_{\Delta}|$ and $|\mathrm{Cyc}|=|\mathrm{Cyc}'|$, as only the node matching is changed. 
     In $G'_{\Delta}$, the number $|V'_{\Delta}|$ of nodes is higher than $|V_{\Delta}|$ as less nodes are matched, but the number of connected components is higher. In fact, the new connected components in $G_{\Delta}'$ only arise because some nodes are not matched anymore. Removing a matching between two nodes creates at most one new connected component. Hence, we have  $|V'_{\Delta}|- |CC(G'_{\Delta})|\geq |V_{\Delta}|- |CC(G_{\Delta})|$ and the result follows. All in all, we have shown that 
     \[
     \Delta^{2|E'_{\Delta}|}d^{|\mathrm{Cyc}'|}\frac{1}{K^{|V'_{\Delta}|-|\mathrm{CC}(G'_{\Delta})|}}\leq \Delta^{2|E_{\Delta}|}d^{|\mathrm{Cyc}|}\frac{1}{K^{|V_{\Delta}|-|\mathrm{CC}(G_{\Delta})|}}\enspace ,
     \]
     which concludes the proof. 
        \end{proof}

\section{Technical proofs for the upper bound}\label{sec:proof:technical:UB}

\subsection{Proof of Lemma~\ref{lem:upper_boundexpectation}}
    Similarly to the proof of Lemma~\ref{lem:first_moment}, we have the following. For any labeling $\pi\in \Pi_V$ such that $\pi(v_1)=1$, $\pi(v_2)=2$, we have 
\[
        \mathbb{E}[\overline{\Psi}_{G,\pi}|\mu, k^*,b_1,b_2]= \Delta^{2|E|} \mathbf{1}\{k^*(\pi(v))\text{ is  contant for } v\in V  \}
\] 
It readily follows that  $\mathbb{E}[\overline{\Psi}_{G,\pi}|\mu,k^*(1),k^*(2),b_1,b_2, x=0]=0$. Besides, we have 
\[
        \mathbb{E}[\overline{\Psi}_{G,\pi}|\mu, k^*(1),k^*(2),b_1,b_2,  x=1]= \frac{\Delta^{2|E|}}{K^{2|V|-2}} \ , 
\]
and we conclude by summing over $\Pi_{V}$.

\subsection{Proof of Lemma~\ref{lem:UBva}}
We deduce from~\eqref{eq:upper_cross_variance:conditional} in the proof of Proposition~\ref{prop:covariance_without_degree_2} that, for any labeling $\pi^{(1)}$ and $\pi^{(2)}$ of $G^*$
and the corresponding matching $\mathbf{M}$ --as defined in the statement of Proposition~\ref{prop:covariance_without_degree_2}--, we have 
 \begin{align}\label{eq:upper_cross_variance:conditional2}
               \mathbb{E}\left[\overline{\Psi}_{G^{*},\pi^{(1)}} \overline{\Psi}_{G^{*},\pi^{(2)}}|\mu, b_1,b_2,k^*(1),k^*(2) \right]=  \sum_{\mathbf{P} \in \mathcal{P}[\mathbf{M}]} 
                d^{|\mathrm{Cyc}|} \mathbb{E}\left[\prod_{e\in E_{\Delta}} \langle \mu_{k^*(l(e))},  \mu_{k^*(r(e))}\rangle \big|\mu, x \right]\ .
\end{align}
Recall that $\mathbf{1}\{v_1 \sim_{G_\Delta} v_2\}$ (resp. $\mathbf{1}\{v_1 \nsim_{G_\Delta} v_2\}$) means that the node $1$ and $2$ belong (resp. do not belong) to the same connected component in $G_{\Delta}$. 
Since the $\mu_k$'s are orthogonal almost surely, we deduce from~\eqref{eq:upper_cross_variance:conditional} that 
\[
\mathbb{E}\left[\overline{\Psi}_{G^{*},\pi^{(1)}} \overline{\Psi}_{G^{*},\pi^{(2)}}|\mu, b_1,b_2,k^*(1),k^*(2) \right]=  \sum_{\mathbf{P} \in \mathcal{P}[\mathbf{M}]} 
 d^{|\mathrm{Cyc}|}  \frac{\Delta^{2|E_{\Delta}|} }{K^{|V_{\Delta}|- |\mathrm{CC}(G_{\Delta})|- \mathbf{1}\{v_1 \sim_{G_\Delta} v_2\}}} 
 \left[\mathbf{1}\{x=1\}+ \mathbf{1}\{x=0\}\mathbf{1}\{v_1 \nsim_{G_\Delta} v_2\}\right]
\]
Then, summing over all $\pi_1$ and $\pi_2$, we arrive at 
\begin{align*}
\mathbb{E}\left[\overline{\Psi}_{G^*}^2  |\mu, b_1,b_2,k^*(1),k^*(2)\right]&= \sum_{\mathbf{M},\mathbf{P}\in \mathcal{MP}} \frac{(n-2)!}{(n-|V_{\Delta}|)!} \frac{d^{|\mathrm{Cyc}|} \Delta^{2|E_{\Delta}|} }{K^{|V_{\Delta}|- |\mathrm{CC}(G_{\Delta})|- \mathbf{1}\{v_1 \sim_{G_\Delta} v_2\}}} 
 \left[\mathbf{1}\{x=1\}+ \mathbf{1}\{x=0\}\mathbf{1}\{v_1 \nsim_{G_\Delta} v_2\}\right]\\ 
 &= \sum_{\mathbf{M},\mathbf{P}\in \mathcal{MP}, \mathbf{P}\neq \emptyset } \frac{(n-2)!}{(n-|V_{\Delta}|)!} \frac{d^{|\mathrm{Cyc}|} \Delta^{2|E_{\Delta}|} }{K^{|V_{\Delta}|- |\mathrm{CC}(G_{\Delta})|- \mathbf{1}\{v_1 \sim_{G_\Delta} v_2\}}} 
 \left[\mathbf{1}\{x=1\}+ \mathbf{1}\{x=0\}\mathbf{1}\{v_1 \nsim_{G_\Delta} v_2\}\right] \\
  & \quad \quad \quad + \mathbf{1}\{x=1\} \sum_{\mathbf{M}\in \mathcal{M}}  \frac{(n-2)!}{(n-|V_{\Delta}|)!} \frac{\Delta^{4|E|} }{K^{2|V|-|\mathbf{M}|- 2}} \ .
\end{align*}
where we used in the last line, that for $\mathbf{P}=\emptyset$, we have $|\mathrm{Cyc}|=0$, $|E_{\Delta}|=2|E|$, $G_{\Delta}$ is connected, and $|V_{\Delta}|=2|V|-|\mathbf{M}|$. 
With Lemma~\ref{lem:upper_boundexpectation}, we conclude that 
\begin{align*}
 \mathbb{E}\left[\left[\overline{\Psi}_{G^*} - \mathbb E[\overline{\Psi}_{G^*}|\mu,x=0]\right]^2 |\mu, b_1,b_2,k^*(1),k^*(2), x=0\right]\hspace{4cm}\\
 =  \sum_{\mathbf{M},\mathbf{P}\in \mathcal{MP}:~\mathbf{P} \neq \emptyset} \frac{(n-2)!}{(n-|V_{\Delta}|)!} \Delta^{2|E_{\Delta}|}d^{|\mathrm{cyc}|}\frac{1}{K^{|V_{\Delta}|-|\mathrm{CC}(G_{\Delta})|}}  \mathbf{1}\{v_1 \nsim_{G_\Delta} v_2\}\enspace , 
    \end{align*}
and that 
\begin{align*}
 \mathbb{E}\left[\left[\overline{\Psi}_{G^*} - \mathbb E[\overline{\Psi}_{G^*}|\mu,x=1]\right]^2 |\mu, b_1,b_2,k^*(1),k^*(2), x=1\right]\hspace{4cm}\\ =  \sum_{\mathbf{M},\mathbf{P}\in \mathcal{MP}:~\mathbf{P} \neq \emptyset} \frac{(n-2)!}{(n-|V_{\Delta}|)!} \Delta^{2|E_{\Delta}|}d^{|\mathrm{cyc}|}\frac{1}{K^{|V_{\Delta}|-|\mathrm{CC}(G_{\Delta})|-\mathbf{1}\{v_1 \sim_{G_\Delta} v_2\}}}  \\ + \mathbb E^2[\overline{\Psi}_{G^*}|\mu,x=1]   \sum_{\mathbf{M}\in  \mathcal{M} }   \frac{(n-|V|)!^2}{(n-2)!(n-2|V|+|\mathbf{M}|!) } \left(K^{|\mathbf{M}|- 2}-1\right)
 \enspace . 
\end{align*}

\subsection{Proof of Lemma~\ref{prop:UBvar1}}
    Since $\mathbf{M}=\mathbf{M}_0$, we have $|V_{\Delta}|= 2|V|-2$, $|\mathrm{Cyc}|=0$. The only half edges that can be paired are as follows: half-edges incident to $v_{1}^{(1)}$ --there are $M+1$ of them-- with half-edges incident to $v_1^{(2)}$ --there are $M+1$ of them--  and half-edges incident to $v_2^{(1)}$  with half-edges incident to $v_2^{(2)}$. A a consequence $|\mathbf{P}|$ satisfies $|\mathbf{P}|\leq 2(M+1)$. Each of these pairings creates an open path of length 1 in $G_{\cup}[\mathbf{M}_0,\mathbf{P}]$. If there are $m_1$ pairings incident to the node $v_1$ that identifies $v_1^{(1)}$ and $v_1^{(2)}$ in $G_{\cup}[\mathbf{M}_0,\mathbf{P}]$, it decreases the degree of $v_1$ by $2m_1$ in $G_{\Delta}[\mathbf{M}_0,\mathbf{P}]$. 
    The graph $G_{\Delta}$ remains connected as long as $m_1< M+1$ and $m_2< M+1$. If  $m_1=M+1$ (resp. $m_2=M+1$), then the node $v_1$ that identifies $v_1^{(1)}$ and $v_1^{(2)}$ (resp. $v_2$ that identifies $v_2^{(1)}$ and $v_2^{(2)}$) becomes isolated in $G_{\Delta}$. Then, summing over the size of $\mathbf{P}$ and recalling the value of $\mathbb E[\overline{\Psi}_{G^*}|\mu,x=1]$ from Lemma~\ref{lem:upper_boundexpectation}, we arrive at 
\begin{align*}
B&= \sum_{\substack{0\leq m_1\leq M+1,\\ 0\leq m_2\leq M+1 , \\ m_1+m_2>0 }} \binom{M+1}{m_1}^2 m_1! \binom{M+1}{m_2}^2 m_2! \frac{(n-2)!}{(n-2|V|+2)!}\frac{\Delta^{4|E|-2m_1-2m_2}}{K^{2(|V|-2)-\mathbf{1}\{m_1+m_2=2M+2\}}}\\ 
&\leq  \mathbb E^2[\overline{\Psi}_{G^*}|\mu,x=1]\Big[\frac{K(M+1)^{2(M+1)}}{\Delta^{4(M+1)}} + \sum_{\substack{0\leq m_1\leq M+1,\\ 0\leq m_2\leq M+1 , \\ 0 <m_1+m_2<2(M+1)}} \frac{(M+1)^{3(m_1+m_2)} }{\Delta^{2(m_1+m_2)}}\Big]\\ 
&\leq \mathbb E^2[\overline{\Psi}_{G^*}|\mu,x=1]\Big[\frac{K(M+1)^{2(M+1)}}{\Delta^{4(M+1)}} + \sum_{m=1}^{2M+1} (m+1)  \left(\frac{(M+1)^{3}}{\Delta^{2}}\right)^{m  }\Big]\\
&\leq \mathbb E^2[\overline{\Psi}_{G^*}|\mu,x=1]\Big[\frac{K(M+1)^{2(M+1)}}{\Delta^{4(M+1)}} + 2  \frac{(M+1)^{5}}{\Delta^{2}}\Big]\ , 
\end{align*}
provided that $\Delta^2 \geq 2(M+1)^{4}$.

\subsection{Proof of Proposition~\ref{prop:template:chaine_rappeur}}
We consider  two replicates $G^{(1)}$ and $G^{(2)}$ of $G^*$, a node matching $\mathbf{M}$ and a paring $\mathbf{P}$.  For the purpose of this proof, we shall argue on the multigraphs $G_{\cup}[\mathbf{M},\mathbf{P}]$, $G^{(1)}[\mathbf{M},\mathbf{P}]$, and $G^{(2)}[\mathbf{M},\mathbf{P}]$. Recall that $G_{\cup}[\mathbf{M},\mathbf{P}]$ is built by  merging two replicates $G^{(1)}$ and $G^{(2)}$ of $G^*$ where we identify nodes in $\mathbf{M}$. Here, $G^{(1)}[\mathbf{M},\mathbf{P}]$ is isomorphic to $G$ and is interpreted as a sub-multigraph of $G_{\cup}[\mathbf{M},\mathbf{P}]$.

 Let us denote $\mathbf{M}^{(1)}$ the subset of nodes of $G^{(1)}[\mathbf{M},\mathbf{P}]$ that correspond to matched nodes. By definition, we have $|\mathbf{M}^{(1)}|=|\mathbf{M}|$. 
 An edge of $G^{(1)}[\mathbf{M},\mathbf{P}]$ is said to be \emph{fully paired} is both corresponding half-edges are paired, i.e. both half edges arise in the pairing $\mathbf{P}$. 
 Let us denote $FE^{(1)}[\mathbf{M},\mathbf{P}]$ the collection of fully paired edges in $G^{(1)}[\mathbf{M},\mathbf{P}]$. By definition, any fully-paired edge should be incident to nodes that are matched. For this reason, we introduce $E(\mathbf{M}^{(1)})$ as the multi-set of edges such that their two extremities belong to $\mathbf{M}^{(1)}$. So that we have $FE^{(1)}[\mathbf{M},\mathbf{P}]\subset E(\mathbf{M}^{(1)})$. We further partition the collection of fully-paired edges into $FE_c^{(1)}[\mathbf{M},\mathbf{P}]$ and $FE_o^{(1)}[\mathbf{M},\mathbf{P}]$, depending on whether they arise in cycles or open paths in the construction of $G_{\Delta}$.

We shall argue using the structure of $\mathbf{M}^{(1)}$. In particular, we decompose  $\mathbf{M}^{(1)}\setminus\{v^{(1)}_1,v^{(1)}_2\}$ into maximum  consecutive sequences of the form $\{v^{(1)}_l, v^{(1)}_{l+1},\ldots, v^{(1)}_{l+b}\}$. Henceforth, we write $c$ for the number of such consecutive sequences $\mathbf{M}^{(1)}\setminus\{v^{(1)}_1,v^{(1)}_2\}$. 

\begin{lemma}\label{lem:technique:0}
We have the following decomposition
\begin{align}\nonumber
    &|E(\mathbf{M}^{(1)})|\leq   \\ \label{eq:upper_bound:E}
& 2(|\mathbf{M}^{(1)}|-2) -2c + 2\mathbf{1}\{v^{(1)}_3\in \mathbf{M}^{(1)} \} + 2\mathbf{1}\{v^{(1)}_{LM+2} \in \mathbf{M}^{(1)}\}  + |\{m\in [M-1]: \{v^{(1)}_{mL+2},v^{(1)}_{mL+3}\}\cap \mathbf{M}^{(1)}\neq \emptyset\}|\ . 
\end{align}    
\end{lemma}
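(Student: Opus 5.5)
Since $E(\mathbf{M}^{(1)})$ consists of edges of $G^*$ (counted with multiplicity) whose two endpoints are both matched, the bound \eqref{eq:upper_bound:E} follows by sorting the edges of $G^*$ into three families and counting, for each family, how many can have both endpoints in $\mathbf{M}^{(1)}$. The families are: (i) the double-chain edges $(v_{l+mL+2},v_{l+mL+3})$ for $1\le l\le L-1$, together with the two end edges $(v_1,v_3)$ and $(v_2,v_{LM+2})$, all of multiplicity $2$; (ii) the single fastener edges $(v_{mL+2},v_{mL+3})$; and (iii) the single edges $(v_1,v_{mL+2})$ and $(v_2,v_{mL+3})$ for $1\le m\le M-1$. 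Because $v_1$ and $v_2$ are always matched, the edges of family (iii) and the two end edges of family (i) are controlled by the status of their single non-terminal endpoint.

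Restrict to the path $v_3,v_4,\ldots,v_{LM+2}$. Consecutive nodes on this path are joined either by a double edge (family (i)) or, at a fastener position, by a single edge (family (ii)), and no other edges join nodes of $V\setminus\{v_1,v_2\}$. Decompose $\mathbf{M}^{(1)}\setminus\{v_1,v_2\}$ into its $c$ maximal consecutive runs. A run of $b+1$ nodes contains exactly $b$ consecutive pairs, so summing over runs gives at most $(|\mathbf{M}^{(1)}|-2)-c$ internal consecutive pairs. Each pair contributes multiplicity at most $2$, for a total of at most $2(|\mathbf{M}^{(1)}|-2)-2c$. Using the bound $2$ for fastener pairs as well is wasteful, but it keeps the count uniform and is harmless.

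For the edges involving $v_1$ or $v_2$: the double edge $(v_1,v_3)$ lies in $E(\mathbf{M}^{(1)})$ iff $v_3\in\mathbf{M}^{(1)}$, contributing $2\mathbf{1}\{v_3\in\mathbf{M}^{(1)}\}$, and symmetrically $(v_2,v_{LM+2})$ contributes $2\mathbf{1}\{v_{LM+2}\in\mathbf{M}^{(1)}\}$. For fastener $m$, the edges $(v_1,v_{mL+2})$ and $(v_2,v_{mL+3})$ lie in $E(\mathbf{M}^{(1)})$ iff $v_{mL+2}$, respectively $v_{mL+3}$, is matched. Together they contribute at most $\mathbf{1}\{v_{mL+2}\in\mathbf{M}^{(1)}\}+\mathbf{1}\{v_{mL+3}\in\mathbf{M}^{(1)}\}$. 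This is where the stated bound differs from the crude estimate: the crude count gives $2$ per fastener when both nodes are matched, but the statement charges only $1$ per fastener meeting $\mathbf{M}^{(1)}$.

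The main obstacle is recovering this missing unit per fastener, and the slack in the chain count supplies it. When both $v_{mL+2}$ and $v_{mL+3}$ are matched, they are consecutive in the same run, and their connecting pair is the single fastener edge. That pair was charged $2$ in the chain count but has multiplicity only $1$. The saved unit pays for the second spoke, so the two spokes plus the fastener edge total exactly $1+2$ against a charge of $2$ plus one indicator. When exactly one of the two nodes is matched, only one spoke appears, matching the indicator directly. Summing the three contributions then gives \eqref{eq:upper_bound:E}. One must also check the degenerate cases $L=1$, where fastener nodes may coincide with $v_3$ or $v_{LM+2}$, and runs of a single node; both are consistent with the bookkeeping above.
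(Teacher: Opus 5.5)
Your proof is correct and follows the same route as the paper: split $\mathbf{M}^{(1)}\setminus\{v^{(1)}_1,v^{(1)}_2\}$ into its $c$ maximal runs, count $2b$ chain slots per run of $b+1$ nodes minus one for each fastener pair lying inside the run, then add the spokes to $v_1,v_2$ and the two end double-edges. Your per-fastener bookkeeping, where the unit saved on a fully matched fastener pair pays for its second spoke, is the step the paper leaves implicit in ``the result follows by summing''; it also shows the bound actually holds with equality.
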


\begin{proof}[Proof of Lemma~\ref{lem:technique:0}]
Consider a maximum set $[v^{(1)}_l; v^{(1)}_{l+b}]$ that is included in $\mathbf{M}^{(1)}\setminus\{v^{(1)}_1,v^{(1)}_2\}$. If $l>2$ and $l+b\leq LM+2$, there are $2b- |\{m\in [M-1]: \{v^{(1)}_{mL+2},v^{(1)}_{mL+3}\}\subset[v^{(1)}_l; v^{(1)}_{l+b}] \}|$ edges in $G^{(1)}[\mathbf{M},\mathbf{P}]$ that connects the nodes in $[v^{(1)}_l; v^{(1)}_{l+b}]$. As an aside, there are also $|\{m\in [M-1]: v^{(1)}_{mL+2}\in [v^{(1)}_l; v^{(1)}_{l+b}] \}|+ |\{m\in [M-1]: v^{(1)}_{mL+3}\in [v^{(1)}_l; v^{(1)}_{l+b}] \}| $ edges that connect $[v^{(1)}_l; v^{(1)}_{l+b}]$ and $\{v^{(1)}_1,v^{(1)}_2\}$. The result follows by summing over all $c$ sequence and by considering properly the neighboring effects, that is the case where $v^{(1)}_3\in \mathbf{M}^{(1)}$ and $v^{(1)}_{LM+2}\in \mathbf{M}^{(1)}$
\end{proof}

Let us partition the connected components $\mathrm{CC}(G_{\Delta})$ of $G_{\Delta}$  into $\mathrm{CC}_{\mathrm{isol}}(G_{\Delta})$, $\mathrm{CC}_{\mathrm{matched}}(G_{\Delta})$, $\mathrm{CC}_{1}(G_{\Delta})$, $\mathrm{CC}_{2}(G_{\Delta})$, where 
$\mathrm{CC}_{\mathrm{isol}}(G_{\Delta})$ stands for the collection of isolated nodes without self-edges in $G_{\Delta}$, $\mathrm{CC}_{\mathrm{matched}}(G_{\Delta})$ is the collection of non-trivial connected components that intersect at least one matched node aside from $v_1$ and $v_2$.
$\mathrm{CC}_{1}(G_{\Delta})$ is the collection of non-trivial connected components that intersect $G^{(1)}[\mathbf{M},\mathbf{P}]$ and that do not intersect any matched node to the possible exception of $\{v_1,v_2\}$. Finally, $\mathrm{CC}_{2}(G_{\Delta})$ stands for the remaining connected components: those that neither intersect matched nodes nor intersect  $G^{(1)}[\mathbf{M},\mathbf{P}]$. 

All isolated nodes in $G_{\Delta}$ have to be matched and all of the incident half-edges have to be paired. Each component in $\mathrm{CC}_{\mathrm{matched}}(G_{\Delta})$ includes at least a matched node (aside from $v_1$ and $v_2$). Hence, we have   
\[
 |\mathrm{CC}_{\mathrm{isol}}(G_{\Delta})| - \mathbf{1}\{v_1\text{ isolated in }G_\Delta\} -  \mathbf{1}\{v_2\text{ isolated in }G_\Delta\} + |\mathrm{CC}_{\mathrm{matched}}(G_{\Delta})|\leq |\mathbf{M}| - 2\ , 
\]
which is equivalent to 
\begin{equation}\label{eq:upper_CC_0}
    |\mathrm{CC}_{\mathrm{isol}}(G_{\Delta})| + |\mathrm{CC}_{\mathrm{matched}}(G_{\Delta})|\leq |\mathbf{M}|-\mathbf{1}\{(v^{(1)}_1,v^{(2)}_1)\notin \mathbf{M}_{\mathrm{full}}\} -\mathbf{1}\{(v^{(1)}_2,v^{(2)}_2) \notin \mathbf{M}_{\mathrm{full}}\}   \enspace . 
\end{equation}

Besides, we claim that
\begin{equation}\label{eq:upper_CC_1}
    |\mathrm{CC}_{1}(G_{\Delta})| \leq c + 1 - \mathbf{1}\{v^{(1)}_3\in \mathbf{M}^{(1)} \} - \mathbf{1}\{v^{(1)}_{ML+2}\in \mathbf{M}^{(1)} \}  + \mathbf{1}\{\{v_1,v_2\}\in \mathrm{CC}(G_{\Delta})\}\enspace . 
\end{equation}
Let us prove this claim.
First consider the case where $\{v_1,v_2\}\notin CC(G_{\Delta})$. This enforces that any $C$ in $\mathrm{CC}_{1}(G_{\Delta})$ contains at least one node $v_{k}^{(1)}$ with $k\geq 3$. As  $v^{(1)}_k$ belongs to $C$, then  $[v^{(1)}_{l_1},v^{(1)}_{l_2}]\subset C$ where $l_1-1= \max\{l <k : v_l\in \mathbf{M}^{(1)}\}$ and $\max\{l >k : v_l\in \mathbf{M}^{(1)}\}$. Indeed, all the edges betwen nodes $[v^{(1)}_{l_1},v^{(1)}_{l_2}]$ cannot be pruned as the corresponding nodes are not matched. As a consequence, there are at most between $c-1$ and $c+1$ such connected components (depending on the boundary conditions). %It is possible that $v_1$ or $v_2$ belong to $C$, but as $C$ contains at least one set of the form $[v_{l_1},v_{l_2}]$ (otherwise $v_1$ or $v_2$ would be isolated and therefore not belong to $\mathrm{CC}_{1}(G_{\Delta})$ or would of the form $\{v_1,v_2\}$ which is impossible) this does not change the bound. 
If $\{v_1,v_2\}\in CC(G_{\Delta})$, then this connected components belongs to  $\mathrm{CC}_{1}(G_{\Delta})$ and the bound~\eqref{eq:upper_CC_1} accounts for it.

Finally, we focus on $\mathrm{CC}_{2}(G_{\Delta})$. Let us consider such a connected component $C\in \mathrm{CC}_{2}(G_{\Delta})$. Arguing as previously, we deduce that  $C$ is an union of nodes of the form $[v^{(2)}_{l_1},v^{(2)}_{l_2}]$ where $[v^{(2)}_{l_1},v^{(2)}_{l_2}]\cap \mathbf{M}^{(2)}=\emptyset$, $v^{(1)}_{l_1-1}\in \mathbf{M}^{(2)}$, and\footnote{In fact, if  $l_2=LM+2$, the condition is $v^{(2)}_2\in \mathbf{M}^{(2)}$} $v^{(2)}_{l_2+1}\in \mathbf{M}^{(2)}$. Since neither $v^{(2)}_{l_1-1}$, nor $v^{(2)}_{l_2+1}$ belong to $C$, this implies that the edges in $E^{(2)}$ of the form $(v^{(2)}_{l_1-1},v^{(2)}_{l_1})$ and $(v^{(2)}_{l_2},v^{(2)}_{l_2+1})$ have been pruned in $G_{\Delta}$ when removing the open paths of paired half-edges. Since the connected component $C$ does not intersect $G^{(1)}[\mathbf{M},\mathbf{P}]$, one sees that the length of these open paths is a least two --indeed, pruning an open path of length $1$ will connect a node of $G^{(1)}[\mathbf{M},\mathbf{P}]$ to a node of $G^{(2)}[\mathbf{M},\mathbf{P}]$. 
As a consequence, we can count two fully paired edges $G^{(1)}[\mathbf{M},\mathbf{P}]$ for $C$. Besides, let us consider the case where $\{v_1,v_2\}$ is a connected component in $G_{\Delta}$. This implies that there has been a pruned open path that allowed to connect $v_1$ to $v_2$ in $G_{\Delta}$. Since the graph distance from $v_1$ to $v_2$ in $G_{\cup}$ is $3$, this implies that at least one fully paired edge in $G^{(1)}[\mathbf{M},\mathbf{P}]$ has been used for this open path. In summary we have proved that
\begin{equation}\label{eq:upper_CC_2}
  2|\mathrm{CC}_{2}(G_{\Delta})|+ \mathbf{1}\{\{v_1,v_2\}\in CC(G_{\Delta})\}\leq   |FE_o^{(1)}[\mathbf{M},\mathbf{P}]|\enspace .
\end{equation}

Since each cycle involves at least one fully paired edge, we $|\mathrm{Cyc}|\leq |FE_c^{(1)}[\mathbf{M},\mathbf{P}]|$. Gathering~\eqref{eq:upper_CC_0},~\eqref{eq:upper_CC_1}, and~\eqref{eq:upper_CC_2}, we arrive at 
\begin{eqnarray*}
    |\mathrm{Cyc}| + |\mathrm{CC}(G_{\Delta})| -1 &\leq&  |\mathbf{M}|-\mathbf{1}\{(v^{(1)}_1,v^{(2)}_1)\notin \mathbf{M}_{\mathrm{full}}\} -\mathbf{1}\{(v^{(1)}_2,v^{(2)}_2)\notin \mathbf{M}_{\mathrm{full}}\}  + |FE_c^{(1)}[\mathbf{M},\mathbf{P}]| + |FE_o^{(1)}[\mathbf{M},\mathbf{P}]|\\ && + c   - \mathbf{1}\{v^{(1)}_3\in \mathbf{M}^{(1)} \} - \mathbf{1}\{v^{(1)}_{ML+2}\in \mathbf{M}^{(1)}\} 
\end{eqnarray*}
Then, since $|E(\mathbf{M}^{(1)})|\geq |FE_c^{(1)}[\mathbf{M},\mathbf{P}]| + |FE_o^{(1)}[\mathbf{M},\mathbf{P}]|$ we combine this with \eqref{eq:upper_bound:E} and we obtain
\begin{eqnarray*}
    |\mathrm{Cyc}| + |\mathrm{CC}(G_{\Delta})| -1 &\leq&  3|\mathbf{M}|-4-\mathbf{1}\{(v^{(1)}_1,v^{(2)}_1)\notin \mathbf{M}_{\mathrm{full}}\} -\mathbf{1}\{(v^{(1)}_2,v^{(2)}_2)\notin \mathbf{M}_{\mathrm{full}}\} \\ && - c +\mathbf{1}\{v_3^{(1)}\in \mathbf{M}^{(1)} \} + \mathbf{1}\{v^{(1)}_{ML+2}\in \mathbf{M}^{(1)}\}+  |\{m\in [M-1]: \{v^{(1)}_{mL+2},v^{(1)}_{mL+3}\}\cap \mathbf{M}^{(1)}\}| \ . 
\end{eqnarray*}
To conclude, it suffices to prove that 
\[
    -c   + \mathbf{1}\{v_3\in \mathbf{M}^{(1)} \} + \mathbf{1}\{v^{(1)}_{ML+2}\in \mathbf{M}^{(1)}\}+ |\{m\in [M-1]: \{v^{(1)}_{mL+2},v^{(1)}_{mL+3}\}\cap \mathbf{M}^{(1)}\neq \emptyset\}|\leq \lfloor (4(|\mathbf{M}|-2))/L\rfloor\ . 
\]
First, we consider the specific case where $|\mathbf{M}|=ML+2$ so that the left-hand side equals $M$, whereas the right-hand side is equal to $4M$ and the inequality therefore holds. Next, we focus on the case where  $|\mathbf{M}|< ML+2$. For any set  $[v^{(1)}_{l_1}; v^{(1)}_{l_1+x}]$ with $l_1>3$ and $l_1+x<  ML+2$, we have $|\{m\in [M-1]: \{v^{(1)}_{mL+2},v^{(1)}_{mL+3}\}\cap [v^{(1)}_{l_1};v^{(1)}_{l_1+x}]\neq \emptyset\}|\leq 1 + \lfloor (x+3)/L\rfloor \leq 1+ \lfloor 4(x+1)/L\rfloor$. For any interval of the form $[v^{(1)}_{l_1},v^{(1)}_{l_1+x}]$ with either $l_1=3$ or $l_1+x=ML+2$, we have 
$|\{m\in [M-1]: \{v^{(1)}_{mL+2},v^{(1)}_{mL+3}\}\cap [v^{(1)}_{l_1};v^{(1)}_{l_1+x}]\neq \emptyset\}|\leq  \lfloor 4(x+1)/L\rfloor$. Note that we cannot have $l_1=3$ and $l_1+x=LM+2$ as $|\mathbf{M}|\leq LM+2$. Summing over all these $c$ intervals in the decomposition  $\mathbf{M}^{(1)}\setminus\{v^{(1)}_1,v^{(1)}_2\}$, we arrive at the desired conclusion.

\subsection{Proof of Proposition~\ref{prop2:template:chaine_rappeur}}
We use the same notation as in the previous proof.         
We have observed in the previous proof that 
$|\mathrm{Cyc}|\leq |FE_c^{(1)}[\mathbf{M},\mathbf{P}]|$. 
Combining the partition of $\mathrm{CC}(G_{\Delta})$ into $\mathrm{CC}_{\mathrm{isol}}(G_{\Delta})$, $\mathrm{CC}_{\mathrm{matched}}(G_{\Delta})$, $\mathrm{CC}_{1}(G_{\Delta})$, and $\mathrm{CC}_{2}(G_{\Delta})$ with~\eqref{eq:upper_CC_0} and~\eqref{eq:upper_CC_2}, we get
\begin{eqnarray} \nonumber
|\mathrm{Cyc}| + |\mathrm{CC}(G_{\Delta})|-1 & \leq& |FE_c^{(1)}[\mathbf{M},\mathbf{P}]|+ |FE_o^{(1)}[\mathbf{M},\mathbf{P}]|+ |\mathrm{CC}_{1}(G_{\Delta})| - \mathbf{1}\{\{v_1,v_2\}\in \mathrm{CC}(G_{\Delta})\}\\ & &+ |\mathbf{M}|-\mathbf{1}\{(v^{(1)}_1,v^{(1)}_1)\notin \mathbf{M}_{\mathrm{full}}\} -\mathbf{1}\{(v^{(2)}_2,v^{(2)}_2) \notin \mathbf{M}_{\mathrm{full}}\} - 1 \enspace . \label{eq:prop2:chain}
\end{eqnarray}

Consider any connected component $C$ in $\mathrm{CC}_1(G_{\Delta})$ that is distinct from $\{v_1,v_2\}$. As argued in the previous proof, the restriction to $C$ of the nodes in $G^{(1)}[\mathbf{M},\mathbf{P}]\setminus\{v_1^{(1)},v_2^{(1)}\}$ is an union of intervals of the form $[v^{(1)}_l,v^{(1)}_{l+b}]$ where $v^{(1)}_{l-1}\in \mathbf{M}^{(1)}$ and $v^{(1)}_{l+b+1}\in  \mathbf{M}^{(1)}$ (except if $l+b=ML+2$). Suppose that $l>3$ and $l+b< LM+2$. Since neither $v^{(1)}_{l-1}$, nor $v^{(1)}_{l+b+1}$ belong to $C$ (by definition of $\mathrm{CC}_1(G_{\Delta})$), this means that half-edges $G^{(1)}[\mathbf{M},\mathbf{P}]$ that are incident to $v^{(1)}_{l-1}$ (resp. $v_{l+b+1}^{(1)}$) and correspond to edges incident to $v^{(1)}_{l}$ (resp. $v_{l+b}^{(1)}$) belong to the collection of paired half-edges. Note also that these half-edges, while paired,  do not belong to a fully paired edge as neither $v_l^{(1)}$ nor $v_{l+b}^{(1)}$ are matched. As a consequence, to each such component $C$ that does not intersect $v^{(1)}_{3}$ or $v^{(1)}_{LM+2}$, we can associate at least two paired half-edges that do not belong to a fully paired edge. If $v^{(1)}_3\in C$, $v^{(1)}_{LM+2}\in C$, but $v_1$ does not belong to $C$, we obtain by arguing similarly, that we can associate at least two paired half-edges that do not belong to a fully paired edge. In contrast, for $v^{(1)}_3\in C$, $v^{(1)}_{LM+2}\in C$, but $v_1\in C$, we can associate only one-such half-edges. Arguing similarly with the remaining cases, we arrive at 
%, we can associate two such paired half-edges. If $l=3$ and $l+b<LM+2$ or  If $l>3$ and $l+b=LM+2$, we can only associate one such paired half-edge. If $l=3$ and $l+b=LM+2$, we cannot associate any such paired half-edge. 
\begin{equation}\label{eq:upper:CC1:bis}
    2|\mathrm{CC}_{1}(G_{\Delta})|  \leq q + |\{C\in \mathrm{CC}_{1}(G_{\Delta}): C\cap \{v_1\}\neq \emptyset\}|+ |\{C\in \mathrm{CC}_{1}(G_{\Delta}): C\cap \{v_2\}\neq \emptyset\}|\leq q +2 \ , 
\end{equation}
where $q$ is the number of paired half-edges in $G^{(1)}[\mathbf{M},\mathbf{P}]$ that are not incident to $v_1$ or $v_2$ (except if they belong to an edge incident to $v_3^{(1)}$ or $v_{LM+2}^{(1)}$) and that do not belong to a fully paired edge. Together with~\eqref{eq:prop2:chain}, we arrive at 
\begin{align*}
    2(|\mathrm{Cyc}| + |\mathrm{CC}(G_{\Delta})|-1)  &\leq 2|FE_c^{(1)}[\mathbf{M},\mathbf{P}]|+ 2|FE_o^{(1)}[\mathbf{M},\mathbf{P}]|+ q \\ & \quad \quad + 2\left[|\mathbf{M}|-\mathbf{1}\{(v^{(1)}_1,v^{(2)}_1)\notin \mathbf{M}_{\mathrm{full}}\} -\mathbf{1}\{(v^{(1)}_2,v^{(2)}_2) \notin \mathbf{M}_{\mathrm{full}}\}\right] \ .
\end{align*}
This concludes the proof as the total number  $|\mathbf{P}|$ of paired half-edges in $G^{(1)}[\mathbf{M},\mathbf{P}]$ is equal to $|H_1|+|H_2|+ q + 2|FE_c^{(1)}[\mathbf{M},\mathbf{P}]|+ 2|FE_o^{(1)}[\mathbf{M},\mathbf{P}]|$. 

\subsection{Proof of Lemma~\ref{lem:P:petit}}

The quantity $|\mathrm{CC}(G_{\Delta})|+ \mathbf{1}\{v_1\sim_{G_{\Delta}} v_2\} -2$ counts the number of connected components that do not intersect $v_1$ and $v_2$. We decompose this set of connected components in $\mathrm{CC}^*_{1}$ and in  $\mathrm{CC}^*_2$ where $\mathrm{CC}^*_{1}$ is the set of connected components that intersect $G^{(1)}[\mathbf{M},\mathbf{P}]$ but neither intersect  $v_1$ nor $v_2$ and where $\mathrm{CC}^*_2$ is the set of connected components that do not intersect $G^{(1)}[\mathbf{M},\mathbf{P}]$. 

Since $|CC^*_2|\leq |CC_2(G_{\Delta})|$, where $CC_2(G_{\Delta})$ is defined in the proof of Proposition~\ref{prop:template:chaine_rappeur}, it follows from~\eqref{eq:upper_CC_2} that  
\begin{align}\label{eq:upper_CC*2}
|CC^*_2|\leq |FE^{(1)}_o[\mathbf{M},\mathbf{P}|]|/2\enspace .
\end{align}

Define $V^*:=[v_3^{(1)},\ldots, v_{ML+2}^{(1)}]$. Each connected component $C$ of  $\mathrm{CC}^*_{1}$ decomposes into an union of maximum intervals of the form $[v^{(1)}_l, v^{(1)}_{l+b}]$. Since neither $v_{l-1}^{(1)}$, $v_{b+l+1}^{(1)}$, $v_1$, nor $v_2$ belong to $C$, it follows that the edges going from $v_{l}^{(1)}$ to $v^{(1)}_{l-1}$ (or to $v_1$ or $v_2$) and from $v^{(1)}_{l+b}$ to $v^{(1)}_{l+b+1}$ or ($v_1$ and $v_2$) have been pruned in some way when building $G_{\Delta}$. This implies that at least $2$ edges incident to $v_{l-1}^{(1)}$ and $2$ distinct edges incident to $v^{(1)}_{l+b}$ have at least one of their two half-edges that has been paired.  

By summing over all connected components $C$ of $\mathrm{CC}^*_{1}$, it follows that all paired half-edges in $G^{(1)}[\mathbf{M},\mathbf{P}]$ have been  counted at most twice, except for the half-edges incident to $v_1$ or $v_2$, and except for half-edges involved in a fully paired edges which have counted at most once. 
We arrive at  
\[
4 |\mathrm{CC}^*_1|+ 2 |FE^{(1)}[\mathbf{M},\mathbf{P}]|+ |H_1|+|H_2| \leq 2|\mathbf{P}|\enspace . 
\]
Since $|\mathrm{Cyc}|\leq |FE^{(1)}_{c}[\mathbf{M},\mathbf{P}]|$, we deduce from~\eqref{eq:upper_CC*2}  that 
\[
|\mathrm{Cyc}| +  |\mathrm{CC}^*_1|+  |\mathrm{CC}^*_2| \leq |\mathbf{P}|/2 + |FE^{(1)}_{c}[\mathbf{M},\mathbf{P}]|/2 - (|H_1|+|H_2)/4\enspace . 
\]
Since each fully paired edge required $2$ paired half edges, we conclude that 
\[
|\mathrm{Cyc}| + |\mathrm{CC}(G_{\Delta})|+ \mathbf{1}\{v_1\sim G_{\Delta} v_2\} -2 \leq \frac{3}{4}|\mathbf{P}| - \frac{|H_1|+|H_2}{4}\enspace . 
\]

\section{Technical proofs for the lower bound}\label{sec:proof:technical:LB}

\subsection{Proof of Lemma~\ref{lem:connected_components}}

As in the proof of Proposition~\ref{prop:template:chaine_rappeur}, we partition the connected components $\mathrm{CC}(G_{\Delta})$ into several subsets, but the partition is slightly different. Consider $\mathrm{CC}_{\mathrm{matched}}(G_{\Delta})$, $\mathrm{CC}_{mixed}(G_{\Delta})$, and  $\mathrm{CC}_{pure}(G_{\Delta})$ such that $\mathrm{CC}_{\mathrm{matched}}(G_{\Delta})$ is the collection of connected components of $G_{\Delta}$ that intersect a matched node, $\mathrm{CC}_{\mathrm{mixed}}(G_{\Delta})$ is the collection of connected components that do not intersect any matched node and that intersect both $G^{(1)}[\mathbf{M},\mathbf{P}]$ and $G^{(2)}[\mathbf{M},\mathbf{P}]$, whereas $\mathrm{CC}_{pure}(G_{\Delta})$ contains the remaining components, ie the connected components that not intersect matched node and that lie only in $G^{(1)}[\mathbf{M},\mathbf{P}]$ or $G^{(1)}[\mathbf{M},\mathbf{P}]$. Obviously, we have $|\mathrm{CC}_{\mathrm{matched}}(G_{\Delta})|\leq |\mathbf{M}|$ the number of couples of matched nodes. 

Let us turn to a connected component $C\in \mathrm{CC}_{mixed}(G_{\Delta})$. Let us consider $C^{(1)}$ (resp. $C^{(2)}$) the induced subcomponents of $C$ in $G^{(1)}[\mathbf{M},\mathbf{P}]$ (resp. $G^{(2)}[\mathbf{M},\mathbf{P}]$). Since, in $G_{\Delta}$, $C^{(1)}$ and $C^{(2)}$ are connected and since $C$ does not intersect any matched node, in the construction of $G_{\Delta}$, we have added at least an edge from $C^{(1)}$ to $C^{(2)}$ by deletion of an open path. The number of edges between $C^{(1)}$ and $C^{(2)}$ has to be even. Indeed, each node in $G_{\Delta}$ has an even degree. Since the sum of the degrees of the graph induced by $C^{(1)}$ is even (as it is for any graph), this implies that the number of edges between $C^{(1)}$ and $C^{(2)}$ is even. We have proved that at least two open-paths pruned in $G_{\Delta}$ are associated to $C$. Besides, an open path that led to an edge between $C^{(1)}$ and $C^{(2)}$ has to be of odd size. We have proved that $2|\mathrm{CC}_{mixed}(G_{\Delta})|\leq |\mathrm{Op}_{\mathrm{odd}}|$.

Finally, we consider a connected component $C\in \mathrm{CC}_{\mathrm{pure}}(G_{\Delta})$. Without loss of generality, we can assume that $C$ is restricted to nodes in  $G^{(1)}[\mathbf{M},\mathbf{P}]$. Since $\mathbf{M}\in \mathcal{M}^{\star}$,  each connected component of $G^{(1)}[\mathbf{M},\mathbf{P}]$ contains at least a matched node.  This implies that at least one node $v$ in $C$ was connected by an edge in $G^{(1)}[\mathbf{M},\mathbf{P}]$ to a mached node $v'$.  Since this edge has been pruned in the construction in $G_{\Delta}$ (otherwise, we would have  $v\in C$), this means that there is a pruned open path that that lead to connected $C$ to itself. Since this pruned open paths allowed to connect two nodes from $G^{(1)}[\mathbf{M},\mathbf{P}]$, this implies that its length is even as pruning odd lengths open path connects $G^{(1)}[\mathbf{M},\mathbf{P}]$ to $G^{(2)}[\mathbf{M},\mathbf{P}]$. We have proved that $|\mathrm{CC}_{mixed}(G_{\Delta})|\leq |\mathrm{Op}_{\mathrm{even}}|$. This concludes the proof.

\subsection{Proof of Lemma~\ref{lem:pairing}}

We start with $B$. Observe that any cycle of length $l\geq 2$ requires $l$ pairing and that any open path of size $l\geq 1$ also requires $l$ pairings. As a consequence, $2|\mathrm{Cyc}| + 2|\mathrm{Op}_{\mathrm{even}}|+ |\mathrm{Op}_{\mathrm{odd}}|$ is smaller than the total number $|\mathbf{P}|$ of pairings and $B$ is therefore non-negative. In fact, we have the following lower bound which will useful for proving Lemma~\ref{lem:lower_bound_psi}. 
\begin{eqnarray}\label{eq:interpretation_B}
    B\geq \frac{1}{4}|\{\text{ Paired half-edges in Open path of length $>$ 2 or in cycle of length $>$ 2}\}| \enspace . 
\end{eqnarray}
Let us turn to $C$. We shall argue using the half-edges. Recall that  $\frac{1}{2}|E^{\mathrm{half},(1)}|=|E^{(1)}|$ so that 
\[
2C = |E^{\mathrm{half},(1)}| + |E^{\mathrm{half},(2)}| - 2|\mathbf{P}| - 2[2|V^{(1)}| + 2|V^{(2)}|-3|\mathbf{M}|-|\mathbf{M}_{\mathrm{full}}|]\ . 
\] 
Observe that $2|\mathbf{P}|$ is the number of half-edges in $G_{\cup}[\mathbf{M},\mathbf{P}]$ that are involved in a pairing. To control $2C$, we argue by counting the contribution of each node of  $G_{\cup}[\mathbf{M},\mathbf{P}]$. Consider any node $v$ of this graph. First, if $v$ has not been matched, then the corresponding term in $2C$ is 
the number half-edges incident to $v$ -4, which is non-negative, as non-matched node have a degree at least $4$. Second, if $v$ is a matched node and is also fully matched, then its corresponding term in $2C$ is zero, since $|E^{\mathrm{half},(1)}| + |E^{\mathrm{half},(2)}|$ accounts for the number of half-edges incident to $v$ and $2|\mathbf{P}|$ also accounts for the same quantity. Finally, if $v$ is matched but not fully matched, that is it arises in $\mathbf{M}\setminus \mathbf{M}_{\mathrm{full}}$, we know that, at least $2$ half-edges incident to $v$ are not paired --as the number of non-paired half-edges is even. Hence, the corresponding term in $|E^{\mathrm{half},(1)}| + |E^{\mathrm{half},(2)}| - 2|\mathbf{P}|$ is at least $2$. In conclusion, we have proved $C\geq 0$.

\subsection{Proof of Lemma~\ref{lem:counting_isomorphism}}
    Fix  $(\underline{\mathbf{M}}, \underline{\mathbf{P}}, \underline{PE}^{(1)},\underline{PE}^{(2)})\in \underline{\mathcal{M}\mathcal{P}}_{\mathrm{shadow}}$. Consider any two $(\mathbf{M},\mathbf{P})$ and $(\mathbf{M}',\mathbf{P}^{'})$ that have this $(\underline{\mathbf{M}}, \underline{\mathbf{P}}, \underline{PE}^{(1)},\underline{PE}^{(2)})$. 
    Hence, there exist two corresponding bijections $\phi:\underline{PE}^{(1)}\mapsto \underline{PE}^{(2)} $ and $\phi':\underline{PE}^{(1)}\mapsto \underline{PE}^{(2)}$. Then, we can define the bijection $\tilde{\phi}: E^{\mathrm{\mathrm{half}},(1)}\mapsto E^{\mathrm{\mathrm{half}},(1)}$  such that $\tilde{\phi}(\underline{e})= \underline{e}$ if $\underline{e}\notin \underline{PE}^{(1)}$ and $\tilde{\phi}(\underline{e})= \phi^{-1}(\phi'^{-1}(\underline{e}))$ otherwise. One can readily check that $\tilde{\phi}$ is an automorphism of $G^{(1)}$. Indeed, outside of the set of perfectly paired half-edges it is the identity, and inside this set, it corresponds to a matching and pairing that perfectly pairs the corresponding half-edges. 
    Besides any two distinct $\phi'$ lead to distinct automorphisms. Thus, we get
    \[
        \left|\Big\{(\mathbf{M}^{'},\mathbf{P}^{'})\in \mathcal{M}\mathcal{P} :  (\underline{\mathbf{M}}, \underline{\mathbf{P}}, \underline{PE}^{(1)},\underline{PE}^{(2)})\triangleleft  (\mathbf{M}^{'},\mathbf{P}^{'}) \Big\}\right|\leq |\mathrm{Aut}(G^{(1)})| \ . 
    \]
    We conclude by symmetry.

\subsection{Proof of Lemma~\ref{lem:lower_bound_psi}}

First we work with the definition~\eqref{eq:definition:C} of $C$ and the definition of $\psi$ to get 
\begin{eqnarray*}
    \phi(\mathbf{M},\mathbf{P})&=&   \frac{1}{2}\left[|E^{\mathrm{half},(1)}| + |E^{\mathrm{half},(2)}| - 2|\mathbf{P}|\right] + 2|\mathbf{M}|-|\mathbf{M}_{\mathrm{full}}|+ 2B \\
    &\geq & \frac{1}{4}\left[|E^{\mathrm{half},(1)}| + |E^{\mathrm{half},(2)}| - 2|\mathbf{P}|\right]+ \frac{1}{8}\left[ 2(|E^{\mathrm{half},(1)}| + |E^{\mathrm{half},(2)}| - 2|\mathbf{P}|)+16B\right]\\
    & &  :=\frac{1}{4} \phi_1 + \frac{1}{8}\phi_2\ .
\end{eqnarray*}
The quantity $\phi_1$ corresponds to the number of half-edges that are not paired. Hence, it suffices that show that $\phi_2$ is larger or equal to the number of half-edges that are paired but not perfectly paired. By~\eqref{eq:interpretation_B}, $16B$ is larger than the number of paired half-edges that are either involved in a cycle of length larger than $2$ or an open-path of size larger than $2$. Since perfectly paired half-edges correspond to cycle of length exactly $2$, it suffices to prove that $2(|E^{\mathrm{half},(1)}| + |E^{\mathrm{half},(2)}| - 2|\mathbf{P}|)$ is larger than the number of paired-half-edges involved in an open path of length $1$ or $2$. To see this, observe that an open path has a always two extremities that corresponds non-paired half-edges. As a consequence, $2(|E^{\mathrm{half},(1)}| + |E^{\mathrm{half},(2)}| - 2|\mathbf{P}|)$ is larger than $4$ times the number of open paths. Since each open path of length at most two involves at most $4$ paired half-edges, this concludes the proof.

\end{document}